\title{Dirac Operators on Quantum Projective Spaces}
\date{January 29, 2009}
\author[Francesco D'Andrea and Ludwik D\k{a}browski]{Francesco D'Andrea$^1$ and Ludwik D\k{a}browski$^{\,2}$}
\address{$^1$D{\'e}p.~de Math{\'e}matique, U.C.~Louvain, Louvain-La-Neuve, B-1348, Belgique \\
         $^2$Scuola Internazionale Superiore di Studi Avanzati, Trieste, I-34127, Italia\\
         ~}
\keywords{Noncommutative geometry, quantum groups, Dirac operators, spectral triples}
\subjclass[2000]{Primary: 58B34; Secondary: 20G42}
\numberwithin{equation}{section}
\newtheorem{thm}{Theorem}[section]
\newtheorem{lemma}[thm]{Lemma}
\newtheorem{prop}[thm]{Proposition}
\newcommand{\arxiv}[1]{[arxiv:{\color{blue}#1}]}
\newcommand{\N}{\mathbb{N}}
\newcommand{\Z}{\mathbb{Z}}
\newcommand{\C}{\mathbb{C}}
\newcommand{\A}{\mathcal{A}}
\newcommand{\M}{\mathcal{M}}
\newcommand{\U}{\mathcal{U}}
\newcommand{\HH}{\mathcal{H}}
\newcommand{\Uq}[1]{U_q(\mathfrak{su}(#1))}
\newcommand{\Aq}[1]{\mathcal{A}(\CP_q^{#1})}
\newcommand{\Oq}{\mathcal{A}(SU_q(\ell+1))}
\newcommand{\Kq}[1]{U_q(\mathfrak{u}(#1))}
\newcommand{\CP}{\mathbb{C}\mathrm{P}}
\newcommand{\Gr}{\mathtt{Gr}^\ell_q}
\newcommand{\az}{\triangleright}
\newcommand{\za}{\triangleleft}
\newcommand{\adj}{\stackrel{\mathrm{ad}}{\za}}
\newcommand{\inner}[1]{\left<#1\right>}
\newcommand{\D}{D\mkern-11.5mu/\,}
\newcommand{\de}{\partial}
\newcommand{\wprod}{\wedge_q\mkern-1mu}
\newcommand{\deb}{\bar{\partial}}
\newcommand{\Ja}{\mathcal{J}_0}
\newcommand{\Jb}{\mathcal{J}}
\newcommand{\maut}{K_{2\rho}}
\newcommand{\mL}{\mathcal{L}}
\begin{document}

\begin{abstract}
We construct a family of self-adjoint operators $D_N$, $N\in\Z$, which have compact resolvent and bounded commutators with the coordinate algebra of the quantum projective space $\CP^\ell_q$, for any $\ell\geq 2$ and $0<q<1$. They provide $0^+$-dimensional equivariant even spectral triples. If $\ell$ is odd and $N=\frac{1}{2}(\ell+1)$, the spectral triple is real with KO-dimen\-sion~\mbox{$2\ell\!\!\mod 8$}.
\end{abstract}

\maketitle

\thispagestyle{empty}

\section{Introduction}
In recent years several examples of noncommutative riemannian spin manifolds, described in terms of spectral triples \cite{Con94,Con96}, have been constructed. Among them there are lowest dimensional quantum groups and their homogeneous spaces (see \cite{Dab08} for references), and $q$-deformed compact simply connected simple Lie groups~\cite{NT07}. An equivariant Dirac operator $D$ satisfying the crucial property of bounded commutators with the coordinates has been constructed on\linebreak $q$-deformed irreducible flag manifolds in~\cite{Kra04} (and shown to yield a finite dimensional differential calculus which coincides with the one of \cite{HK04}). The other essential property of a spectral triple --- that the resolvent of $D$ is compact --- though expected, has not yet been demonstrated.

In this paper we analyse a class of $q$-deformed irreducible flag manifolds: namely quantum projective spaces $\CP^\ell_q$ for any $\ell\in \N$. We first give an explicit description of the antiholomorphic part of the differential calculus (the Dolbeault complex) and use it to construct a family (numbered by $N\in\Z$) of self-adjoint operators $D_N$ on $\HH_N$, which have bounded commutators with the coordinate algebra $\Aq{\ell}$. Then, being $\CP^\ell_q$ a homogeneous $SU_q(\ell+1)$-space, by preserving the equivariance at all steps and by relating $D_N$ to certain Casimir operator of $SU_q(\ell+1)$, we are able to study the asymptotic behaviour of the spectrum of $D_N$. We find the exponentially growing spectrum, which guarantees the compact resolvent property of $D_N$. Thus $(\Aq{\ell},\HH_N,D_N)$ are bona fide spectral triples on noncommutative homogeneous manifolds $\CP^\ell_q$. This generalizes the simplest case $\CP^1_q$ (that coincides with the standard Podle\'s sphere) and the case $\CP_q^2$ (that is spin$^c$ but not spin). The spectral triple with \mbox{$N=\frac{1}{2}(\ell+1)$}, that exists if $\ell$ is odd, is the analogue of the real spectral triple in~\cite{DS03}, and the one with $N=0$ is the analogue of the spectral triple in~\cite{DDL08b}.

It should be mentioned that the de Rham complex for $\CP_q^\ell$ (on the formal level) appears in \cite{HK06b} and (in local coordinates) in \cite{CHZ96}. The relevant differential operator in local coordinates on $\CP^1_q$ appears already in \cite{DS94}, where, in particular, the relation with the $q$-derivative is mentioned.

In the classical limit ($q=1$), when $\ell$ is odd and $N=\frac{1}{2}(\ell+1)$, we obtain the canonical Dirac operator (for the Fubini-Study metric) acting on the space of square integrable spinors on $\CP^\ell$, while for $N=0$ we get the Dolbeault-Dirac operator on the Hilbert space of antiholomorphic forms on $\CP^\ell$. Their spectra agree with the formula in~\cite{DHMOC08}, cf.~also \cite{CFG89,SS93,AB98} if $\ell$ is odd.

The plan of the paper is the following.
In Sec.~\ref{sec:zero} we briefly recall what is known about $\CP^1_q$, to prepare the discussion of the general case.
In Sec.~\ref{sec:uno}, we describe the basic properties of $\Uq{\ell+1}$ and --- guided by the equivariance condition --- introduce a $q$-deformation of the Grassmann algebra and of left invariant vector fields on $\CP^\ell$. The former will be relevant in the construction of the algebra of antiholomorphic forms, the latter in the definition of the exterior derivative.
In Sec.~\ref{sec:due}, we describe the quantum $SU(\ell+1)$ group and the action of $\Uq{\ell+1}$ on it, as well as the subalgebras of `functions' on the quantum unitary sphere $S^{2\ell+1}_q$, and on the quantum complex projective space $\CP^\ell_q$.
Sec.~\ref{sec:tre} is dedicated to the differential calculus, and Sec.~\ref{sec:quattro} to spectral triples.
General notions on spectral triples are recalled in Appendix \ref{app:A}.
Finally, in Appendix \ref{app:B} we discuss the limit $q\to 1$ and compare our results with the literature.

\section{The `exponential' Dirac operator on $\CP^1_q$}\label{sec:zero}
In this section, we briefly recall the geometry of the $q$-deformed $\CP^1$, cf.~\cite{DS03,SW04}.
We use the notations of \cite{DDLW07,DDL08}: $0<q<1$ is a real deformation parameter,
$K,K^{-1},E,F$ are the generators of the Hopf $*$-algebra $\Uq{2}$, $\alpha,\beta$ are the
generators of the dual Hopf $*$-algebra $\A(SU_q(2))$, which is
an $\Uq{2}$-bimodule $*$-algebra for the left $\az$ and right $\za$ canonical
actions (cf~App.~\ref{app:A}). For each $N\in\Z$, a left $\Uq{2}$-module $\Gamma_N$ is given by
$$
\Gamma_N=\big\{ a\in\A(SU_q(2))\,|\,a\za K=q^{-\frac{N}{2}}a\big\} \;,
$$
and $\Aq{1}:=\Gamma_0$ is a left $\Uq{2}$-module $*$-algebra called the coordinate
algebra of the standard Podle\'s sphere. For each $N\in\Z$, $\Gamma_N$ is also an $\Aq{1}$-bimodule.
As a left $\Uq{2}$-module, we have the following decomposition
$$
\Gamma_N\simeq\bigoplus_{n-|N|\in 2\N}V_n \;,
$$
where $V_n$ is the spin $\frac{1}{2}n$ irreducible $*$-representation of $\Uq{2}$.
This is a unitary equivalence if we put on $\Gamma_N$ the inner product
coming from the Haar state of $SU_q(2)$ (see~\cite{KS97}). The Casimir element
$$
\mathcal{C}_q=\left(\frac{\smash[t]{q^{\frac{1}{2}}}K-\smash[t]{q^{-\frac{1}{2}}K^{-1}}}{q-q^{-1}}\right)^2+FE
$$
has eigenvalues
$$
\mathcal{C}_q\big|_{V_n}=[\tfrac{n+1}{2}]^2\cdot id
$$
with multiplicity $\dim V_n=n+1$. Here
$$
[x]:=\frac{q^x-q^{-x}}{q-q^{-1}}
$$
is the $q$-analogue of $x$.

Antiholomorphic $0$ and $1$-forms are $\Omega^0=\Aq{1}$ and $\Omega^1=\Gamma_{-2}$,
with the Dolbeault operator and its Hermitian conjugate given by
\begin{align*}
\deb &: \Omega^0\to\Omega^1\;,\qquad a\mapsto \mL_F\, a \;,\\
\deb^\dag\! &: \Omega^1\to\Omega^0\;,\qquad a\mapsto \mL_E\, a \;,
\end{align*}
where
$$
\mL_ha:=a\za S^{-1}(h) \;,\qquad\forall\;a\in\A(SU_q(2))\,,\;h\in\Uq{2}\,,
$$
and $S$ is the antipode of $\Uq{2}$. It is shown already in \cite{DS94}, by using local `coordinates', that $\deb$
is related to the well-known $q$-derivative operator; cf.~(4.19) therein.

The Dolbeault-Dirac operator $D$ on $\Omega^0\oplus\Omega^1$ is given by
$$
D(\omega_0,\omega_1):=(\deb^\dag\omega_1,\deb\omega_0)
=-(q^{-1}\omega_1\za E,q\,\omega_0\za F)
$$
and satisfies $D^2\omega=\omega\za (\mathcal{C}_q-[\frac{1}{2}]^2)$.
Being an even spectral triple, the spectrum of $D$ must be symmetric with respect to the origin.
It is computed from the spectrum of $\mathcal{C}_q$, by using the above
decomposition of $\Gamma_N$ and the fact that for central elements the
left and right canonical actions are equal. It immediately follows
that $D$ has a $1$-dimensional kernel, and its non-zero eigenvalues are
$\pm\sqrt{[k][k+1]}$ with multiplicity $2k+1$, for all $k\in\N+1$.

To get the Dirac operator (for the Fubini-Study metric) we must tensor
$\Omega^0\oplus\Omega^1$ with the square root of the canonical bundle of holomorphic
$1$-forms, i.e.~with $\Gamma_1$. We get the space
$$
(\Omega^0\oplus\Omega^1)\otimes_{\Aq{1}}\Gamma_1\simeq\Gamma_1\oplus\Gamma_{-1} \;.
$$
The Dirac operator $\D$ is obtained by twisting $D$ with the Grassmannian
connection of $\Gamma_{-1}$. This goes as follows. Given any $\Aq{1}$-bimodule
$\mathfrak{M}\subset\A(SU_q(2))$, the map
\begin{align*}
\phi&:\mathfrak{M}\otimes_{\Aq{1}}\Gamma_1\to\mathfrak{M}^2p_B\;,\qquad a\mapsto a(\alpha,\beta) \;,\\
\phi^{-1}\!\!&:\mathfrak{M}^2p_B\to\mathfrak{M}\otimes_{\Aq{1}}\Gamma_1\;,\qquad (a_1,a_2)\mapsto a_1\alpha^*+a_2\beta^* \;,
\end{align*}
is an isomorphism of left $\Aq{1}$-modules, where $p_B$ is the $q$-analogue of
Bott projection
$$
p_B:=(\alpha,\beta)^\dag (\alpha,\beta) \;.
$$
The Dirac operator $\D$ on $\Gamma_1\oplus\Gamma_{-1}$ is
$$
\D:=\phi^{-1}(D\otimes 1_2)\phi \;,
$$
where $\phi$ in this case sends $\Gamma_1\oplus\Gamma_{-1}\simeq (\Omega^0\oplus\Omega^1)\otimes_{\Aq{1}}\Gamma_1$
to $(\Omega^0\oplus\Omega^1)^2p_B$.
We compute $\D$ explicitly. For $v_+\in\Gamma_1$ and $v_-\in\Gamma_{-1}$ we have
\begin{align*}
\D(v_+,v_-)=-\left(\,
q^{-1}v_-(\alpha,\beta)\za E\tbinom{\alpha^*}{\beta^*}\,,\,
q\,v_+(\alpha,\beta)\za F\tbinom{\alpha^*}{\beta^*}\,\right) \;.
\end{align*}
But $(\alpha,\beta)\za E=0$, $(\alpha,\beta)^\dag\za F=0$ and thus
$$
0=1\za F=(\alpha,\beta)\tbinom{\alpha^*}{\beta^*}\za F
=q^{-\frac{1}{2}}(\alpha,\beta)\za F\tbinom{\alpha^*}{\beta^*} \;.
$$
Thus $F,E$ acts non-trivially only on the $v_+,v_-$ part and using
$(\alpha,\beta)\za K(\alpha,\beta)^\dag=q^{\frac{1}{2}}$
we get
$$
\D(v_+,v_-)=-q^{\frac{1}{2}}(q^{-1}v_-\za E,q\,v_+\za F) \;.
$$
Hence, $\D$ has an expression similar to $q^{\frac{1}{2}}D$, although living on a
different Hilbert space. This is exactly the Dirac operator of \cite{DS03} (but for
the factor $q^{\frac{1}{2}}$), as proved in \cite{SW04}.
Since $\D^2(v_+,v_-)=(v_+,v_-)\za q\,\mathcal{C}_q$, it immediately follows that
$\D$ has eigenvalues $\pm q^{\frac{1}{2}}[k]$ with multiplicity $2k$, for all $k\in\N+1$.

A crucial difference between $D$ and $\D$ is that the latter admits
a real structure $J$. This is the operator
$$
J(v_+,v_-):=K\az (v_-^*,-v_+^*)\za K \;.
$$
One can show that $JK\az $ is left $\Uq{2}$-covariant, $J^2=-1$, $J$ is an isometry, $J\D=\D J$
and the commutant and first order conditions are satisfied (cf.~\cite{SW04}), meaning that
we have a real spectral triple with KO-dimension $2$.

The analogous of the operator $D$ for $\CP^2_q$ has been constructed in \cite{DDL08b}. 

\section{Preliminaries about $\Uq{\ell+1}$}\label{sec:uno}
For $0<q<1$, we denote $\Uq{\ell+1}$ the `compact' real form of the
Hopf algebra denoted $\breve{U}_q(\mathfrak{sl}(\ell+1,\C))$ in Sec.~6.1.2 of~\cite{KS97}.
As a $*$-algebra it is generated by $\{K_i=K_i^*,K_i^{-1},E_i,F_i=E_i^*\}_{i=1,2,\ldots,\ell}$
with relations
\begin{align*}
[K_i,K_j]&=0 \;,\\
K_iE_iK_i^{-1}&=qE_i\;, \\
K_iE_jK_i^{-1}&=q^{-1/2}E_j &&\mathrm{if}\;|i-j|=1\;, \\
K_iE_jK_i^{-1}&=E_j &&\mathrm{if}\;|i-j|>1\;, \\
[E_i,F_j]&=\delta_{ij}\,\frac{K_i^2-K_i^{-2}}{q-q^{-1}}\;, \\
E_i^2E_j-(q+q^{-1})&E_iE_jE_i+E_jE_i^2=0 &&\mathrm{if}\;|i-j|=1\;,\\
[E_i,E_j]&=0 &&\mathrm{if}\;|i-j|>1\;,
\end{align*}
plus conjugated relations. If we define the $q$-commutator as
$$
[a,b]_q:=ab-q^{-1}ba \;,
$$
the second-last relation can be rewritten in two equivalent forms
$$
[E_i,[E_j,E_i]_q]_q=0 \qquad\mathrm{or}\qquad
[[E_i,E_j]_q,E_i]_q=0 \;,
$$
for any $|i-j|=1$. Coproduct, counit and antipode are given by
\begin{gather*}
\Delta(K_i)=K_i\otimes K_i\;,\qquad
\Delta(E_i)=E_i\otimes K_i+K_i^{-1}\otimes E_i\;, \\
\epsilon(K_i)=1\;,\qquad
\epsilon(E_i)=0\;,\qquad
S(K_i)=K_i^{-1}\;,\qquad
S(E_i)=-qE_i\;.
\end{gather*}
Using self-evident notation, we call $\Uq{\ell}$ the Hopf
$*$-subalgebra of $\Uq{\ell+1}$ generated by the elements
$\{K_i=K_i^*,K_i^{-1},E_i,F_i=E_i^*\}_{i=1,2,\ldots,\ell-1}$.
Its commutant is the Hopf $*$-subalgebra $U_q(\mathfrak{u}(1))\subset\Uq{\ell+1}$
generated by the element $K_1K_2^2\ldots K_\ell^\ell$ and its inverse.
This is a positive operator in all representations we consider.
Its positive root of order $\frac{2}{\ell+1}$,
\begin{equation}\label{eq:Khat}
\hat{K}:=(K_1K_2^2\ldots K_\ell^\ell)^{\frac{2}{\ell+1}} \;,
\end{equation}
and its inverse will serve to define a Casimir operator.
We enlarge the algebra $\Uq{\ell+1}$ accordingly.

The element
\begin{equation}\label{eq:Ssquare}
\maut=(K_1^\ell K_2^{2(\ell-1)}\ldots K_j^{j(\ell-j+1)}\ldots K_\ell^\ell)^2 \;,
\end{equation}
implements the square of the antipode:
\begin{equation}\label{eq:Sdue}
S^2(h)=\maut h\maut^{-1} \;,\qquad\forall h\in\Uq{\ell+1}\;,
\end{equation}
as one can easily check on generators of the Hopf algebra.
By \cite[Sec.~11.3.4]{KS97}, Ex.~9, we see that the pairing
$\inner{\maut,a}=f_1(a)$ is the character giving the modular automorphism
(cf.~(11.36) in \cite{KS97}).
The expression $f_1.a.f_1$ in \cite{KS97} becomes $\maut\az a\za\maut$ in our notations,
and by (11.26) of \cite{KS97} the Haar state $\varphi:\Oq\to\C$ satisfies
\begin{equation}\label{eq:modprop}
\varphi(ab)=\varphi\bigl(b\,\maut\az a\za\maut) \;,
\end{equation}
for all $a,b\in\Oq$.

We are interested in highest weight $*$-representations of $\Uq{\ell+1}$ such that
$K_j$ are represented by positive operators.
Such irreducible $*$-representations are labeled
by $\ell$ non-negative integers $n_1,\ldots,n_\ell$. For
$n=(n_1,\ldots,n_\ell)\in\N^\ell$ we denote by $V_n$ the vector space
carrying the representation $\rho_n$ with highest
weight $n$; the highest weight vector $v$ is annihilated by all the $E_j$'s
and satisfies $\rho_n(K_i)v=q^{n_i/2}v$, $i=1,\ldots,\ell$.

\subsection{The Casimir operator}\label{sec:Cas}
Casimir operators for $\Uq{\ell+1}$ are discussed in \cite{Bin91,Cha91}. We repeat here the
construction from scratch, adapting their notations to ours, to make the paper self-contained.
Moreover, some formul{\ae} in the proofs will be useful later on.

For any $j,k\in\{1,\ldots,\ell\}$, with $j<k$, we define the following
elements of $\Uq{\ell+1}$
$$
M_{jk}:=[E_j,[E_{j+1},[E_{j+2},\ldots[E_{k-1},E_k]_q
\ldots]_q]_q]_q\;.
$$
That is, if we set $M_{ii}=E_i$ the $M_{jk}$'s are
obtained by iteration using
\begin{equation}\label{eq:iter}
M_{jk}=[E_j,M_{j+1,k}]_q \;.
\end{equation}
(For $q=1$, $M_{jk}$ and $M^*_{jk}$, together with the Cartan generators, form a basis of the
Lie algebra $\mathfrak{su}(\ell+1)$.) We need the following Lemmas.

\begin{lemma}\label{lemmaA}
The following equalities hold:
\begin{subequations}
\begin{align}
[F_i,M_{jk}] &=\delta_{ij}\,M_{j+1,k}K_i^{-2}
-\delta_{ik}\,K_i^2M_{j,k-1}
-\delta_{ij}\delta_{ik}\tfrac{K_i^2-K_i^{-2}}{q-q^{-1}} \;, \label{eq:prov} \\
[E_i,M_{jk}^*] &=\delta_{ik}\,M_{j,k-1}^*K_i^2
-\delta_{ij}\,K_i^{-2}M_{j+1,k}^*
+\delta_{ij}\delta_{ik}\tfrac{K_i^2-K_i^{-2}}{q-q^{-1}} \;, \label{eq:cons}
\end{align}
\end{subequations}
where we set $M_{jk}:=0$ when the labels are out of the range (i.e.~when $j>k$).
\end{lemma}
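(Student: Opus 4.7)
The plan is to prove (\ref{eq:prov}) by induction on the ``length'' $k-j\geq 0$, and then obtain (\ref{eq:cons}) by applying the $*$-involution.

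For the base case $j=k$, we have $M_{jj}=E_j$ and the right-hand side of (\ref{eq:prov}) collapses to $-\delta_{ij}(K_i^2-K_i^{-2})/(q-q^{-1})$, which is exactly $[F_i,E_j]$ by the defining relation $[E_i,F_j]=\delta_{ij}(K_i^2-K_i^{-2})/(q-q^{-1})$ of $\Uq{\ell+1}$.

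For the inductive step, I would use the recursion (\ref{eq:iter}) to write $M_{jk}=E_jM_{j+1,k}-q^{-1}M_{j+1,k}E_j$, and apply the derivation property of $[F_i,\,\cdot\,]$ to obtain
\begin{equation*}
[F_i,M_{jk}]=[F_i,E_j]M_{j+1,k}+E_j[F_i,M_{j+1,k}]-q^{-1}[F_i,M_{j+1,k}]E_j-q^{-1}M_{j+1,k}[F_i,E_j].
\end{equation*}
The inductive hypothesis supplies $[F_i,M_{j+1,k}]$, and the only non-trivial manipulation is to move the resulting $K_i^{\pm 2}$ factors past $E_j$ (or past $M_{j+1,k}$, when substituting $[F_i,E_j]$). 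For this I would use the auxiliary observation that since each monomial of $M_{j'k}$ contains exactly one copy of each generator $E_{j'},\ldots,E_k$, conjugation by $K_i^{\pm 2}$ produces a single scalar factor (namely $q^{\mp 1}$ if $i$ is adjacent to the block $\{j',\ldots,k\}$, i.e.\ $i=j'-1$ or $i=k+1$; trivial if $i$ is far away; and $q^{\pm 2}$ times a telescoping correction if $i$ lies inside). A four-case split on whether $i\in\{j,j+1,k\}$ or none of these then gives the claim: in the ``none'' case both derivation terms vanish and the answer is zero; when $i=j$ only $[F_i,E_j]$ contributes and the $q$-commutator of $K_i^{\pm 2}$ with $M_{j+1,k}$ telescopes to $M_{j+1,k}K_i^{-2}$; when $i=k$ only $[F_i,M_{j+1,k}]=-K_i^2 M_{j+1,k-1}$ contributes and, since $K_i$ commutes with $E_j$ (for $k>j+1$) or the computation must be redone directly with $M_{j,j+1}=[E_j,E_{j+1}]_q$ (for $k=j+1$), one reconstructs $-K_i^2M_{j,k-1}$; when $i=j+1$ (with $j+1<k$) the inductive term reads $M_{j+2,k}K_i^{-2}$ and, after conjugating $K_i^{-2}$ past $E_j$, one is left with $[E_j,M_{j+2,k}]K_i^{-2}$ which vanishes because $E_j$ commutes with every $E_m$ for $m\geq j+2$.

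The main obstacle I expect is bookkeeping: keeping track of the $q$-factors from the many $K_i^{\pm 2}$ that must be moved around, and verifying that in the ``interior'' case $i=j+1$ the apparently non-zero contribution actually cancels by Serre-type commutation. Once (\ref{eq:prov}) is established, (\ref{eq:cons}) follows from $[F_i,M_{jk}]^*=-[E_i,M_{jk}^*]$ together with $K_i^*=K_i$, after comparing the two sides term by term.
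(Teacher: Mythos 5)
Your proposal is correct and takes essentially the same approach as the paper: both proofs proceed from the recursion $M_{jk}=[E_j,M_{j+1,k}]_q$, take the $[F_i,\cdot]$ derivation, and split into cases according to where $i$ sits relative to the interval $\{j,\ldots,k\}$, using that $K_i^{\pm2}$ conjugates each $E_m$ by a known power of $q$ and that $E_j$ commutes with $E_m$ for $|j-m|>1$ in the interior case. The paper organizes the same content slightly differently (fixing $i,k$ and descending in $j$ in two separate sub-inductions rather than one induction on $k-j$), but the underlying ideas and the reduction of \eqref{eq:cons} to \eqref{eq:prov} by adjunction are identical.
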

\begin{proof}
Since (\ref{eq:prov}) implies (\ref{eq:cons}) by adjunction, we have
to prove only the former. Since for $j=k$ (or $j>k$) this is trivial,
we assume $j<k$. We notice that $F_i$ commutes with all $E_j$'s
but for $i=j$, and $K_i$ commutes with all $E_j$'s but for $i=j\pm 1$.
In particular, this means that $[F_i,M_{jk}]=0$ if $i<j$.
For $i=j$ (so $i<k$) using (\ref{eq:iter}) we get
$$
[F_i,M_{ik}]=[[F_i,E_i],M_{i+1,k}]_q=
[-\tfrac{K_i^2-K_i^{-2}}{q-q^{-1}},M_{i+1,k}]_q=
M_{i+1,k}K_i^{-2} \;,
$$
which agrees with (\ref{eq:prov}).
If $i=j+1$ and $i<k$ using again (\ref{eq:iter}) and the identity
just proved we get
$$
[F_i,M_{i-1,k}]=[E_{i-1},[F_i,M_{ik}]]_q=
[E_{i-1},M_{i+1,k}K_i^{-2}]_q=
[E_{i-1},M_{i+1,k}]K_i^{-2}=0\; ;
$$
this is zero since $E_{i-1}$ commutes with all $E_j$ with $j\geq i+1$.
Using the equation just proved (which is true for $i<k$), by (\ref{eq:iter})
we prove by induction on $j$ that
$$
[F_i,M_{jk}]=[E_j,[F_i,M_{j+1,k}]]_q=0 \;,
$$
for all $j<i<k$. If $j=i-1$ and $k=i$ we have
$$
[F_i,M_{i-1,i}]=[E_{i-1},[F_i,E_i]]_q
=[E_{i-1},-\tfrac{K_i^2-K_i^{-2}}{q-q^{-1}}]_q
=-K_i^2E_{i-1}=-K_i^2M_{i-1,i-1} \;,
$$
and again by induction on $j$ we prove that
$$
[F_i,M_{ji}]=[E_j,[F_i,M_{j+1,i}]]_q=
[E_j,-K_i^2M_{j+1,i-1}]_q=
-K_i^2[E_j,M_{j+1,i-1}]_q=
-K_i^2M_{j,i-1} \;,
$$
for all $j<i-1$. With this (\ref{eq:prov}) is proved for any $i\leq k$.
For $i>k$ it holds trivially. This concludes the proof.
\end{proof}

For all $j,k\in\{1,\ldots,\ell\}$ let
\begin{equation}\label{eq:Njk}
N_{jk}:=(K_jK_{j+1}\ldots K_\ell)\cdot(K_{k+1}K_{k+2}\ldots K_\ell)\cdot\hat{K}^{-1}
\end{equation}
and notice that
\begin{equation}\label{eq:NEN}
N^2_{jk}E_iN_{jk}^{-2}=q^{-\delta_{i,j-1}+\delta_{i,j}-\delta_{i,k}+\delta_{i,k+1}}E_i
\end{equation}
for all $i,j,k\in\{1,\ldots,\ell\}$.

\begin{lemma}\label{lemmaB}
The following equality holds
\begin{equation}\label{eq:last}
[E_i,N^2_{jk}M_{jk}]=
\delta_{i,j-1}qN^2_{jk} M_{ik}
-\delta_{i,k+1}N^2_{jk} M_{ji} \;,
\end{equation}
where we set $M_{jk}:=0$ when the labels are out of the range.
\end{lemma}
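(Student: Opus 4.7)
The starting point is that formula (\ref{eq:NEN}) controls how $E_i$ commutes past $N^2_{jk}$: setting $\beta_i:=-\delta_{i,j-1}+\delta_{i,j}-\delta_{i,k}+\delta_{i,k+1}$, one has $E_iN^2_{jk}=q^{-\beta_i}N^2_{jk}E_i$, so that
$$
[E_i,N^2_{jk}M_{jk}]=N^2_{jk}\bigl(q^{-\beta_i}E_iM_{jk}-M_{jk}E_i\bigr).
$$
The lemma thus reduces to identifying the parenthesised bracket with $\delta_{i,j-1}\,qM_{ik}-\delta_{i,k+1}M_{ji}$, and I would proceed by a case analysis on the position of $i$ relative to $\{j-1,j,k,k+1\}$.

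For $i=j-1$ one has $\beta_i=-1$, and the bracket equals $q[E_{j-1},M_{jk}]_q=qM_{j-1,k}$ by the defining iteration (\ref{eq:iter}). For $i=k+1$ one has $\beta_i=+1$, and the bracket becomes $-[M_{jk},E_{k+1}]_q$; here I would first establish the companion right-sided iteration $M_{j,k+1}=[M_{jk},E_{k+1}]_q$ by a short induction on $k-j$ based on the elementary $q$-Jacobi identity $[A,[B,C]_q]_q=[[A,B]_q,C]_q$ valid whenever $[A,C]=0$, after which the bracket is exactly $-M_{j,k+1}$. If $i$ lies outside $\{j-1,\ldots,k+1\}$ the bracket vanishes because $\beta_i=0$ and $E_i$ commutes with every generator appearing in $M_{jk}$. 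The remaining cases $i\in\{j,\ldots,k\}$ all give zero right-hand side, and they are settled via three auxiliary quasi-commutation identities,
$$
[M_{jk},E_j]_q=0,\qquad [E_k,M_{jk}]_q=0,\qquad [E_i,M_{jk}]=0\ \text{for }j<i<k,
$$
which, combined with the relevant value of $\beta_i$, force the bracket to vanish (e.g.\ at $i=j$ it is $-[M_{jk},E_j]_q$, and at $i=k$ it is $q[E_k,M_{jk}]_q$).

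The three auxiliary identities are the technical core of the proof and I would establish them by a joint induction on $k-j$. The base case $k=j+1$ for the first two is exactly one of the two equivalent forms $[[E_j,E_{j+1}]_q,E_j]_q=0$ and $[E_{j+1},[E_j,E_{j+1}]_q]_q=0$ of the Serre relation highlighted just after the definition of $[\cdot,\cdot]_q$; the inductive step follows by substituting (\ref{eq:iter}) and using that $E_j$ (resp.\ $E_k$) commutes with the ``inner'' generators $E_{j+2},\ldots,E_k$ (resp.\ $E_j,\ldots,E_{k-2}$). The interior identity $[E_i,M_{jk}]=0$ for $j<i<k$ is the delicate one: after expanding $M_{jk}=[E_j,M_{j+1,k}]_q$, the sub-case $i\geq j+2$ closes at once by induction since then $[E_i,E_j]=0$, but the sub-case $i=j+1$ is non-trivial---it requires feeding back $[M_{j+1,k},E_{j+1}]_q=0$ (the first auxiliary identity applied to $M_{j+1,k}$) and then invoking the Serre relations for both neighbouring pairs $(E_j,E_{j+1})$ and $(E_{j+1},E_{j+2})$ in order to cancel the residual terms. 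Managing this interlocked induction, together with the careful bookkeeping of $q$-commutators versus ordinary commutators throughout, is the main technical obstacle.
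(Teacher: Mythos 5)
Your decomposition via \eqref{eq:NEN}, the case split on the position of $i$, the companion right-sided iteration $M_{jk}=[M_{j,k-1},E_k]_q$, and the reduction of $j\le i\le k$ to three auxiliary quasi-commutation identities reproduce the paper's proof architecture faithfully. The two endpoint identities $[M_{jk},E_j]_q=0$ and $[E_k,M_{jk}]_q=0$ do go through by the joint induction you describe, with the small imprecision that the first needs the \emph{right} iteration together with $[E_j,E_k]=0$, while \eqref{eq:iter} serves only the second; since you have established both iterations this is not a real problem.

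The genuine gap is the interior identity $[E_i,M_{jk}]=0$ for $j<i<k$, specifically the sub-case $i=j+1$ that you correctly flag as delicate. If you expand $M_{jk}=[E_j,M_{j+1,k}]_q$ and feed in $[M_{j+1,k},E_{j+1}]_q=0$ as you propose, what remains after reorganization is $[E_{j+1},M_{jk}]=\bigl[[E_{j+1},E_j]_q,M_{j+1,k}\bigr]$, and no combination of the Serre relations for the neighbouring pairs $(E_j,E_{j+1})$ and $(E_{j+1},E_{j+2})$ alone annihilates this expression: the Serre relations control $[E_{j+1},[E_j,E_{j+1}]_q]_q$ and $[[E_j,E_{j+1}]_q,E_{j+1}]_q$, which are not what emerges, and already at $k=j+2$ one is left after all available substitutions with a non-trivial four-term residue. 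The idea you are missing is the paper's two-sided comparison. Using the two intermediate factorizations $M_{jk}=[M_{ji},M_{i+1,k}]_q$ and $M_{jk}=[M_{j,i-1},M_{ik}]_q$, one computes the \emph{two distinct} asymmetric combinations $qE_iM_{jk}-q^{-1}M_{jk}E_i$ and $qM_{jk}E_i-q^{-1}E_iM_{jk}$, feeding $[E_i,M_{ji}]_q=0$ into the first and $[M_{ik},E_i]_q=0$ into the second, and finds that both equal $[M_{ji},M_{ik}]$. Subtracting gives $[2]\,[E_i,M_{jk}]=0$, whence $[E_i,M_{jk}]=0$ because $[2]=q+q^{-1}\neq 0$ for $0<q<1$. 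Without this comparison of two decompositions, the interior case does not close.
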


\begin{proof}
We assume $j\leq k$, as for $j>k$ the claim is a trivial $0=0$.
First we notice that if $i<j-1$ or $i>k+1$ we have $[E_i,M_{jk}]=0$
since $E_i$ commutes with any $E_n$ with $|n-i|>1$; also
by (\ref{eq:NEN}) we have $[E_i,N_{jk}]=0$, and this proves
(\ref{eq:last}) in the cases $i<j-1$ and $i>k+1$.

By (\ref{eq:iter}) and (\ref{eq:NEN}) we have the recursive
definition of $N^2_{jk} M_{jk}$,
$$
N^2_{ik} M_{ik}=
q^{-1}K_i^2[E_i,N^2_{i+1,k} M_{i+1,k}]\;,
$$
which gives (\ref{eq:last}) in the case $i=j-1$.
On the other hand if $k=i-1$, since $[E_i,E_l]=0$ for $i>l+1$ we have
\begin{align*}
[M_{j,i-1},E_i]_q&=
[[E_j,[E_{j+1},\ldots[E_{i-2},E_{i-1}]_q\ldots]_q]_q,E_i]_q \\
&=[E_j,[E_{j+1},\ldots[E_{i-2},[E_{i-1},E_i]_q]_q\ldots]_q\ldots]_q
=M_{j,i} \;,
\end{align*}
and this together with (\ref{eq:NEN}) gives (for $j\leq i-1$)
$$
[N^2_{j,i-1}M_{j,i-1},E_i]=
N^2_{j,i-1}[M_{j,i-1},E_i]_q=
N^2_{j,i-1}M_{j,i}
$$
which is (\ref{eq:last}) in the case $i=k+1$. It remains to consider
$j\leq i\leq k$, which by (\ref{eq:NEN}) is equivalent to the following
set of equations
\begin{subequations}
\begin{align}
[E_i,M_{ji}]_q&=0 \qquad\mathrm{if}\;j<i \;, \label{eq:lastD} \\
[M_{ik},E_i]_q&=0 \qquad\mathrm{if}\;i<k \;, \label{eq:lastE} \\
[E_i,M_{jk}] &=0\qquad\mathrm{if}\;i=j=k\;\mathrm{or}\;j<i<k \;.\label{eq:lastC}
\end{align}
\end{subequations}
The case $i=j=k$ is trivial.
Furthermore by Serre's relations
\begin{align*}
E_iM_{i-1,i}= E_i[E_{i-1},E_i]_q
 &=q^{-1}[E_{i-1},E_i]_qE_i=q^{-1}M_{i-1,i} E_i \;, \\
E_iM_{i,i+1}=E_i[E_i,E_{i+1}]_q
 &=q[E_i,E_{i+1}]_qE_i=qM_{i,i+1} E_i \;.
\end{align*}
Then, for any $j+1<i=k$ we prove by induction that
\begin{subequations}\label{eq:pro}
\begin{equation}
E_iM_{j,i}=[E_j,E_iM_{j+1,i}]_q
=q^{-1}[E_j,M_{j+1,i}E_i]_q
=q^{-1}[E_j,M_{j+1,i}]_qE_i
=q^{-1}M_{j,i}E_i
\end{equation}
which is (\ref{eq:lastD}) and for any $j=i<k-1$ that
\begin{equation}
E_iM_{i,k}=[M_{i,k-1},E_k]_q
=[E_iM_{i,k-1},E_k]_q
=q[M_{i,k-1}E_i,E_k]_q
=qM_{i,k}E_i \;,
\end{equation}
\end{subequations}
which is (\ref{eq:lastE}).

Consider now $j\leq n<k$ and notice that for any such
$i,j,k$ we can write
$$
M_{jk}=[M_{jn},M_{n+1,k}]_q \;.
$$
Using this equation in the cases $n=i,i-1$,
together with (\ref{eq:pro}) and (\ref{eq:iter})
we have
\begin{align*}
qE_iM_{jk}-q^{-1}M_{jk}E_i &=
[M_{ji},[E_i,M_{i+1,k}]_q]=
[M_{ji},M_{ik}] \;,
\\
qM_{jk}E_i-q^{-1}E_iM_{jk} &=
[[M_{ji-1},E_i]_q,M_{ik}]=
[M_{ji},M_{ik}] \;.
\end{align*}
The difference of the two lines has to be zero, so
$$
0=qE_iM_{jk}-q^{-1}M_{jk}E_i
-qM_{jk}E_i+q^{-1}E_iM_{jk}=
[2]\,[E_i,M_{jk}]\;.
$$
This concludes the proof.
\end{proof}

A Casimir operator $\mathcal{C}_q$ for $\Uq{\ell+1}$ is given by the formula
\begin{equation}\label{eq:Cq}
\mathcal{C}_q=\sum_{i=1}^\ell\tfrac{q^{\ell+2-2i}}{(q-q^{-1})^2}
N^2_{i,i-1}+\tfrac{q^{-\ell}}{(q-q^{-1})^2}\,\hat{K}^{-2}
+\sum_{1\leq j\leq k\leq\ell}q^{\ell+1-2j}M_{jk}^*N^2_{jk}M_{jk}
-\tfrac{[\ell+1]}{(q-q^{-1})^2} \;\,.
\end{equation}

\begin{prop}
The operator $\mathcal{C}_q$ is real ($\mathcal{C}_q=\mathcal{C}_q^*$) and central. In the irreducible representation $\rho_n:\Uq{\ell+1}\to\mathrm{End}(V_n)$ with highest weight $n=(n_1,\ldots,n_\ell)$, it is proportional to the identity with proportionality constant:
\begin{equation}\label{eq:SpCqa}
\rho_n(\mathcal{C}_q)=
\frac{1}{2}\sum_{i=1}^{\ell+1}\left[\tfrac{\sum_{j=1}^{i-1}jn_j-\sum_{j=i}^\ell(\ell+1-j)n_j}{\ell+1}+i-\tfrac{\ell+2}{2}\right]^2
+\frac{\ell+1-[\ell+1]}{(q-q^{-1})^2} \;.
\end{equation}
\end{prop}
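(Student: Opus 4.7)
The plan is to verify the three assertions in turn, exploiting the lemmas just proved. Self-adjointness of $\mathcal{C}_q$ is immediate from the form of (\ref{eq:Cq}): the generators $K_i$ are self-adjoint, so each $N_{jk}$ from (\ref{eq:Njk}) is self-adjoint; each summand $M_{jk}^* N_{jk}^2 M_{jk}$ and each $N_{i,i-1}^2$ is manifestly self-adjoint; and the numerical coefficients are all real.

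For centrality, by self-adjointness of $\mathcal{C}_q$ it suffices to check $[\mathcal{C}_q, K_i]=0$ and $[\mathcal{C}_q, E_i]=0$, since commutation with $F_i = E_i^*$ follows by taking adjoints. Commutation with $K_i$ is immediate by a weight argument: each summand $M_{jk}^* N_{jk}^2 M_{jk}$ has weight zero (since $M_{jk}^*$ cancels the weight of $M_{jk}$ and $N_{jk}^2$ is in the Cartan), and the remaining diagonal terms lie in the extended Cartan. The nontrivial part is the $E_i$-commutator. The plan is to expand
\begin{equation*}
[E_i, M_{jk}^* N_{jk}^2 M_{jk}] = [E_i, M_{jk}^*]\,N_{jk}^2 M_{jk} + M_{jk}^*\,[E_i, N_{jk}^2 M_{jk}],
\end{equation*}
apply (\ref{eq:cons}) of Lemma~\ref{lemmaA} to the first bracket and (\ref{eq:last}) of Lemma~\ref{lemmaB} to the second, then sum over $1\leq j\leq k\leq\ell$ weighted by $q^{\ell+1-2j}$. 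The Kronecker deltas in these formul{\ae} restrict the nonvanishing contributions to $i\in\{j-1,j,k,k+1\}$, and after reindexing ($j\to j+1$ or $k\to k-1$) they match, with opposite sign, the contributions from $[E_i, N_{j,j-1}^2]$ in the quadratic part (computed from (\ref{eq:NEN})). The main obstacle is to check that the powers of $q$ are precisely tuned so that all these boundary terms cancel pairwise; this is the reason for the specific coefficients $q^{\ell+1-2j}$ and $q^{\ell+2-2i}$ in (\ref{eq:Cq}).

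For the eigenvalue, centrality combined with Schur's lemma implies that $\mathcal{C}_q$ acts as a scalar on the irreducible $V_n$, computable by evaluation on the highest weight vector $v$. The cubic terms vanish: the recursion (\ref{eq:iter}) with $E_i v=0$ gives inductively $M_{jk}v = E_j M_{j+1,k}v = \cdots = E_j E_{j+1}\cdots E_k v = 0$, hence $M_{jk}^* N_{jk}^2 M_{jk}v=0$. On the diagonal part, $K_l v = q^{n_l/2}v$ yields $N_{i,i-1}^2 v = q^{2\nu_i}v$ and $\hat K^{-2}v = q^{-2S/(\ell+1)}v$, where $S=\sum_l l\,n_l$ and $\nu_i = \sum_{l\geq i} n_l - S/(\ell+1)$. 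Since $q^{-\ell}=q^{\ell+2-2(\ell+1)}$, the $\hat K^{-2}$ term can be absorbed as the $i=\ell+1$ case of the diagonal sum, giving
\begin{equation*}
\rho_n(\mathcal{C}_q) = \frac{1}{(q-q^{-1})^2}\left[\sum_{i=1}^{\ell+1} q^{\ell+2-2i+2\nu_i} - [\ell+1]\right].
\end{equation*}
Setting $\mu_i = i - (\ell+2)/2 - \nu_i$, which is readily checked to equal the bracketed expression in (\ref{eq:SpCqa}), and using $[\mu_i]^2 = (q^{2\mu_i}+q^{-2\mu_i}-2)/(q-q^{-1})^2$ together with the identity $\sum_{i=1}^{\ell+1} q^{\ell+2-2i} = [\ell+1]$ (which contributes to the constant term), one rearranges the expression above into (\ref{eq:SpCqa}). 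This last step is purely algebraic and, compared to the centrality calculation, a routine bookkeeping.
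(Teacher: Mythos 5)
The self-adjointness and centrality parts of your outline match the paper's strategy: self-adjointness is by inspection of \eqref{eq:Cq}, and centrality is reduced to $[E_i,\mathcal{C}_q]=0$ via the weight argument and then Lemmas \ref{lemmaA}--\ref{lemmaB}. Two small inaccuracies there: the boundary terms coming from the deltas in \eqref{eq:cons} and \eqref{eq:last} mostly cancel \emph{among themselves} (using $N_{i,k}=K_iN_{i+1,k}$, $N_{j,i-1}=K_iN_{j,i}$, $K_i^2M_{i+1,k}^*=qM_{i+1,k}^*K_i^2$), leaving a single residual term $\frac{K_i^2-K_i^{-2}}{q-q^{-1}}q^{\ell+1-2i}N_{ii}^2E_i$ which is what cancels against $[E_i,\sum_j q^{\ell+2-2j}N_{j,j-1}^2]$; and you omitted the case $i=\ell$, which needs the extra contribution $[E_\ell,\hat K^{-2}]=q(q-q^{-1})\hat K^{-2}E_\ell$. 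As an outline this is forgivable, but a full proof would have to carry these out.

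The genuine gap is in the eigenvalue computation, and it is not ``routine bookkeeping.'' From the highest-weight evaluation you correctly obtain $\rho_n(\mathcal{C}_q)=\frac{1}{(q-q^{-1})^2}\bigl[\sum_i q^{-2\mu_i}-[\ell+1]\bigr]$, where $\mu_i$ is the bracketed expression in \eqref{eq:SpCqa}. To rewrite this as $\frac{1}{2}\sum_i[\mu_i]^2+\frac{\ell+1-[\ell+1]}{(q-q^{-1})^2}$ one expands $\frac{1}{2}\sum_i[\mu_i]^2=\frac{1}{2(q-q^{-1})^2}\bigl(\sum_i q^{2\mu_i}+\sum_i q^{-2\mu_i}-2(\ell+1)\bigr)$, so the two expressions agree \emph{if and only if} $\sum_i q^{2\mu_i}=\sum_i q^{-2\mu_i}$. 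This palindromic property of the multiset $\{\mu_i\}$ is nontrivial: the $\mu_i$'s are not in general symmetric under $\mu\mapsto-\mu$ (already for $\ell=2$ and $n=(1,0)$ they are $\{-\tfrac53,\tfrac13,\tfrac43\}$). The identity $\sum_i q^{\ell+2-2i}=[\ell+1]$ you invoke is merely the trivial-representation case $\nu_i=0$. The paper supplies an argument for the required symmetrization by reflecting the summation indices $i\mapsto\ell+2-i$, $j\mapsto\ell+1-j$ and then averaging the two expressions; your write-up neither identifies this step as needed nor supplies a substitute. Without it, the derivation of \eqref{eq:SpCqa} is incomplete.
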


\begin{proof}
The properties $\mathcal{C}_q=\mathcal{C}_q^*$ and $K_i\mathcal{C}_q=\mathcal{C}_qK_i$ are evident.
If further
\begin{equation}\label{eq:central}
[E_i,\mathcal{C}_q]=0\qquad\forall\;i=1,\ldots,\ell\;,
\end{equation}
then by adjunction  $\mathcal{C}_q$ commutes with all the generators of
$\Uq{\ell+1}$ and so it is central.

Using Lemmas \ref{lemmaA}-\ref{lemmaB} we get
\begin{align*} &
\left[E_i,\sum\nolimits_{j\leq k}q^{\ell+1-2j}M_{jk}^*N^2_{jk}M_{jk}\right] \\ &
\qquad =\sum\nolimits_{j\leq k}q^{\ell+1-2j}[E_i,M_{jk}^*]N^2_{jk}M_{jk}+
\sum\nolimits_{j\leq k}q^{\ell+1-2j}M_{jk}^*[E_i,N^2_{jk}M_{jk}] \\ &
\qquad =
\sum\nolimits_{j\leq i-1}q^{\ell+1-2j}M_{j,i-1}^*K_i^2N^2_{ji}M_{ji}
-\sum\nolimits_{k\geq i+1}q^{\ell+1-2j}K_i^{-2}M_{i+1,k}^*N^2_{ik}M_{ik}
\\ &\qquad
+\sum\nolimits_{k\geq i+1}q^{\ell-2i-1}qM_{i+1,k}^*N^2_{i+1,k}M_{i,k}
-\sum\nolimits_{j\leq i-1}q^{\ell+1-2j}M_{j,i-1}^*N^2_{j,i-1}M_{ji} \\ & \qquad
+\boxed{\frac{K_i^2-K_i^{-2}}{q-q^{-1}}q^{\ell+1-2i}N_{ii}^2E_i}\;.
\end{align*}
Since $K_i^2M_{i+1,k}^*=qM_{i+1,k}^*K_i^2$,
$N_{i,k}=K_iN_{i+1,k}$ and $N_{j,i-1}=K_iN_{j,i}$,
all the terms cancel but the framed one. Using (\ref{eq:NEN}) we get
\begin{align*}
\left[E_i,\sum\nolimits_{j=1}^\ell\tfrac{q^{\ell+2-2j}}{(q-q^{-1})^2}N_{j,j-1}^2\right]
&=\sum\nolimits_{j=1}^\ell\tfrac{q^{\ell+2-2j}}{(q-q^{-1})^2}
(q^{2(\delta_{i,j-1}-\delta_{i,j})}-1)N_{j,j-1}^2E_i \\
&=\begin{cases}
q^{\ell+1-2i}\frac{N_{i+1,i}^2-N_{i,i-1}^2}{q-q^{-1}}E_i
& \mathrm{if}\;i<\ell\;, \\
-\tfrac{q^{\ell+1-2i}}{q-q^{-1}}N_{i,i-1}^2E_i
& \mathrm{if}\;i=\ell\;.
\end{cases}
\end{align*}
Thus if $i<\ell$ last commutator cancel with the framed equation and we get
$[E_i,\mathcal{C}_q]=[E_i,A]$, with
$A:=q^{-\ell}(q-q^{-1})^{-2}\hat{K}^{-2}$,
while if $i=\ell$ we have
$$
[E_\ell,\mathcal{C}_q]=[E_\ell,A]
-K_\ell^{-2}(q-q^{-1})^{-1}q^{-\ell+1}
N_{\ell\ell}^2E_\ell \;.
$$
Observe that
$$
[E_i,\hat{K}^{-2}]=\delta_{i\ell}
q(q-q^{-1})\hat{K}^{-2}E_\ell \;,
$$
which implies that $[E_i,\mathcal{C}_q]=0$ for all $i$.
This concludes the first part of the proof.

By Schur's Lemma $\rho_n(\mathcal{C}_q)$ is proportional
to the identity. We can compute the proportionality constant
by applying it to the highest weight vector $v_n$. By construction
$\rho_n(M_{jk})v_n=0$, being annihilated by all $E_i$'s, and
$\rho_n(K_i)v_n=q^{n_i/2}v_n$, thus
$$
\rho_n(\mathcal{C}_q)=
\sum_{i=1}^{\ell+1}\tfrac{q^{\ell+2-2i}}{(q-q^{-1})^2}\,
q^{-\frac{2}{\ell+1}(\sum_{j=1}^{i-1}jn_j-\sum_{j=i}^\ell
(\ell+1-j)n_j)}-\tfrac{[\ell+1]}{(q-q^{-1})^2} \;,
$$
times the identity operator on $V_n$.
If we call $i'=\ell+2-i$ and $j'=\ell+1-j$, last equation can be rewritten as
$$
\rho_n(\mathcal{C}_q)=
\sum_{i'=1}^{\ell+1}\tfrac{q^{2i'-\ell-2}}{(q-q^{-1})^2}\,
q^{-\frac{2}{\ell+1}(\sum_{j'=i'}^\ell (\ell+1-j')n_{j'}-
\sum_{j'=1}^{i'-1}j'n_{j'})}-\tfrac{[\ell+1]}{(q-q^{-1})^2} \;,
$$
and the sum of last two equations gives
\begin{align*}
2\rho_n(\mathcal{C}_q)&=
\sum_{i=1}^{\ell+1}
\frac{
q^{\ell+2-2i-\frac{2}{\ell+1}(\sum_{j=1}^{i-1}jn_j-\sum_{j=i}^\ell(\ell+1-j)n_j)}
+q^{-\ell-2+2i+\frac{2}{\ell+1}(\sum_{j=1}^{i-1}jn_j-\sum_{j=i}^\ell(\ell+1-j)n_j)}
-2
}{(q-q^{-1})^2} \\ & \qquad\quad
+2\,\frac{\ell+1-[\ell+1]}{(q-q^{-1})^2} \\
&=
\sum_{i=1}^{\ell+1}
\left[
-\tfrac{\ell+2}{2}+i+\tfrac{\sum_{j=1}^{i-1}jn_j-\sum_{j=i}^\ell(\ell+1-j)n_j}{\ell+1}
\right]^2+2\,\frac{\ell+1-[\ell+1]}{(q-q^{-1})^2}
\;.
\end{align*}
This concludes the proof.
\end{proof}

\noindent
From Weyl's character formula \cite{IN66} we know that the multiplicity
of the eigenvalue (\ref{eq:SpCqa}) is
\begin{equation}\label{eq:Wchf}
\dim V_n=\frac{\prod_{1\leq r\leq s\leq\ell}\left(s-r+1
+\sum\nolimits_{i=r}^sn_i\right)}{\prod\nolimits_{r=1}^\ell r!} \;.
\end{equation}

We shall need later certain class of $V_n$, with $n=(n_1,0,0,\ldots,0,n_\ell)+\underline{e}_{\,k}$,
where $\underline{e}_{\,k}$ is the $\ell$-tuple with $k$-th component equal to one and all the others equal
to zero. That is, $n$ has components $n_i=n_1\delta_{i,1}+n_\ell\delta_{i,\ell}+\delta_{i,k}$, for
$i=1,\ldots,\ell$ and $k$ is a fixed number in $\{1,2,\ldots,\ell\}$.

\begin{lemma}\label{lemma:multeig}
For any $1\leq k\leq\ell$, the dimension of the irreducible representation
$V_n$ with highest weight $n_i=n_1\delta_{i,1}+n_\ell\delta_{i,\ell}+\delta_{i,k}$ is
\begin{equation}\label{eq:mult}
\dim V_n=\frac{k(n_1+n_\ell+\ell+1)}{(n_1+k)(n_\ell+\ell+1-k)}
\binom{n_1+\ell}{\ell}\binom{n_\ell+\ell}{\ell}\binom{\ell}{k} \;.
\end{equation}
The eigenvalue $\lambda_{n_1,n_\ell,N}$ of $\mathcal{C}_q$ in such a representation is
given by
\begin{equation}\label{eq:eigA}
2\lambda_{n_1,n_\ell,N}=
[n_1+k][n_1-\tfrac{2N}{\ell+1}+\ell+2-k]
+[n_\ell][n_\ell+\tfrac{2N}{\ell+1}+\ell]
+[\ell+1][\tfrac{N}{\ell+1}]^2 \;,
\end{equation}
where we call $N:=n_1-n_\ell+k$.
\end{lemma}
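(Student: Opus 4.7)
Both identities follow by direct substitution into previously established formulas --- Weyl's formula~(\ref{eq:Wchf}) for~(\ref{eq:mult}) and~(\ref{eq:SpCqa}) for~(\ref{eq:eigA}) --- together with an algebraic simplification.

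For~(\ref{eq:mult}), I would substitute the weight $n_i = n_1\delta_{i,1} + n_\ell\delta_{i,\ell} + \delta_{i,k}$ into Weyl's formula and split the product over pairs $1\leq r\leq s\leq\ell$ according to which of the three ``active'' indices $\{1,k,\ell\}$ lies in the interval $[r,s]$. The unique pair $(1,\ell)$ contributes the single factor $\ell+n_1+n_\ell+1$. The pairs with $r=1$, $s<\ell$ split into the cases $s<k$ and $s\geq k$ and telescope to $(n_1+\ell)!/[n_1!(n_1+k)]$, and symmetrically those with $r>1$, $s=\ell$ give $(n_\ell+\ell)!/[n_\ell!(n_\ell+\ell+1-k)]$. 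The remaining pairs $r>1$, $s<\ell$ factor as a ``generic'' part $\prod(s-r+1)$ times $\prod_{k\in[r,s]}(s-r+2)/(s-r+1)$, the latter telescoping to $(\ell-1)!/[(k-1)!(\ell-k)!]$. Collecting these four contributions and dividing by $\prod_{r=1}^\ell r!$ one obtains~(\ref{eq:mult}).

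For~(\ref{eq:eigA}), I would substitute the same weight into~(\ref{eq:SpCqa}) and compute the auxiliary sequence $S_i$. Setting $\omega := 2N/(\ell+1)$ with $N=n_1-n_\ell+k$, a case-by-case evaluation of $\sum_{j<i} j n_j - \sum_{j\geq i}(\ell+1-j) n_j$ gives the endpoint values $S_1 = -n_1 + \omega/2 - (\ell+2)/2$ and $S_{\ell+1} = n_\ell + \omega/2 + \ell/2$, while in the middle $2S_i = \omega + 2(i-1) - \ell - 2$ for $2\leq i\leq k$ and $2S_i = \omega + 2i - \ell - 2$ for $k<i\leq\ell$, i.e.\ two arithmetic progressions of step $2$ with a jump of step $4$ at the transition $i=k\to k+1$. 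Multiplying both~(\ref{eq:SpCqa}) and~(\ref{eq:eigA}) by $(q-q^{-1})^2$ and using
$$(q-q^{-1})^2[x]^2 = q^{2x}+q^{-2x}-2,\qquad (q-q^{-1})^2[a][b] = q^{a+b}+q^{-a-b}-q^{a-b}-q^{b-a},$$
together with the special case $(q-q^{-1})^2[\ell+1][\omega/2]^2 = [\ell+1+\omega]+[\ell+1-\omega]-2[\ell+1]$, the equality $2\lambda_{n_1,n_\ell,N} = 2\rho_n(\mathcal{C}_q)$ reduces, after matching the $i=1,\ell+1$ terms against the ``diagonal'' parts of $[n_1+k][n_1+\ell+2-k-\omega]$ and $[n_\ell][n_\ell+\ell+\omega]$, to the monomial identity
$$\sum_{i=2}^\ell \bigl(q^{2S_i}+q^{-2S_i}\bigr) = [\ell+1+\omega]+[\ell+1-\omega] - q^{2k-\ell-2+\omega} - q^{\ell+2-2k-\omega} - q^{-\ell-\omega} - q^{\ell+\omega},$$
whose left-hand side is evaluated by summing the two middle geometric series in closed form.

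The main obstacle is this last monomial bookkeeping: several $q^{c\pm\omega}$ terms must cancel in pairs, and the ``skip'' at $i=k$ must produce precisely the off-diagonal monomials $-q^{2k-\ell-2+\omega}$ and $-q^{\ell+2-2k-\omega}$. The boundary cases $k=1$ and $k=\ell$ correspond to one of the middle sub-sums being empty and follow as consistent limits of the general computation.
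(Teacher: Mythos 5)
Your argument is correct and arrives at both formulas. The two parts compare differently with the paper.

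For the dimension formula your computation is essentially the paper's: both substitute the weight into Weyl's formula and split the product over pairs $(r,s)$ by which of the indices $\{1,k,\ell\}$ lie in $[r,s]$, and your telescoping of the $r=1$ and $s=\ell$ chains reproduces the paper's explicit factors $\frac{(n_1+\ell)!}{n_1!(n_1+k)}$ and $\frac{(n_\ell+\ell)!}{n_\ell!(n_\ell+\ell+1-k)}$. The only organizational difference is that the paper splits the interior pairs $1<r\le s<\ell$ into three subranges ($s<k$, $r>k$, $r\le k\le s$) and records three separate products, whereas you factor that same set once as $\prod(s-r+1)$ times the correction $\prod_{k\in[r,s]}\frac{s-r+2}{s-r+1}=\frac{(\ell-1)!}{(k-1)!(\ell-k)!}$. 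These agree, so there is no substantive difference.

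For the Casimir eigenvalue your method is genuinely different from the paper's. The paper observes that your middle terms $S_i$ ($2\le i\le\ell$) are a relabeled subset of the arithmetic progression $T_i=\frac{N}{\ell+1}+i-\frac{\ell+2}{2}$ ($1\le i\le\ell+1$) missing $T_k$ and $T_{\ell+1}$, writes $\sum_{i=2}^\ell[S_i]^2=\sum_{i=1}^{\ell+1}[T_i]^2-[T_k]^2-[T_{\ell+1}]^2$, then applies the closed-form identity $\sum_{i=1}^{\ell+1}[i-\frac{\ell+2}{2}+t]^2=[\ell+1][t]^2-2\frac{\ell+1-[\ell+1]}{(q-q^{-1})^2}$ and pairs the leftover squared brackets via $[x]^2-[y]^2=[x-y][x+y]$, immediately producing the three terms of \eqref{eq:eigA}. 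You instead multiply through by $(q-q^{-1})^2$, cancel the $i=1,\ell+1$ monomials against the diagonal parts of the two $[\,\cdot\,][\,\cdot\,]$ products, and reduce to a single $q$-monomial identity for $\sum_{i=2}^\ell(q^{2S_i}+q^{-2S_i})$, which you verify by summing the two geometric series straddling the ``jump'' at $i=k$. Both routes are valid and of comparable length; the paper's approach avoids splitting into monomials but requires spotting the relabeling $\{S_2,\ldots,S_\ell\}=\{T_1,\ldots,T_{\ell+1}\}\smallsetminus\{T_k,T_{\ell+1}\}$ and knowing the two $q$-bracket identities, whereas yours is purely mechanical once the geometric sums are written out, at the cost of more bookkeeping of exponents.
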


\begin{proof}
We divide the product in the numerator of (\ref{eq:Wchf}) in the following
cases
\begin{align*}
\{1\leq r\leq s\leq\ell\} &=
\{1<r\leq s<k\}
\cup\{k<r\leq s<\ell\}
\cup\{1<r\leq k\leq s<\ell\}\cup \\ &
\quad\cup\{1=r\leq s<\ell\}
\cup\{1<r\leq s=\ell\}
\cup\{r=1,s=\ell\} \;.
\end{align*}
With a simple computation we get
\begin{align*}
& \textstyle{\prod\limits_{1<r\leq s<k}}\Big(s-r+1+\textstyle{\sum\limits_{i=r}^s}\,n_i\Big)
   \!=\textstyle{\prod\limits_{r=1}^{k-2}}r! \;, &&
  \textstyle{\prod\limits_{k<r\leq s<\ell}}\Big(s-r+1+\textstyle{\sum\limits_{i=r}^s}\,n_i\Big)
   \!=\textstyle{\prod\limits_{r=1}^{\ell-k-1}}r! \;,\\
& \textstyle{\prod\limits_{1<r\leq k\leq s<\ell}}\Big(s-r+1+\textstyle{\sum\limits_{i=r}^s}\,n_i\Big)
   \!=\frac{\prod_{r=k}^{\ell-1}r!}{\prod_{r=1}^{\ell-k}r!} \;, &&
\textstyle{\prod\limits_{1=r\leq s<\ell}}\Big(s-r+1+\textstyle{\sum\limits_{i=r}^s}\,n_i\Big)
   \!=\frac{(n_1+\ell)!}{n_1!(n_1+k)} \;,\\
& \textstyle{\prod\limits_{1<r\leq s=\ell}}\Big(s-r+1+\textstyle{\sum\limits_{i=r}^s}\,n_i\Big)
   \!=\frac{(n_\ell+\ell)!}{n_\ell!(n_\ell+\ell+1-k)} \;, &&
\textstyle{\prod\limits_{r=1,s=\ell}}\Big(s-r+1+\textstyle{\sum\limits_{i=r}^s}\,n_i\Big)
   \!=n_1+n_\ell+\ell+1 \;,
\end{align*}
and plugging these in Weyl's character formula we get (\ref{eq:mult}).

The eigenvalue is given by (\ref{eq:SpCqa}):
\begin{align*}
2\lambda_{n_1,n_\ell,N} &=
\left[n_1-\tfrac{1}{\ell+1}(n_1-n_\ell+k)+\tfrac{\ell+2}{2}\right]^2
+\sum_{i=2}^k\left[\tfrac{1}{\ell+1}(n_1-n_\ell+k)+i-1-\tfrac{\ell+2}{2}\right]^2 \\
+\sum_{i=k+1}^{\ell}&\left[\tfrac{1}{\ell+1}(n_1-n_\ell+k)+i-\tfrac{\ell+2}{2}\right]^2
+\left[n_\ell+\tfrac{1}{\ell+1}(n_1-n_\ell+k)+\tfrac{\ell}{2}\right]^2
+2\frac{\ell+1-[\ell+1]}{(q-q^{-1})^2} \\
&=\left[n_1-\tfrac{1}{\ell+1}(n_1-n_\ell+k)+\tfrac{\ell+2}{2}\right]^2
-\left[\tfrac{1}{\ell+1}(n_1-n_\ell+k)+k-\tfrac{\ell+2}{2}\right]^2 \\ &\quad
+\left[n_\ell+\tfrac{1}{\ell+1}(n_1-n_\ell+k)+\tfrac{\ell}{2}\right]^2
-\left[\tfrac{1}{\ell+1}(n_1-n_\ell+k)+\tfrac{\ell}{2}\right]^2 \\ &\quad
+\sum_{i=1}^{\ell+1}\left[\tfrac{1}{\ell+1}(n_1-n_\ell+k)+i-\tfrac{\ell+2}{2}\right]^2 
+2\frac{\ell+1-[\ell+1]}{(q-q^{-1})^2} \;.
\end{align*}
But for all $t$,
$$
\sum_{i=1}^{\ell+1}\left[i-\tfrac{\ell+2}{2}+t\right]^2
=\frac{(q^{2t}+q^{-2t})[\ell+1]-2(\ell+1)}{(q-q^{-1})^2}=
[\ell+1][t]^2-2\,\frac{\ell+1-[\ell+1]}{(q-q^{-1})^2} \;.
$$
Thus, with the substitution $n_1-n_\ell+k=N$, we get
\begin{align*}
2\lambda_{n_1,n_\ell,N}
&=\left[n_1-\tfrac{N}{\ell+1}+\tfrac{\ell+2}{2}\right]^2
-\left[\tfrac{N}{\ell+1}+k-\tfrac{\ell+2}{2}\right]^2 \\ &\quad
+\left[n_\ell+\tfrac{N}{\ell+1}+\tfrac{\ell}{2}\right]^2
-\left[\tfrac{N}{\ell+1}+\tfrac{\ell}{2}\right]^2 \\ &\quad
+[\ell+1][\tfrac{N}{\ell+1}]^2 \;.
\end{align*}
Using in the first two lines the algebraic identity
$[x]^2-[y]^2=[x-y][x+y]$, we prove \eqref{eq:eigA}.
\end{proof}

\begin{lemma}
A Casimir operator $\mathcal{C}'_q$ for $\Uq{\ell}$ is defined by
\begin{equation}\label{eq:CqK}
\hat{K}^{\frac{2}{\ell}}\mathcal{C}'_q=
\sum_{i=1}^\ell\tfrac{q^{\ell+1-2i}}{(q-q^{-1})^2}N^2_{i,i-1}
+\sum_{1\leq j\leq k\leq\ell-1}q^{\ell-2j}M^*_{jk}N^2_{jk}M_{jk}
-\tfrac{[\ell]}{(q-q^{-1})^2}\hat{K}^{\frac{2}{\ell}} \;\,.
\end{equation}
Its spectrum is given by (\ref{eq:SpCqa}), with a replacement $\ell\to\ell-1$.
\end{lemma}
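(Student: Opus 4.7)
The plan is to reduce the statement to the preceding Proposition applied at one step lower rank. Introduce the intrinsic analogues of (\ref{eq:Khat}) and (\ref{eq:Njk}) for the Hopf $*$-subalgebra $\Uq{\ell}\subset\Uq{\ell+1}$,
\[
\hat{K}':=(K_1K_2^2\cdots K_{\ell-1}^{\ell-1})^{2/\ell}\,,\qquad N'_{jk}:=(K_j\cdots K_{\ell-1})(K_{k+1}\cdots K_{\ell-1})(\hat{K}')^{-1}\,,
\]
and let $\mathcal{C}_q^{(\ell)}$ denote the operator produced by the formula (\ref{eq:Cq}) with $\ell\to\ell-1$. The Proposition, applied at rank $\ell-1$, shows that $\mathcal{C}_q^{(\ell)}$ is central in $\Uq{\ell}$ with spectrum given by (\ref{eq:SpCqa}) subject to the replacement $\ell\to\ell-1$. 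It therefore suffices to establish the identity $\mathcal{C}'_q=\mathcal{C}_q^{(\ell)}$.

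The key observation is that $\hat{K}$ belongs to the commutant $U_q(\mathfrak{u}(1))$ of $\Uq{\ell}$, so $\hat{K}^{2/\ell}$ commutes with every element of $\Uq{\ell}$ and may be moved freely through the right-hand side of (\ref{eq:CqK}). Comparing $(\hat{K}')^{\ell/2}=K_1K_2^2\cdots K_{\ell-1}^{\ell-1}$ with $\hat{K}^{(\ell+1)/2}=K_1K_2^2\cdots K_\ell^\ell$ isolates
\[
K_\ell^2=\hat{K}^{(\ell+1)/\ell}(\hat{K}')^{-1}\,,
\]
and substitution into (\ref{eq:Njk}) yields the uniform rescaling
\[
N_{jk}=N'_{jk}\,\hat{K}^{1/\ell}\qquad\text{for all }1\le j\le k\le\ell-1\,,
\]
the same relation for the ``diagonal'' $N_{i,i-1}$ with $i\le\ell-1$, and the boundary value $N_{\ell,\ell-1}^2=K_\ell^4\hat{K}^{-2}=\hat{K}^{2/\ell}(\hat{K}')^{-2}$.

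Plugging these formulae into (\ref{eq:CqK}), every summand on the right-hand side acquires a common factor $\hat{K}^{2/\ell}$: the $i\le\ell-1$ part of the first sum reconstructs the analogous sum in $\mathcal{C}_q^{(\ell)}$, the isolated $i=\ell$ term supplies precisely the $q^{1-\ell}(\hat{K}')^{-2}$ scalar piece required by $\mathcal{C}_q^{(\ell)}$, the $M^*NM$ sum transforms term by term, and the trailing constant is already a multiple of $\hat{K}^{2/\ell}$. Cancelling this common factor on both sides of (\ref{eq:CqK}) yields $\mathcal{C}'_q=\mathcal{C}_q^{(\ell)}$, after which centrality in $\Uq{\ell}$ and the asserted spectrum both follow from the Proposition. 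The only delicate point is the careful bookkeeping of the $q$-exponents and of $K_\ell$ in terms of $\hat{K}$ and $\hat{K}'$; once the relation $K_\ell^2=\hat{K}^{(\ell+1)/\ell}(\hat{K}')^{-1}$ is in place, everything else reduces to routine substitution.
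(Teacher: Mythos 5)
Your proof is correct and coincides with the paper's: both define $\hat{K}'$ and $N'_{jk}$, invoke the preceding Proposition at rank $\ell-1$, and then substitute $N_{jk}=\hat{K}^{1/\ell}N'_{jk}$ together with $N_{\ell,\ell-1}^2=K_\ell^4\hat{K}^{-2}$ to pull out the common factor $\hat{K}^{2/\ell}$. The bookkeeping identity $K_\ell^2=\hat{K}^{(\ell+1)/\ell}(\hat{K}')^{-1}$ you isolate is exactly the relation the paper writes as $\hat{K}'=\hat{K}^{(\ell+1)/\ell}K_\ell^{-2}$.
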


\begin{proof}
Take (\ref{eq:Cq}), replace $\ell$ with $\ell-1$, $\hat{K}$ with
$$
\hat{K}'=(K_1K_2^2\ldots K_{\ell-1}^{\ell-1})^{\frac{2}{\ell}}=
\hat{K}^{\frac{\ell+1}{\ell}}K_\ell^{-2} \;,
$$
$N_{j,k}$ with
$$
N'_{j,k}=(K_jK_{j+1}\ldots K_{\ell-1})\cdot(K_{k+1}K_{k+2}\ldots K_{\ell-1})\cdot\hat{K}'^{-1}
=\hat{K}^{-\frac{1}{\ell}}N_{j,k} \;,
$$
$M^*_{jk}$ with $M'^*_{jk}\equiv M^*_{jk}$ (for $k\leq\ell-1$),
obtaining the Casimir 
\begin{align*}
\mathcal{C}'_q &=\sum_{i=1}^{\ell-1}\tfrac{q^{\ell+1-2i}}{(q-q^{-1})^2}
\,N'^2_{i,i-1}+\tfrac{q^{-\ell+1}}{(q-q^{-1})^2}\,\hat{K}'^{-2}
+\sum_{1\leq j\leq k\leq\ell-1}q^{\ell-2j}M'^*_{jk}N'^2_{jk}M'_{jk}
-\tfrac{[\ell]}{(q-q^{-1})^2} \\
&=\hat{K}^{-\frac{2}{\ell}}\bigg(\sum_{i=1}^{\ell-1}\tfrac{q^{\ell+1-2i}}{(q-q^{-1})^2}
\,N^2_{i,i-1}+\tfrac{q^{-\ell+1}}{(q-q^{-1})^2}\,K_\ell^4\hat{K}^{-2}
+\!\!\sum_{1\leq j\leq k\leq\ell-1}\!\!q^{\ell-2j}M^*_{jk}N^2_{jk}M_{jk}\bigg)
-\tfrac{[\ell]}{(q-q^{-1})^2} \;.
\end{align*}
Since $K_\ell^4\hat{K}^{-2}=N_{\ell,\ell-1}^2$, the last equation is exactly (\ref{eq:CqK}).
\end{proof}

\smallskip

\noindent
The relation between the Casimir (\ref{eq:Cq}) of $\Uq{\ell+1}$ and the Casimir (\ref{eq:CqK}) of $\Uq{\ell}$ is
\begin{equation}\label{eq:CCprime}
\mathcal{C}_q=
q\hat{K}^{\frac{2}{\ell}}\Big(\mathcal{C}'_q+\tfrac{[\ell]}{(q-q^{-1})^2}\Big)
+\tfrac{q^{-\ell}}{(q-q^{-1})^2}\,\hat{K}^{-2}
+\sum\nolimits_{i=1}^\ell q^{\ell+1-2i}M_{i,\ell}^*N^2_{i,\ell}M_{i,\ell}
-\tfrac{[\ell+1]}{(q-q^{-1})^2} \;.
\end{equation}

\subsection{The quantum Grassmann algebra}\label{sec:3.2}
Irreducible representations of $\Uq{\ell}$ are labeled by
$n=(n_1,\ldots,n_{\ell-1})$, with $q^{\frac{1}{2}n_i}$ the eigenvalue of $K_i$ corresponding
to the highest weight vector (the vector that is annihilated by all $E_i$'s, $i=1,\ldots,\ell-1$).
These are all the finite-dimensional irreducible highest weight representations
such that $K_i$ are positive operators (thus, having a well-defined $q\to 1$ limit).
Before discussing representations, we need some preliminaries.

For a fixed $k=1,\ldots,\ell-1$, we denote $\Lambda_k$ the following
set of multi-indices (the set of $k$-partitions of $\ell$):
\begin{equation}\label{eq:ineq}
\Lambda_k:=\big\{\,
\underline{i}=(i_1,i_2,\ldots,i_k)\in\Z^k\,\big|\,
1\leq i_1<i_2<\ldots<i_k\leq\ell\,
\big\} \;.
\end{equation}
Let
\begin{equation}\label{eq:jdi}
j\#\underline{i}:=\sum\nolimits_{h=1}^k(\delta_{i_h,j}-\delta_{i_h,j+1})
\end{equation}
be the number of times $j$ appears in the string $\underline{i}$
minus the number of times $j+1$ appears in it. 
Due to the inequality in (\ref{eq:ineq}), this is either $0$ or $\pm 1$.

Given two multi-indices $\underline{i}'\in\Lambda_{k'}$ and
$\underline{i}''\in\Lambda_{k''}$ with empty intersection,
$\underline{i}'\cap\underline{i}''=\emptyset$
(that is $i'_r\neq i''_s\;\forall\;r,s$), we denote
$\underline{i}'\cup\underline{i}''\in\Lambda_{k'+k''}$
the ordered set with elements $\{i'_r,i'_s\}_{r,s}$.
In this case, for all $j$,
\begin{equation}\label{eq:dsum}
j\#(\underline{i}'\cup\underline{i}'')=
j\#\underline{i}'\,+\,j\#\underline{i}'' \;.
\end{equation}

For any $\underline{i}\in\Lambda_k$ there is only one 
$(\ell-k)$-tuple $\underline{i}^c=(i^c_1,\ldots,i^c_{\ell-k})\in\Lambda_{\ell-k}$,
such that $i_r\neq i^c_s\;\forall\;r,s$.
This is given by $\underline{i}^c=(1,2,\ldots,\ell)\smallsetminus\underline{i}$
(as ordered sets), and since
\begin{equation}
(\underline{i}^c)^c=\underline{i} \;,
\end{equation}
the map $\Lambda_k\to\Lambda_{\ell-k}$, $\underline{i}\mapsto\underline{i}^c$
is a bijection (in fact, an involution).
As $\underline{i}\cap\underline{i}^c=\emptyset$ and
$\underline{i}\cup\underline{i}^c=\{1,2,\ldots,\ell\}$, by (\ref{eq:dsum}) we have
\begin{equation}\label{eq:iic}
j\#\underline{i}\,+\,j\#\underline{i}^c=0 \;.
\end{equation}

If $j\#\underline{i}=+1$ it means that there is $r$ such that $i_r=j$
and $i_{r+1}>j+1$; in this case, we denote
\begin{subequations}\label{eq:jpm}
\begin{equation}
\underline{i}^{\,j,+}=(i_1,\ldots,i_{r-1},j+1,i_{r+1},\ldots,i_k) \;.
\end{equation}
If $j\#\underline{i}=-1$ it means that there is $r$ such that $i_r=j+1$
and $i_{r-1}<j$;
in this case, we denote
\begin{equation}
\underline{i}^{\,j,-}=(i_1,\ldots,i_{r-1},j,i_{r+1},\ldots,i_k) \;.
\end{equation}
\end{subequations}
Both $\underline{i}^{\,j,+}$ and $\underline{i}^{\,j,-}$ satisfy the inequality
in (\ref{eq:ineq}), so $\underline{i}^{\,j,+},\underline{i}^{\,j,-}\in\Lambda_k$.

\begin{lemma}\label{lemma:diesis}
For all $j,l,\underline{i}$ the following identities hold:
\begin{subequations}
\begin{align}
& l\#\underline{i}^{\,j,+}=l\#\underline{i}-2\delta_{j,l}+\delta_{j,l+1}+\delta_{j,l-1}
&& \mathrm{whenever}\;j\#\underline{i}=1 \label{eq:diesisP} \;,\\
\rule{0pt}{16pt}
& \delta_{j\#\underline{i},+1}\delta_{l\#\underline{i}^{\,j,+},-1}-
  \delta_{l\#\underline{i},-1}\delta_{j\#\underline{i}^{\,l,-},+1}=
  \delta_{j,l}\,j\#\underline{i} \label{eq:diesisA} \;,\\
\rule{0pt}{16pt}
& (\underline{i}^{j,+})^c=(\underline{i}^c)^{j,-}
&& \mathrm{whenever}\;j\#\underline{i}=1 \label{eq:diesisB} \;.
\end{align}
\end{subequations}
\end{lemma}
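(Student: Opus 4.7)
The plan is to prove the three identities (\ref{eq:diesisP}), (\ref{eq:diesisB}), (\ref{eq:diesisA}) in this order, reducing each to elementary combinatorics on the underlying multi-indices and their complements in $\{1,\ldots,\ell\}$. The crucial observation, immediate from (\ref{eq:jpm}), is that when $j\#\underline{i}=+1$ the operation $\underline{i}\mapsto\underline{i}^{\,j,+}$ simply replaces the unique entry equal to $j$ in $\underline{i}$ by $j+1$; symmetrically, when $l\#\underline{i}=-1$, the operation $\underline{i}\mapsto\underline{i}^{\,l,-}$ replaces the unique entry equal to $l+1$ by $l$. All three identities then follow from tracking what this single-entry change does.

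First I would establish (\ref{eq:diesisP}): only the summand of (\ref{eq:jdi}) at the modified position contributes to the difference $l\#\underline{i}^{\,j,+}-l\#\underline{i}$, giving $(\delta_{l,j+1}-\delta_{l+1,j+1})-(\delta_{l,j}-\delta_{l+1,j})$, which after rewriting each Kronecker symbol in the form $\delta_{j,\cdot}$ collapses to $\delta_{j,l-1}-2\delta_{j,l}+\delta_{j,l+1}$. The same argument with the roles of $(j,+)$ and $(l,-)$ swapped yields the auxiliary formula
\begin{equation*}
j\#\underline{i}^{\,l,-}=j\#\underline{i}+2\delta_{j,l}-\delta_{j,l+1}-\delta_{j,l-1}\qquad\text{whenever}\;l\#\underline{i}=-1,
\end{equation*}
which will be needed for the last identity. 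Next I would prove (\ref{eq:diesisB}) set-theoretically: the hypothesis $j\#\underline{i}=+1$ forces $j\in\underline{i}$ and $j+1\notin\underline{i}$, hence $j\notin\underline{i}^c$, $j+1\in\underline{i}^c$, and (\ref{eq:iic}) then gives $j\#\underline{i}^c=-1$, so that $(\underline{i}^c)^{\,j,-}$ is well-defined. Both $(\underline{i}^{\,j,+})^c$ and $(\underline{i}^c)^{\,j,-}$ are then seen, as ordered tuples, to coincide with $(\underline{i}^c\smallsetminus\{j+1\})\cup\{j\}$.

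For (\ref{eq:diesisA}) I would split on $|j-l|$. When $|j-l|>1$, both formulas above leave $\#$ unchanged, so the two terms on the LHS become identical and cancel, matching the vanishing RHS. When $|j-l|=1$ the shift is $\pm 1$, but then achieving $\delta_{\cdot,\mp 1}=1$ after the flip would require $\#=\mp 2$, which is excluded by the strictly increasing condition in (\ref{eq:ineq}) that confines $\#$ to $\{-1,0,+1\}$; hence both sides vanish again. The only non-trivial case is the diagonal $j=l$, where the two flips shift $\#$ by $\mp 2$ and the LHS telescopes to $\delta_{j\#\underline{i},+1}-\delta_{j\#\underline{i},-1}=j\#\underline{i}$, matching the RHS. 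The only step requiring a moment of care is the $|j-l|=1$ sub-case, whose vanishing rests on the range constraint $\#\in\{-1,0,+1\}$; everywhere else the verification is mechanical bookkeeping.
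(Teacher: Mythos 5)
Your proposal is correct and in large part mirrors the paper's argument: the computation for (\ref{eq:diesisP}) by tracking the single modified entry, and the case split on $|j-l|$ for (\ref{eq:diesisA}) using $l\#\underline{i}\in\{-1,0,+1\}$ to rule out $\#=\mp 2$, are exactly the paper's route. The genuine difference is your proof of (\ref{eq:diesisB}). The paper proceeds by induction on the length $k$ of $\underline{i}$, peeling off an end entry $i'_{k+1}$ (or $i'_1$) unequal to $j$ and appealing to the inductive hypothesis for the shorter tuple. You instead observe that $j\#\underline{i}=+1$ is equivalent to $j\in\underline{i}$ and $j+1\notin\underline{i}$, hence $j\notin\underline{i}^c$ and $j+1\in\underline{i}^c$, so both $(\underline{i}^{\,j,+})^c$ and $(\underline{i}^c)^{\,j,-}$ are the ordered tuple underlying the set $(\underline{i}^c\smallsetminus\{j+1\})\cup\{j\}$. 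This set-theoretic verification is shorter, avoids any induction, and makes it more transparent why the identity holds: complementation and the swap $j\leftrightarrow j+1$ commute. The paper's induction buys nothing here that your direct argument does not; if anything, your version is the cleaner of the two. One small remark worth making explicit if you write this up: the equivalence $j\#\underline{i}=+1\iff(j\in\underline{i}\text{ and }j+1\notin\underline{i})$ uses the strict monotonicity in (\ref{eq:ineq}) to guarantee that each value occurs at most once in $\underline{i}$, so the sum in (\ref{eq:jdi}) is genuinely a difference of two indicator functions.
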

\begin{proof}
If $j\#\underline{i}=+1$ (resp. $-1$) and $r$ is the integer
such that $i_r=j$ (resp. $j+1$), then
\begin{align*}
l\#\underline{i}-l\#\underline{i}^{\,j,+} &=\sum\nolimits_{h=1}^k
(\delta_{l,i_h}-\delta_{l+1,i_h}-\delta_{l,i_h+\delta_{r,h}}+\delta_{l+1,i_h+\delta_{r,h}})
=2\delta_{j,l}-\delta_{j,l+1}-\delta_{j,l-1} \;,\\
l\#\underline{i}-l\#\underline{i}^{\,j,-} &=\sum\nolimits_{h=1}^k
(\delta_{l,i_h}-\delta_{l+1,i_h}-\delta_{l,i_h-\delta_{r,h}}+\delta_{l+1,i_h-\delta_{r,h}})
=-2\delta_{j,l}+\delta_{j,l+1}+\delta_{j,l-1} \;.
\end{align*}
The former equation is just (\ref{eq:diesisP}).
Next, if $|j-l|=1$ we have
$$
\delta_{j\#\underline{i},+1}\delta_{l\#\underline{i}^{\,j,+},-1}
=\delta_{j\#\underline{i},+1}\delta_{l\#\underline{i}+1,-1}
$$
and this is zero since $l\#\underline{i}$ can never be $-2$.
Similarly $\delta_{l\#\underline{i},-1}\delta_{j\#\underline{i}^{\,l,-},+1}=0$.

If $|j-l|>1$, then $l\#\underline{i}^{\,j,+}=l\#\underline{i}$,
$j\#\underline{i}^{\,l,-}=j\#\underline{i}$, and
$$
\delta_{j\#\underline{i},+1}\delta_{l\#\underline{i}^{\,j,+},-1}
=\delta_{j\#\underline{i},+1}\delta_{l\#\underline{i},-1}
=\delta_{j\#\underline{i}^{\,l,-},+1}\delta_{l\#\underline{i},-1} \;.
$$
If $j=l$ then $\delta_{j\#\underline{i},+1}\delta_{l\#\underline{i}^{\,j,+},-1}=
\delta_{j\#\underline{i},+1}$,
$\delta_{l\#\underline{i},-1}\delta_{j\#\underline{i}^{\,l,-},+1}=-
\delta_{j\#\underline{i},-1}$, and their difference is
$$
\delta_{j\#\underline{i},+1}-\delta_{j\#\underline{i},-1}=j\#\underline{i} \;.
$$
This proves (\ref{eq:diesisA}).

We pass to (\ref{eq:diesisB}). We assume $j\#\underline{i}=1$, that by (\ref{eq:iic})
is equivalent to the condition $j\#\underline{i}^c=-1$. We prove the equation by
induction on the length $k$ of $\underline{i}$. For $k=1$ we have:
$$
\underline{i}=(j)\;,\qquad
\underline{i}^{j,+}=(j+1)\;,\qquad
\underline{i}^c=(1,2,\ldots,j-1,j+1,j+2,\ldots,\ell)\;,
$$
and
$$
(\underline{i}^c)^{j,-}=(1,\ldots,j-1,j,j+2,\ldots,\ell)=
(\underline{i}^{j,+})^c\;.
$$
Suppose now
$(\underline{i}^{j,+})^c=(\underline{i}^c)^{j,-}$ is true for any
$\underline{i}\in\Lambda_k$ with $j\#\underline{i}=1$ (for a fixed
$k\geq 1$). Let $\underline{i}'\in\Lambda_{k+1}$ with $j\#\underline{i}'=1$.
We write $\underline{i}'=(\underline{i},i'_{k+1})$ if $i'_{k+1}\neq j$ and
$\underline{i}'=(i'_1,\underline{i})$ if $i'_1\neq j$. In both cases
$\underline{i}\in\Lambda_k$ has $j\#\underline{i}=1$. We consider the former
case, the latter being symmetric. We have
$$
\underline{i}'^{\,j,+}=(\underline{i}^{j,+},i'_{k+1}) \;,
$$
and (as ordered sets)
$$
\underline{i}'^{\,c}=(1,2,\ldots,\ell)\smallsetminus
(\underline{i},i'_{k+1})=\underline{i}^c\smallsetminus (i'_{k+1}) \;,
$$
and being $i'_{k+1}\neq j+1$ (as $j\#\underline{i}'=1$), by inductive hypothesis
$$
(\underline{i}'^{\,c})^{j,-}=(\underline{i}^c)^{j,-}\smallsetminus (i'_{k+1})
=(\underline{i}^{j,+})^c\smallsetminus (i'_{k+1})=
(\underline{i}^{j,+},i'_{k+1})^c=(\underline{i}'^{\,j,+})^c \;.
$$
This concludes the proof.
\end{proof}

\smallskip

\noindent
We denote by $W_k\simeq\C^{\binom{\ell}{k}}$ the linear space of vectors
$w=(w_{\underline{i}})_{\underline{i}\in\Lambda_k}$ with
components $w_{\underline{i}}\in\C$ labeled by the above multi-index.
A transformation $T\in\mathrm{End}(W_k)$ sends a vector $w$
into the vector $Tw$ with components $(Tw)_{\underline{i}}$.

\begin{prop}
Let $1\leq k\leq\ell-1$. The irreducible $*$-representation of $\Uq{\ell}$ with highest weight
$$
\delta^k=(\,\stackrel{k-1\;\mathrm{times}}{\overbrace{0,\ldots,0}},1,\!\!\!\stackrel{\ell-k-1\;\mathrm{times}}{\overbrace{0,\ldots,0}}\!\!) \;,
$$
is given explicitly by the map $\,\sigma_k:\Uq{\ell}\to\mathrm{End}(W_k)$ defined on
generators by
\begin{subequations}
\begin{align}
\bigl\{\sigma_k(K_j)w\bigr\}_{\underline{i}}&=q^{\frac{1}{2}j\#\underline{i}}\,w_{\,\underline{i}}\;,\label{ar:K} \\
\bigl\{\sigma_k(E_j)w\bigr\}_{\underline{i}}&=\delta_{j\#\underline{i},+1}\,w_{\,\underline{i}^{\,j,+}}\;,\label{ar:E} \\
\bigl\{\sigma_k(F_j)w\bigr\}_{\underline{i}}&=\delta_{j\#\underline{i},-1}\,w_{\,\underline{i}^{\,j,-}}\;,\label{ar:F}
\end{align}
\end{subequations}
where $j\#\underline{i}$ is defined in (\ref{eq:jdi}) and
$\underline{i}^{\,j,\pm}$ are defined in (\ref{eq:jpm}).

For $k=0$ or $\ell$, we denote $W_0=W_\ell=\C$ the vector space underlying the trivial representation
(i.e.~$\sigma_0=\sigma_\ell=\epsilon$ is the counit of $\Uq{\ell}$).
\end{prop}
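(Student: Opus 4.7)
The plan is to verify directly the defining relations of $\Uq{\ell}$ on the basis vectors $\{e_{\underline{i}}\}_{\underline{i}\in\Lambda_k}$ of $W_k$, check the $*$-property, identify the highest weight vector, and conclude irreducibility by a dimension count.

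The Hopf algebra relations reduce to bookkeeping once the identities of Lemma~\ref{lemma:diesis} are available. Commutativity of the $\sigma_k(K_j)$ is immediate since they act diagonally, and the conjugation relations $K_iE_jK_i^{-1}=q^{\alpha_{ij}}E_j$ follow from (\ref{eq:diesisP}) applied with $l=i$. The commutator $[\sigma_k(E_i),\sigma_k(F_j)]\,e_{\underline{i}}$ evaluates, by (\ref{eq:diesisA}), to $\delta_{ij}\,(i\#\underline{i})\,e_{\underline{i}}$; this matches $\delta_{ij}\,[i\#\underline{i}]\,e_{\underline{i}}$ since $i\#\underline{i}\in\{-1,0,+1\}$ and $[x]=x$ on these values. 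For $|i-j|>1$ the operators $\sigma_k(E_i)$ and $\sigma_k(E_j)$ act on disjoint pairs of positions, so they commute. The quantum Serre relations for $|i-j|=1$ trivialise because $\sigma_k(E_i)^2=0$: once $E_i$ has replaced $i+1$ by $i$ in some $\underline{i}$, the condition $i\#(\cdot)=-1$ required for a second application necessarily fails. An analogous elementary case check yields $\sigma_k(E_i)\sigma_k(E_j)\sigma_k(E_i)=0$ whenever $|i-j|=1$, so every term of $E_i^2E_j-(q+q^{-1})E_iE_jE_i+E_jE_i^2$ vanishes separately. The $*$-property $\sigma_k(F_j)=\sigma_k(E_j)^*$ with respect to the natural orthonormal inner product on $W_k$ follows by comparing matrix elements, using the involutivity $(\underline{i}^{\,j,+})^{\,j,-}=\underline{i}$ whenever $j\#\underline{i}=+1$.

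To identify the highest weight vector I take $\underline{i}_0=(1,2,\ldots,k)$. A direct computation yields $j\#\underline{i}_0=\delta_{j,k}$ for $j=1,\ldots,\ell-1$, hence $\sigma_k(K_j)e_{\underline{i}_0}=q^{\delta_{j,k}/2}e_{\underline{i}_0}$, which is the weight $\delta^k$. The relation $\sigma_k(E_j)e_{\underline{i}_0}=0$ holds because no $\underline{i}\in\Lambda_k$ can simultaneously satisfy $\underline{i}^{\,j,+}=\underline{i}_0$ and $j\#\underline{i}=+1$ (such an $\underline{i}$ would necessarily have a repeated entry). Consequently the cyclic submodule $\Uq{\ell}\cdot e_{\underline{i}_0}\subseteq W_k$ is an irreducible highest weight module isomorphic to $V_{\delta^k}$. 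Weyl's dimension formula (\ref{eq:Wchf}), applied with $\ell\to\ell-1$ and $n=\delta^k$, gives $\dim V_{\delta^k}=\binom{\ell}{k}=\dim W_k$, forcing $W_k=V_{\delta^k}$ and hence irreducibility. The boundary cases $k=0,\ell$ are clear: on the unique multi-index all indicators $\delta_{j\#\underline{i},\pm1}$ vanish and $\sigma_k$ reduces to the counit.

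The main obstacle is the bookkeeping involved in tracking how the operations $\underline{i}\mapsto\underline{i}^{\,j,\pm}$ interact when operators are composed, particularly for the off-diagonal $[E_i,F_j]$ identity and the Serre verifications. All the sharp combinatorial content has, however, already been isolated in Lemma~\ref{lemma:diesis}, so once those identities are in hand the remaining checks become essentially mechanical.
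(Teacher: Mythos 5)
Your proof is correct and follows essentially the same route as the paper's: verify the defining relations of $\Uq{\ell}$ via the identities of Lemma~\ref{lemma:diesis}, check unitarity ($\sigma_k(E_j)^\dagger=\sigma_k(F_j)$), identify $(1,\ldots,k)$ as a highest weight vector of weight $\delta^k$, and conclude irreducibility by matching $\dim W_k=\binom{\ell}{k}$ against Weyl's dimension formula. The only differences are cosmetic: the paper presents the highest-weight/dimension argument before the relation checks, and you also spell out the (trivial) verification of $[E_i,E_j]=0$ for $|i-j|>1$, which the paper leaves implicit.
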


\begin{proof}
The vector $w$ with $w_{(1,2,\ldots,k)}=1$ and all other components equal to zero is annihilated
by all the $E_i$'s. Since $\sigma_k(K_j)w=q^{\delta_{j,k}/2}w$, the vector $w$
is the highest weight vector of the irreducible representation with highest weight $\delta^k$, that is
then a subrepresentation of the (would-be) representation $\sigma_k$. Having both the same dimension
$\binom{\ell}{k}$ (by Weyl's character formula), they coincide if $\sigma_k$ is a representation.
Since $\sigma_k(K_j)$ is Hermitian and the Hermitian conjugated of $\sigma_k(E_j)$
is $\sigma_k(F_j)$, the would-be representation is unitary.
We now prove that the defining relations of $\Uq{\ell}$ are satisfied (being $\sigma_k(h)^\dag=\sigma_k(h^*)$,
if a relation holds, its conjugated holds too).
We omit the representation symbol $\sigma_k$.

That $K_j$'s commute each other is trivial, being all diagonal. Further,
by (\ref{eq:diesisP}):
\begin{align*}
\{K_lE_jK_l^{-1}w\}_{\underline{i}} &=
\delta_{j\#\underline{i},+1}\,q^{\frac{1}{2}(l\#\underline{i}-l\#\underline{i}^{\,j,+})}w_{\,\underline{i}^{\,j,+}} \\
&=q^{\delta_{j,l}-\frac{1}{2}(\delta_{j,l+1}+\delta_{j,l-1})}\delta_{j\#\underline{i},+1}\,w_{\,\underline{i}^{\,j,+}} \\
&=\{q^{\delta_{j,l}-\frac{1}{2}(\delta_{j,l+1}+\delta_{j,l-1})}E_jw\}_{\underline{i}} \;.
\end{align*}
This means $K_jE_jK_j^{-1}=qE_j$, $K_lE_jK_l^{-1}=q^{-\frac{1}{2}}E_j$ if $|j-l|=1$
and $K_lE_jK_l^{-1}=E_j$ otherwise.
Next, by by (\ref{eq:diesisA}):
\begin{align*}
\{[E_j,F_l]w\}_{\underline{i}} &=
\big(
\delta_{j\#\underline{i},+1}\delta_{l\#\underline{i}^{\,j,+},-1}-
\delta_{l\#\underline{i},-1}\delta_{j\#\underline{i}^{\,l,-},+1}
\big)w_{\underline{i}} \\
&=\delta_{j,l}\,(j\#\underline{i})\,w_{\underline{i}}
=\delta_{j,l}\,[j\#\underline{i}]\,w_{\underline{i}}
=\Big\{\tfrac{K_j^2-K_j^{-2}}{q-q^{-1}}\,w\Big\}_{\underline{i}}\;,
\end{align*}
where we used the fact that being $j\#\underline{i}\in\{0,\pm 1\}$, it is always
$[j\#\underline{i}]=j\#\underline{i}\,$.

Concerning Serre's relation, it is enough to show that
$E_j^2=E_jE_{j\pm 1}E_j=0$ for all $j$. We have
$$
\{E_j^2w\}_{\underline{i}}=
\delta_{j\#\underline{i},+1}\delta_{j\#\underline{i}^{\,j,+},+1}
\,w_{(\underline{i}^{j,+})^{l,+}} \;,
$$
and this is zero since by (\ref{eq:diesisP}) $j\#\underline{i}^{\,j,+}=j\#\underline{i}-2=-1$
whenever $j\#\underline{i}=1$. Again by (\ref{eq:diesisP}), for $l=j\pm 1$ we have
$$
j\#\underline{i}=1\;\;\wedge\;\;
l\#\underline{i}^{\,j,+}=1
\quad\Rightarrow\quad
j\#(\underline{i}^{\,j,+})^{\,l,+}=
j\#\underline{i}^{\,j,+}+1=
j\#\underline{i}-1=0 \;.
$$
So for $l=j\pm 1$,
$$
\{E_jE_lE_jw\}_{\underline{i}}=
\delta_{j\#\underline{i},+1}\delta_{l\#\underline{i}^{\,j,+},+1}
\delta_{j\#(\underline{i}^{\,j,+})^{\,l,+},+1}
w_{((\underline{i}^{j,+})^{l,+})^{j,+}}=0\;.
$$
This concludes the proof.
\end{proof}

For $q=1$, we have $W_k\simeq\wedge^kW_1$ for all $0\leq k\leq\ell$. We are going to define a deformation
$\wprod$ of the wedge product such that $\wprod:W_h\otimes W_k\to W_{h+k}$
intertwines the Hopf tensor product of the representations $\sigma_h$ and $\sigma_k$
with the representation $\sigma_{h+k}$, where $W_{h+k}:=0$ if $h+k>\ell$.
Before that, we need a brief interlude on permutations. We take notations from \cite{BB05}.

Let $S_n$ be the group of permutations of the set with $n$ elements
$\{1,2,\ldots,n\}$; if $p\in S_n$, we denote $p(i)$ 
the image of $i\in\{1,\ldots,n\}$ through $p$, and with $p^i$
the position of $i$ after the permutation (we use the superscript to avoid
confusion, since in \cite{BB05} they call $p_i=p(i)$). For example,
let $n=3$; if $p$ is the permutation
$$
p=\footnotesize\left(\!\begin{array}{ccc}1 & 2 & 3 \\ 3 & 1 & 2\end{array}\!\right)
$$
then $(p(1),p(2),p(3))=(3,1,2)$, while $p^1=2$, $p^2=3$ and $p^3=1$.
A generic permutation is
$$
p=\footnotesize\left(\!\begin{array}{cccc}1 & 2 & \ldots & n \\ p(1) & p(2) & \ldots & p(n)\end{array}\!\right)
$$
and the relation between $p^i$ and $p(i)$ is the following: $p^i=p^{-1}(i)$
where $p^{-1}$ is the inverse of $p$ in $S_n$ ($p^i$ is the new position of $i$,
$p(p^i)$ is the number in position $p^i$, then $p(p^i)=i$).
In the above example, one can easily check that
$$
\footnotesize\left(\!\begin{array}{ccc}1 & 2 & 3 \\ p^1 & p^2 & p^3\end{array}\!\right)
\normalsize =
\footnotesize\left(\!\begin{array}{ccc}1 & 2 & 3 \\ 2 & 3 & 1\end{array}\!\right)
$$
is the inverse of $p$. The product of two permutations $p\cdot p'$ is their composition,
where as usual the permutation on the right act first: thus $pp'$ sends $i$ into $p(p'(i))$.
For example if
$$
p=\footnotesize\left(\!\begin{array}{ccc}1 & 2 & 3 \\ 2 & 1 & 3\end{array}\!\right)
\normalsize \;,\qquad
p'=\footnotesize\left(\!\begin{array}{ccc}1 & 2 & 3 \\ 1 & 3 & 2\end{array}\!\right)
\normalsize \;,
$$
then $p'$ sends $1\mapsto 1$, $p$ sends $1\mapsto 2$, so $pp'$ sends $1\mapsto 2$, etc.;
on the other hand, $p$ sends $1\mapsto 2$, $p'$ sends $2\mapsto 3$, so $p'p$ sends
$1\mapsto 3$, etc.; the result is that
$$
pp'=\footnotesize\left(\!\begin{array}{ccc}1 & 2 & 3 \\ 2 & 3 & 1\end{array}\!\right)
\normalsize \;,\qquad
p'p=\footnotesize\left(\!\begin{array}{ccc}1 & 2 & 3 \\ 3 & 1 & 2\end{array}\!\right)
\normalsize \;.
$$

A representation $\pi:S_n\to\mathrm{End}(\Z^n)$ is given by
$\pi(p)(\underline{i}):=(i_{p^1},i_{p^2},\ldots,i_{p^n})$ for all
$p\in S_n$ and any $n$-tuple $\underline{i}=(i_1,i_2,\ldots,i_n)\in\Z^n$.
Notice that $\pi(p)$ sends $\Lambda_n\subset\Z^n$ into $\Lambda_n$
only if $p$ is the trivial permutation. We call
\begin{equation}\label{eq:repSn}
\underline{i}_{\,\pi(p)}:=(i_{p(1)},i_{p(2)},\ldots,i_{p(n)})=
\pi(p^{-1})(\underline{i}) \;.
\end{equation}
With this notation $(\underline{i}_{\,\pi(p)})_{\pi(p')}=\underline{i}_{\,\pi(pp')}$.

Recall that $S_n$ is generated by \emph{simple transpositions}, that
are the transformations that exchange two consecutive elements and keep
all others fixed. The \emph{length} $||p||$ of a permutation $p$ is the
minimal number of simple transpositions needed to express $p$.
The length of $p$ coincides with the \emph{number of inversions} in $p$,
that is the number of pairs $(i,j)$ such that $1\leq i<j\leq n$ and
$p(i)>p(j)$ (cf.~\cite[Proposition 1.5.2]{BB05}):
$$
||p||=\mathrm{card}\{(i,j)\,:\,i<j\;\mathrm{and}\;p(i)>p(j)\} \;.
$$
This can be written also as
$$
||p||=\mathrm{card}\{(i,j)\,:\,i<j\;\mathrm{and}\;p^i>p^j\} \;,
$$
since $p$ and $p^{-1}$ have always the same length (if $p=ss'\ldots s''$
is a reduced expression of $p$, then $p^{-1}=s''\ldots s's$ is a
reduced expression of $p^{-1}$).

Given a maximal parabolic subgroup $S_k\times S_{n-k}$ of
$S_n$ we can consider the left cosets
$$
S^{(k)}_n:=S_n/(S_k\times S_{n-k}) \;.
$$
The quotient $S^{(k)}_n$ coincides with the set of permutations (cf.~\cite{BB05}, Sec.~2.4)
$$
S^{(k)}_n=\{p\in S_n\,|\,p(1)<p(2)<\ldots<p(k)\;\mathrm{and}\;p(k+1)<p(k+2)<\ldots<p(n) \}\;.
$$
The map $p\mapsto p^{-1}$ gives a bijection between $S^{(k)}_n$ and the right cosets
$S_{k,n-k}:=(S_k\times S_{n-k})\backslash S_n$, that is then given by those permutations
$p$ that satisfy $p^1<p^2<\ldots<p^k$ and $p^{k+1}<p^{k+2}<\ldots<p^n$.
Elements of $S_{k,n-k}$ are called \emph{$(k,n-k)$-shuffles} (see e.g.~\cite{KS97}, Sec.~13.2).

We'll need the following lemma (cf.~\cite{BB05}, Proposition 2.4.4, Cor.~2.4.5 and 2.12).

\begin{lemma}\label{lemma:3.9}
Any $p\in S_n$ can be uniquely factorized 
$p=p'p''$ with $p'\in S^{(k)}_n$ and $p''\in S_k\times S_{n-k}$,
or as $p=\tilde{p}'\tilde{p}''$ with $\tilde{p}'\in S_k\times S_{n-k}$ and $\tilde{p}''\in S_{k,n-k}$,
and in both cases
$$
||p||=||p'||+||p''||
=||\tilde{p}'||+||\tilde{p}''|| \;.
$$
\end{lemma}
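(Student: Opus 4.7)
The plan is to establish the first factorization directly from the defining property of $S^{(k)}_n$ as a system of left coset representatives, then prove the length additivity by a careful classification of inversions into those lying in a common block of $\{1,\ldots,k\} \sqcup \{k+1,\ldots,n\}$ and those straddling the boundary at $k$. The second factorization will then follow from the first by inverting.

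For the existence and uniqueness of the decomposition $p = p'p''$, I would argue as follows. Fix any $p \in S_n$ and consider the left coset $p(S_k \times S_{n-k})$. Two elements of this coset agree as maps from $\{1,\ldots,k\}$ (respectively $\{k+1,\ldots,n\}$) to their images regarded as sets, differing only by arbitrary reorderings within each block. Hence there is a unique representative of the coset whose restriction to each block is strictly increasing, and by the description given in the text this is the unique element $p'$ of the coset lying in $S^{(k)}_n$. Setting $p'' := (p')^{-1} p$ yields the factorization, and uniqueness of $p'$ forces uniqueness of $p''$.

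The main step, and the only real obstacle, is the length identity $||p|| = ||p'|| + ||p''||$. I would prove it by partitioning the inversions of $p$, i.e.\ pairs $i < j$ with $p(i) > p(j)$, according to the block structure. Since $p'' \in S_k \times S_{n-k}$ preserves the two blocks, pairs $(i,j)$ lying in a common block get mapped by $p''$ to pairs in the same block; on such pairs $p'$ is monotone (it is strictly increasing on each block by definition of $S^{(k)}_n$), so $(i,j)$ is an inversion of $p$ iff it is an inversion of $p''$, and such pairs account for exactly $||p''||$ inversions of $p$. For pairs with $i \leq k < j$, the image $(p''(i), p''(j))$ also straddles the boundary, and the assignment is a bijection on straddling pairs, so $(i,j)$ is an inversion of $p$ iff the image is an inversion of $p'$; and since $p'$ is monotone on each block, its inversions are precisely the straddling ones, giving a total of $||p'||$. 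Summing the two counts yields the identity.

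The second factorization $p = \tilde p' \tilde p''$ with $\tilde p' \in S_k \times S_{n-k}$ and $\tilde p'' \in S_{k,n-k}$ is then obtained by applying the first decomposition to $p^{-1}$: write $p^{-1} = p' p''$ and set $\tilde p' := (p'')^{-1}$, $\tilde p'' := (p')^{-1}$. The identification $S_{k,n-k} = \{q^{-1} : q \in S^{(k)}_n\}$ noted in the text places $\tilde p''$ in the correct set, while $S_k \times S_{n-k}$ is visibly closed under inversion. The length additivity for this factorization then follows from that of the first together with the equality $||p|| = ||p^{-1}||$, which is immediate from the observation made in the text that reversing a reduced expression for $p$ produces a reduced expression for $p^{-1}$ of the same length.
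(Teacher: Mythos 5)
The paper does not actually prove this lemma; it cites \cite{BB05} (Prop.~2.4.4, Cor.~2.4.5), where the factorization and length additivity are established for an arbitrary Coxeter group and arbitrary parabolic subgroup via the deletion/exchange condition on reduced words. Your proof is a direct, self-contained argument for the special case at hand ($S_n$ with parabolic $S_k\times S_{n-k}$) using the inversion-number description of length, which is available precisely because we are in type $A$. The central step is correct: since $p''$ preserves the two blocks it restricts to a bijection both on in-block pairs and on straddling pairs, while $p'$ is order-preserving on each block, so in-block pairs $(i,j)$ are inversions of $p=p'p''$ exactly when they are inversions of $p''$ (contributing $||p''||$ in total) and straddling pairs $(i,j)$ are inversions of $p$ exactly when $(p''(i),p''(j))$ is an inversion of $p'$ (contributing $||p'||$, since $p'$ has no in-block inversions). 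Deriving the second factorization by applying the first to $p^{-1}$, together with closure of $S_k\times S_{n-k}$ under inversion, the identification $S_{k,n-k}=(S^{(k)}_n)^{-1}$ noted in the text, and $||p||=||p^{-1}||$, is also sound. In short, your proof is correct; it is more elementary and concrete than the general Coxeter-theoretic route the paper invokes, at the price of not generalizing beyond the symmetric group.
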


Elements of these two quotients correspond to maps $\Lambda_{h+k}\to\Lambda_h\times\Lambda_k$:
$\forall\;\underline{i}\in\Lambda_{h+k}$ we have
$\{\pi(p)(\underline{i})\in\Lambda_h\times\Lambda_k\iff p\in S_{h,k}\}$
and
$\{\underline{i}_{\,\pi(p)}\in\Lambda_h\times\Lambda_k\iff p\in S^{(h)}_{h+k}\}$.
With this preparation, we define $\wprod:W_h\otimes W_k\to W_{h+k}$ by the formula
\begin{equation}\label{eq:wprod}
(v\wprod w)_{\underline{i}}=\sum\nolimits_{p\in S^{(h)}_{h+k}}(-q^{-1})^{||p||}\,
v_{i_{p(1)},\ldots,i_{p(h)}}\,w_{i_{p(h+1)},\ldots,i_{p(h+k)}}
\end{equation}
for all $v=(v_{\underline{i}'})\in W_h$ and $w=(w_{\underline{i}''})\in W_k$,
and for all $h,k=0,\ldots,\ell$ with $h+k\leq\ell$. Moreover, we set $v\wprod w:=0$ if $h+k>\ell$.
By definition of $S^{(h)}_{h+k}$, this map is well defined.

Remark: when there is no risk of confusion, we write $i_1,\ldots,i_k$ without
parenthesis instead of $(i_1,\ldots,i_k)$. For example in the equation above,
$v_{i_{p(1)},\ldots,i_{p(h)}}$ means $v_{(i_{p(1)},\ldots,i_{p(h)})}$; also
$\underline{i}\smallsetminus j$ means $\underline{i}\smallsetminus (j)$.

\begin{prop}\label{prop:grass}
The above product satisfies the following properties:
\begin{enumerate}
\item $\wprod$ is surjective. \label{enu:one}
\item $\wprod$ is associative. \label{enu:two}
\item $\wprod$ is a left $\Uq{\ell}$-module map, that is \label{enu:three}
\begin{equation}\label{eq:wedgecov}
\sigma_{h+k}(x)(v\wprod w)=
\{\sigma_h(x_{(1)})v\}\wprod\{\sigma_k(x_{(2)})w\}
\end{equation}
for all $v\in W_h$, $w\in W_k$ and for all $x\in\Uq{\ell}$.
Here we use Sweedler-type notation, $\Delta(x)=x_{(1)}\otimes x_{(2)}$,
for the coproduct.
\end{enumerate}
\end{prop}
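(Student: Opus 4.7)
The plan is to tackle the three parts in order.

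For \emph{surjectivity} (\ref{enu:one}): given $\underline{i}=(i_1,\ldots,i_{h+k})\in\Lambda_{h+k}$, write $\underline{i}'=(i_1,\ldots,i_h)\in\Lambda_h$ and $\underline{i}''=(i_{h+1},\ldots,i_{h+k})\in\Lambda_k$, and let $e_{\underline{j}}$ denote the standard basis vector. Substituting $v=e_{\underline{i}'}$, $w=e_{\underline{i}''}$ into (\ref{eq:wprod}), and using that both tuples are already strictly increasing, the Kronecker deltas select only $p=\mathrm{id}$ (of length zero) and $\underline{j}=\underline{i}$, yielding $e_{\underline{i}'}\wprod e_{\underline{i}''}=e_{\underline{i}}$. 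Hence $\wprod$ reaches every basis vector of $W_{h+k}$.

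For \emph{associativity} (\ref{enu:two}): expanding both $((u\wprod v)\wprod w)_{\underline{i}}$ and $(u\wprod(v\wprod w))_{\underline{i}}$ by two applications of (\ref{eq:wprod}) produces double sums indexed by pairs of shuffles. Iterating Lemma \ref{lemma:3.9} places both sums in bijection with the set of three-block shuffles
\[
S^{(g,h,k)}_{g+h+k}:=\{\,p\in S_{g+h+k}\,:\,p(1)<\ldots<p(g),\ p(g+1)<\ldots<p(g+h),\ p(g+h+1)<\ldots<p(g+h+k)\,\},
\]
with the partial lengths summing to $||p||$. Both bracketings therefore reduce to the same manifestly symmetric sum
\[
\sum_{p\in S^{(g,h,k)}_{g+h+k}}(-q^{-1})^{||p||}\,u_{i_{p(1)},\ldots,i_{p(g)}}\,v_{i_{p(g+1)},\ldots,i_{p(g+h)}}\,w_{i_{p(g+h+1)},\ldots,i_{p(g+h+k)}}.
\]

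For \emph{equivariance} (\ref{enu:three}): since each $\sigma_k$ is an algebra homomorphism and $\Delta$ is multiplicative, it suffices to check (\ref{eq:wedgecov}) on the generators $K_j,E_j,F_j$; moreover $F_j$-covariance is entirely parallel to $E_j$-covariance (exchanging the roles of $+1$ and $-1$ in Lemma \ref{lemma:diesis}), so only $K_j$ and $E_j$ warrant detailed treatment. With $\Delta(K_j)=K_j\otimes K_j$, the two sides of (\ref{eq:wedgecov}) agree termwise once the additivity (\ref{eq:dsum}) is applied to the two blocks of each shuffle.

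The main obstacle is $E_j$-covariance, with $\Delta(E_j)=E_j\otimes K_j+K_j^{-1}\otimes E_j$. I fix $\underline{i}\in\Lambda_{h+k}$ and analyze, for each $p\in S^{(h)}_{h+k}$, where $j$ and $j+1$ sit in $\underline{i}$ relative to the two blocks cut by $p$. When $j\in\underline{i}$ and $j+1\notin\underline{i}$ (so the LHS equals $(v\wprod w)_{\underline{i}^{j,+}}$), each $p$ places $j$ in exactly one of the two blocks and the substitution $j\mapsto j+1$ there realizes the $j,+$ operation, while $j\#$ of the other block is $0$ by (\ref{eq:diesisP}), trivializing the corresponding $K_j^{\pm 1}$ factor; the $p$-summand of the LHS then matches exactly one of the two $p$-summands of the RHS. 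When both $j,j+1\in\underline{i}$, they occupy consecutive positions $r,r+1$; the LHS vanishes, and the two RHS terms cancel via the involution $p\mapsto\tau_{r,r+1}\cdot p$ (swapping the values $r$ and $r+1$), which stays inside $S^{(h)}_{h+k}$, interchanges the two contributing subcases, and changes $||p||$ by exactly $1$, so that the $q^{\pm 1/2}$ from $K_j^{\pm 1}$ combines with the extra $-q^{-1}$ to make matched contributions cancel. When $j\notin\underline{i}$, both sides are trivially zero.
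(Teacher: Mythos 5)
Your proof is correct, but it is organized quite differently from the paper's. The paper first reduces everything to iterated products of vectors in $W_1$: it proves by induction that a $k$-fold product $v^1\wprod\cdots\wprod v^k$ equals a sum over all of $S_k$ regardless of parenthesization (this gives associativity), and then proves equivariance only for such products, using the $n$-fold coproduct $\Delta_n(E_j)$ and a case analysis on where $j,j+1$ sit in $\underline{i}$; surjectivity and associativity are then invoked to upgrade this to general $v\in W_h$, $w\in W_k$. You instead stay at the two-factor level throughout: for associativity you expand both bracketings of $u\wprod v\wprod w$ and apply Lemma~\ref{lemma:3.9} twice (once for each choice of maximal parabolic $S_{g+h}\times S_k$ or $S_g\times S_{h+k}$) to land on a common three-block shuffle sum, and for equivariance you use only $\Delta(E_j)=E_j\otimes K_j+K_j^{-1}\otimes E_j$ directly on general $v\otimes w$. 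This makes part~\ref{enu:three} logically independent of parts~\ref{enu:one} and~\ref{enu:two}, which is a small structural gain; the paper's route, on the other hand, yields the explicit closed formula~\eqref{eq:wprodind} for iterated products as a byproduct, which is reused later. Your cancellation argument when $j,j+1\in\underline{i}$ mirrors the paper's (same involution $p\mapsto s_r p$, same $\pm 1$ length shift from \cite[1.26]{BB05}), except you need the extra observation that the involution preserves $S^{(h)}_{h+k}$, which you implicitly rely on and which does hold since $r,r+1$ are sent to different blocks. One further difference worth flagging: for $F_j$-covariance the paper invokes a one-line ``$*$-representation'' shortcut, whereas you explicitly note it is a mirror of the $E_j$ case under $+1\leftrightarrow-1$ in Lemma~\ref{lemma:diesis}; your version is arguably the more transparent justification.
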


\begin{proof}
The map $\wprod$ is clearly \emph{surjective}, since
for all $\underline{i}\in\Lambda_n$, $n=h+k$, we can find $v\in W_h$ and $w\in W_k$
such that $(v\wprod w)_{\underline{i}}\neq 0$. For instance, take as $v$ the vector with $v_{i_1,...,i_h}=1$
and all other components zero, and as $w$ the vector with $w_{i_{h+1},...,i_{h+k}}=1$
and all other components zero; this yields $(v\wprod w)_{\underline{i}}=v_{i_1,...,i_h}
w_{i_{h+1},...,i_{h+k}}=1$, which shows point \ref{enu:one}.

\medskip

Concerning point \ref{enu:two}, we need to prove that
for any $k\in\{3,4,\ldots,\ell\}$, no matter how we parenthesize the product $v^1\wprod v^2\wprod ...\wprod v^k$
of vectors in $W_1$, the result will be the same, and is given by the formula
\begin{equation}\label{eq:wprodind}
(v^1\wprod v^2\wprod\ldots\wprod v^k)_{\underline{i}}=\sum_{p\in S_k}(-q^{-1})^{||p||}v^1_{i_{p(1)}}
v^2_{i_{p(2)}}\ldots v^k_{i_{p(k)}} \;.
\end{equation}
We prove (\ref{eq:wprodind}) by induction on $k$. It's easy to check when $k=3$:
\begin{align*}
\{(v^1\wprod v^2)&\wprod v^3\}_{i_1,i_2,i_3}=\{v^1\wprod (v^2\wprod v^3\}_{i_1,i_2,i_3} \\
&=v^1_1v^2_2v^3_3
-q^{-1}v^1_2v^2_1v^3_3
-q^{-1}v^1_1v^2_3v^3_2
+q^{-2}v^1_2v^2_3v^3_1
+q^{-2}v^1_3v^2_1v^3_2
-q^{-3}v^1_3v^2_2v^3_1 \;.
\end{align*}
Now we assume the claim is true for a generic $k\geq 3$, and prove that
$(v^1\wprod ...\wprod v^k)\wprod v^{k+1}$ is equal to
$v^1\wprod (v^2\wprod ...\wprod v^{k+1})$ and given by
the expression (\ref{eq:wprodind}) (with $k$ replaced by $k+1$).
We have
\begin{multline*}
\{(v^1\wprod ...\wprod v^k)\wprod v^{k+1}\}_{\underline{i}}\,
=\sum_{p'\in S^{(k)}_{k+1}}(-q^{-1})^{||p'||}
(v^1\wprod ...\wprod v^k)_{i_{p'(1)},\ldots,i_{p'(k)}}
v^{k+1}_{i_{p'(k+1)}} \\
=\!\sum_{p'\in S^{(k)}_{k+1},p''\in S_k}\!(-q^{-1})^{||p'||+||p''||}v^1_{i_{p'(p''(1))}}
v^2_{i_{p'(p''(2))}}\ldots v^k_{i_{p'(p''(k))}} 
v^{k+1}_{i_{p'(k+1)}} \,.
\end{multline*}
The composition $p=p'\circ (p''\times id)$ is clearly an element
of $S_{k+1}$. But also the converse is true: by Lemma
\ref{lemma:3.9} for any $p\in S_{k+1}$
there is a unique decomposition $p=p'\circ (p''\times id)$
with $p'\in S^{(k)}_{k+1}$ and $p''\times id\in S_k\times S_1$
($S_1=\{id\}$ is the trivial group), and satisfies
$||p||=||p'||+||p''||$. Thus,
$$
\{(v^1\wprod ...\wprod v^k)\wprod v^{k+1}\}_{\underline{i}}
=\sum_{p\in S_{k+1}}(-1)^{||p||}\,v^1_{i_{p(1)}}
v^2_{i_{p(2)}}\ldots v^{k+1}_{i_{p(k+1)}} \;.
$$
The proof can be mirrored: we get the analogous claim for
$v^1\wprod (v^2\wprod ...\wprod v^{k+1})$ by using the decomposition
$S_{k+1}=S^{(1)}_{k+1}\cdot (S_1\times S_k)$. This proves the
inductive step.

Now, the map $\wprod$ being surjective, any $w\in W_k$, $w'\in W_{k'}$
and $w''\in W_{k''}$ can be written as
$$
w=v^1\wprod\ldots\wprod v^k \;,\qquad
w'=v^{k+1}\wprod\ldots\wprod v^{k+k'} \;,\qquad
w''=v^{k+k'+1}\wprod\ldots\wprod v^{k+k'+k''} \;,
$$
with $v^j\in W_1$. Since the product of any number of vectors
in $W_1$ is associative, we have
$$
(w\wprod w')\wprod w''=
v^1\wprod\ldots\wprod v^{k+k'+k''}
=w\wprod (w'\wprod w'')
$$
and this concludes the proof of associativity.

\medskip

We pass to point \ref{enu:three}, the claim that $\wprod:W_h\otimes W_k\to W_{h+k}$
intertwines the Hopf tensor product of the representations $\sigma_h$ and $\sigma_k$
with the representation $\sigma_{h+k}$. It is enough to prove that
\begin{equation}\label{eq:intcov}
\sigma_n(x)(v^1\wprod v^2\wprod\ldots\wprod v^n)=
\{\sigma_1(x_{(1)})v^1\}\wprod
\{\sigma_1(x_{(2)})v^2\}\wprod\ldots\wprod
\{\sigma_1(x_{(n)})v^n\} \;,
\end{equation}
for any number $n$ of vectors $v^j\in W_1$. Equation (\ref{eq:intcov}), together
with associativity and surjectivity of $\wprod$, implies (\ref{eq:wedgecov}).
Indeed, any $v\in W_h$ (resp.~$w\in W_k$) can be written as products $v=v^1\wprod\ldots\wprod v^h$
(resp.~$w=v^{h+1}\wprod\ldots\wprod v^{h+k}$) of vectors $v^j\in W_1$, and using
(\ref{eq:intcov}) and coassociativity of the coproduct we get
\begin{align*}
\sigma_{h+k}(x)(v\wprod w)
&=\sigma_{h+k}(x)(v^1\wprod\ldots\wprod v^{h+k}) \\
&=\{\sigma_1(x_{(1)})v^1\}\wprod
\{\sigma_1(x_{(2)})v^2\}\wprod\ldots\wprod
\{\sigma_1(x_{(n)})v^{h+k}\} \\
&=\bigl\{\sigma_h(x_{(1)})(v^1\wprod\ldots\wprod v^h)\bigr\}
\wprod
\bigl\{\sigma_k(x_{(2)})(v^{h+1}\wprod\ldots\wprod v^{h+k})\bigr\} \\
&=\{\sigma_h(x_{(1)})v\}\wprod\{\sigma_k(x_{(2)})w\} \;.
\end{align*}
Last step is to prove (\ref{eq:intcov}). Dealing with $*$-representa\-tions,
it is enough to do the check for $x=K_j,E_j$ (and for all $j=1,\ldots,\ell-1$).
For $x=K_j$, the property (\ref{eq:intcov}) follows from the simple
observation that, by (\ref{eq:jdi}),
$j\#\underline{i}=\sum\nolimits_{r=1}^n j\#(i_r)$ for all
$\underline{i}\in\Lambda_n$.
For $x=E_j$, the $n$th power of the coproduct is
$$
\Delta_n(E_j)=\sum\nolimits_{r=1}^n\,(K_j^{-1})^{\otimes (r-1)}\otimes E_j\otimes K_j^{\otimes (n-r)}
$$
and the right hand side of (\ref{eq:intcov}) becomes
$$
\mathrm{rhs}=\sum_{r=1}^n
\sigma_1(K_j^{-1})v^1\wprod\ldots\wprod
\sigma_1(K_j^{-1})v^{r-1}\wprod
\sigma_1(E_j)v^{r}\wprod
\sigma_1(K_j)v^{r+1}\wprod\ldots\wprod
\sigma_1(K_j)v^n \;.
$$
By (\ref{ar:K}) and (\ref{ar:E}) we have $\{\sigma_1(K_j)v\}_i=q^{\frac{1}{2}(\delta_{i,j}-\delta_{i,j+1})}v_i$
and $\{\sigma_1(E_j)v\}_i=\delta_{i,j}v_{i+1}$ for all $v\in W_1$. Thus, using (\ref{eq:wprodind})
we rewrite the $\underline{i}$th component of previous equation as
$$
(\mathrm{rhs})_{\underline{i}}=\sum_{r=1}^n
\sum_{p\in S_n|i_{p(r)}=j}\!\!\!(-q^{-1})^{||p||}
q^{\frac{1}{2}\left(\sum_{s<r}-\sum_{s>r}\right)\delta_{i_{p(s)},j+1}}
v^1_{i_{p(1)}}\ldots v^{r-1}_{i_{p(r-1)}}v^r_{j+1}v^{r+1}_{i_{p(r+1)}}\ldots v^n_{i_{p(n)}} \;.
$$
Being $p$ a permutation, $j\in\underline{i}_{\,\pi(p)}$
if and only if $j\in\underline{i}$. We have three cases:
1) $j\notin\underline{i}$,
2) $j,j+1\in\underline{i}$ and
3) $j\in\underline{i}$ but $j+1\notin\underline{i}$; the first
two cases correspond to $j\#\underline{i}\neq 1$, the third
to $j\#\underline{i}=1$.

If $j\notin\underline{i}$ the equation is trivially zero (the sum is
empty). In the second case we get zero too, but to prove it requires
some work. Firstly, notice that if $r,s$ are the integer such that
$i_r=j$ and $i_s=j+1$, then $s=r+1$ (by contradiction, assume there
exists $h$ such that $r<h<s$, then there is also $i_h$ such that $j<i_h<j+1$,
that is a contradiction being $i_h$ an integer). Assume then
that $\underline{i}=(i_1,\ldots,i_{r-1},j,j+1,j_{r+2},\ldots,i_n)$,
and call
$$
A=\{p\in S_n\,|\,p^r<p^{r+1}\}\;,\qquad
B=\{p\in S_n\,|\,p^r>p^{r+1}\}\;.
$$
We have
\begin{align*}
(\mathrm{rhs})_{\underline{i}}
=\sum\nolimits_{p\in A}(-q^{-1})^{||p||}q^{-\frac{1}{2}}
v^1_{i_{p(1)}}\ldots
v^{p^r-1}_{i_{p(p^r-1)}}v^{p^r}_{j+1}v^{p^r+1}_{i_{p(p^r+1)}}\ldots
v^{p^{r+1}-1}_{i_{p(p^{r+1}-1)}}v^{p^{r+1}}_{j+1}v^{p^{r+1}+1}_{i_{p(p^{r+1}+1)}}\ldots
v^n_{i_{p(n)}} \notag \\
 +\sum\nolimits_{p'\in B}(-q^{-1})^{||p'||}q^{\frac{1}{2}}
v^1_{i_{p'(1)}}\ldots
v^{p'^{r+1}-1}_{i_{p'(p'^{r+1}-1)}}v^{p'^{r+1}}_{j+1}v^{p'^{r+1}+1}_{i_{p'(p'^{r+1}+1)}}\ldots
v^{p'^r-1}_{i_{p'(p'^r-1)}}v^{p'^r}_{j+1}v^{p'^r+1}_{i_{p'(p'^r+1)}}\ldots
v^n_{i_{p'(n)}} \;.
\end{align*}
The effect of the composition $p^{-1}\mapsto p^{-1}\circ s_r$,
with $s_r$ the simple transposition exchanging $r$ with $r+1$, is to
exchange $p^{-1}(r)$ with $p^{-1}(r+1)$ in the complete expression of $p^{-1}$
(cf.~\cite{BB05}, Pag.~20, with $x=p^{-1}$). But this is equivalent to
the transformation $p\mapsto s_r\circ p$ (as $s_r^2=1$), whose effect
is then to exchange $p^r$ with $p^{r+1}$ (as $p^j=p^{-1}(j)$), thus
giving a bijection $A\to B$.
The change of variable $p'=s_r\circ p$ turns the second sum in last equation
into the first, but for a global sign (by~\cite[1.26]{BB05}, $||p'^{-1}||=||p^{-1}||+1$,
and then $||p'||=||p||+1$, for all $p'\in B$). Hence, the two sums cancel and the result is zero.
Therefore, $(\mathrm{rhs})_{\underline{i}}$ is zero unless $j\#\underline{i}=1$.
We have
\begin{align*}
(\mathrm{rhs})_{\underline{i}} &=\delta_{j\#\underline{i},+1}\sum_{p\in S_n}(-q^{-1})^{||p||}
\Big[v^1_{i_{p(1)}}\ldots v^{x-1}_{i_{p(x-1)}}v^x_{j+1}v^{x+1}_{i_{p(x+1)}}\ldots v^n_{i_{p(n)}}
\Big]_{x:\,i_{p(x)}=j} \\
&=\delta_{j\#\underline{i},+1}(v^1\wprod v^2\wprod\ldots\wprod v^n)_{\underline{i}^{\,j,+}} \;,
\end{align*}
that by (\ref{ar:E}) is exactly the $\underline{i}$th component of the
left hand side of (\ref{eq:intcov}). This concludes the proof.
\end{proof}

We set $\Gr:=\oplus_{k=0}^\ell W_k$, equipped with $\wprod$, that by Proposition \ref{prop:grass} is a
graded associative algebra -- generated by $W_1$ -- and a left $\Uq{\ell}$-module algebra. This is a $q$-analogue of the $2^\ell$ dimensional
Grassmann algebra. Indeed, for its dimension we have $\,\dim\Gr=\sum_{k=0}^\ell\dim W_k=\sum_{k=0}^\ell\binom{\ell}{k}=2^\ell$.
We list explicitly the wedge product between elements with degree $0$ and $1$.
If $a\in W_0$ and $v\in W_1$ then $a\wprod v=av$ and $v\wprod a=va$;
if $v,w\in W_1$:
$$
(v\wprod w)_{i_1,i_2}=v_{i_1}w_{i_2}-q^{-1}v_{i_2}w_{i_1}
\;,\qquad\forall\;1\leq i_1<i_2\leq\ell\;;
$$
if $v\in W_1$, $w\in W_2$:
$$
(v\wprod w)_{i_1,i_2,i_3}=v_{i_1}w_{i_2,i_3}-q^{-1}v_{i_2}w_{i_1,i_3}
+q^{-2}v_{i_3}w_{i_1,i_2}
\;,\qquad\forall\;1\leq i_1<i_2<i_3\leq\ell\;.
$$
Also, the formula for the product of $v\in W_1$ and $w\in W_k$ will be useful later.
Any $p\in S^{(1)}_{k+1}$ has the form $p:(i_1,...,i_{k+1})
\mapsto (i_r)\times (i_1,...,i_{r-1},i_{r+1},...,i_{k+1})$ for some $1\leq r\leq k+1$,
and $||p||=r-1$. Thus
$$
(v\wprod w)_{\underline{i}}=\sum_{r=1}^{k+1} (-q)^{1-r}\,
v_{i_r}\,w_{\underline{i}\smallsetminus i_r} \;,
$$
and similarly
$$
(w\wprod v)_{\underline{i}}=\sum_{r=1}^{k+1} (-q)^{r-k-1}\,
w_{\underline{i}\smallsetminus i_r}v_{i_r} \;.
$$

\medskip

To discuss the first order condition, we'll need the following antilinear
map $J:W_k\to W_{\ell-k}$,
\begin{equation}\label{eq:JonW}
(Jw)_{\underline{i}}=
(-q^{-1})^{|\underline{i}|}q^{\frac{1}{4}\ell(\ell+1)}
\,\overline{w_{\,\underline{i}^c}} \;,
\end{equation}
where $|\underline{i}|:=i_1+i_2+\ldots+i_{\ell-k}$ and $\bar{z}$ is the
complex conjugate of $z\in\C$.

\begin{prop}\label{prop:Jsquare}
Let $0\leq k\leq\ell$. The map $J:W_k\to W_{\ell-k}$ has square
$$
J^2=(-1)^{\lfloor\frac{\ell+1}{2}\rfloor} \;,
$$
with $\lfloor t\rfloor$ the integer part of $t$.
It is equivariant, in the following sense:
\begin{equation}\label{eq:Jx}
\sigma_{\ell-k}(x^*)J=J\sigma_k(S(x))
\end{equation}
for all $x\in\Uq{\ell}$.
\end{prop}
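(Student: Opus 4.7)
The plan is to prove the two claims by direct computation on components, exploiting the combinatorial identities for complementary multi-indices (\ref{eq:iic}) and (\ref{eq:diesisB}) established in Lemma \ref{lemma:diesis}.

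For the first claim, I would apply the definition of $J$ twice. Since $(-q^{-1})$ and $q^{\ell(\ell+1)/4}$ are real, complex conjugation passes through them cleanly, giving
\[
(J^2w)_{\underline{i}}
=(-q^{-1})^{|\underline{i}|+|\underline{i}^c|}\,q^{\ell(\ell+1)/2}\,w_{\underline{i}} \;.
\]
By (\ref{eq:iic}) applied to the identity map (i.e.\ $\underline{i}\cup\underline{i}^c=\{1,\ldots,\ell\}$), we get $|\underline{i}|+|\underline{i}^c|=\frac{1}{2}\ell(\ell+1)$, so the $q$-powers cancel and $J^2=(-1)^{\ell(\ell+1)/2}$. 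A short check mod~$4$ in $\ell$ confirms that $\ell(\ell+1)/2\equiv\lfloor(\ell+1)/2\rfloor\pmod 2$, yielding the stated sign. The role of the prefactor $q^{\ell(\ell+1)/4}$ in (\ref{eq:JonW}) is precisely to make this cancellation happen.

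For the second claim, I note that the identity $\sigma_{\ell-k}(x^*)J=J\sigma_k(S(x))$ is linear in $x$ and, since $*$ and $S$ are both anti-multiplicative while $\sigma_{\ell-k}$ and $\sigma_k$ are representations, compatible with products; hence it is enough to verify it on the generators $K_j, E_j, F_j$ of $\Uq{\ell}$. The $K_j$ case is immediate from (\ref{ar:K}) and (\ref{eq:iic}): replacing $j\#\underline{i}^c$ by $-j\#\underline{i}$ converts $\sigma_k(K_j^{-1})$ acting on the $\underline{i}^c$-component into $\sigma_{\ell-k}(K_j)$ acting on the $\underline{i}$-component. The $E_j$ case requires more care: evaluating both sides at $\underline{i}$, the delta from $\sigma_k(E_j)$ applied inside the $\underline{i}^c$-slot imposes $j\#\underline{i}^c=+1$, which by (\ref{eq:iic}) is $j\#\underline{i}=-1$, matching the delta from $\sigma_{\ell-k}(F_j)$ on the other side; and by (\ref{eq:diesisB}) the complement $(\underline{i}^c)^{j,+}$ equals $(\underline{i}^{\,j,-})^c$, so the $w$-components match. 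The sign/$q$-power bookkeeping then reduces to the identity $|\underline{i}^{\,j,-}|=|\underline{i}|-1$, which combined with the factor $-q$ from $S(E_j)$ gives precisely the expected $(-q^{-1})^{|\underline{i}^{\,j,-}|}$ on the side of $\sigma_{\ell-k}(F_j)Jw$. The $F_j$ case is entirely parallel, using (\ref{eq:diesisB}) directly in the form given.

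The main obstacle I expect is keeping track of the three interacting pieces of data in the $E_j$ (and $F_j$) check: the replacement of $\underline{i}$ by $\underline{i}^{\,j,\pm}$ and its interaction with complementation, the shift of $|\underline{i}|$ by $\pm 1$ that consumes exactly one power of $(-q^{-1})$, and the $-q^{\pm 1}$ coming from $S$. The combinatorial identity (\ref{eq:diesisB}) is what aligns the first piece, while the constant $q^{\ell(\ell+1)/4}$ in $J$ is irrelevant here (it cancels) and only intervenes in part~(1).
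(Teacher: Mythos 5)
Your proposal is correct and follows essentially the same route as the paper's own proof: direct componentwise computation for $J^2$ using $|\underline{i}|+|\underline{i}^c|=\tfrac{1}{2}\ell(\ell+1)$ followed by a parity check, and verification of the equivariance identity on the generators $K_j,E_j,F_j$ using \eqref{eq:iic}, \eqref{eq:diesisB}, and the shift $|\underline{i}^{\,j,\pm}|=|\underline{i}|\pm1$. Two cosmetic remarks: the identity $|\underline{i}|+|\underline{i}^c|=\tfrac{1}{2}\ell(\ell+1)$ is not literally \eqref{eq:iic} (which is about $j\#\underline{i}$), but simply the fact that $\underline{i}\cup\underline{i}^c=\{1,\ldots,\ell\}$; and the paper verifies only one of the two $E_j/F_j$ cases directly and then deduces the other by conjugating with $J$ and invoking $J^2=\pm1$, which saves a computation you instead carry out (or sketch as ``parallel'').
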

\begin{proof}
Clearly, for $w\in W_k$,
$$
(J^2w)_{\underline{i}}=
(-q^{-1})^{i_1+i_2+\ldots+i_k+i^c_1+i^c_2+\ldots+i_{\ell-k}^c}q^{\frac{1}{2}\ell(\ell+1)}
\,w_{(\underline{i}^c)^c} \;,
$$
but $(\underline{i}^c)^c=\underline{i}$ and $\{i_r,i^c_s\}_{r,s}$ is the
set as \emph{all} integers between $1$ and $\ell$, so their sum is $\frac{1}{2}\ell(\ell+1)$,
and
$$
(J^2w)_{\underline{i}}=
(-q^{-1})^{\frac{1}{2}\ell(\ell+1)}q^{\frac{1}{2}\ell(\ell+1)}
\,w_{\underline{i}}=(-1)^{\frac{1}{2}\ell(\ell+1)}\,w_{\underline{i}} \;.
$$
Note that $\frac{1}{2}\ell(\ell+1)$ has the same parity of $\frac{1}{2}(\ell+1)$ if $\ell$
is odd, and it has the same parity of $\frac{1}{2}\ell$ if $\ell$ is
even. In both cases it has the same parity as $\lfloor\frac{\ell+1}{2}\rfloor$.
This proves the claim about $J^2$.

We pass to (\ref{eq:Jx}). Let
$c_{\underline{i},\ell}:=(-q^{-1})^{i_1+i_2+\ldots+i_k}q^{\frac{1}{4}\ell(\ell+1)}$. Firstly,
by (\ref{eq:iic}) (we omit the representation symbols)
$$
(K_jJw)_{\underline{i}}=q^{\frac{1}{2}(j\#\underline{i})}
(Jw)_{\underline{i}}=q^{-\frac{1}{2}(j\#\underline{i}^c)}(Jw)_{\underline{i}}
=q^{-\frac{1}{2}(j\#\underline{i}^c)}c_{\underline{i},\ell}\,
\overline{w_{\underline{i}^c}}=c_{\underline{i},\ell}\,\overline{(K_j^{-1}w)_{\underline{i}^c}} \;,
$$
that is $K_j^*Jw=J\,S(K_j)w$.

Now we use (\ref{eq:iic}), (\ref{eq:diesisB}), and the observation that
$-q^{-1}c_{\underline{i},\ell}=c_{\underline{i}^{j,+},\ell}$, to compute
\begin{align*}
\{J(-q^{-1}F_jw)\}_{\underline{i}}&=
c_{\underline{i},\ell}\overline{\{-q^{-1}F_jw\}_{\underline{i}^c}}=
-q^{-1}c_{\underline{i},\ell}\delta_{j\#\underline{i}^c,-1}\overline{w_{(\underline{i}^c)^{j,-}}} \\ &=
c_{\underline{i}^{j,+},\ell}\delta_{j\#\underline{i},1}\overline{w_{(\underline{i}^{j,+})^c}}=
\delta_{j\#\underline{i},1}(Jw)_{\underline{i}^{j,+}}=
\{E_jJw\}_{\underline{i}} \;.
\end{align*}
Since $J^2=\pm 1$, we have also $-q^{-1}F_jJ=JE_j$.
Hence, we have $x^*J=JS(x)$ for arbitrary generator $x=K_j,E_j,F_j$ of $\Uq{\ell}$,
and this concludes the proof.
\end{proof}

To any $x\in W_1$, we associate an operation of left `exterior product'
$\mathfrak{e}^L_x:W_k\to W_{k+1}$ (resp.~right `exterior product' $\mathfrak{e}^R_x:W_k\to W_{k+1}$)
via the rule
\begin{equation}\label{eq:extpro}
\mathfrak{e}_x^Lw=x\wprod w \;,\qquad
\mathfrak{e}_x^Rw=(-q)^kw\wprod x \;.
\end{equation}
We define the left (resp.~right) `contraction' as the adjoint $\mathfrak{i}_x^L$
of $\mathfrak{e}_x^L$ (resp.~$\mathfrak{i}_x^R$ of $\mathfrak{e}_x^R$) with respect to the inner product
on $W_k$ given by
$$
\inner{v,w}:=\sum\nolimits_{\underline{i}\in\Lambda_k}
\overline{v_{\underline{i}}}\,w_{\,\underline{i}} \;,
$$
for all $v,w\in W_k$.

\begin{prop}\label{prop:3.12}
We have
$$
J\mathfrak{e}_x^L J^{-1}=-q\mathfrak{i}^R_x\;,
$$
for all $x\in W_1$. As a consequence, denoting  $L(j,\underline{i}')$ the position
of $j$ inside the string $\underline{i}'$, we have
\begin{equation}\label{eq:internal}
(\mathfrak{i}_x^Rv)_{\underline{i}}=\sum_{j\notin\underline{i}}
(-q)^{L(j,\underline{i}\cup j)-1}\,\overline{x_j}\,v_{\underline{i}\cup j} \;\;,
\end{equation}
for all $v\in W_{k+1}$ and $\underline{i}\in\Lambda_k$.
\end{prop}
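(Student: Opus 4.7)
The plan is to compute $J\mathfrak{e}_x^L J^{-1}$ explicitly in components, recognize the result as $-q$ times the right hand side of (\ref{eq:internal}), and then verify separately by an adjunction calculation that the operator defined by that right hand side is the adjoint of $\mathfrak{e}_x^R$. Setting $u:=J^{-1}v\in W_{\ell-k-1}$ for $v\in W_{k+1}$ (so that $Ju=v$), the first task reduces to showing $J(x\wprod u)=-q\,\mathfrak{i}_x^R(v)$ componentwise.

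Apply the definition of $J$ and the formula for $x\wprod u$ (with $x\in W_1$ and $u\in W_{\ell-k-1}$, so $x\wprod u\in W_{\ell-k}$) displayed just before the proposition. Re-indexing the resulting sum over $r\in\{1,\dots,\ell-k\}$ as a sum over $j=i^c_r\in\underline i^c$, with $r=L(j,\underline i^c)$, produces
$$
\bigl(J(x\wprod u)\bigr)_{\underline i}=c_{\underline i,\ell}\sum_{j\notin\underline i}(-q)^{1-L(j,\underline i^c)}\,\overline{x_j}\,\overline{u_{(\underline i\cup j)^c}}\,,
$$
where $c_{\underline i,\ell}:=(-q^{-1})^{|\underline i|}q^{\ell(\ell+1)/4}$. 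Reintroduce $J$ in the opposite direction: since $\underline i\cup j\in\Lambda_{k+1}$, the definition of $J$ gives $\overline{u_{(\underline i\cup j)^c}}=c_{\underline i\cup j,\ell}^{-1}v_{\underline i\cup j}$, and $|\underline i\cup j|=|\underline i|+j$ telescopes the $c$-factors to $c_{\underline i,\ell}\,c_{\underline i\cup j,\ell}^{-1}=(-q)^j$. The crucial combinatorial identity is
$$
L(j,\underline i^c)+L(j,\underline i\cup j)=j+1,
$$
proved by counting: writing $m:=\#\{i\in\underline i:i<j\}$, one has $L(j,\underline i\cup j)=m+1$, while among the $j-1$ integers below $j$ exactly $m$ lie in $\underline i$ and $j-1-m$ lie in $\underline i^c$, so $L(j,\underline i^c)=j-m$. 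This collapses the $(-q)$-exponent to $L(j,\underline i\cup j)$, and pulling out one factor of $-q$ yields exactly $-q$ times the right hand side of (\ref{eq:internal}) evaluated at $v$.

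It remains to identify the operator $B\colon W_{k+1}\to W_k$ defined by the right hand side of (\ref{eq:internal}) with $\mathfrak{i}_x^R$, i.e.\ the adjoint of $\mathfrak{e}_x^R$. Expanding $\langle v,\mathfrak{e}_x^R w\rangle=(-q)^k\langle v,w\wprod x\rangle$ via $(w\wprod x)_{\underline j}=\sum_{r=1}^{k+1}(-q)^{r-k-1}w_{\underline j\smallsetminus j_r}x_{j_r}$ and changing variables to $\underline i:=\underline j\smallsetminus j_r\in\Lambda_k$, $j:=j_r$ (so that $r=L(j,\underline i\cup j)$), one recognizes $\langle Bv,w\rangle$, confirming $B=\mathfrak{i}_x^R$ and giving both assertions of the proposition. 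The only genuine obstacle is the careful $(-q)$-exponent bookkeeping, which culminates in the position identity $L(j,\underline i^c)+L(j,\underline i\cup j)=j+1$; this is what pins down the specific universal scalar $-q$ in the statement.
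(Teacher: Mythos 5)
Your proof is correct. You take essentially the same computational route as the paper --- compute $J\mathfrak{e}_x^L J^{-1}$ in components, reduce everything to the position identity $L(j,\underline{i}^c)+L(j,\underline{i}\cup j)=j+1$, and identify the result with $-q\,\mathfrak{i}_x^R$ by an adjunction calculation --- but with two organizational/technical differences worth noting. First, you prove the position identity by a direct double-count (of integers below $j$ lying in $\underline{i}$ vs.\ $\underline{i}^c$), whereas the paper proves it by induction on $k$; your argument is shorter and more transparent. Second, you establish the explicit componentwise formula $(J\mathfrak{e}_x^LJ^{-1}v)_{\underline{i}}=-q\sum_{j\notin\underline{i}}(-q)^{L(j,\underline{i}\cup j)-1}\overline{x_j}v_{\underline{i}\cup j}$ first and then verify separately that this formula defines the adjoint of $\mathfrak{e}_x^R$, whereas the paper works at the level of inner products, comparing $\langle J\mathfrak{e}_x^LJ^{-1}v,w\rangle$ with $\langle v,-q\mathfrak{e}_x^Rw\rangle$ via a summation interchange (including a normalization sanity check), and only then reads off the explicit expression from its intermediate formula. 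Your organization makes the bijection $(\underline{i},j)\leftftarrow(\underline{i}\cup j,j)$ implicit in the index change rather than a separate lemma-like step, which streamlines the bookkeeping; the paper's version makes the adjoint relation primary and the explicit formula a corollary. Both are complete and correct.
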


\begin{proof}
For any $v\in W_{k+1}$ and $\underline{i}\in\Lambda_k$,
\begin{align}
(J\mathfrak{e}^L_xJ^{-1}v)_{\underline{i}} &=
(-q^{-1})^{|\underline{i}|}q^{\frac{1}{4}\ell(\ell+1)}
\overline{(\mathfrak{e}_x^LJ^{-1}v)_{\underline{i}^c}} \notag \\
&=
(-q^{-1})^{|\underline{i}|}q^{\frac{1}{4}\ell(\ell+1)}
\sum\nolimits_{r=1}^{\ell-k}(-q^{-1})^{r-1}\,
\overline{(J^{-1}v)_{\underline{i}^c\smallsetminus i^c_r}}\,\overline{x_{i_r^c}} \notag \\
&=\sum\nolimits_{r=1}^{\ell-k}(-q)^{i^c_r-r+1}\,
v_{\underline{i}\cup i^c_r}\,\overline{x_{i_r^c}} \;\;. \label{eq:take}
\end{align}
Thus,
\begin{align*}
\inner{J\mathfrak{e}_x^LJ^{-1}v,w} &=
\sum_{\underline{i}\in\Lambda_k} \sum_{r=1}^{\ell-k}
(-q)^{i^c_r-r+1}\,\overline{v_{\underline{i}\cup i^c_r}}\,x_{i_r^c}\,
w_{\underline{i}}
=\sum_{\underline{i}\in\Lambda_k} \sum_{j\notin\underline{i}}
(-q)^{j-L(j,\underline{i}^c)+1}\,\overline{v_{\underline{i}\cup j}}\,x_j\,
w_{\underline{i}} \;\;, \\
\inner{v,-q\mathfrak{e}_x^Rw} &=
\sum_{\underline{i}'\in\Lambda_{k+1}}\sum_{r'=1}^{k+1}
(-q)^{r'}\,\overline{v_{\underline{i}'}}\,x_{i'_{r'}}\,
w_{\underline{i}'\smallsetminus i'_{r'}}
=\sum_{\underline{i}'\in\Lambda_{k+1}}\sum_{j\in\underline{i}'}
(-q)^{L(j,\underline{i}')}\,\overline{v_{\underline{i}'}}\,x_j\,
w_{\underline{i}'\smallsetminus j} \;\;,
\end{align*}
where $L(j,\underline{i}^c)$ (resp.~$L(j,\underline{i}')$) is the position
of $j$ inside $\underline{i}^c$ (resp.~$\underline{i}'$). We have
$$
\sum_{\underline{i}\in\Lambda_k} \sum_{j\notin\underline{i}}f_{\underline{i},j}
=\sum_{\underline{i}'\in\Lambda_{k+1}}\sum_{j\in\underline{i}'}
f_{\underline{i}'\smallsetminus j,j}
$$
for any $f$ and modulo a proportionality constant (any $\underline{i}'\in\Lambda_{k+1}$
can be written as a union $\underline{i}\cup j$, but the decomposition is not unique).
To check that the normalization is correct we take $f$ with all components equal to $1$,
and get
\begin{align*}
\sum_{\underline{i}\in\Lambda_k} \sum_{j\notin\underline{i}} 1   &=
(\ell-k)\sum_{\underline{i}\in\Lambda_k}1=(\ell-k)\tbinom{\ell}{k} \;,\\
\sum_{\underline{i}'\in\Lambda_{k+1}}\sum_{j\in\underline{i}'} 1 &=
(k+1)\sum_{\underline{i}'\in\Lambda_{k+1}}1=(k+1)\tbinom{\ell}{k+1} \;,
\end{align*}
and the two quantities above coincide. It remains to show that
\begin{equation}\label{eq:claimj}
L(j,\underline{i}^c)+L(j,\underline{i}\cup j)=j+1
\end{equation}
for all $\underline{i}\in\Lambda_k$ and $j\notin\underline{i}$.
From this it follows immediately $\inner{J\mathfrak{e}_x^LJ^{-1}v,w}=
\inner{v,-q\mathfrak{e}_x^Rw}$, so that the adjoint $-q\mathfrak{i}_x^R$ of $-q\mathfrak{e}_x^R$
is exactly $J\mathfrak{e}^L_x J^{-1}$.

We now prove (\ref{eq:claimj}) by induction. Let $k=1$ and $\underline{i}=(i_1)$.
We have $L(j,\underline{i}\cup j)=1$ if $j<i_1$ and $L(j,\underline{i}\cup j)=2$
if $j>i_1$. Concerning the left hand side, $\underline{i}^c=\{1,2,\ldots,i_1-1,i_1+1,\ldots,\ell\}$
and the position of $j$ inside $\underline{i}^c$ is $j$ itself if
$j<i_1$, and $j-1$ (for the empty position corresponding to $i_1$)
if $j>i_1$. In both cases, the sum is $j+1$.

Now we assume (\ref{eq:claimj}) is true for $k\geq 1$ generic, and prove that
it is true for $k+1$. Let $\underline{i}\in\Lambda_{k+1}$.
Call $\underline{i}''=(i_1,\ldots,i_k)$,
$\underline{i}=\underline{i}''\cup i_{k+1}$ and
$\underline{i}^c=\underline{i}''^{\,c}\smallsetminus i_{k+1}$. If
$i_{k+1}>j$ the inductive step follows from
$$
L(j,\underline{i}\cup j)=L(j,\underline{i}''\cup j)
\;,\qquad
L(j,\underline{i}^c)=L(j,\underline{i}''^{\,c})
\;.
$$
If $i_{k+1}<j$ then $i_r<j$ for all $r$, and
$$
L(j,\underline{i}\cup j)=k+2 \;,\qquad
L(j,\underline{i}^c)=j-k-1 \;.
$$
The sum is again $j+1$.
To conclude, if we take (\ref{eq:take}) and use (\ref{eq:claimj}), we
get (\ref{eq:internal}).
\end{proof}

Remark: for $q=1$, $\mathfrak{e}_x^L=\mathfrak{e}_x^R=:\mathfrak{e}_x$,
and for all $x,y\in W_1$, we have
$\mathfrak{e}_x\mathfrak{e}_y+\mathfrak{e}_y\mathfrak{e}_x=0$
and $\mathfrak{i}_x\mathfrak{e}_y+\mathfrak{e}_y\mathfrak{i}_x=\inner{x,y}\cdot
id_{\Gr}$. From this it follows that the map $x\mapsto\mathfrak{i}_x+\mathfrak{e}_x$
gives a representation of the Clifford algebra generated by $W_1$. For $q\neq 1$
this is no more true (for example, $(\mathfrak{e}^L_x)^2=
\mathfrak{e}^L_{x\wprod x}$, and $x\wprod x$ is not always zero). Fortunately,
we don't need this property in the sequel.

We conclude with a lemma on the quantum dimension $\dim_qW_k$ of $W_k$, that is defined as
the trace of 
$$
\prod\nolimits_{j=1}^{\ell-1}K_j^{2j(\ell-j)}=\maut\hat{K}^{-\ell-1} \;,
$$
which is the analogue of the element in (\ref{eq:Ssquare}) for the Hopf algebra $\Uq{\ell}$
(cf.~Sec.~7.1.6 of \cite{KS97}). Recall that $\hat{K}$ is defined in \eqref{eq:Khat}.
The geometrical meaning of $\dim_qW_k$ is the value of the $U_q(\mathfrak{su}(\ell))$
invariant of the unknot coloured by the representation $W_k$ (cf.~\cite{Res87}).

\begin{lemma}\label{lemma:dimq}
The quantum dimension of $W_k$ is given by
$$
\dim_qW_k=\sum\nolimits_{\underline{i}\in\Lambda_k}q^{k(\ell+1)-2|\underline{i}|} \;.
$$
It is symmetric under the exchange $q\to q^{-1}$, and its explicit value is
$$
\dim_qW_k=\tfrac{[\ell]!}{[k]!\,[\ell-k]!} \;,
$$
where $[n]!:=[n][n-1]\ldots [1]$ if $n\geq 1$, and $[0]!:=1$.
\end{lemma}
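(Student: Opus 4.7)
My plan is to prove the three assertions separately, using the explicit action of $\Uq{\ell}$ on $W_k$ from \eqref{ar:K}.

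First, I would compute the trace directly. By \eqref{ar:K} the operator $\prod_{j=1}^{\ell-1}K_j^{2j(\ell-j)}$ is diagonal in the natural basis of $W_k$ with eigenvalue $q^{E(\underline{i})}$, where
\begin{equation*}
E(\underline{i})=\sum\nolimits_{j=1}^{\ell-1}j(\ell-j)\,(j\#\underline{i})\;.
\end{equation*}
Substituting the definition \eqref{eq:jdi} and exchanging the order of summation gives
\begin{equation*}
E(\underline{i})=\sum\nolimits_{h=1}^{k}\bigl[\,i_h(\ell-i_h)-(i_h-1)(\ell+1-i_h)\,\bigr]=\sum\nolimits_{h=1}^{k}(\ell+1-2i_h)=k(\ell+1)-2|\underline{i}|\;,
\end{equation*}
after expanding the square bracket (the inner sum telescopes because $j\#\underline{i}$ only feels the entries $j=i_h$ and $j=i_h-1$). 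Summing $q^{E(\underline{i})}$ over $\underline{i}\in\Lambda_k$ gives the claimed formula for $\dim_qW_k$.

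Second, for the $q\leftrightarrow q^{-1}$ symmetry I would use the involution
\begin{equation*}
\Lambda_k\to\Lambda_k\;,\qquad (i_1,\ldots,i_k)\mapsto(\ell+1-i_k,\ldots,\ell+1-i_1)\;,
\end{equation*}
which preserves the increasing order and sends $|\underline{i}|$ to $k(\ell+1)-|\underline{i}|$, hence the exponent $k(\ell+1)-2|\underline{i}|$ to its opposite. So $\dim_qW_k$ is invariant under $q\to q^{-1}$.

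Third, to identify the sum with the $q$-binomial coefficient $[\ell]!/([k]![\ell-k]!)$, I would argue by induction on $\ell$. Splitting $\Lambda_k$ according to whether $\ell\in\underline{i}$ (in which case the remaining $k-1$ indices range over $\Lambda_{k-1}(\ell-1)$) or $\ell\notin\underline{i}$ (in which case $\underline{i}\in\Lambda_k(\ell-1)$), and collecting the resulting powers of $q$, one obtains the recursion
\begin{equation*}
\dim_qW_k(\ell)=q^k\,\dim_qW_k(\ell-1)+q^{k-\ell}\,\dim_qW_{k-1}(\ell-1)\;.
\end{equation*}
The base cases $k=0$ and $k=\ell$ give $1$, and the recursion coincides with the standard Pascal identity $\binom{\ell}{k}_q=q^k\binom{\ell-1}{k}_q+q^{k-\ell}\binom{\ell-1}{k-1}_q$ for the symmetric $q$-binomial (which follows from $[\ell]=q^{-k}[\ell-k]+q^{\ell-k}[k]$ applied to the ratio of $q$-factorials). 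The main bookkeeping step is the shift of the exponent: extracting $i_k=\ell$ contributes $q^{1-\ell}$, while keeping $i_k\leq\ell-1$ requires rewriting $q^{\ell+1-2i_h}=q\cdot q^{(\ell-1)+1-2i_h}$ in each of the $k$ factors; getting these powers to match the recursion for the symmetric $q$-binomial is the only delicate point, but it is a direct calculation once one commits to the convention $[n]=(q^n-q^{-n})/(q-q^{-1})$.
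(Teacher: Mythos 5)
Your proposal follows the paper's route almost step for step: the same telescoping computation of the trace eigenvalue $k(\ell+1)-2|\underline{i}|$, and the same decomposition of $\Lambda_k$ according to whether $\ell\in\underline{i}$, yielding the recursion $c_k^\ell=q^kc_k^{\ell-1}+q^{-(\ell-k)}c_{k-1}^{\ell-1}$ which is matched against the $q$-binomial via $q^k[\ell-k]+q^{-(\ell-k)}[k]=[\ell]$; your explicit involution $(i_1,\ldots,i_k)\mapsto(\ell+1-i_k,\ldots,\ell+1-i_1)$ for the $q\leftrightarrow q^{-1}$ symmetry is a small addition that the paper leaves implicit (there it follows from the closed form). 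One small slip to fix: the Pascal rule you quote does match your recursion, but it follows from $[\ell]=q^{k}[\ell-k]+q^{-(\ell-k)}[k]$, not from $[\ell]=q^{-k}[\ell-k]+q^{\ell-k}[k]$ as written — the latter is the same identity after $q\mapsto q^{-1}$ and would produce the other form of the Pascal rule.
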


\begin{proof}
The general matrix element of $\maut\hat{K}^{-\ell-1}$ along the diagonal is
$$
\sigma_k(\maut\hat{K}^{-\ell-1})_{\underline{i},\underline{i}}=
q^{\sum_{j=1}^{\ell-1}j(\ell-j)\cdot(j\#\underline{i})} \;.
$$
But{\allowdisplaybreaks%
\begin{align*}
\sum\nolimits_{j=1}^{\ell-1}j(\ell-j)\cdot(j\#\underline{i})
&=\sum\nolimits_{j=1}^{\ell-1}j(\ell-j)\sum\nolimits_{h=1}^k(\delta_{j,i_h}-\delta_{j+1,i_h}) \\
&=\sum\nolimits_{h=1}^k\sum\nolimits_{j=1}^{\ell-1}j(\ell-j)(\delta_{j,i_h}-\delta_{j+1,i_h}) \\
&=\sum\nolimits_{h=1}^k\left(\sum\nolimits_{j=1}^{\ell-1}j(\ell-j)\delta_{j,i_h}-
\sum\nolimits_{j=2}^\ell(j-1)(\ell-j+1)\delta_{j,i_h}\right) \\
&=\sum\nolimits_{h=1}^k\left(\sum\nolimits_{j=1}^\ell j(\ell-j)\delta_{j,i_h}-
\sum\nolimits_{j=1}^\ell(j-1)(\ell-j+1)\delta_{j,i_h}\right) \\
&=\sum\nolimits_{h=1}^k\sum\nolimits_{j=1}^\ell\Big( j(\ell-j)-(j-1)(\ell-j+1)\Big)\delta_{j,i_h} \\
&=\sum\nolimits_{h=1}^k\sum\nolimits_{j=1}^\ell(\ell+1-2j)\delta_{j,i_h}
=\sum\nolimits_{h=1}^k(\ell+1-2i_h) \\
&=k(\ell+1)-2|\underline{i}| \;.
\end{align*}
}Thus:
\begin{equation}\label{eq:zaK}
\sigma_k(\maut\hat{K}^{-\ell-1})_{\underline{i},\underline{i}}=q^{k(\ell+1)-2|\underline{i}|} \;.
\end{equation}
This proves the first formula for $\dim_qW_k$.

A $q$-analogue of the hook formula for the quantum dimension of a general
irreducible representation of $U_q(\mathfrak{su}(\ell))$ is discussed in
the unpublished paper \cite{Res87}. We derive a simpler formula for the
representations we are interested in.
Let
$$
c^\ell_k:=\dim_qW_k=\sum_{1\leq i_1<i_2<\ldots<i_k\leq\ell}q^{k(\ell+1)-2(i_1+i_2+\ldots+i_k)} \;,
$$
where the dependence on $\ell$ is put in evidence. We decompose $\Lambda_k$ as
\begin{align*}
\Lambda_k &=\{1\leq i_1<i_2<\ldots<i_k\leq\ell\} \\
&=\{1\leq i_1<i_2<\ldots<i_k\leq\ell-1\}\cup
\{1\leq i_1<i_2<\ldots<i_{k-1}\leq\ell-1 \, ; \, i_k=\ell\} \;.
\end{align*}
This gives the recursive equation
\begin{align*}
c^\ell_k &= q^k\!\!\sum_{1\leq i_1<\ldots<i_k\leq\ell-1}\!\!q^{k\ell-2(i_1+i_2+\ldots+i_k)}
+q^{-(\ell-k)}\!\!\sum_{1\leq i_1<\ldots<i_{k-1}\leq\ell-1}\!\!q^{(k-1)\ell-2(i_1+i_2+\ldots+i_{k-1})} \\
&=q^k c^{\ell-1}_k+q^{-(\ell-k)}c^{\ell-1}_{k-1} \;.
\end{align*}
But since $q^k[\ell-k]+q^{-(\ell-k)}[k]=[\ell]$, we have also
$$
\tfrac{[\ell]!}{[k]!\,[\ell-k]!}=q^k\tfrac{[\ell-1]!}{[k]!\,[\ell-1-k]!}
+q^{-(\ell-k)}\tfrac{[\ell-1]!}{[k-1]!\,[\ell-k]!} \;.
$$
Thus, $c^\ell_k=c^1_1\tfrac{[\ell]!}{[k]!\,[\ell-k]!}$. But $c^1_1=1$,
and this concludes the proof.
\end{proof}

\subsection{Left invariant vector fields over $\CP^\ell_q$}
Classically (for $q=1$), left invariant vector fields on $\CP^\ell\simeq SU(\ell+1)/U(\ell)$ are given
by the right action of elements of $\mathfrak{u}^\perp(\ell)$, the orthogonal
complement of $\mathfrak{u}(\ell)$ inside $\mathfrak{su}(\ell+1)$. That is, any left
invariant vector field is a linear combination of the operators $M_{i,\ell}$
and $M_{i,\ell}^*$ (see Sec.~\ref{sec:Cas}), acting via the right canonical action.
In particular, the former gives the Dolbeault operator $\de$ and the latter $\deb$.
We are interested only in the latter one.
The vector $v=(v_i)$ with components $v_i=M_{i,\ell}^*|_{q=1}$ can be thought of
as an element of $\mathfrak{su}(\ell+1)\otimes_{\C}W_1$ that is right
$\mathfrak{u}(\ell)$-invariant with respect to the tensor product of the right adjoint action --
`dressed' with $S^{-1}$ -- and the action via $\sigma_1$. It is then clear how to generalize it to $q\neq 1$.

We denote by
$$
x\adj h:=S(h_{(1)})xh_{(2)}
$$
the right adjoint action of $\Uq{\ell+1}$ on itself, and set
$$
\mathfrak{X}:=\big\{v\in\Uq{\ell+1}\otimes_{\C}W_1
\,\big|\,v\adj\hat{K}=qv,\;\sigma_1(h_{(2)})v\adj S^{-1}(h_{(1)})=
\epsilon(h)v,\;\forall\;h\in\Uq{\ell}\big\} \;.
$$
Classically, primitive elements of $\mathfrak{X}$ (i.e.~such that
$\Delta(x)=x\otimes 1+1\otimes x$) are proportional to the vector
with components $M_{i,\ell}^*$. We look for a natural $q$-deformation
of these elements belonging to $\mathfrak{X}$ (the subset of primitive
elements is empty for $q\neq 1$).

\begin{lemma}\label{lemma:Xi}
Let $\{e^i\}_{i=1,\ldots,\ell}$ be the canonical basis of $W_1\simeq\C^\ell$
and
$$
X_i:=N_{i\ell}M_{i\ell}^*\;,\qquad
\forall\;i=1,\ldots,\ell\;,
$$
where $N_{i\ell}$ are defined in \eqref{eq:Njk}.
Then the vector
$$
X=\sum_ie^iX_i\in\Uq{\ell+1}\otimes_{\C}W_1
$$
belongs to $\mathfrak{X}$.
\end{lemma}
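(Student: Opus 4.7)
I must verify the two defining conditions on $X=\sum_i e^iX_i$: (i) the weight condition $X\adj\hat K=qX$; and (ii) the $\Uq{\ell}$-invariance $\sigma_1(h_{(2)})\,X\adj S^{-1}(h_{(1)})=\epsilon(h)X$ for every $h\in\Uq{\ell}$. The formula in (ii) defines a left action of $\Uq{\ell}$ on $\Uq{\ell+1}\otimes_{\C}W_1$ (using that $\adj$ is a right action, that $S^{-1}$ is an anti-homomorphism, and that $\sigma_1$ is a homomorphism), so it is enough to check it on an algebra generating set, which I take to be the standard generators $K_j,E_j,F_j$ with $j=1,\ldots,\ell-1$.

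For (i): since $\hat K$ is group-like, $X_i\adj\hat K=\hat K^{-1}X_i\hat K$, and since $N_{i\ell}$ lies in the Cartan subalgebra, $[\hat K,N_{i\ell}]=0$. A direct check on generators shows $\hat K F_a\hat K^{-1}=q^{-\delta_{a,\ell}}F_a$, and since $M_{i\ell}^*$ is an iterated $q$-commutator containing each of $F_i,F_{i+1},\ldots,F_\ell$ exactly once, only $F_\ell$ contributes: $\hat K^{-1}M_{i\ell}^*\hat K=q\,M_{i\ell}^*$. Multiplying gives $X_i\adj\hat K=qX_i$ for every $i$, hence $X\adj\hat K=qX$.

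For (ii) with $h=K_j$: on the $e^i$-component the condition reduces to the weight identity $K_j^{-1}X_iK_j=q^{(\delta_{i,j}-\delta_{i,j+1})/2}X_i$, which exactly cancels the action $\sigma_1(K_j)e^i=q^{(\delta_{i,j}-\delta_{i,j+1})/2}e^i$ of (\ref{ar:K}). The identity itself follows from $K_jF_aK_j^{-1}=q^{-\delta_{j,a}+\tfrac{1}{2}\delta_{|j-a|,1}}F_a$ by summing the exponent over $a=i,\ldots,\ell$ (the indices of the $F$-generators appearing in $M_{i\ell}^*$) and a short case analysis on the relative position of $i$ and $j$.

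For $h=E_j$, the condition projected onto each basis vector $e^k\in W_1$ becomes $X_k\adj E_j=\delta_{k,j}X_{j+1}$. I would expand
$$
X_k\adj E_j=-qE_jX_kK_j+K_jX_kE_j,
$$
commute $E_j$ past $N_{k\ell}$ using (\ref{eq:NEN}) and $K_j$ past $M_{k\ell}^*$ as in the previous step, and then apply Lemma \ref{lemmaA}: for $j<\ell$ the formula (\ref{eq:cons}) reduces to $[E_j,M_{k\ell}^*]=-\delta_{j,k}K_j^{-2}M_{k+1,\ell}^*$. The $q$-power prefactors of the bulk term $M_{k\ell}^*E_j$ turn out to be equal for every $k$ and thus cancel; the surviving commutator term vanishes unless $k=j$, in which case the identity $N_{j\ell}=K_jN_{j+1,\ell}$ (clear from (\ref{eq:Njk})) repackages it into $X_{j+1}$. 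The case $h=F_j$ is entirely analogous, based on (\ref{eq:prov}) in place of (\ref{eq:cons}) and on the same commutation relations with $N_{k\ell}$.

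The only non-routine point is the cancellation in the $E_j$-step: three independent sources of $q$-factors --- from conjugation of $N_{k\ell}$ by $E_j$ (half the exponent predicted by (\ref{eq:NEN})), from conjugation of $M_{k\ell}^*$ by $K_j$, and from the commutation $K_jE_j=qE_jK_j$ --- must conspire to annihilate the bulk contribution $M_{k\ell}^*E_j$ for \emph{every} $k$, leaving only the single commutator term from Lemma \ref{lemmaA}. Once this cancellation is verified, the correct normalization for $k=j$ is automatic and every other step is straightforward substitution.
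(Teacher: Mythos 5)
Your proof is correct and essentially follows the paper for the weight conditions ($\hat K$ and $K_j$) and for the $E_j$ step, where you correctly identify the key cancellation $X_k\adj E_j = K_jN_{k\ell}[M_{k\ell}^*,E_j]$ and the use of \eqref{eq:cons}. The only departure from the paper is that you verify the $K_j$-weight by summing exponents directly over the generators $F_i,\dots,F_\ell$ appearing in $M_{i\ell}^*$, whereas the paper does it by induction on $i$; both are valid and this is merely stylistic.

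The $F_j$ case, however, is \emph{not} ``entirely analogous'' and \eqref{eq:prov} is the wrong tool. First, \eqref{eq:prov} computes $[F_i,M_{jk}]$ with the \emph{unstarred} $M$; what is needed here involves $F_j$ against $M_{k\ell}^*$, which has the opposite $*$-type and lives in Lemma~\ref{lemmaB}, not Lemma~\ref{lemmaA}. Second, if you mimic the $E_j$-step by pulling $K_jN_{k\ell}$ out front, the surviving prefactor on the bulk term $F_jM_{k\ell}^*$ is $q^{\delta_{j,k}-\delta_{j,k-1}}$, so you obtain a plain commutator only when $j\notin\{k-1,k\}$; for $j=k$ and $j=k-1$ you are left with $q$-commutators, which must be resolved via the recursion \eqref{eq:iter} and the Serre-type identities \eqref{eq:lastD}--\eqref{eq:lastE} --- again Lemma~\ref{lemmaB}, not \eqref{eq:prov}. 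The paper avoids this case split by choosing a \emph{different} factorization in the $F_j$-step: it pulls out $K_jN_{i\ell}^{-1}$ (not $K_jN_{i\ell}$), so that the prefactors cancel to the genuine commutator $[N_{i\ell}^2M_{i\ell}^*,F_j]$, which is then evaluated by the adjoint of \eqref{eq:last}. Your plan is salvageable, but as written the $F_j$-step cites a formula that cannot be applied and asserts a cancellation that, under the stated organization, does not hold for all $j$.
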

\begin{proof}
We have
$$
\hat{K}^{-1}X_\ell\hat{K}=
\hat{K}K_\ell^{-1} (\hat{K}^{-1}F_\ell\hat{K})
$$
and
$$
\hat{K}^{-1}F_\ell\hat{K}=
(K_{\ell-1}^{\ell-1}K_\ell^\ell)^{-\frac{2}{\ell+1}}F_\ell
(K_{\ell-1}^{\ell-1}K_\ell^\ell)^{\frac{2}{\ell+1}}=
q^{\big(\ell-\frac{1}{2}(\ell-1)\big)\frac{2}{\ell+1}}F_\ell
=qF_\ell \;.
$$
Since $\hat{K}$ commutes with $F_i$ for all $i=1,\ldots,\ell-1$, we
have $X_i\adj\hat{K}=\hat{K}^{-1}X_i\hat{K}=qX_i$ for all $i=1,\ldots,\ell$.
Next, to show the right $\Uq{\ell}$-invariance, it is enough to work with generators,
that means to verify
$$
X\adj K_j=\sigma_1(K_j)X \;,\qquad
X\adj E_j=\sigma_1(E_j)X \;,\qquad
X\adj F_j=\sigma_1(F_j)X \;,
$$
for all $j=1,\ldots,\ell-1$.
We prove by induction that
$$
X_i\adj K_j:=K_j^{-1}X_iK_j=q^{\frac{1}{2}(\delta_{i,j}-\delta_{i,j+1})}X_i\equiv\{\sigma_1(K_j)X\}_i \;,
$$
for all $i=1,\ldots,\ell$ and for all $j=1,\ldots,\ell-1$. It is true for $i=\ell$,
since $F_\ell$ commutes with $K_j$ for all $j<\ell-1$, and
$K_{\ell-1}^{-1}F_\ell K_{\ell-1}=q^{-\frac{1}{2}}F_\ell$.
The inductive step comes from the recursive relation
$$
X_i=N_{i,\ell}[N_{i+1,\ell}^{-1}X_{i+1},F_i]_q
$$
that together with $F_i\adj K_j=q^{\delta_{i,j}-\frac{1}{2}(\delta_{i,j+1}+\delta_{i,j-1})}$
gives
\begin{align*}
X_i\adj K_j &=N_{i,\ell}[N_{i+1,\ell}^{-1}X_{i+1}\adj K_j,F_i\adj K_j]_q \\
&=q^{\frac{1}{2}(\delta_{i+1,j}-\delta_{i+1,j+1})}
q^{\delta_{i,j}-\frac{1}{2}(\delta_{i,j+1}+\delta_{i,j-1})}
N_{i,\ell}[N_{i+1,\ell}^{-1}X_{i+1},F_i]_q \\
&=q^{\frac{1}{2}(\delta_{i,j}-\delta_{i,j+1})}X_i \;,
\end{align*}
for all $i=1,\ldots,\ell-1$.
Since
$\,K_jM_{i,\ell}^*K_j^{-1}=q^{-\frac{1}{2}(\delta_{i,j}-\delta_{i,j+1})}M_{i,\ell}^*\,$,
$\,N_{i,\ell}E_jN_{i,\ell}^{-1}=q^{\frac{1}{2}(\delta_{i,j}-\delta_{i,j+1})}\,$ and
$\,K_jE_jK_j^{-1}=qE_j$,
we have
\begin{align*}
X_i\adj E_j &=K_jX_iE_j-qE_jX_iK_j \\
&=K_jN_{i,\ell}M_{i,\ell}^*E_j-qE_jN_{i,\ell}M_{i,\ell}^*K_j \\
&=K_jN_{i,\ell}\big\{M_{i,\ell}^*E_j-q
(K_j^{-1}N_{i,\ell}^{-1}E_jN_{i,\ell}K_j)(K_j^{-1}M_{i,\ell}^*K_j)\big\} \\
&=K_jN_{i,\ell}[M_{i,\ell}^*,E_j] \\
\intertext{and by (\ref{eq:cons}) and $\,N_{i,\ell}=K_iN_{i+1,\ell}\,$, this is}
&=K_jN_{i,\ell}\delta_{i,j}K_i^{-2}M_{i+1,\ell}^* \\
&=\delta_{i,j}N_{i+1,\ell}M_{i+1,\ell}^* \\
&=\delta_{i,j}X_{i+1}\equiv\{\sigma_1(E_j)X\}_i \;.
\end{align*}
Finally,
\begin{align*}
X_i\adj F_j &=K_jN_{i,\ell}M_{i,\ell}^*F_j-q^{-1}F_jN_{i,\ell}M_{i,\ell}^*K_j \\
&=K_jN_{i,\ell}^{-1}\big\{N^2_{i,\ell}M_{i,\ell}^*F_j
-q^{-1}(K_j^{-1}N_{i,\ell}F_jN_{i,\ell}^{-1}K_j)N^2_{i,\ell}(K_j^{-1}M_{i,\ell}^*K_j)\big\} \\
&=K_jN_{i,\ell}^{-1}[N^2_{i,\ell}M_{i,\ell}^*,F_j] \\
\intertext{and by the adjoint of (\ref{eq:last}) this is}
&=K_jN_{i,\ell}^{-1}\delta_{i,j+1}N^2_{i,\ell}M_{i-1,\ell}^* \\
&=\delta_{i,j+1}N_{i-1,\ell}M_{i-1,\ell}^* \\
&=\delta_{i,j+1}X_{i-1}\equiv\{\sigma_1(F_j)X\}_i \;.
\end{align*}
This concludes the proof.
\end{proof}

The following lemmas will be useful later.
\begin{lemma}
We have
\begin{equation}\label{eq:debsquare}
X_iX_j=q^{-1}X_jX_i \;,
\end{equation}
for all $1\leq i<j\leq\ell$.
\end{lemma}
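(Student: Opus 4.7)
My plan is to separate the abelian $N$-factors from the $M^*$'s, reducing the claim to a pure $q$-commutation among the $M_{i\ell}^*$'s, and then establish that by induction. Since $M_{i\ell}^*$ is a weight vector of weight $-\alpha_i-\cdots-\alpha_\ell$ for the inner action of the Cartan subalgebra, and $N_{j\ell}=K_jK_{j+1}\cdots K_\ell\hat{K}^{-1}$ is group-like, a direct weight computation---using the defining commutation relations of the $K_p$'s with the $F_p$'s together with $\hat{K}F_p\hat{K}^{-1}=q^{-\delta_{p,\ell}}F_p$---gives, for $i<j$,
\begin{equation*}
N_{j\ell}M_{i\ell}^*=q^{1/2}M_{i\ell}^*N_{j\ell}\quad\text{and}\quad N_{i\ell}M_{j\ell}^*=q^{1/2}M_{j\ell}^*N_{i\ell}.
\end{equation*}
Since the $N$'s mutually commute, substituting these into $X_iX_j=N_{i\ell}M_{i\ell}^*N_{j\ell}M_{j\ell}^*$ and $X_jX_i=N_{j\ell}M_{j\ell}^*N_{i\ell}M_{i\ell}^*$ shows the claim to be equivalent to $M_{i\ell}^*M_{j\ell}^*=q^{-1}M_{j\ell}^*M_{i\ell}^*$ for all $1\leq i<j\leq\ell$.

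I would prove this $q$-commutation by downward induction on $i$ with $j$ fixed. For the inductive step ($i+1<j$), the adjoint of~\eqref{eq:iter} gives $M_{i\ell}^*=M_{i+1,\ell}^*F_i-q^{-1}F_iM_{i+1,\ell}^*$, and $F_i$ commutes with $M_{j\ell}^*$ because every $F_p$ occurring in $M_{j\ell}^*$ satisfies $p\geq j\geq i+2$, hence $|p-i|\geq 2$. Applying the inductive hypothesis twice then yields the desired equality in two lines. The diagonal base case $i=j-1$ with $j=\ell$ follows directly from the adjoint of~\eqref{eq:lastD}, which reads $M_{\ell-1,\ell}^*F_\ell=q^{-1}F_\ell M_{\ell-1,\ell}^*$.

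What remains is the diagonal base case $i=j-1$ for $j<\ell$, and this is the main obstacle. Substituting $M_{j-1,\ell}^*=M_{j\ell}^*F_{j-1}-q^{-1}F_{j-1}M_{j\ell}^*$ into the target identity reduces it to the quantum Serre-type relation
\begin{equation*}
F_{j-1}M_{j\ell}^{*2}-(q+q^{-1})M_{j\ell}^*F_{j-1}M_{j\ell}^*+M_{j\ell}^{*2}F_{j-1}=0.
\end{equation*}
I would establish this by a secondary induction on $\ell-j$: the case $j=\ell$ is the ordinary Serre relation between $F_{\ell-1}$ and $F_\ell$, while the inductive step expands $M_{j\ell}^*=M_{j+1,\ell}^*F_j-q^{-1}F_jM_{j+1,\ell}^*$ and exploits both the commutation $[F_{j-1},M_{j+1,\ell}^*]=0$ and the two Serre relations for the simple pair $(F_{j-1},F_j)$. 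This is a Levendorskii--Soibelman-type identity for the composite root vectors: expanding $M_{j\ell}^{*2}$ produces four quadratic monomials in $F_j$ and $M_{j+1,\ell}^*$ which must collapse, after repeated use of the Serre identities and the previous Serre identity with $j$ replaced by $j+1$, to the desired form, and this combinatorial bookkeeping is the technical heart of the argument.
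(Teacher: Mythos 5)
Your reduction to the $q$-commutation $M_{i\ell}^*M_{j\ell}^*=q^{-1}M_{j\ell}^*M_{i\ell}^*$ is correct (and your $q^{1/2}$ conjugation factors are the right ones), and your inductive step for $i+1<j$ is also correct. The structural problem is your choice of induction variable. Inducting downward on the \emph{smaller} index $i$ with $j$ held fixed makes the base case $i=j-1$ the bottleneck: for $j<\ell$ both $M_{j-1,\ell}^*$ and $M_{j\ell}^*$ are composite root vectors, and as you note the base case reduces to the Serre-type identity $F_{j-1}M_{j\ell}^{*2}-(q+q^{-1})M_{j\ell}^*F_{j-1}M_{j\ell}^*+M_{j\ell}^{*2}F_{j-1}=0$, for which you only outline a secondary induction and defer the verification of its step to unspecified ``combinatorial bookkeeping.'' That identity is indeed true (classically it records that $\alpha_{j-1}+2(\alpha_j+\cdots+\alpha_\ell)$ is not a root of $A_\ell$), but it is a non-trivial Levendorskii--Soibelman-type lemma in its own right, and your sketch does not establish it; as written this is a genuine gap.

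The paper avoids this obstacle entirely by inducting on the \emph{larger} index. Working with $[M_{j,\ell},M_{i,\ell}]_q$ and decreasing $j$ from $\ell$, the base case $j=\ell$ involves the simple generator $M_{\ell,\ell}=E_\ell$ and is exactly \eqref{eq:lastD}; the step from $j$ to $j-1$ uses $M_{j-1,\ell}=[E_{j-1},M_{j,\ell}]_q$, the commutator identity $[[a,b]_q,c]_q=[a,[b,c]_q]_q+q^{-1}[[a,c],b]$, and the vanishing $[E_{j-1},M_{i,\ell}]=0$ for $i<j-1<\ell$ from \eqref{eq:lastC}. The key point is that the base case always involves a simple generator, never a composite root vector. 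You can recover this economy inside your own starred framework: fix $i$ and induct $j$ downward from $\ell$; the base $j=\ell$ is the adjoint of \eqref{eq:lastD}, and the step uses $M_{j-1,\ell}^*=M_{j\ell}^*F_{j-1}-q^{-1}F_{j-1}M_{j\ell}^*$ together with $[F_{j-1},M_{i\ell}^*]=0$ for $i<j-1<\ell$ (the adjoint of \eqref{eq:lastC}); the inductive hypothesis then closes the computation in two lines, with no Serre-type identity for composite vectors needed at all.
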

\begin{proof}
From $X_i=N_{i,\ell}M_{i,\ell}^*$, and \eqref{eq:NEN}, which implies
$N_{j,\ell}M_{i,\ell}^*N_{j,\ell}^{-1}=q^{1-\frac{1}{2}\delta_{i,j}}M_{i,\ell}^*$,
we deduce
$$
[X_i,X_j]_q=q^{\frac{1}{2}\delta_{i,j}-1}
N_{i,\ell}N_{j,\ell}[M_{j,\ell},M_{i,\ell}]_q^* \;.
$$
Now we prove, by induction on $j$, that $[M_{j,\ell},M_{i,\ell}]_q=0$ for all $j>i$.
For $j=\ell$ we have
$$
[E_\ell,M_{i,\ell}]_q=0
$$
by (\ref{eq:lastD}). Then, if assume the claim true for a generic $j$, $i+1<j\leq\ell$,
using the identity
$$
[[a,b]_q,c]_q=[a,[b,c]_q]_q+q^{-1}[[a,c],b]
$$
we get
$$
[M_{j-1,\ell},M_{i,\ell}]_q=
[[E_{j-1},M_{j,\ell}]_q,M_{i,\ell}]_q=
[E_{j-1},[M_{j,\ell},M_{i,\ell}]_q]_q+q^{-1}
[[E_{j-1},M_{i,\ell}],M_{j,\ell}] \;.
$$
The first term is zero by inductive hypothesis, and the second is
zero since $[E_{j-1},M_{i,\ell}]=0$ for all $i<j-1<\ell$ by (\ref{eq:lastC}).
Thus, $[M_{j,\ell},M_{i,\ell}]_q=0$ implies $[M_{j-1,\ell},M_{i,\ell}]_q=0$
for all $i+1<j\leq\ell$, and this concludes the proof.
\end{proof}

\begin{lemma}
We have
\begin{equation}\label{eq:Xcop}
\Delta(X_i)=X_i\otimes N_{i,i-1}+\hat{K}^{-1}\otimes X_i
+q^{-\frac{1}{2}}(q-q^{-1})\sum_{j=i}^{\ell-1}X_{j+1}\otimes N_{i,j}M_{i,j}^* \;.
\end{equation}
for all $1\leq i\leq\ell$.
\end{lemma}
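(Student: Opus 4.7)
The strategy is a direct inductive computation. Since $\hat K$ and each $K_j$ are group-like in the enlarged Hopf algebra, so is their product $N_{i,\ell}=K_iK_{i+1}\cdots K_\ell\,\hat K^{-1}$; hence $\Delta(N_{i,\ell})=N_{i,\ell}\otimes N_{i,\ell}$. Because $\Delta$ is an algebra homomorphism, the factorisation $X_i=N_{i,\ell}M_{i,\ell}^*$ gives $\Delta(X_i)=(N_{i,\ell}\otimes N_{i,\ell})\Delta(M_{i,\ell}^*)$, so the task reduces to computing $\Delta(M_{i,\ell}^*)$.

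I would obtain $\Delta(M_{i,\ell}^*)$ by descending induction on $i$, from $i=\ell$ down to $i=1$. The base case is $\Delta(M_{\ell,\ell}^*)=\Delta(F_\ell)=F_\ell\otimes K_\ell+K_\ell^{-1}\otimes F_\ell$. For the inductive step, take the adjoint of the recursion $M_{i,\ell}=[E_i,M_{i+1,\ell}]_q$ to get $M_{i,\ell}^*=[M_{i+1,\ell}^*,F_i]_q$, apply $\Delta$, and expand using $\Delta(F_i)=F_i\otimes K_i+K_i^{-1}\otimes F_i$ together with the inductive hypothesis. Normal-ordering the result via the Cartan commutation relations $K_jF_iK_j^{-1}=q^{-\delta_{i,j}+\frac12(\delta_{i,j-1}+\delta_{i,j+1})}F_i$ should organize the output into three families of tensors: a leading term $M_{i,\ell}^*\otimes(K_i\cdots K_\ell)$, a trailing term $(K_i\cdots K_\ell)^{-1}\otimes M_{i,\ell}^*$, and cross terms whose first tensor factor is built from $F_{j+1},\ldots,F_\ell$ and whose second tensor factor is built from $F_i,\ldots,F_j$ (up to Cartan elements), one such family for each $j=i,\ldots,\ell-1$.

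Restoring the prefactor $N_{i,\ell}\otimes N_{i,\ell}$ and using the two identities $N_{i,\ell}^2\hat K=N_{i,i-1}$ and $N_{i,\ell}N_{j+1,\ell}^{-1}=K_iK_{i+1}\cdots K_j$, both immediate from \eqref{eq:Njk}, the leading term collapses into $X_i\otimes N_{i,i-1}$ and the trailing term into $\hat K^{-1}\otimes X_i$; each cross term becomes $X_{j+1}\otimes N_{i,j}M_{i,j}^*$ once one further invokes the definition $X_{j+1}=N_{j+1,\ell}M_{j+1,\ell}^*$. As a sanity check, the extreme case $i=\ell$ (where the sum is empty) reduces the claim to
\[
\Delta(X_\ell)=X_\ell\otimes K_\ell^2\hat K^{-1}+\hat K^{-1}\otimes X_\ell,
\]
which follows directly from $X_\ell=K_\ell\hat K^{-1}F_\ell$ together with $\Delta(F_\ell)$ and the identity $N_{\ell,\ell-1}=K_\ell^2\hat K^{-1}$.

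The principal obstacle is bookkeeping rather than insight: one must verify that, after all the reordering, the coefficient of every cross term is precisely $q^{-1/2}(q-q^{-1})$, independent of $j$. The factor $(q-q^{-1})$ will arise from the incomplete cancellation in the $q$-commutator $[\,\cdot\,,F_i]_q=\,\cdot\,F_i-q^{-1}F_i\,\cdot$ when $F_i$ is pushed past an earlier $F$, while the $q^{-1/2}$ tracks the conjugation shift $N_{i+1,\ell}F_iN_{i+1,\ell}^{-1}=q^{1/2}F_i$ already encountered in the proof of Lemma~\ref{lemma:Xi}. The computation is mechanical and mirrors the Lusztig formula for the coproduct of PBW root vectors in $\Uq{\ell+1}$, where $M_{i,\ell}^*$ plays the role of the negative root vector attached to $\alpha_i+\alpha_{i+1}+\ldots+\alpha_\ell$.
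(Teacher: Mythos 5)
Your proposal is essentially the same argument the paper gives: both prove a closed formula for the coproduct of the root vector (the paper works with $M_{i,\ell}$ and only passes to $X_i=N_{i,\ell}M_{i,\ell}^*$ at the very end; you take adjoints from the start and work with $M_{i,\ell}^*$, which is a cosmetic difference) by descending induction on $i$ from $i=\ell$, using the $q$-commutator recursion and then conjugating by the group-like element $N_{i,\ell}\otimes N_{i,\ell}$. The one thing your sketch leaves to the reader — that every cross term acquires exactly the coefficient $q^{-1/2}(q-q^{-1})$ — is precisely the explicit expansion of $[\Delta(E_i),\Delta(M_{i+1,\ell})]_q$ that constitutes the bulk of the paper's proof, so to turn the sketch into a proof you would still need to carry that computation out.
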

\begin{proof}
Let $c:=q^{-\frac{1}{2}}(q-q^{-1})$. We prove by induction that
\begin{align}
\Delta(M_{i,\ell}) &=M_{i,\ell}\otimes K_i\ldots K_\ell+(K_i\ldots K_\ell)^{-1}\otimes M_{i,\ell} \notag\\
&\quad +c\sum_{j=i}^{\ell-1}M_{j+1,\ell}(K_i\ldots K_j)^{-1}\otimes M_{i,j}(K_{j+1}\ldots K_\ell) \;.\label{eq:rhs}
\end{align}
This immediately implies the claim for $X_i$. For $i=\ell$ the equality (\ref{eq:rhs})
holds
$$
\Delta(E_\ell)=E_\ell\otimes K_\ell +K_\ell^{-1}\otimes E_\ell \;.
$$
Suppose \eqref{eq:rhs} gives the correct value of 
$\Delta(M_{i+1,\ell})$ for some $i+1\leq\ell$. Then
\begin{align*}
\Delta(M_{i,\ell}) &=[\Delta(E_i),\Delta(M_{i+1,\ell})]_q \\
&=
\left[E_i\otimes K_i,M_{i+1,\ell}\otimes K_{i+1}\ldots K_\ell\right]_q
+\left[K_i^{-1}\otimes E_i,M_{i+1,\ell}\otimes K_{i+1}\ldots K_\ell\right]_q
\\ &\quad
+\left[E_i\otimes K_i,(K_{i+1}\ldots K_\ell)^{-1}\otimes M_{i+1,\ell}\right]_q
+\left[K_i^{-1}\otimes E_i,(K_{i+1}\ldots K_\ell)^{-1}\otimes M_{i+1,\ell}\right]_q
\\ &\quad
+c\sum\nolimits_{j=i+1}^{\ell-1}
\left[E_i\otimes K_i,M_{j+1,\ell}(K_{i+1}\ldots K_j)^{-1}\otimes M_{i+1,j}(K_{j+1}\ldots K_\ell)\right]_q
\\ &\quad
+c\sum\nolimits_{j=i+1}^{\ell-1}
\left[K_i^{-1}\otimes E_i,M_{j+1,\ell}(K_{i+1}\ldots K_j)^{-1}\otimes M_{i+1,j}(K_{j+1}\ldots K_\ell)\right]_q \\
&=
M_{i,\ell}\otimes K_i\ldots K_\ell
+q^{-\frac{1}{2}}(q-q^{-1})M_{i+1,\ell}K_i^{-1}\otimes E_i(K_{i+1}\ldots K_\ell)
\\ &\quad
+\;0\;+(K_i\ldots K_\ell)^{-1}\otimes M_{i,\ell}
\\ &\quad
+c\sum\nolimits_{j=i+1}^{\ell-1}q^{-\frac{1}{2}}
[E_i,M_{j+1,\ell}](K_{i+1}\ldots K_j)^{-1}\otimes M_{i+1,j}K_i(K_{j+1}\ldots K_\ell)
\\ &\quad
+c\sum\nolimits_{j=i+1}^{\ell-1}M_{j+1,\ell}(K_i\ldots K_j)^{-1}\otimes
[E_i,M_{i+1,j}]_q(K_{j+1}\ldots K_\ell) \\
&=M_{i,\ell}\otimes K_i\ldots K_\ell+(K_i\ldots K_\ell)^{-1}\otimes M_{i,\ell}
\\ &\quad
+c\sum\nolimits_{j=i}^{\ell-1}M_{j+1,\ell}(K_i\ldots K_j)^{-1}\otimes
M_{i,j}(K_{j+1}\ldots K_\ell) \;,
\end{align*}
and last equation is exactly the right hand side of (\ref{eq:rhs}).
\end{proof}

\begin{lemma}
We have
\begin{subequations}
\begin{align}
[X_i^*,S^{-1}(X_j)] &=q^{\frac{1}{2}}\,(\hat{K}N_{j,i-1})^{-1}\,S^{-1}(M_{j,i-1}^*)
&&\textup{for all}\;\ell\geq i>j\geq 1 \;,\label{eq:SXXA} \\
[X_i^*,S^{-1}(X_i)] &=-q\,\frac{(K_i\ldots K_\ell)^2-(K_i\ldots K_\ell)^{-2}}{q-q^{-1}}
&&\textup{for all}\;i=1,\ldots,\ell \;.\label{eq:SXXB}
\end{align}
\end{subequations}
\end{lemma}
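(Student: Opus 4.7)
My plan is to reduce both identities to a single core commutator, and to prove case (b) first so as to use it as the base of case (a). First I would rewrite $X_i^* = M_{i,\ell} N_{i,\ell}$ and $S^{-1}(X_j) = S^{-1}(M_{j,\ell}^*)\,N_{j,\ell}^{-1}$; then, using $S^{-1}(F_m) = -qF_m$ and anti-multiplicativity of $S^{-1}$ on the recursion $M_{j,\ell}^* = [M_{j+1,\ell}^*, F_j]_q$, an induction on $\ell-j$ yields $S^{-1}(M_{j,\ell}^*) = (-q)^{\ell-j+1}\hat{M}_{j,\ell}$, where I introduce $\hat{M}_{j,\ell} := [F_j,[F_{j+1},\ldots,[F_{\ell-1},F_\ell]_q\ldots]_q]_q$, the $F$-analogue of $M_{j,\ell}$. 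From (\ref{eq:NEN}) I verify that $N_{i,\ell}$ commutes with $M_{i,\ell}$ and $M_{i,\ell}^*$, that $N_{i,\ell}$ acts on $M_{j,\ell}^*$ (for $j<i$) by $q^{1/2}$, and that $N_{j,\ell}$ acts on $M_{i,\ell}$ by $q^{-1/2}$; combined with $N_{i,\ell}N_{j,\ell}^{-1} = (K_j\ldots K_{i-1})^{-1}$, all Cartan factors can be pulled out of the commutator, reducing the problem to computing $[M_{i,\ell},\hat{M}_{j,\ell}]$.

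For case (b), the total cancellation of Cartan factors gives $[X_i^*,S^{-1}(X_i)] = (-q)^{\ell-i+1}[M_{i,\ell},\hat{M}_{i,\ell}]$, so the identity becomes $[M_{i,\ell},\hat{M}_{i,\ell}] = (-1)^{i-\ell}q^{i-\ell}H_{i,\ell}$ with $H_{i,\ell} := ((K_i\ldots K_\ell)^2 - (K_i\ldots K_\ell)^{-2})/(q-q^{-1})$. I would prove this by induction on $\ell-i$, with base $[E_\ell,F_\ell]=H_\ell$. The inductive step applies the $q$-Jacobi identity $[A,[B,C]_q] = [[A,B],C]_q + [B,[A,C]]_q$ with $A=M_{i,\ell}$, $B=F_i$, $C=\hat{M}_{i+1,\ell}$: the first term reduces via Lemma \ref{lemmaA}(a) (which gives $[F_i,M_{i,\ell}] = M_{i+1,\ell}K_i^{-2}$) to a multiple of $K_i^{-2}H_{i+1,\ell}$, while the second, using the vanishings $[E_i,\hat{M}_{i+1,\ell}] = [F_i,M_{i+1,\ell}] = 0$ together with the inductive hypothesis, simplifies to a multiple of $\tilde{K}^2 H_i$ with $\tilde{K} := K_{i+1}\ldots K_\ell$. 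The algebraic identity $K_i^{-2}(\tilde{K}^2-\tilde{K}^{-2}) + \tilde{K}^2(K_i^2-K_i^{-2}) = (K_i\tilde{K})^2 - (K_i\tilde{K})^{-2}$ then telescopes these into $H_{i,\ell}$ with the correct sign and $q$-power.

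For case (a), Lemma \ref{lemmaA}(a) guarantees $[F_n,M_{i,\ell}] = 0$ for every $n<i$, so iterating the same $q$-Jacobi identity with $B = F_n$, $C = \hat{M}_{n+1,\ell}$ for $n = j,j+1,\ldots,i-1$ kills the $[[M_{i,\ell},F_n],\hat{M}_{n+1,\ell}]_q$ summand at every step, leading to $[M_{i,\ell},\hat{M}_{j,\ell}] = [F_j,[F_{j+1},\ldots,[F_{i-1},[M_{i,\ell},\hat{M}_{i,\ell}]]_q\ldots]_q]_q$. Substituting the base value from case (b) and computing $[F_{i-1},H_{i,\ell}]_q = -K^{-2}F_{i-1}$ (with $K := K_i\ldots K_\ell$) starts a chain reaction: each outer $F_n$ with $n<i-1$ commutes with $K^{-2}$ and combines with the inner chain via the defining recursion of $\hat{M}$, producing $\hat{M}_{j,i-1}$. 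Converting $\hat{M}_{j,i-1}$ back to $S^{-1}(M_{j,i-1}^*)$ via the first step, using $(\hat{K}N_{j,i-1})^{-1} = K^{-2}(K_j\ldots K_{i-1})^{-1}$, and performing a last commutation of $(K_j\ldots K_{i-1})^{-1}$ past $S^{-1}(M_{j,i-1}^*)$ (which supplies the final factor $q^{-1}$) yields (\ref{eq:SXXA}).

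The main obstacle is the telescoping identity at the heart of case (b): without the exact cancellation $K_i^{-2}\tilde{K}^2 - \tilde{K}^2 K_i^{-2} = 0$ and the recombination of the remaining cross-terms into $K^2-K^{-2}$, the inductive argument would produce a sum of two distinct Cartan elements rather than the single clean right-hand side of (\ref{eq:SXXB}). Secondary care is required in the bookkeeping of the $(-q)$ powers arising from repeated use of $S^{-1}$ on $M^*$, the $q^{\pm 1/2}$ from conjugation by the $N$'s, and the final $q^{-1}$ from commuting $K_j\ldots K_{i-1}$ past $S^{-1}(M_{j,i-1}^*)$ at the end of case (a).
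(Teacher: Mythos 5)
Your proposal is correct, and it follows a genuinely different organization from the paper's argument. The paper reduces directly to the single quantity $[M_{i,\ell},S^{-1}(M_{j,\ell}^*)]$ and proves the two sub-identities independently: for the case $i>j$ it inducts downward on $i$, expanding $M_{i,\ell}=[E_i,M_{i+1,\ell}]_q$ and invoking Lemma~\ref{lemmaA}; for the case $i=j$ it writes both $M_{i,\ell}$ and the antipode side as nested $q$-brackets and kills them in one blow with the two-sided identity $[[a,b]_q,[c,d]_q]=[[[a,c],b]_q,d]_q+[c,[a,[b,d]]_q]_q$ (valid when $[a,d]=[b,c]=0$). You instead pass to the purely $F$-side object $\hat{M}_{j,\ell}$ so that $S^{-1}$ disappears from the calculation, and then prove (b) by the one-sided $q$-Jacobi identity $[A,[B,C]_q]=[[A,B],C]_q+[B,[A,C]]_q$, handling the resulting ``mixed'' term $[M_{i,\ell},\hat{M}_{i+1,\ell}]$ by a second reduction ($[[a,b]_q,c]=[a,[b,c]]_q$ when $[a,c]=0$) plus the inductive hypothesis — the $K_i^{-2}H_{i+1,\ell}+\tilde{K}^2H_i=H_{i,\ell}$ telescope you isolate is exactly the mechanism that the paper's two-sided identity realizes implicitly. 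Most notably, you then derive (a) as a consequence of (b) by iteratively peeling off $F_j,\ldots,F_{i-1}$ from $\hat{M}_{j,\ell}$ (each ``wrong'' Jacobi term dying because $[F_n,M_{i,\ell}]=0$ for $n<i$), whereas in the paper (a) is proved by a fresh induction independent of (b). Your route uses only the elementary one-sided $q$-Jacobi and unifies (a) as a corollary of (b), at the price of extra bookkeeping: the conversion $S^{-1}(M_{j,\ell}^*)=(-q)^{\ell-j+1}\hat{M}_{j,\ell}$, the resulting $(-q)$-power accounting, and the step-by-step Cartan conjugations that the paper handles in a single displayed reduction. Both arguments are correct; I checked the telescoping, the sign/power bookkeeping, and the final Cartan commutation you invoke, and they all close up consistently with \eqref{eq:SXXA}--\eqref{eq:SXXB}.
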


\begin{proof}
Since $K_iX_jK_i^{-1}=q^{\frac{1}{2}(\delta_{i,j-1}-\delta_{i,j}-\delta_{i,\ell})}
X_j$, we have
$$
N_{i,\ell}X_jN_{i,\ell}^{-1}=q^{\frac{1}{2}(1-\delta_{i,j})}X_j \;,
$$
and
$$
[X_i^*,S^{-1}(X_j)]=q^{\frac{1}{2}(\delta_{i,j}-1)}\,(K_j\ldots K_{i-1})^{-1}\,[M_{i,\ell},S^{-1}(M_{j,\ell}^*)] \;,
$$
for all $i\geq j$. Thus, it is equivalent to prove the equalities
\begin{subequations}
\begin{align}
[M_{i,\ell},S^{-1}(M_{j,\ell}^*)] &=q(K_i\ldots K_\ell)^{-2}\,S^{-1}(M_{j,i-1}^*)
&&\textup{for all}\;\ell\geq i>j\geq 1 \;,\label{eq:ausA} \\
[M_{i,\ell},S^{-1}(M_{i,\ell}^*)] &=-q\,\frac{(K_i\ldots K_\ell)^2-(K_i\ldots K_\ell)^{-2}}{q-q^{-1}}
&&\textup{for all}\;i=1,\ldots,\ell \;.\label{eq:ausB}
\end{align}
\end{subequations}
Let us start with (\ref{eq:ausA}).
We prove it by induction on $i$. For $i=\ell$ ($1\leq j<i$), (\ref{eq:cons}) gives
$$
[E_\ell,S^{-1}(M_{j,\ell}^*)]=qS^{-1}([E_\ell,M_{j,\ell}^*])=
qS^{-1}(M_{j,\ell-1}^*K_\ell^2)=qK_\ell^{-2}S^{-1}(M_{j,\ell-1}^*) \;.
$$
Suppose, as claimed in \eqref{eq:ausA}, that $[M_{i+1,\ell},S^{-1}(M_{j,\ell}^*)]=
q(K_{i+1}\ldots K_\ell)^{-2}\,S^{-1}(M_{j,i}^*)$ for some $i+1\leq\ell$.
Since $M_{i,\ell}=[E_i,M_{i+1,\ell}]_q$ we have
$$
[M_{i,\ell},S^{-1}(M_{j,\ell}^*)]=
[[E_i,S^{-1}(M_{j,\ell}^*)],M_{i+1,\ell}]_q
+[E_i,[M_{i+1,\ell},S^{-1}(M_{j,\ell}^*)]]_q \;.
$$
But the first term is zero by (\ref{eq:cons}), since $j<i<\ell$ are all distinct,
while in the second term we use the inductive hypothesis. Thus
\begin{align*}
[M_{i,\ell},S^{-1}(M_{j,\ell}^*)]
&=q[E_i,(K_{i+1}\ldots K_\ell)^{-2}\,S^{-1}(M_{j,i}^*)]_q
 =(K_{i+1}\ldots K_\ell)^{-2}[E_i,S^{-1}(M_{j,i}^*)] \\
&=q(K_{i+1}\ldots K_\ell)^{-2}S^{-1}([E_i,M_{j,i}^*]) \;.
\end{align*}
Using again (\ref{eq:cons}) we find
$$
[M_{i,\ell},S^{-1}(M_{j,\ell}^*)]
=q(K_{i+1}\ldots K_\ell)^{-2}S^{-1}(M_{j,i-1}^*K_i^2)
=q(K_i\ldots K_\ell)^{-2}S^{-1}(M_{j,i-1}^*) \;,
$$
and this proves the inductive step, and then (\ref{eq:ausA}).

We pass to (\ref{eq:ausB}), and prove it by induction on $i$.
Since $[M_{\ell,\ell},S^{-1}(M_{\ell,\ell}^*)]=-q[E_\ell,F_\ell]$, the claim is true for $i=\ell$.
We now show that the claim \eqref{eq:ausB} for $[M_{i+1,\ell},S^{-1}(M_{i+1,\ell}^*)]$
implies the claim for $[M_{i,\ell},S^{-1}(M_{i,\ell}^*)]$, for any $1<i+1\leq\ell$.
Since $M_{i,\ell}=[E_i,M_{i+1,\ell}]_q$ we have
$$
[M_{i,\ell},S^{-1}(M_{i,\ell}^*)]=
-q[[E_i,M_{i+1,\ell}]_q,[F_i,S^{-1}(M_{i+1,\ell}^*)]_q] \;.
$$
But $E_i$ commutes with $S^{-1}(M_{i+1,\ell}^*)$ and $F_i$ commutes with
$M_{i+1,\ell}$. We then use the following identity
$$
[[a,b]_q,[c,d]_q]=
[[[a,c],b]_q,d]_q+
[c,[a,[b,d]]_q]_q \;,
$$
that is valid whenever $[a,d]=[b,c]=0$, to write
$$
-q^{-1}[M_{i,\ell},S^{-1}(M_{i,\ell}^*)]=[[[a,c],b]_q,d]_q+[c,[a,[b,d]]_q]_q \;,
$$
where in our case $a=E_i$, $b=M_{i+1,\ell}$, $c=F_i$ and $d=S^{-1}(M_{i+1,\ell}^*)$.
By inductive hypothesis,
$$
[b,d]=[M_{i+1,\ell},S^{-1}(M_{i+1,\ell}^*)]=
-q\,\frac{(K_{i+1}\ldots K_\ell)^2-(K_{i+1}\ldots K_\ell)^{-2}}{q-q^{-1}} \;,
$$
so
$$
[a,[b,d]]_q=-E_i(K_{i+1}\ldots K_\ell)^2 \;,
$$
and
$$
[c,[a,[b,d]]_q]_q
=[E_i,F_i](K_{i+1}\ldots K_\ell)^2
=\frac{(K_i\ldots K_\ell)^2-K_i^{-2}(K_{i+1}\ldots K_\ell)^2}{q-q^{-1}} \;.
$$
Similarly,
$$
[a,c]=\frac{K_i^2-K_i^{-2}}{q-q^{-1}} \;,
$$
so
$$
[[a,c],b]_q=-M_{i+1,\ell}K_i^{-2} \;,
$$
and
$$
[[[a,c],b]_q,d]_q=q^{-1}[S^{-1}(M_{i+1,\ell}^*),M_{i+1,\ell}]K_i^{-2}
$$
that again by inductive hypothesis gives
$$
[[[a,c],b]_q,d]_q=\frac{K_i^{-2}(K_{i+1}\ldots K_\ell)^2-(K_i\ldots K_\ell)^{-2}}{q-q^{-1}} \;.
$$
The sum is
$$
[[[a,c],b]_q,d]_q+[c,[a,[b,d]]_q]_q=
\frac{(K_i\ldots K_\ell)^2-(K_i\ldots K_\ell)^{-2}}{q-q^{-1}} \;,
$$
and this proves the inductive step.
\end{proof}

\section{The quantum projective space $\CP^\ell_q$ and equivariant modules}\label{sec:due}
We recall the definition of the Hopf $*$-algebra $\Oq$, deformation of the algebra of representative functions
on $SU(\ell+1)$ (cf.~\cite{KS97}, Sec.~9.2). As a $*$-algebra it is generated by elements
$u^i_j$ ($i,j=1,\ldots,\ell+1$) with commutation relations
\begin{align*}
u^i_ku^j_k &=qu^j_ku^i_k &
u^k_iu^k_j &=qu^k_ju^k_i &&
\forall\;i<j\;, \\
[u^i_l,u^j_k]&=0 &
[u^i_k,u^j_l]&=(q-q^{-1})u^i_lu^j_k &&
\forall\;i<j,\;k<l\;,
\end{align*}
and with determinant relation
$$
\sum\nolimits_{p\in S_{\ell+1}}(-q)^{||p||}
u^1_{p(1)}u^2_{p(2)}\ldots u^{\ell+1}_{p(\ell+1)}=1 \;,
$$
where the sum is over all permutations $p$ of the set
$\{1,2,\ldots,\ell+1\}$ and $||p||$ is the number of inversions
in $p$. The $*$-structure is given by
$$
(u^i_j)^*=(-q)^{j-i}\sum\nolimits_{p\in S_\ell}(-q)^{||p||}u^{k_1}_{p(n_1)}
u^{k_2}_{p(n_2)}\ldots u^{k_\ell}_{p(n_\ell)}
$$
with $\{k_1,\ldots,k_\ell\}=\{1,\ldots,\ell+1\}\smallsetminus\{i\}$,
$\{n_1,\ldots,n_\ell\}=\{1,\ldots,\ell+1\}\smallsetminus\{j\}$
(as ordered sets) and the sum is over all permutations $p$ of the set
$\{n_1,\ldots,n_\ell\}$.
Coproduct, counit and antipode are of `matrix' type:
$$
\Delta(u^i_j)=\sum\nolimits_ku^i_k\otimes u^k_j\;,\qquad
\epsilon(u^i_j)=\delta^i_j\;,\qquad
S(u^i_j)=(u^j_i)^*\;.
$$

The basic representation $\pi:\Uq{\ell+1}\to\mathrm{Mat}_{\ell+1}(\C)$
of $\Uq{\ell+1}$, i.e.~the one with highest weight $(n_1,\ldots,n_\ell)=(0,0,\ldots,0,1)$,
is given in matrix form by
\begin{equation}\label{eq:frep}
\pi^j_k(1)=\delta^j_k\;,\qquad
\pi^j_k(K_i)=\delta^j_k q^{\frac{1}{2}(\delta_{i+1,j}-\delta_{i,j})}\;,\qquad
\pi^j_k(E_i)=\delta^j_{i+1}\delta^i_k\;,
\end{equation}
where $i\in\{1,\ldots,\ell\}$ and $j,k\in\{1,\ldots,\ell+1\}$.
Since $\pi(\hat{K}^{(\ell+1)/2})$ is a positive operator, $\pi(\hat{K}):=
\pi(\hat{K}^{(\ell+1)/2})^{2/(\ell+1)}$ is well defined and the representation can
be extended to the extension of $\Uq{\ell+1}$ by $\hat{K}$.

In our notation (that agrees with~\cite{KS97}), the action of a
matrix $T=((T^j_k))$ on a vector $v=(v^1,\ldots,v^{\ell+1})^t\in\C^{\ell+1}$
gives the vector $Tv$ with components
$$
(Tv)^j=\sum\nolimits_{k=1}^{\ell+1}T^j_kv^k \;.
$$
That is, in $M^j_k$ the label $j$ indicates the row while the label $k$
indicates the column, and $v=(v^j)$ is a column vector.
Modulo a replacement $\ell\to\ell-1$, this representation is equivalent to
the representation $W_\ell$ of $\Uq{\ell}$ given in Sec.~\ref{sec:3.2}: the intertwiner
is the map sending $v=(v^1,\ldots,v^\ell)^t\in\C^\ell$ to the vector
$w\in W_{\ell-1}$ with components given by $w_{1,2,\ldots,j-1,j+1,\ldots,\ell}=v^j$.

With the representation (\ref{eq:frep}) one defines a
pairing between $\Uq{\ell+1}$ and $\Oq$ through the formul{\ae} (cf.~\cite{KS97}, Sec.~9.4)
\begin{equation}\label{eq:pairing}
\inner{h,1}:=\epsilon(h)\;,\qquad
\inner{h,\smash[t]{u^j_k}}:=\pi^j_k(h)\;,\qquad\forall\;h\in \Uq{\ell+1}\;,
\end{equation}
and with this pairing one constructs the left and right canonical actions,
$h\az a=a_{(1)}\inner{h,\smash[b]{a_{(2)}}}$ and
$a\za h=\inner{h,\smash[b]{a_{(1)}}}a_{(2)}$,
which on generators are then given by
$$
h\az u^i_j=\sum\nolimits_ku^i_k\,\pi^k_j(h)\;,\qquad
u^i_j\za h=\sum\nolimits_k\pi^i_k(h)u^k_j\;.
$$
Note that the Casimir $\mathcal{C}_q$, being central,
satisfies $\mathcal{C}_q\az a=a\za \mathcal{C}_q$ for all
$a\in\Oq$.

Remark: by definition of left canonical action, the pairing between $\Uq{\ell+1}$ and $\Oq$
can be written as $\inner{h,a}=\epsilon(h\az a)$. Since $\hat{K}^{(\ell+1)/2}\az$
is a positive diagonalizable (invertible) operator, it is immediate to extend the
left canonical action to its (positive) root $\hat{K}$, and to extend the pairing
to the extension of $\Uq{\ell+1}$ by $\hat{K}$ and $\hat{K}^{-1}$.

Consider the following left action of $\Uq{\ell+1}$ on $\Oq$:
\begin{equation}\label{eq:mL}
\mL_ha:=a\za S^{-1}(h) \;.
\end{equation}
Notice that the map $h\mapsto\mL_h$ is a $*$-representation of $\Uq{\ell+1}$ on $\Oq$,
for the inner product $(a,b):=\varphi(a^*b)$ on $\Oq$ coming from the Haar state.
Indeed, using the right invariance of $\varphi$, we get
\begin{align*}
(\mL_{h^*}a,b)
&=\varphi\bigl(\{a\za S^{-1}(h^*)\}^*b\bigr)
 =\varphi\bigl(\{a^*\za h\}b\bigr) \\
&=\varphi\bigl(\{a^*\za h_{(1)}\}\epsilon(h_{(2)})b\bigr)
 =\varphi\bigl(\{a^*\za h_{(1)}\}\{b\za S^{-1}(h_{(3)})h_{(2)}\}\bigr) \\
&=\varphi\bigl(\{a^*(b\za S^{-1}(h_{(2)}))\}\za h_{(1)}\bigr)
 =\epsilon(h_{(1)})\varphi\bigl(a^*\{b\za S^{-1}(h_{(2)})\}\bigr) \\
&=\varphi\bigl(a^*\{b\za S^{-1}(h)\}\bigr)
 =(a,\mL_hb) \;,
\end{align*}
that means $\mL_{h^*}=(\mL_h)^\dag$ for all $h\in\Uq{\ell+1}$.

The algebra $\A(S^{\ell+1}_q)$ of `functions' on the unitary quantum sphere
$S^{2\ell+1}_q$ is defined as
$$
\A(S^{\ell+1}_q):=\big\{a\in\Oq\,\big|\,
\mL_h a=\epsilon(h)a\;\forall\;h\in\Uq{\ell}\big\} \;.
$$
There is an isomorphism with the $*$-algebra generated by
$\{z_i,z_i^*\}_{i=1,\ldots,\ell+1}$ with relations
\begin{align*}
z_iz_j &=qz_jz_i &&\forall\;i<j \;,\\
z_i^*z_j &=qz_jz_i^* &&\forall\;i\neq j \;,\\
[z_1^*,z_1] &=0 \;,\\
[z_{i+1}^*,z_{i+1}] &=(1-q^2)\sum\nolimits_{j=1}^i z_jz_j^* &&\forall\;i=1,\ldots,\ell \;,\\
z_1z_1^*+z_2z_2^* &+\ldots+z_{\ell+1}z_{\ell+1}^*=1 \;,
\end{align*}
given on generators by the identification $z_i=u_i^{\ell+1}$~\cite{VS91}.

The algebra $\Aq{\ell}$ of `functions' on the quantum projective
space $\CP^\ell_q$ is defined as
$$
\Aq{\ell}:=\big\{a\in\A(S^{\ell+1}_q)\,\big|\,\mL_{\hat{K}}a=a\big\} \;.
$$
Let $\Kq{\ell}$ be the Hopf algebra generated by $\Uq{\ell}$, $\hat{K}$ and
$\hat{K}^{-1}$. If $\lambda$ is an $N$-dimensional $*$-representation of $\Kq{\ell}$,
the tensor product (over $\C$) $\Oq\otimes\C^N\simeq\Oq^N$ carries a natural action
of $\Kq{\ell}$, that is the Hopf tensor product of $\mL$ and $\lambda$.
We call $\mathfrak{M}(\lambda)=\Oq\!\boxtimes_{\lambda}\!\C^N$ the subset of
$\Kq{\ell}$-invariant elements:
$$
\mathfrak{M}(\lambda):=
\big\{v\in\Oq^N\,\big|\,\{\mL_{h_{(1)}}\otimes\lambda(h_{(2)})\}v
=\epsilon(h)v\,,\;\forall\;h\in\Kq{\ell}\big\} \;,
$$
where $v=(v_1,\ldots,v_N)^t$ is a column vector and row by column multiplication
is understood. This is an $\Aq{\ell}$-bimodule and a left $\Aq{\ell}\rtimes\Uq{\ell+1}$-module.

\begin{lemma}\label{lemma:equiv}
Let $v\in\Oq^N$. The following are equivalent:
\begin{itemize}
\item[(i)] $\{\mL_{h_{(1)}}\otimes\lambda(h_{(2)})\}v=\epsilon(h)v$ for all $h\in\Kq{\ell}$;
\item[(ii)] $\lambda(S(h_{(1)}))v\za h_{(2)}=\epsilon(h)v$ for all $h\in\Kq{\ell}$;
\item[(iii)] $v\za h=\lambda(h)v$ for all $h\in\Kq{\ell}$.
\end{itemize}
\end{lemma}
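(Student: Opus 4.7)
The plan is to reduce the three-way equivalence to two pieces: (i)$\iff$(ii) by an antipode substitution in the universal quantifier, and (ii)$\iff$(iii) by the standard Hopf-algebra contraction $h_{(1)}S(h_{(2)})=\epsilon(h)$. Both steps rely only on the bialgebra axioms together with invertibility of $S$ on $\Kq{\ell}$, plus the trivial observation that matrix multiplication by $\lambda(x)$ commutes entrywise with the right action $\za$ on $\Oq^N$.

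First I would unfold (i) using the definition $\mL_h a=a\za S^{-1}(h)$, so that it becomes
$$
\lambda(h_{(2)})\bigl(v\za S^{-1}(h_{(1)})\bigr)=\epsilon(h)\,v\qquad\forall\,h\in\Kq{\ell}.
$$
Substituting $h=S(g)$ and using $\Delta\circ S=\tau\circ(S\otimes S)\circ\Delta$ (with $\tau$ the flip) turns this precisely into the form (ii); the converse substitution $h=S^{-1}(g)$ recovers (i) from (ii), which uses only invertibility of the antipode on $\Kq{\ell}$.

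For (iii)$\Rightarrow$(ii) I would apply (iii) to $h_{(2)}$ and premultiply by $\lambda(S(h_{(1)}))$, collapsing $\lambda(S(h_{(1)}))\lambda(h_{(2)})=\lambda(S(h_{(1)})h_{(2)})=\epsilon(h)\cdot\mathrm{id}$. For the converse (ii)$\Rightarrow$(iii), apply (ii) with $h$ replaced by $h_{(2)}$ (legitimate after invoking coassociativity) to obtain
$$
\lambda(S(h_{(2)}))\bigl(v\za h_{(3)}\bigr)=\epsilon(h_{(2)})\,v,
$$
and then premultiply by $\lambda(h_{(1)})$. The left-hand side collapses to $\lambda(h_{(1)}S(h_{(2)}))(v\za h_{(3)})=\epsilon(h_{(1)})(v\za h_{(2)})=v\za h$, while the right-hand side becomes $\epsilon(h_{(2)})\lambda(h_{(1)})\,v=\lambda(h)\,v$, giving exactly (iii).

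No real obstacle is anticipated: the whole proof is symbolic bookkeeping with Sweedler indices, and the only hypotheses needed -- bijectivity of the antipode on $\Kq{\ell}$ and the fact that scalar matrix multiplication by $\lambda(x)$ commutes with the componentwise right action $\za$ on $\Oq^N$ -- are immediate from the setup.
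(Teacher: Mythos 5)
Your proof is correct and proceeds by essentially the same manipulations as the paper's (unfolding $\mL$, an antipode substitution for (i)$\iff$(ii), and Sweedler-index collapse via $h_{(1)}S(h_{(2)})=\epsilon(h)$). The only cosmetic difference is that you pivot through (ii)$\iff$(iii), working directly with the right action $\za$ after the first step, while the paper pivots through (i)$\iff$(iii) and keeps $\mL$ in play; both routes are equivalent bookkeeping.
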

\begin{proof}
Equivalence between (i) and (ii) is straightforward: just replace $h$ with $S(h)$.

Last condition can be rewritten as $\mL_hv=\lambda(S^{-1}(h))v$. Since $S^{-1}(h_{(2)})h_{(1)}=\epsilon(h)$,
$$
\mL_h\otimes id_{\C^N}=(id_{\Oq}\otimes \lambda(S^{-1}(h_{(3)})))(\mL_{h_{(1)}}\otimes\lambda(h_{(2)})) \;.
$$
Thus (i) implies (iii): if $v$ is invariant,
$$
\mL_hv=\lambda(S^{-1}(h_{(3)}))(\mL_{h_{(1)}}\otimes\lambda(h_{(2)}))v
=\lambda(S^{-1}(h_{(2)}))\epsilon(h_{(1)})v=\lambda(S^{-1}(h))v \;.
$$
Vice versa, if $v\za h=\lambda(h)v$, we have
$$
(\mL_{h_{(1)}}\otimes\lambda(h_{(2)}))v=
\lambda(h_{(2)})v\za S^{-1}(h_{(1)})=\lambda(h_{(2)}S^{-1}(h_{(1)}))v=\epsilon(h)v \;.
$$
Hence, (iii) implies (i).
\end{proof}

We introduce an $\Aq{\ell}$-valued sesquilinear map:
$$
\mathfrak{M}(\lambda)\times\mathfrak{M}(\lambda)\to\Aq{\ell} \;,\qquad
(v,v')\mapsto v^\dag v' \;,
$$
where $v^\dag$ is the conjugate transpose of $v$ and row by column multiplication
is understood. Indeed, if $v,v'\in\mathfrak{M}(\lambda)$, then $v^\dag v'\in\Aq{\ell}$:
\begin{align*}
v^\dag v'\za h
 &=(v^\dag\za h_{(1)})(v'\za h_{(2)})
 =(v^\dag\za h_{(1)})\lambda(h_{(2)})\lambda(S(h_{(3)}))(v'\za h_{(4)}) \\
 &=(v^\dag\za h_{(1)})\lambda(h_{(2)})\epsilon(h_{(3)})v'
 =\bigl\{\lambda(h_{(2)}^*)v\za S(h_{(1)})^*\bigr\}^*v' \\
 &=\bigl\{\lambda(S(t_{(1)}))v\za t_{(2)})\bigr\}^*v'
 =\epsilon(t)^*v^\dag v'=\epsilon(h)v^\dag v' \;,
\end{align*}
for all $h\in\Kq{\ell}$, where we denoted $t:=S(h)^*$.

Composing this map with the Haar functional $\varphi:\Oq\to\C$, we get a non-degenerate
inner product on $\mathfrak{M}(\lambda)$, $\inner{v,v'}:=\varphi(v^\dag v')$.
Thus $\mathfrak{M}(\lambda)$ are noncommutative `homogeneous Hermitian vector bundles' over $\CP^\ell_q$.

\subsection{Line bundles on $\CP^\ell_q$}
Let
$$
[n]!:=[n][n-1]\ldots [1] \;,
$$
for any $n\geq 1$ (and $[0]!:=1$). For $j_1,\ldots,j_{\ell+1}\in\N$, we define the $q$-multinomial coefficients as
$$
[j_1,\ldots,j_{\ell+1}]!:=\frac{[j_1+\ldots+j_{\ell+1}]!}{[j_1]!\ldots[j_{\ell+1}]!}
q^{-\sum_{r<s}j_rj_s} \;.
$$
The following lemma is a generalization of a similar lemma for $\CP^2_q$ (cf.~\cite{DL08}).

\begin{lemma}
The generators $z_i$ of $\A(S^{\ell+1}_q)$ satisfy
$$
\sum_{j_1+\ldots+j_{\ell+1}=N}
[j_1,\ldots,j_{\ell+1}]!\,
z_1^{j_1}\ldots z_{\ell+1}^{j_{\ell+1}}
(z_1^{j_1}\ldots z_{\ell+1}^{j_{\ell+1}})^*=1 \;,
$$
for all $N\in\N$.
\end{lemma}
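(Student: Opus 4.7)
The plan is to induct on $N$, writing the left-hand side as $S_N$ and using the sphere relation $\sum_i z_iz_i^*=1$ to pass from $N$ to $N+1$. The base case $N=0$ is immediate, the only term being $[0,\ldots,0]!\cdot 1\cdot 1=1$. For the inductive step, set $z^j:=z_1^{j_1}\cdots z_{\ell+1}^{j_{\ell+1}}$ and $|j|:=j_1+\ldots+j_{\ell+1}$; inserting the identity $1=\sum_i z_iz_i^*$ between $z^j$ and $(z^j)^*$ rewrites $S_N$ as
$$
\sum_{|j|=N}[j_1,\ldots,j_{\ell+1}]!\sum_{i=1}^{\ell+1}z^{j}\,z_iz_i^*\,(z^{j})^*,
$$
and the aim is to collect this into $S_{N+1}$.

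Next comes the commutation step. From $z_iz_k=qz_kz_i$ for $i<k$ one obtains, by pushing $z_i$ leftward past $z_{i+1}^{j_{i+1}}\cdots z_{\ell+1}^{j_{\ell+1}}$,
$$
z^{j}z_i=q^{-A_i(j)}\,z^{j+e_i},\qquad A_i(j):=j_{i+1}+\ldots+j_{\ell+1},
$$
where $e_i$ is the $i$-th standard basis vector, and adjunction gives $z_i^*(z^{j})^*=q^{-A_i(j)}(z^{j+e_i})^*$. Hence $z^{j}z_iz_i^*(z^{j})^*=q^{-2A_i(j)}z^{j+e_i}(z^{j+e_i})^*$, and re-indexing with $k:=j+e_i$ (so $|k|=N+1$ and $A_i(k-e_i)=A_i(k)$) reduces the inductive step to the combinatorial identity
$$
\sum_{i:\,k_i\ge 1}[k_1,\ldots,k_i-1,\ldots,k_{\ell+1}]!\,q^{-2A_i(k)}\;=\;[k_1,\ldots,k_{\ell+1}]!
$$
for every $k$ with $|k|=N+1$.

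The final step is purely numerical. Writing $T(k):=\sum_{r<s}k_rk_s$ and using both $T(k-e_i)=T(k)-(|k|-k_i)$ and $[k_i-1]!=[k_i]!/[k_i]$, the required identity reduces after cancellation to the scalar telescoping identity
$$
\sum_{i=1}^{\ell+1}[k_i]\,q^{\sum_{r<i}k_r-\sum_{s>i}k_s}\;=\;[N+1].
$$
Indeed, setting $C_i:=\sum_{r\le i}k_r-\sum_{s>i}k_s$ one has $C_0=-(N+1)$, $C_{\ell+1}=N+1$, and using $[k_i]=(q^{k_i}-q^{-k_i})/(q-q^{-1})$ each summand on the left equals $(q^{C_i}-q^{C_{i-1}})/(q-q^{-1})$, so the sum collapses to $(q^{N+1}-q^{-(N+1)})/(q-q^{-1})=[N+1]$. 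The only real obstacle is the careful bookkeeping of the $q$-powers in the multinomial coefficient through the decrement $k\mapsto k-e_i$; no conceptual difficulty arises once the insertion trick above is in place.
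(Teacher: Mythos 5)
Your proof is correct and follows essentially the same route as the paper: induct on $N$, insert the sphere relation $\sum_i z_iz_i^*=1$ in the middle, use the commutation rules to reindex monomials via $k=j+e_i$, and reduce to the $q$-multinomial recursion, which the paper also reduces to the scalar identity $\sum_i [k_i]\,q^{\sum_{r<i}k_r-\sum_{s>i}k_s}=[|k|]$ (the paper states this identity without proof, whereas you supply the telescoping justification). No substantive difference in method.
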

\begin{proof}
The equality
$$
[j_1+\ldots+j_{\ell+1}]=\sum_{i=1}^{\ell+1}\,[j_i]\,q^{\sum_{k=1}^{i-1}j_k-\sum_{k=i+1}^{\ell+1}j_k}
$$
implies
\begin{equation}\label{eq:qmulti}
[j_1,\ldots,j_{\ell+1}]!=\sum_{i=1}^{\ell+1}\,[j_1,\ldots,j_i-1,\ldots,j_{\ell+1}]!\,
q^{-2\sum_{k=i+1}^{\ell+1}j_k} \;.
\end{equation}
Let $c_N$ be the polynomial we want to compute,
$$
c_N:=\sum_{j_1+\ldots+j_{\ell+1}=N}
[j_1,\ldots,j_{\ell+1}]!\,
z_1^{j_1}\ldots z_{\ell+1}^{j_{\ell+1}}
(z_1^{j_1}\ldots z_{\ell+1}^{j_{\ell+1}})^* \;.
$$
We prove the lemma by induction on $N$. For $N=1$,
we get the spherical relation of the algebra $c_1=\sum_{i=1}^{\ell+1}z_iz_i^*=1$.
From this, it follows that $c_N$ can be rewritten as
$$
c_N=\sum_{j_1+\ldots+j_{\ell+1}=N}
[j_1,\ldots,j_{\ell+1}]!\,
z_1^{j_1}\ldots z_{\ell+1}^{j_{\ell+1}}\left(\textstyle{\sum_{i=1}^{\ell+1}}z_iz_i^*\right)
(z_1^{j_1}\ldots z_{\ell+1}^{j_{\ell+1}})^* \;,
$$
and using the commutation rules of the algebra ($z_iz_j=qz_jz_i$ for all $i<j$)
we get
\begin{align*}
c_N &=\sum_{i=1}^{\ell+1}\sum_{j_1+...\,+j_{\ell+1}=N}
[j_1,...\,,j_{\ell+1}]!\,q^{-2\sum_{k=i+1}^{\ell+1}j_k}
z_1^{j_1}...\, z_i^{j_i+1}...\, z_{\ell+1}^{j_{\ell+1}}
(z_1^{j_1}...\, z_i^{j_i+1}...\, z_{\ell+1}^{j_{\ell+1}})^* \\
&=\sum_{i=1}^{\ell+1}\sum_{j_1+...\,+j_{\ell+1}=N+1}
[j_1,...\,,j_i-1,...\,,j_{\ell+1}]!\,q^{-2\sum_{k=i+1}^{\ell+1}j_k}
z_1^{j_1}...\, z_i^{j_i}...\, z_{\ell+1}^{j_{\ell+1}}
(z_1^{j_1}...\, z_i^{j_i}...\, z_{\ell+1}^{j_{\ell+1}})^* \\
\intertext{and using (\ref{eq:qmulti}) we get}
&=\sum_{j_1+\ldots+j_{\ell+1}=N+1}
[j_1,\ldots,j_i,\ldots,j_{\ell+1}]!
z_1^{j_1}\ldots z_i^{j_i}\ldots z_{\ell+1}^{j_{\ell+1}}
(z_1^{j_1}\ldots z_i^{j_i}\ldots z_{\ell+1}^{j_{\ell+1}})^* \\
&=c_{N+1} \;.
\end{align*}
This proves the inductive step.
\end{proof}

As a consequence, the vector $\Psi_N=(\psi^N_{j_1,\ldots,j_{\ell+1}})$ with components
\begin{equation}\label{eq:Psi}
\psi^N_{j_1,\ldots,j_{\ell+1}}:=[j_1,\ldots,j_{\ell+1}]!^{\frac{1}{2}}
(z_1^{j_1}\ldots z_{\ell+1}^{j_{\ell+1}})^* \;,\qquad
\forall\;j_1+\ldots+j_{\ell+1}=N \;,
\end{equation}
satisfies $\Psi_N^\dag\Psi_N=1$. Thus $P_N:=\Psi_N\Psi_N^\dag$ is a projection.

We observe that the constraint \eqref{eq:Psi}, $j_1,\ldots,j_{\ell+1}\in\N$ and $j_1+j_2+\ldots+j_{\ell+1}=N$,
holds if and only if
$$
1\leq j_1+1<j_1+j_2+2<\ldots<j_1+\ldots+j_\ell+\ell\leq j_1+\ldots+j_{\ell+1}+\ell=N+\ell \;.
$$
Therefore, the size of $P_N$, that is the number of $j_1,\ldots,j_{\ell+1}$ which sum up to $N$,
equals the number of $\ell$-partitions of $N+\ell$, which is $\binom{N+\ell}{\ell}$.

Since $\Psi_N\za K_i=\Psi_N$ for $i\neq\ell$, and $\Psi_N\za K_\ell=q^{-N/2}\Psi_N$,
we have
$$
\mL_{\hat{K}}\Psi_N=
\Psi_N\za\hat{K}^{-1}=q^{\frac{\ell}{\ell+1}N}\Psi_N \;.
$$
Let
$$
\Gamma_N:=\{a\in\A(S^{\ell+1}_q)\,|\,\mL_{\hat{K}}a=q^{\frac{\ell}{\ell+1}N}a\} \;.
$$
The map
\begin{align*}
\phi&:\Gamma_N\to\Aq{\ell}^{\binom{N+\ell}{\ell}}P_N \;,\qquad a\mapsto a\,\Psi_N^\dag \;,\\
\phi^{-1}\!\!&:\Aq{\ell}^{\binom{N+\ell}{\ell}}P_N \to\Gamma_N\;,\qquad v\mapsto v\cdot\Psi_N \;,
\end{align*}
with $v=(v_{j_1,\ldots,j_\ell}^N)$ a row vector and $v\cdot\Psi_N$ the scalar product,
is an isomorphism of left $\Aq{\ell}$-modules.
The above discussion can be mirrored, exchanging the role of $z_i$ and $z_i^*$, to get a
projective module description of the modules $\Gamma_N$ with $N<0$. Notice that for
$q=1$, the projection giving $\Gamma_N$ is simply the transpose of the projection
giving $\Gamma_{-N}$.
The special case $\mathcal{S}:=\Gamma_{\frac{1}{2}(\ell+1)}$,
that exists only if $\ell$ is odd, is the $q$-analogue of the module of
sections on the square root of the canonical bundle over $\CP^\ell$.
In the case $\ell=1$ (i.e.~for the standard Podle\'s sphere), such a module
has been discussed in Sec.~\ref{sec:zero}.

\section{Antiholomorphic forms on $\CP^\ell_q$}\label{sec:tre}

Recall that $\Kq{\ell}$ is generated by $\Uq{\ell}$, $\hat{K}$
and its inverse. Since it is a central extension of $\Uq{\ell}$, any representation of
$\lambda:\Uq{\ell}\to\mathrm{End}(V)$ can be lifted to a representation
$\tilde{\lambda}:\Kq{\ell}\to\mathrm{End}(V)$ by defining $\tilde{\lambda}(\hat{K})$
to be any non-zero (and non-negative) multiple of the identity endomorphism.
For the representation $\sigma_k:\Uq{\ell}\to\mathrm{End}(W_k)$ studied in Sec.~\ref{sec:3.2},
and for any $N\in\N$, we define a lift $\sigma^N_k$ of $\sigma_k$ as
$$
\sigma^N_k(\hat{K})=q^{k-\frac{\ell}{\ell+1}N}\cdot id_{W_k} \;.
$$

\subsection{The algebra of forms}

We call $\Omega^k_N:=\mathfrak{M}(\sigma^N_k)$ and notice that $\Omega^k:=\Omega^k_0$
is the $q$-analogue of the module antiholomorphic $k$-forms, and
$$
\Omega^k_N\simeq\Omega^k\otimes_{\Aq{\ell}}\Gamma_N
$$
consists of $k$-forms twisted with the `line bundle' $\Gamma_N$. For odd $\ell$, chiral spinors are
given by
$$
\mathcal{S}_k:=\Omega^k_{\frac{1}{2}(\ell+1)}\simeq\Omega^k\otimes_{\Aq{\ell}}\mathcal{S} \;.
$$
When $q=1$ we have $\Omega^k=\wedge^k\Omega^1$, where the antisymmetric tensor product
is over the algebra $\A(\CP^\ell)$. Being $\Omega^1$ the module of antiholomorphic $1$-forms
(classically, $\deb a=(a\za X_1,\ldots, a\za X_\ell)^t\in\Omega^1$ for all $a\in\A(\CP^\ell)$,
where $X_i$ are defined in Lemma \ref{lemma:Xi}), $\Omega^k$ are
antiholomorphic $k$-forms, and $\mathcal{S}_k$ are chiral spinors. This justifies
the our terminology. The geometrical meaning of $\za\hat{K}$ acting on forms is also clear:
eigenspaces consist of homogeneous forms, and the eigenvalue is $q^{\mathrm{degree}}$ of the
forms.

Notice that $\Omega^0_0=\Omega^\ell_{\ell+1}=\Aq{\ell}$, and that
$\Omega^k_N$ is an $\Aq{\ell}$-bimodule and a left $\Aq{\ell}\rtimes\Uq{\ell+1}$-module (for all $k,N$).
An associative product $\wprod:\Omega^k_N\times\Omega^{k'}_{N'}\to\Omega^{k+k'}_{N+N'}$
is obtained by composing the product in the Grassmann algebra $\Gr$ and
the product in $\Oq$ (and extended linearly). For $\omega=av\in\Omega^k_N$
and $\omega'=a'v'\in\Omega^{k'}_{N'}$, with $a,a'\in\Oq$, $v\in W_k$, $v'\in W_{k'}$,
the product is given by
$$
\omega\wprod\omega':=a\cdot a'(v\wprod v') \;.
$$
In particular when $N=N'=0$, we have that $\Omega^\bullet:=\bigoplus_{k=0}^\ell\Omega^k$ is a graded associative algebra.

Let us check that the above product is really an element of $\Omega^{k+k'}_{N+N'}$. By (\ref{eq:wedgecov}),
\begin{align*}
\sigma_{k+k'}(S(h_{(1)}))(\omega\wprod\omega')\za h_{(2)}
&=(a\za h_{(3)})\cdot (a'\za h_{(4)})
\sigma_{k}(S(h_{(2)}))v\wprod \sigma_{k'}(S(h_{(1)}))v' \\
&=
\sigma_{k}(S(h_{(2)}))\omega\za h_{(3)}\wprod
\sigma_{k'}(S(h_{(1)}))\omega'\za h_{(4)} \\
\intertext{and since by hypothesis $\omega\in\Omega^k_N$, i.e.
$\sigma_{k}(S(h_{(2)}))\omega\za h_{(3)}=\epsilon(h_{(2)})\omega$, this is}
&=\omega\wprod\sigma_{k'}(S(h_{(1)}))\omega'\za h_{(2)} \\
\intertext{that using $\omega'\in\Omega^{k'}_{N'}$ gives}
&=\epsilon(h)\omega\wprod\omega' \;.
\end{align*}
Since we have also $(\omega\wprod\omega')\za\hat{K}
=(\omega\za\hat{K})\wprod(\omega'\za\hat{K})=q^{k+k'-\frac{\ell}{\ell+1}(N+N')}$,
we conclude that $\omega\wprod\omega'\in\Omega^{k+k'}_{N+N'}$ for all
$\omega\in\Omega^k_N$ and $\omega'\in\Omega^{k'}_{N'}$.
Associativity follows from associativity of both the products
in $\Oq$ and $\Gr$.

We need few more lemmas.

\begin{lemma}
We have
\begin{equation}\label{eq:using}
\omega_{1,2,\ldots,k,r}\za M_{r,s-1}=\omega_{1,2,\ldots,k,s} \;.
\end{equation}
for all $\omega\in\Omega^k_N$ and for all $0\leq k<r<s\leq\ell$.
\end{lemma}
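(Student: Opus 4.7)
The plan is to exploit the equivariance of $\omega$ in order to convert the right action of $M_{r,s-1}$ on the single component $\omega_{(1,\ldots,k,r)}$ into a finite-dimensional matrix calculation inside the representation $\sigma_{k+1}$. Since $s-1\leq\ell-1$, the element $M_{r,s-1}$ --- a nested $q$-commutator of $E_r,E_{r+1},\ldots,E_{s-1}$ --- lies in $\Uq{\ell}\subset\Kq{\ell}$. Applying Lemma~\ref{lemma:equiv}(iii) componentwise (the indexing in the statement forces $\omega$ to be labelled by $(k+1)$-tuples) then yields
\[
\omega_{(1,\ldots,k,r)}\za M_{r,s-1}=\bigl(\sigma_{k+1}(M_{r,s-1})\,\omega\bigr)_{(1,\ldots,k,r)},
\]
so it suffices to show that the right-hand side equals $\omega_{(1,\ldots,k,s)}$.

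I would prove this by induction on $s-r$. In the base case $s-r=1$ we have $M_{r,r}=E_r$; for $\underline{i}=(1,\ldots,k,r)$ the hypothesis $r>k$ yields $r\#\underline{i}=+1$ and $\underline{i}^{\,r,+}=(1,\ldots,k,r+1)$, so formula~\eqref{ar:E} immediately gives $(\sigma_{k+1}(E_r)\omega)_{\underline{i}}=\omega_{(1,\ldots,k,r+1)}$, as required.

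For the inductive step I would use \eqref{eq:iter} to write
\[
M_{r,s-1}=[E_r,M_{r+1,s-1}]_q=E_rM_{r+1,s-1}-q^{-1}M_{r+1,s-1}E_r.
\]
Applying \eqref{ar:E} to the outer $E_r$ in the first term reduces it to $\bigl(\sigma_{k+1}(M_{r+1,s-1})\omega\bigr)_{(1,\ldots,k,r+1)}$, which by the inductive hypothesis equals $\omega_{(1,\ldots,k,s)}$. For the second term, note that $M_{r+1,s-1}$ is a polynomial in the generators $E_j$ with $j\in\{r+1,\ldots,s-1\}$; for any such $j$ both $j$ and $j+1$ strictly exceed $r$, and hence are absent from $(1,\ldots,k,r)$. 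Consequently $j\#(1,\ldots,k,r)=0$, and \eqref{ar:E} forces the outermost action on the index $(1,\ldots,k,r)$ to vanish. The two contributions therefore combine to give $\omega_{(1,\ldots,k,s)}$, closing the induction.

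The only mildly delicate ingredient is the vanishing of the second commutator piece; once one observes that none of the ``raising'' operators $E_{r+1},\ldots,E_{s-1}$ can act nontrivially on the index $(1,\ldots,k,r)$ (in weight-theoretic terms, this index is orthogonal to the root subspace they generate), the argument becomes entirely mechanical.
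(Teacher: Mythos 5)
Your proof is correct and follows essentially the same route as the paper: use equivariance (Lemma~\ref{lemma:equiv}(iii)) to convert the right action into the $\sigma$-representation, expand $M_{r,s-1}=[E_r,M_{r+1,s-1}]_q$, note that the $M_{r+1,s-1}E_r$ remainder annihilates the index $(1,2,\ldots,k,r)$ since its leftmost factors are $E_j$ with $j\geq r+1$ and hence $j\#(1,\ldots,k,r)=0$, and iterate (the paper says ``by iteration'' where you formalize it as induction on $s-r$). You also correctly flagged the typo in the hypothesis: the $(k+1)$-tuple index forces $\omega\in\Omega^{k+1}_N$.
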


\begin{proof}
By definition of $\Omega^k_N$, $\omega_{\underline{i}}\za E_j=\{\sigma_k(E_j)\omega\}_{\underline{i}}$
for all $\omega\in\Omega^k_N$ and all $1\leq j\leq\ell-1$, and by (\ref{ar:E}) we have:
$$
\omega_{1,2,\ldots,k,r}\za E_j=\delta_{r,j}\,
\omega_{1,2,\ldots,k,r+1} \;,
$$
for all $j>k$.
Using $M_{r,s-1}=[E_r,M_{r+1,s-1}]_q$ we get
$$
\omega_{1,2,\ldots,k,r}\za M_{r,s-1}=
\omega_{1,2,\ldots,k,r+1}\za M_{r+1,s-1} \;,
$$
which by iteration gives \eqref{eq:using}.
\end{proof}

\begin{lemma}\label{lemma:Dsquare}
For all $\omega\in\Omega^k_N$ and for all $1\leq r\leq k<s\leq\ell$, we have
$$
\omega_{1,\ldots,\hat{r},\ldots,k-1,k,s}\za S^{-1}(M_{r,s-1}^*)=
(-q)^{k-r+1}\omega_{1,\ldots,k} \;.
$$
Here $\hat{r}$ means that $r$ is omitted from the list.
\end{lemma}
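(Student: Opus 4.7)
The plan is to prove the identity by induction on $k-r+1$. I first use Lemma~\ref{lemma:equiv}(iii) together with the observation that $S^{-1}(M_{r,s-1}^*)\in\Uq{\ell}$ (since only $F_j,K_j^{\pm 1}$ with $j\le\ell-1$ appear) to identify $\omega_{\underline{i}}\za S^{-1}(M_{r,s-1}^*)$ with $\{\sigma^N_k(S^{-1}(M_{r,s-1}^*))\omega\}_{\underline{i}}$, so the whole statement reduces to a computation in the representation $\sigma_k$ on $W_k$ via the explicit formula $\omega_{\underline{i}}\za F_j=\delta_{j\#\underline{i},-1}\,\omega_{\underline{i}^{j,-}}$ of~\eqref{ar:F} together with the elementary identity $S^{-1}(F_j)=-qF_j$.

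The main recursion is $M_{r,s-1}^*=[M_{r+1,s-1}^*,F_r]_q$, obtained by conjugating $M_{r,s-1}=[E_r,M_{r+1,s-1}]_q$ via $[a,b]_q^*=[b^*,a^*]_q$. Applying $S^{-1}$ yields
\[
S^{-1}(M_{r,s-1}^*)\;=\;S^{-1}(M_{r+1,s-1}^*)\,F_r\;-\;q\,F_r\,S^{-1}(M_{r+1,s-1}^*).
\]
For the inductive step $r<k$, applying $\omega_{\underline{i}}\za$ with $\underline{i}=(1,\ldots,\hat r,\ldots,k,s)$ and using $r\#\underline{i}=-1$ to evaluate $\omega_{\underline{i}}\za F_r=\omega_{(1,\ldots,\widehat{r+1},\ldots,k,s)}$, the second summand gives $-q\cdot(-q)^{k-r}\omega_{1,\ldots,k}=(-q)^{k-r+1}\omega_{1,\ldots,k}$ by the inductive hypothesis with $r$ replaced by $r+1$. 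The induction therefore closes provided
\[
\omega_{(1,\ldots,\hat r,\ldots,k,s)}\za S^{-1}(M_{r+1,s-1}^*)\;=\;0,
\]
which is the ``extra'' contribution coming from the first summand.

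The \emph{main obstacle} is this auxiliary vanishing. The key combinatorial observation is that for $\underline{i}=(1,\ldots,\hat r,\ldots,k,s)$ with $r<k$ one has $j\#\underline{i}\in\{0,+1\}$ (never $-1$) for every $j\in\{r+1,\ldots,s-2\}$, so $\omega_{\underline{i}}\za F_j=0$. Iterating the recursion from $j=r+1$ then keeps only the $(\cdots)F_j$ summand at every stage, collapsing the whole expression to
\[
\omega_{\underline{i}}\za S^{-1}(M_{r+1,s-1}^*)\;=\;-q\,\omega_{\underline{i}}\za(F_{s-1}F_{s-2}\cdots F_{r+1}).
\]
Tracing this chain step by step, the top entry $s$ is successively replaced by $s-1,s-2,\ldots,k+1$; the next operator $F_k$ in the chain then has to act on the tuple $(1,\ldots,\hat r,\ldots,k,k+1)$, where both $k$ and $k+1$ are present, so $k\#=0$ and the whole expression vanishes.

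The base case $r=k$ is handled by the same iteration but now with $\underline{i}=(1,\ldots,k-1,s)$: because $k\notin\underline i$ this time, one has $j\#\underline{i}=0$ for every $j\in\{k,\ldots,s-2\}$ and the chain $F_{s-1}F_{s-2}\cdots F_k$ successfully slides $s$ all the way down to $k$ (no $F_k$-wall appears), producing $-q\,\omega_{1,\ldots,k}$; the sub-case $s=k+1$ is immediate from $S^{-1}(F_k)=-qF_k$ and $k\#(1,\ldots,k-1,k+1)=-1$. The delicate combinatorial point of the whole argument is this sharp contrast between the two iterations, which depends only on whether $k$ belongs to the initial index tuple or not.
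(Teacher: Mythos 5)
Your proof is correct, and it does take a genuinely different route from the paper's. The paper first handles the case $s=k+1$ by iterating the one-step recursion $S^{-1}(M_{r,s-1}^*)=-q[F_r,S^{-1}(M_{r+1,s-1}^*)]_q$ downward in $r$; for $s>k+1$ it then establishes the two auxiliary facts $\omega_{1,\ldots,\hat r,\ldots,k,s}\za S^{-1}(M_{r,k}^*)=0$ and $\omega_{1,\ldots,\hat r,\ldots,k,s}\za S^{-1}(M_{k+1,s-1}^*)=-q\,\omega_{1,\ldots,\hat r,\ldots,k+1}$, and combines them via the ``split at $k$'' factorization $M_{r,s-1}=[M_{r,k},M_{k+1,s-1}]_q$ to reduce to the $s=k+1$ case. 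You instead run a single downward induction on $r$, uniformly in $s$, using only the one-step recursion; the price is that at every inductive step you must show the extra summand $(\omega_{\underline i}\za S^{-1}(M_{r+1,s-1}^*))\za F_r$ vanishes, which you do by the chain-collapse argument: because $j\#\underline i\ne -1$ for all $j\in\{r+1,\ldots,s-2\}$, the right action of $S^{-1}(M_{r+1,s-1}^*)$ collapses to $-q\,\omega_{\underline i}\za(F_{s-1}F_{s-2}\cdots F_{r+1})$, and this chain first slides $s$ down to $k+1$ and then dies at $F_k$ because both $k$ and $k+1$ are then present; whereas in the base case $r=k$ the entry $k$ is absent so the chain slides all the way down to $\omega_{1,\ldots,k}$. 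This is exactly the combinatorial distinction that makes the argument work. The trade-off: the paper's version is modular and reuses the $M_{jn}$-factorization identity already established for Lemma~\ref{lemmaB}; yours is self-contained and uniform (one induction, no case split on $s$), at the cost of spelling out the collapse and the ``wall at $F_k$'' in detail, which the paper leaves implicit in its ``by iteration'' steps. Both are correct; yours actually makes the step the paper glosses over more transparent.
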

\begin{proof}
By definition of $\Omega^k_N$, $\omega_{\underline{i}}\za F_j=\{\sigma_k(F_j)\omega\}_{\underline{i}}$
for all $\omega\in\Omega^k_N$ and all $1\leq j\leq\ell-1$, and by (\ref{ar:F}) we have
$$
\omega_{1,\ldots,\hat{i},\ldots,k+1}\za F_j=\delta_{i,j}\,
\omega_{1,\ldots,\,\widehat{i+1\!}\,,\ldots,k+1} \;.
$$
The equation $M_{r,s-1}=[E_r,M_{r+1,s-1}]_q$ implies
$S^{-1}(M^*_{r,s-1})=-q[F_r,S^{-1}(M^*_{r+1,s-1})]_q$. Using this,
we get
\begin{align*}
\omega_{1,\ldots,\hat{r},\ldots,k+1}\za S^{-1}(M_{r,k}^*)
&=-q\omega_{1,\ldots,\,\widehat{r+1\!}\,,\ldots,k+1}\za S^{-1}(M^*_{r+1,k}) \\
\intertext{and by iteration}
&=(-q)^{k-r}\omega_{1,\ldots,k-1,\hat{k},k+1}\za S^{-1}(M^*_{k,k}) \\
&=(-q)^{k-r+1}\omega_{1,\ldots,k-1,\hat{k},k+1}\za F_k \\
&=(-q)^{k-r+1}\omega_{1,\ldots,k} \;.
\end{align*}
This proves the case $s=k+1$. In the same way from
$$
\omega_{1,\ldots,\hat{r},\ldots,k-1,k,s}\za F_n=(1-\delta_{n,k})
\bigl(\delta_{n,r}\omega_{1,\ldots,\,\widehat{r+1\!}\,,\ldots,k-1,k,s}
+\delta_{n,s-1}\omega_{1,\ldots,\hat{r},\ldots,k-1,k,s-1}\bigr)
$$
we get
\begin{align*}
\omega_{1,\ldots,\hat{r},\ldots,k,s}\za S^{-1}(M_{r,k}^*)
&=-q\omega_{1,\ldots,\,\widehat{r+1\!}\,,\ldots,k+1}\za S^{-1}(M^*_{r+1,k}) \\
\intertext{and by iteration}
&=(-q)^{k-r+1}\omega_{1,\ldots,k-1,s}\za F_k=0 \;,\\
\intertext{and also}
\omega_{1,\ldots,\hat{r},\ldots,k,s}\za S^{-1}(M_{k+1,s-1}^*)
&=\omega_{1,\ldots,\hat{r},\ldots,k,s}\za F_{s-1}S^{-1}(M_{k+1,s-2}^*) \\
&=\omega_{1,\ldots,\hat{r},\ldots,k,s-1}\za S^{-1}(M_{k+1,s-2}^*) \\
\intertext{and by iteration}
&=-q\omega_{1,\ldots,\hat{r},\ldots,k+1} \;,
\end{align*}
for all $s>k+1$. For all $1\leq r\leq k$ and $k+1<s\leq\ell$ we have
$M_{r,s-1}=[M_{r,k},M_{k+1,s-1}]_q$, thus
$S^{-1}(M_{r,s-1}^*)=[S^{-1}(M_{r,k}^*),S^{-1}(M_{k+1,s-1}^*)]_q$ and
\begin{align*}
\omega_{1,\ldots,\hat{r},\ldots,k,s}\za S^{-1}(M_{r,s-1}^*)
&=\omega_{1,\ldots,\hat{r},\ldots,k,s}\za [S^{-1}(M_{r,k}^*),S^{-1}(M_{k+1,s-1}^*)]_q \\
&=-q^{-1}\omega_{1,\ldots,\hat{r},\ldots,k,s}\za S^{-1}(M_{k+1,s-1}^*)S^{-1}(M_{r,k}^*) \\
&=\omega_{1,\ldots,\hat{r},\ldots,k+1}\za S^{-1}(M_{r,k}^*) \\
&=(-q)^{k-r+1}\omega_{1,2,\ldots,k} \;.
\end{align*}
This concludes the proof.
\end{proof}

\begin{lemma}
We have
\begin{equation}\label{eq:etaM}
\eta_{i+1}\za S^{-1}(M^*_{j,k})=-q\delta_{i,k}\eta_j \;,
\end{equation}
for all $\eta\in\Omega^1_N$ and for all $0\leq i\leq\ell-1$
and $1\leq j\leq k\leq\ell-1$.
\end{lemma}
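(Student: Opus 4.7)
The plan is to reduce \eqref{eq:etaM} to a computation in the defining representation $\sigma_1$ of $\Uq{\ell}$ on $W_1 \simeq \C^\ell$. Since $1 \leq j \leq k \leq \ell-1$, the element $S^{-1}(M^*_{j,k})$ lies in $\Uq{\ell}$, and Lemma \ref{lemma:equiv} gives $\eta \za h = \sigma_1(h)\eta$ for every such $h$. Hence \eqref{eq:etaM} is equivalent to
\begin{equation*}
\bigl\{\sigma_1(S^{-1}(M^*_{j,k}))\,\eta\bigr\}_{i+1} = -q\,\delta_{i,k}\,\eta_j,
\end{equation*}
so it suffices to determine the operator $\sigma_1(S^{-1}(M^*_{j,k}))$ explicitly in the canonical basis $\{e_m\}_{m=1,\ldots,\ell}$ of $W_1$.

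The key claim I would prove is
\begin{equation*}
\sigma_1(S^{-1}(M^*_{j,k}))\,e_m = -q\,\delta_{m,j}\,e_{k+1}\qquad \text{for all } 1\leq j\leq k\leq \ell-1,
\end{equation*}
by induction on $k-j$. The base case $k=j$ is immediate: $M^*_{j,j}=F_j$, $S^{-1}(F_j)=-qF_j$, and $\sigma_1(F_j)e_m=\delta_{m,j}e_{j+1}$ by \eqref{ar:F}. For the inductive step, I would use the identity
\begin{equation*}
S^{-1}(M^*_{j,k}) = -q\,[F_j,\,S^{-1}(M^*_{j+1,k})]_q,
\end{equation*}
already established in the proof of Lemma \ref{lemma:Dsquare}. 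Applying $\sigma_1$ together with the inductive hypothesis, one of the two terms of the $q$-commutator produces $\sigma_1(F_j)\,e_{k+1}$, which vanishes because $j\leq k$ forces $k+1\neq j$. Only the reversed ordering survives and, after one further application of \eqref{ar:F}, yields exactly $-q\,\delta_{m,j}\,e_{k+1}$.

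Taking the $(i+1)$-st component of $\sigma_1(S^{-1}(M^*_{j,k}))\,\eta$ then gives $-q\,\delta_{i+1,\,k+1}\,\eta_j = -q\,\delta_{i,k}\,\eta_j$, which is \eqref{eq:etaM}. The argument is essentially mechanical; the only point requiring attention is the vanishing of $\sigma_1(F_j)\,e_{k+1}$ for $j\leq k$, which is precisely what forces exactly one term of the $q$-commutator to survive and explains why a uniform $-q$ prefactor is produced regardless of the length $k-j$.
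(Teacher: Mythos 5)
Your proof is correct and follows essentially the same route as the paper: both reduce the identity to the action of $\sigma_1(S^{-1}(M^*_{j,k}))$ on $W_1$ via Lemma \ref{lemma:equiv}, and both use the $q$-commutator recursion for $S^{-1}(M^*_{j,k})$ together with \eqref{ar:F}. The only organizational difference is that you run a single induction on $k-j$ via the first-index recursion $S^{-1}(M^*_{j,k})=-q[F_j,S^{-1}(M^*_{j+1,k})]_q$, proving the clean operator identity $\sigma_1(S^{-1}(M^*_{j,k}))e_m=-q\delta_{m,j}e_{k+1}$, whereas the paper splits into cases ($i<j$ or $i>k$, then $i=k$, then $j\leq i<k$) using the second-index recursion and an intermediate factorization $M_{j,k}=[M_{j,i},M_{i+1,k}]_q$.
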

\begin{proof}
From $\eta_{i+1}\za F_n=\delta_{i,n}\eta_i$, we get
$$
\eta_{i+1}\za S^{-1}(M^*_{j,k})=0\qquad\mathrm{if}\;
i<j\;\mathrm{or}\;i>k \;.
$$
We have $M_{j,k}=[M_{j,n},M_{n+1,k}]_q$,
i.e.~$S^{-1}(M^*_{j,k})=[S^{-1}(M^*_{j,n}),S^{-1}(M^*_{n+1,k})]_q$
for all $j\leq n<k$.\linebreak
In particular $S^{-1}(M^*_{j,i})=-q[S^{-1}(M^*_{j,i-1}),F_i]_q$ applied
to $\eta_{i+1}$ gives
$$
\eta_{i+1}\za S^{-1}(M^*_{j,i})=
\eta_{i+1}\za F_i\,S^{-1}(M^*_{j,i-1})=
\eta_i\za S^{-1}(M^*_{j,i-1})
$$
that iterated gives
$$
\eta_{i+1}\za S^{-1}(M^*_{j,i})=
\eta_{j+1}\za S^{-1}(M^*_{j,j})=
-q\eta_{j+1}\za F_j=
-q\eta_j \;.
$$
Finally, for $j\leq i<k$ we have
\begin{align*}
\eta_{i+1}\za S^{-1}(M^*_{j,k}) &=
\eta_{i+1}\za [S^{-1}(M^*_{j,i}),S^{-1}(M^*_{i+1,k})]_q=
\eta_{i+1}\za S^{-1}(M^*_{j,i})\,S^{-1}(M^*_{i+1,k}) \\
&=-q\eta_j\za S^{-1}(M^*_{i+1,k})=0 \;.
\end{align*}
This concludes the proof.
\end{proof}

The following lemma simplifies considerably the computations with forms.

\begin{lemma}\label{lemma:trick}
Two elements $\omega,\omega'\in\Omega^k_N$ are equal if and only if
$\,\omega_{1,2,\ldots,k}=\smash[b]{\omega'}_{1,2,\ldots,k}$.
Moreover, for all $\omega,\omega'\in\Omega^k_N$ and $\underline{i}\in\Lambda_k$, we have
\begin{equation}\label{eq:trick}
\varphi(\omega_{\underline{i}}^*\omega'_{\underline{i}})=
\tfrac{q^{2|\underline{i}|-k(\ell+1)}}{\dim_q W_k}\inner{\omega,\omega'} \;,
\end{equation}
where the quantum dimension $\dim_q W_k$ is given explicitly in Lemma \ref{lemma:dimq}.
\end{lemma}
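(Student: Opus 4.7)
The plan is to prove both assertions by exploiting the irreducibility of the $\Uq{\ell}$-representation $W_k$.

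For the first assertion, I would observe that $(1,2,\ldots,k)$ is the highest-weight multi-index of $W_k$, so that $W_k$ is generated from $e_{(1,\ldots,k)}$ by iterated application of the $\sigma_k(E_j)$'s. Translating through the equivariance $\omega\za h=\sigma_k^N(h)\omega$ of Lemma \ref{lemma:equiv} together with the rule $\{\sigma_k(E_j)\omega\}_{\underline{i}}=\delta_{j\#\underline{i},+1}\omega_{\underline{i}^{j,+}}$, every component $\omega_{\underline{i}}$ is expressible as $\omega_{(1,\ldots,k)}\za(E_{j_m}\cdots E_{j_1})$ for a suitable chain $j_1,\ldots,j_m$; hence $\omega$ and $\omega'$ must agree as soon as their $(1,\ldots,k)$-components agree.

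For the second (and main) assertion, I introduce the matrix $T_{\underline{i},\underline{j}}:=\varphi(\omega_{\underline{i}}^{*}\omega'_{\underline{j}})$. The strategy is to derive enough intertwining relations to force $T$ to be diagonal and to pin down its diagonal entries up to one global scalar, then fix that scalar by a summation identity. Combining the product rule for the right action, the identity $a^{*}\za h=(a\za S(h)^{*})^{*}$, the equivariance of $\omega$ and $\omega'$, and the right invariance $\varphi(a\za h)=\epsilon(h)\varphi(a)$ of the Haar state, I would derive
\[
\epsilon(h)\,T_{\underline{i},\underline{j}}
=\sum_{\underline{i}',\underline{j}'}\bigl[\sigma_k^N(S(h_{(1)}))\bigr]_{\underline{i}',\underline{i}}\,\bigl[\sigma_k^N(h_{(2)})\bigr]_{\underline{j},\underline{j}'}\,T_{\underline{i}',\underline{j}'}
\qquad\text{for every }h\in\Kq{\ell}.
\]
Specializing to $h=K_j^{2}$ (group-like, $\epsilon=1$) forces $j\#\underline{i}=j\#\underline{j}$ for every $j=1,\ldots,\ell-1$ whenever $T_{\underline{i},\underline{j}}\neq 0$; since the weight map $\underline{i}\mapsto(j\#\underline{i})_{j}$ is injective on $\Lambda_k$, this makes $T$ diagonal.

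Writing $t_{\underline{i}}:=T_{\underline{i},\underline{i}}$ and specializing the same intertwining to $h=E_j$ then yields the recursion $t_{\underline{i}}=q^{2}\,t_{\underline{i}^{\,j,-}}$ whenever $j\#\underline{i}=-1$. Since $|\underline{i}^{\,j,-}|=|\underline{i}|-1$, the quantity $q^{-2|\underline{i}|}t_{\underline{i}}$ is preserved under these moves, and by the irreducibility of $W_k$ they connect all of $\Lambda_k$, so $t_{\underline{i}}=c\,q^{2|\underline{i}|}$ for one constant $c$ depending on $(\omega,\omega')$. Comparing $\sum_{\underline{i}}t_{\underline{i}}=\varphi(\omega^{\dag}\omega')=\inner{\omega,\omega'}$ with $\sum_{\underline{i}}q^{2|\underline{i}|}=q^{k(\ell+1)}\dim_{q}W_{k}$ (from Lemma \ref{lemma:dimq} together with the $q\leftrightarrow q^{-1}$ symmetry of $\dim_{q}W_{k}$) gives $c=q^{-k(\ell+1)}\inner{\omega,\omega'}/\dim_{q}W_{k}$, which is exactly \eqref{eq:trick}. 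The delicate point is the careful derivation of the intertwining relation for $T$---in particular tracking the antipode and $*$-structure through $a^{*}\za h=(a\za S(h)^{*})^{*}$; once that is in place, the remaining steps are straightforward Schur-type applications.
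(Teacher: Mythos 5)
Your proof is correct and arrives at exactly formula~\eqref{eq:trick}, but it packages the argument differently from the paper. The paper goes straight at the diagonal quantities: it applies $\za E_j$ to the single element $\omega_{\underline{i}}^*(\omega'_{\underline{i}}\za F_j)$, uses the coproduct of $E_j$ and the equivariance $\omega_{\underline{i}}\za h=\{\sigma_k(h)\omega\}_{\underline{i}}$, and then invokes right-invariance of $\varphi$ to kill the left-hand side, giving directly $\varphi(\omega_{\underline{i}}^*\omega'_{\underline{i}})=q^2\varphi(\omega_{\underline{i}^{j,-}}^*\omega'_{\underline{i}^{j,-}})$. You instead form the full matrix $T_{\underline{i},\underline{j}}$, derive the intertwining relation from Haar invariance, specialize to $K_j^2$ to deduce diagonality from weight multiplicity-freeness of the fundamental representation $W_k$, and only then extract the same recursion from the $E_j$ intertwining. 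The core engine (Haar invariance plus the $\sigma_k^N$-equivariance of sections, yielding the identical recursion, then normalization against $\sum_{\underline{i}}q^{2|\underline{i}|}=q^{k(\ell+1)}\dim_q W_k$) is the same; what you add is the Schur-type diagonality step, which is conceptually pleasant but strictly unnecessary, since the lemma never mentions off-diagonal $T_{\underline{i},\underline{j}}$. One small imprecision in your first paragraph: $(1,\ldots,k)$ is indeed the highest weight, so $e_{(1,\ldots,k)}$ is \emph{annihilated} by the $\sigma_k(E_j)$'s; $W_k$ is generated from it by the $\sigma_k(F_j)$'s. Your conclusion that $\omega_{\underline{i}}=\omega_{(1,\ldots,k)}\za(E_{j_m}\cdots E_{j_1})$ is nevertheless correct, because the right action on components is contravariant to the action on basis vectors: $\omega_{\underline{i}}\za E_j=\delta_{j\#\underline{i},+1}\,\omega_{\underline{i}^{j,+}}$ walks \emph{up} from $(1,\ldots,k)$.
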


\begin{proof}
Since
$$
\omega_{\underline{i}}=
\omega_{1,2,\ldots,k}\za (E_kE_{k+1}\ldots E_{i_k-1})\ldots
(E_2E_3\ldots E_{i_2-1})(E_1E_2\ldots E_{i_1-1}) \;,
$$
for all $\omega\in\Omega^k_N$ and all $\underline{i}\in\Lambda_k$,
if $\omega_{1,2,\ldots,k}$ is zero, all components of $\omega$ are
zero. By linearity, this proves the first claim.

By inverting the previous transformation we get
\begin{equation}\label{eq:iF}
\omega_{1,2,\ldots,k}=
\omega_{\underline{i}}\za (F_{i_1-1}F_{i_1-2}\ldots F_1)
(F_{i_2-1}F_{i_2-2}\ldots F_2)\ldots
(F_{i_k-1}F_{i_k-2}\ldots F_k) \;.
\end{equation}
Assume $j\#\underline{i}=-1$, so that $\omega'_{\underline{i}}\za K_j=q^{-\frac{1}{2}}\omega'_{\underline{i}}$,
$\omega_{\underline{i}}^*\za K_j^{-1}=q^{-\frac{1}{2}}\omega_{\underline{i}}^*$,
$$
\omega'_{\underline{i}}\za F_jE_j=
\{\sigma_k(F_j)\omega'\}_{\underline{i}}\za E_j=
\omega'_{\underline{i}^{\,j,-}}\za E_j=
\{\sigma_k(E_j)\omega'\}_{\underline{i}^{\,j,-}}=
\omega_{(\underline{i}^{\,j,-})^{\,j,+}}=\omega_{\underline{i}} \;,
$$
and by covariance of the action
\begin{align*}
q^{\frac{1}{2}}\bigl\{\omega_{\underline{i}}^*(\omega'_{\underline{i}}\za F_j)\bigr\}\za E_j
&=\omega_{\underline{i}}^*(\omega'_{\underline{i}}\za F_jE_j)
-q^2(\omega_{\underline{i}}\za F_j)^*(\omega'_{\underline{i}}\za F_j) \\
&=\omega_{\underline{i}}^*\omega'_{\underline{i}}
-q^2(\omega_{\underline{i}}\za F_j)^*(\omega'_{\underline{i}}\za F_j) \;.
\end{align*}
Applying the Haar state $\varphi$ to both sides of this identity we get
$$
0=\varphi\bigl(\omega_{\underline{i}}^*\omega'_{\underline{i}}\bigr)
-q^2\varphi\bigl((\omega_{\underline{i}}\za F_j)^*(\omega'_{\underline{i}}\za F_j)\bigr) \;,
$$
where the left hand side vanishes due to invariance of $\varphi$.
By iterated use of the last equation, together with (\ref{eq:iF}), we get
$$
\varphi(\omega_{\underline{i}}^*\omega'_{\underline{i}})=
q^{2(i_1-1+i_2-2+\ldots+i_k-k)}\varphi(\omega_{1,2,\ldots,k}^*\omega'_{1,2,\ldots,k})=
q^{2|\underline{i}|-k(k+1)}\varphi(\omega_{1,2,\ldots,k}^*\omega'_{1,2,\ldots,k}) \;.
$$
Sum over $\underline{i}$, using Lemma \ref{lemma:dimq}, gives
$$
\inner{\omega,\omega'}=q^{k(\ell-k)}\dim_q\!W_k\;\varphi(\omega_{1,2,\ldots,k}^*\omega'_{1,2,\ldots,k}) \;,
$$
and $\varphi(\omega_{\underline{i}}^*\omega'_{\underline{i}})=
q^{2|\underline{i}|-k(\ell+1)}(\dim_q W_k)^{-1}\inner{\omega,\omega'}$.
This concludes the proof.
\end{proof}

\subsection{Harmonic decomposition of $\Omega^k_N$}
Since left and right canonical actions of $\Uq{\ell+1}$ are mutually commuting, the space
$\bigoplus_{k=0}^\ell\Omega^k_N$, $N\in\Z$, carries a left action of $\Uq{\ell+1}$.
We decompose it into irreducible representations using the Gelfand-Tsetlin basis.

\begin{prop}\label{prop:ha}
As left $\Uq{\ell+1}$-modules we have the following equivalences:{\allowdisplaybreaks%
\begin{align*}
\Omega^0_N&\simeq \begin{cases}
\bigoplus\nolimits_{m\in\N}V_{(m+N,0,\ldots,0,m)} \;, &\mathrm{if}\;N\geq 0\,,\\[5pt]
\bigoplus\nolimits_{m\in\N}V_{(m,0,\ldots,0,m-N)} \;, &\mathrm{if}\;N<0\,,
\end{cases} \\[8pt]
\Omega^k_N&\simeq \begin{cases}
\bigoplus\nolimits_{m\in\N}V_{(m+N-k,0,\ldots,0,m)+\,\underline{e}_k}
\oplus V_{(m+N-k-1,0,\ldots,0,m)+\,\underline{e}_{k+1}} 
\;, &\mathrm{if}\;1\leq k\leq\min(N,\ell)-1\,,\\[5pt]
\bigoplus\nolimits_{m\in\N}V_{(m,0,\ldots,0,m-N+k)+\,\underline{e}_k}
\oplus V_{(m,0,\ldots,0,m-N+k+1)+\,\underline{e}_{k+1}} 
\;, &\mathrm{if}\;\max(1,N)\leq k\leq \ell-1\,,\\
\end{cases} \\[8pt]
\Omega^\ell_N&\simeq \begin{cases}
\bigoplus\nolimits_{m\in\N}V_{(m,0,\ldots,0,m-N+\ell+1)} \;, &\mathrm{if}\;N\leq\ell\,,\\[5pt]
\bigoplus\nolimits_{m\in\N}V_{(m+N-\ell-1,0,\ldots,0,m)} \;, &\mathrm{if}\;N>\ell\,,
\end{cases}
\end{align*}
}where $\underline{e}_{\,k}=(e_k^1,\ldots,e_k^\ell)$ is the $\ell$-tuple with
components $e_k^j=\delta^j_k$ and $V_n$ is the vector space carrying the irreducible
representation $\rho_n$.
The trivial representation $V_{(0,0,\ldots,0)}$ appears (with multiplicity $1$) in
just two cases: as a subrepresentation of $\Omega^0_N$ for $N=0$, and as
a subrepresentation of $\Omega^\ell_N$ for $N=\ell+1$.
\end{prop}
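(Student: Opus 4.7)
The plan is to reduce the problem to a Peter-Weyl decomposition of $\Oq$ combined with a branching rule from $\Uq{\ell+1}$ to $\Kq{\ell}$. Concretely, by Peter-Weyl
$$
\Oq \simeq \bigoplus_n V_n^* \otimes V_n
$$
as a bi-$\Uq{\ell+1}$-module, the left canonical action acting on the first factor and the right on the second. Lemma \ref{lemma:equiv} identifies $\Omega^k_N = \mathfrak{M}(\sigma^N_k)$ with the subspace on which $\za h$ matches $\sigma^N_k(h)$, so as left $\Uq{\ell+1}$-modules
$$
\Omega^k_N \simeq \bigoplus_n V_n^* \otimes \operatorname{Hom}_{\Kq{\ell}}\!\bigl((W_k,\sigma^N_k),V_n\bigr),
$$
and the task is to compute these multiplicities.

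Next, I would apply the Gelfand--Tsetlin branching rule from $\Uq{\ell+1}$ to $\Uq{\ell}$: the restriction of $V_n$ to $\Uq{\ell}$ is multiplicity-free and each constituent is labelled by a signature interlacing with that of $n$. For $k=0$ the subrepresentation $\sigma^N_0$ is the one-dimensional trivial $\Uq{\ell}$-representation, and interlacing forces $n_2=\ldots=n_{\ell-1}=0$, i.e.\ $n=(n_1,0,\ldots,0,n_\ell)$, each with multiplicity one. The analogous analysis for $k=\ell$ isolates the top fundamental rep of $\Uq{\ell}$ and again restricts $n$ to the same shape. For the intermediate cases $1\leq k\leq\ell-1$, the $\Uq{\ell}$-highest weight of $W_k$ is $\delta^k$, and interlacing now permits exactly two kinds of $\Uq{\ell+1}$-highest weights, namely those of the form $(a,0,\ldots,0,b)+\underline{e}_{\,k}$ and $(a,0,\ldots,0,b)+\underline{e}_{\,k+1}$ (again each with multiplicity one). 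This accounts for the two summands in the statement.

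To pin down the parameter (the relation between $a,b$ and $N$, and hence the $m$-indexing), I would use that $\hat{K}$ lies in the centre of $\Kq{\ell}$ and hence acts as a scalar on the $W_k$-isotypic component of $V_n$ by Schur's lemma; on the other hand $\sigma^N_k(\hat{K})=q^{k-\frac{\ell}{\ell+1}N}$. Computing the eigenvalue of $\hat{K}=(K_1K_2^2\ldots K_\ell^\ell)^{2/(\ell+1)}$ on a weight vector of $V_n$ whose $\Uq{\ell}$-weight is that of the $W_k$-highest vector, and equating it with the prescribed value, yields precisely the relations $n_1-n_\ell=N-k$ (or $N-k-1$) for the two respective branches, giving the announced parametrisation by $m\in\N$. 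The statement about the trivial subrepresentation then follows by checking in which cases $n=(0,\ldots,0)$ is permitted by these relations together with the positivity constraints on $m$ and the $n_j$.

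The main obstacle is the $\hat{K}$-eigenvalue computation: one must locate the $W_k$-isotypic vector inside $V_n$ (characterised by its $\Uq{\ell}$-weight and being annihilated by the appropriate $E_j$'s for $j<\ell$), and compute its $\Uq{\ell+1}$-weight $\mu$ in order to evaluate $q^{\sum_j j\mu_j/(\ell+1)}$. Doing this uniformly for the four branches of the decomposition, and correctly handling the edge cases $k=0$ and $k=\ell$ where the second (respectively first) branch disappears and $\hat{K}$ singles out a different $n$-shape, is the delicate part. The remaining verification amounts to bookkeeping consistent with the conventions in \eqref{eq:Khat} and the definition of $\sigma^N_k$.
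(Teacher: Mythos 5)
Your proposal follows the same route as the paper's proof: Peter--Weyl reduces the problem to counting copies of $\sigma^N_k$ inside each $\rho_n^*|_{\Kq{\ell}}$, the Gelfand--Tsetlin interlacing rule is used to determine which highest weights can branch to the fundamental $\Uq{\ell}$-weight $\delta^k$, and the $\hat K$-eigenvalue constraint (which the paper computes via the explicit GT-tableau formula for the $E_{i,i}$-eigenvalues and the expression \eqref{eq:Khat}) pins down the one-parameter families labelled by $m$. The steps you call the ``delicate part'' are exactly what the paper works out; beyond the usual bookkeeping of whether one records the summand as $V_n$ or $V_n^*$ (which swaps $n\leftrightarrow n'$ and hence $\underline{e}_k\leftrightarrow\underline{e}_{\ell+1-k}$, and deserves care), the arguments coincide.
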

\begin{proof}
By Peter-Weyl theorem, the algebra $\Oq$ is a multiplicity free direct sum
of representations $\rho_n\otimes\rho_n^*$ of $\Uq{\ell+1}\otimes\Uq{\ell+1}$,
where $n=(n_1,\ldots,n_\ell)$ runs in $\N^\ell$ and $\rho_n^*$ is the representation
dual to $\rho_n$. We have
$$
\Omega_N^k\simeq\bigoplus\nolimits_{n}(\sigma^N_k,\rho_n^*)V_n
$$
where $(\sigma^N_k,\rho_n^*)$ is the multiplicity of the irreducible representation
$\sigma^N_k$ of $\Kq{\ell}$ inside the representation $\rho_n^*$ of $\Uq{\ell+1}$.
We have $\rho_n^*\simeq\rho_{n'}$, where $n':=(n_\ell,\ldots,n_1)$.
A basis of $V_{n'}$ is given by Gelfand-Tsetlin tableaux
(GT tableaux, for short), which are arrays of integers of the form
\begin{equation}\label{eq:GTT}
\left(\begin{array}{cccccc}
r_{1,1} & r_{1,2} & \ldots & r_{1,\ell} & 0 \\
r_{2,1} & r_{2,2} & \ldots & r_{2,\ell} \\
\ldots & \ldots & \ldots \\
r_{\ell,1} & r_{\ell,2} \\
r_{\ell+1,1}
\end{array}\right)
\end{equation}
with $r_{i,j}\geq r_{i+1,j}\geq r_{i,j+1}$ for all
$i,j$ (from now on it is understood that $r_{ij}=0$ if $i,j$ are out of
range, i.e.~if $i<1$, $j<1$, or $i+j>\ell+2$). The highest weight $n'$
has entries $n'_i=r_{1,i}-r_{1,i+1}$.
Usually GT tableaux are defined modulo a global rescaling
(two arrays are equivalent if they differ by a constant).
Here for each equivalence class of GT tableaux, we choose the representative
which has zero in the top-right corner, $r_{1,\ell+1}=0$.
If we remove the first row from (\ref{eq:GTT}) we obtain a GT tableu of
$\Uq{\ell}$ with highest weight $m=(m_1,\ldots,m_{\ell-1})$ given
by $m_i=r_{2,i}-r_{2,i+1}$.
In particular, $\sigma_k$ appears (with multiplicity $1$) in $\rho_{n'}$ if and only if
\begin{equation}\label{eq:br}
n'=(n'_1,0,\ldots,0,n'_\ell)+\underline{e}_{\,\ell-k}
\qquad\mathrm{or}\qquad
n'=(n'_1,0,\ldots,0,n'_\ell)+\underline{e}_{\,\ell-k+1} \;,
\end{equation}
for $1\leq k\leq\ell-1$, and if and only if $n'=(n'_1,0,\ldots,0,n'_\ell)$ for $k=0,\ell$.

In Gelfand-Tsetlin notations~\cite{GT88a}, $K_i=q^{\frac{1}{2}H_i}$ is
represented by $q^{\frac{1}{2}(E_{i+1,i+1}-E_{i,i})}$, thus
$$
\hat{K}^{\ell+1}=q^{-\ell E_{\ell+1,\ell+1}+\sum_{i=1}^\ell E_{i,i}} \;.
$$
The eigenvalue of $E_{i,i}$ when applied to the generic GT tableau is
$$
\sum\nolimits_{j=1}^ir_{\ell+2-i,j}-\sum\nolimits_{j=1}^{i-1}r_{\ell+3-i,j} \;,
$$
and then the eigenvalue of $\hat{K}$ is
$$
q^{\sum_{j=1}^\ell(\frac{\ell}{\ell+1}r_{1,j}-r_{2,j})} \;.
$$
So, for $k=0,\ell$ we have $\hat{K}^{\ell+1}=q^{\ell(n'_1-n'_\ell)}\cdot id$, and
for $1\leq k\leq\ell-1$ we have, in the two cases listed in (\ref{eq:br}),
$\hat{K}^{\ell+1}=q^{\ell(n'_1-n'_\ell)-\ell+k}\cdot id\,$ resp.
$\hat{K}^{\ell+1}=q^{\ell(n'_1-n'_\ell)+k}\cdot id\,$.
In each case, the eigenvalue of $\hat{K}^{\ell+1}$ must be equal to $q^{(\ell+1)k-\ell N}$, that means
\begin{align*}
\mathrm{if}\;k=0 & \Rightarrow n'_1=n'_\ell-N \;, \\
\mathrm{if}\;1\leq k\leq\ell-1 & \Rightarrow
n'_1=n'_\ell+k+1-N \;\mathrm{or}\;n'_1=n'_\ell+k-N  \;, \\
\mathrm{if}\;k=\ell & \Rightarrow n'_1=n'_\ell+\ell+1-N \;.
\end{align*}
Recalling that $n_j=n'_{\ell+1-j}$, we get Proposition \ref{prop:ha}.
\end{proof}

\subsection{The Dolbeault operator}

We take as a $q$-analogue of the Dolbeault operator
\begin{equation}\label{eq:Dolbeault}
\deb:=\textstyle{\sum_{i=1}^\ell}\,\mL_{\hat{K}X_i}\otimes\mathfrak{e}^L_{e^i} \;,
\end{equation}
where $X_i$ are the operator in Lemma \ref{lemma:Xi} and $\mathfrak{e}^L$ is the exterior product
in \eqref{eq:extpro}.
Explicitly, for any $\omega\in\Omega^k_N$,
$$
(\deb\omega)_{\underline{i}}=\sum\nolimits_{r=1}^{k+1}\,(-q)^{1-r}\,
\omega_{\underline{i}\smallsetminus i_r}\za S^{-1}(\hat{K}X_{i_r}) \;.
$$
Being $h\mapsto\mL_h$ a $*$-representation, the Hermitian conjugate $\deb^\dag$ of $\deb$ is
\begin{equation}\label{eq:debdag}
\deb^\dag=\textstyle{\sum_{i=1}^\ell}\,\mL_{X^*_i\hat{K}}\otimes\mathfrak{i}^L_{e^i} \;.
\end{equation}

\begin{prop}
The operator $\deb$ maps $\Omega^k_N$ in $\Omega^{k+1}_N$ and satisfies $\deb^2=0$.
The operator $\deb^\dag$ maps $\Omega^{k+1}_N$ in $\Omega^k_N$ and satisfies $(\deb^\dag)^2=0$.
\end{prop}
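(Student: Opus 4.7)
The plan is to establish the four claims in order, noting at the outset that the statements about $\deb^\dag$ follow from those about $\deb$: the formula \eqref{eq:debdag} is the Hilbert space adjoint of \eqref{eq:Dolbeault} (using that $\mL_h^\dag=\mL_{h^*}$ and that $\mathfrak i^L_{e^i}$ is the adjoint of $\mathfrak e^L_{e^i}$ by definition); moreover $\Omega^k_N$ and $\Omega^{k'}_N$ with $k\neq k'$ sit in different summands $\Oq\otimes W_k$ and $\Oq\otimes W_{k'}$, so they are orthogonal and $\deb$ raising the form degree by one forces $\deb^\dag$ to lower it by one, with $(\deb^\dag)^2=(\deb^2)^\dag=0$.

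To show $\deb\omega\in\Omega^{k+1}_N$ I would, by Lemma~\ref{lemma:equiv}, verify the two equivariance conditions $v\za\hat K=q^{(k+1)-\frac{\ell}{\ell+1}N}v$ and $v\za h=\sigma_{k+1}(h)v$ for $h\in\Uq{\ell}$. For the $\hat K$-condition, $X_i\adj\hat K=qX_i$ from Lemma~\ref{lemma:Xi} together with $\hat K$ being group-like gives $S^{-1}(X_i)\hat K=q\hat K S^{-1}(X_i)$, and a one-line calculation then shows summand-by-summand that
\begin{equation*}
\bigl(\omega_{\underline i\smallsetminus i_r}\za S^{-1}(\hat K X_{i_r})\bigr)\za\hat K
=q^{(k+1)-\frac{\ell}{\ell+1}N}\,\omega_{\underline i\smallsetminus i_r}\za S^{-1}(\hat K X_{i_r})\,.
\end{equation*}
For the $\Uq{\ell}$-covariance I would first observe that, since $\hat K$ commutes with $\Uq{\ell}$ inside $\Kq{\ell}$, the element $Y:=\sum_i\hat K X_i\otimes e^i$ still lies in $\mathfrak X$; equivalently $Y\adj h=\sigma_1(h)Y$ for $h\in\Uq{\ell}$. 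Combining this with the intertwiner property $\sigma_{k+1}(h)(e^i\wprod w)=\sigma_1(h_{(1)})e^i\wprod\sigma_k(h_{(2)})w$ from Proposition~\ref{prop:grass} and the equivariance of $\omega$, the right action $\za h$ reorganises componentwise into $\sigma_{k+1}(h)\deb\omega$. This Sweedler bookkeeping, moving $h$ through $S^{-1}(\hat K X_i)$ via the identity $xh=h_{(1)}(x\adj h_{(2)})$ and then recombining through $\wprod$, is the main obstacle of the plan and the step where I would expect to spend the most effort.

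For $\deb^2=0$, using $\mL_g\mL_{g'}=\mL_{gg'}$, associativity of $\wprod$ (Proposition~\ref{prop:grass}), and the relation $\hat K X_i=q^{-1}X_i\hat K$, I would rewrite
\begin{equation*}
\deb^2=\sum_{i,j}\mL_{(\hat K X_i)(\hat K X_j)}\otimes\mathfrak e^L_{e^i\wprod e^j}
=q\sum_{i,j}\mL_{\hat K^2 X_iX_j}\otimes\mathfrak e^L_{e^i\wprod e^j}\,.
\end{equation*}
The diagonal $i=j$ terms vanish because $e^i\wprod e^i=0$, while for $i<j$ the $q$-commutation $X_iX_j=q^{-1}X_jX_i$ from \eqref{eq:debsquare} cancels exactly against the $q$-antisymmetry $e^j\wprod e^i=-q^{-1}(e^i\wprod e^j)$, so the $(i,j)$ and $(j,i)$ contributions add up to zero.
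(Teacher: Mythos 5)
Your proposal is correct and follows essentially the same route as the paper. For the mapping property you verify condition (iii) of Lemma~\ref{lemma:equiv} directly while the paper shows $\deb$ intertwines the $\Kq{\ell}$-action at the operator level and then restricts to invariants, and for $\deb^2=0$ you cancel the $(i,j)$ and $(j,i)$ contributions pairwise while the paper collapses everything into $X\wprod X=0$ via \eqref{eq:debsquare} — but both are the same computation stated differently, resting on the same inputs (Lemma~\ref{lemma:Xi}, Proposition~\ref{prop:grass}, $X_i\hat K=q\hat K X_i$, and $\mL$ being a $*$-representation).
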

\begin{proof}
Since $h\to\mL_h$ is a representation, the algebraic identity
$ht=\bigl(t\adj S^{-1}(h_{(2)})\bigr)h_{(1)}$ implies
$$
\mL_h\mL_t=\mL_{t\adj S^{-1}(h_{(2)})}\mL_{h_{(1)}}
$$
for all $h,t\in\Uq{\ell+1}$. By (\ref{eq:wedgecov}) we have also
$$
\sigma^N_{k+1}(h)(x\wprod v)=
\{\sigma^0_1(h_{(1)})x\}\wprod\{\sigma^N_k(h_{(2)})v\}
$$
for all $h\in\Kq{\ell}$, $x\in W_1$ and $v\in W_k$, that means
$\sigma^N_{k+1}(h)\mathfrak{e}^L_{x}=
\mathfrak{e}^L_{\sigma^0_1(h_{(1)})x}\cdot\sigma^N_k(h_{(2)})$. Thus, for all $h\in\Kq{\ell}$,
\begin{align*}
\bigl\{\mL_{h_{(1)}}\otimes\sigma^N_{k+1}(h_{(2)})\bigr\}\deb
&=\textstyle{\sum_{i=1}^\ell}\,\mL_{h_{(1)}}\mL_{\hat{K}X_i}\otimes\sigma^N_{k+1}(h_{(2)})\mathfrak{e}^L_{e^i} \\
&=\textstyle{\sum_{i=1}^\ell}\,\mL_{\hat{K}X_i\adj S^{-1}(h_{(2)})}\mL_{h_{(1)}}\otimes
\mathfrak{e}^L_{\sigma^0_1(h_{(3)})e^i}\cdot\sigma^N_k(h_{(4)}) \;.
\end{align*}
But $\hat{K}$ commutes with all $h\in\Kq{\ell}$, and
$\sum_iX_i\adj S^{-1}(h_{(2)})\otimes\sigma^0_1(h_{(3)})e^i=\epsilon(h_{(2)})\sum_iX_i\otimes e^i$
by Lemma \ref{lemma:Xi}. We conclude that
$$
\bigl\{\mL_{h_{(1)}}\otimes\sigma^N_{k+1}(h_{(2)})\bigr\}\deb
=\deb\bigl\{\mL_{h_{(1)}}\otimes\sigma^N_k(h_{(2)})\bigr\} \;,
$$
for all $h\in\Kq{\ell}$.
Hence, $\deb$ maps invariant elements into invariant elements, and
$\deb(\Omega^k_N)\subset\Omega^{k+1}_N$. By adjunction, being all representations
unitary,
$$
\deb^\dag\bigl\{\mL_{h_{(1)}^*}\otimes\sigma^N_{k+1}(h_{(2)}^*)\bigr\}
=\bigl\{\mL_{h_{(1)}^*}\otimes\sigma^N_k(h_{(2)}^*)\bigr\}\deb^\dag \;.
$$
Hence, $\deb^\dag$ maps invariant elements into invariant elements, and
$\deb^\dag(\Omega^{k+1}_N)\subset\Omega^k_N$.

Now we prove that $\deb^2=0$ (and by adjunction $(\deb^\dag)^2=0$). Using the
associativity of the wedge-product, and $X_i\hat{K}=q\hat{K}X_i$ we get
$$
\deb^2\omega=q\omega\wprod\za\, S^{-1}(X\wprod X)\hat{K}^{-2} \;,
$$
where $X=\sum_ie^iX_i$. But from (\ref{eq:debsquare}) it follows that
$$
(X\wprod X)_{i_1,i_2}=X_{i_1}X_{i_2}-q^{-1}X_{i_2}X_{i_1}=0
$$
for all $i_1<i_2$, thus $X\wprod X=0$ and $\deb^2=0$.
\end{proof}

Thus for any $N$, we have a left $\Uq{\ell+1}$-covariant cohomology complex $(\Omega_N^\bullet,\deb)$
over $\Omega^0_N$. In particular if $N=0$, this complex gives a differential calculus over $\Aq{\ell}$
(if $N\neq 0$, $\Omega_N^\bullet$ is not closed under the wedge product).
In fact, we now prove two different Leibniz properties of $\deb$.

\begin{lemma}
We have
\begin{equation}\label{eq:gLeibzeroN}
\deb(a\omega)=a(\deb\omega)+(\deb a)\wprod\omega \;,
\end{equation}
for all $a\in\Aq{\ell}$ and $\omega\in\Omega^k_N$.
\end{lemma}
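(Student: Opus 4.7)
The plan is componentwise: by (\ref{eq:Dolbeault}), writing $\mL_h(x) := x\za S^{-1}(h)$,
\[\bigl(\deb(a\omega)\bigr)_{\underline{i}} = \sum_{r=1}^{k+1}(-q)^{1-r}\,\mL_{\hat{K} X_{i_r}}\!\bigl(a\,\omega_{\underline{i}\smallsetminus i_r}\bigr).\]
Since $\za$ is a right action of $\Uq{\ell+1}$ on $\Oq$ and $S^{-1}$ is an anti-coalgebra map, one obtains the twisted Leibniz rule $\mL_h(xy) = \mL_{h_{(2)}}(x)\,\mL_{h_{(1)}}(y)$. The strategy is thus to insert the coproduct $\Delta(\hat{K}X_i)$, computed from (\ref{eq:Xcop}) and $\Delta(\hat{K}) = \hat{K}\otimes\hat{K}$, and to show that only the ``primitive-like'' contribution survives when the tensor factor acting on $a\in\Aq{\ell}$ is taken into account.

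The resulting $\Delta(\hat{K}X_i)$ has three kinds of pieces: a Cartan leg $\hat{K}X_i\otimes(K_i\ldots K_\ell)^2$, a grouplike leg $1\otimes\hat{K}X_i$, and remainders $\hat{K}X_{j+1}\otimes\hat{K}N_{i,j}M_{i,j}^*$ for $i\leq j\leq\ell-1$. When the second tensor factor acts on $a$ via $\mL$, the Cartan leg contributes the identity: each $K_r$ with $1\leq r\leq\ell-1$ fixes $a$ by $\Uq{\ell}$-invariance of $\Aq{\ell}$, and $K_\ell$ fixes $a$ too, since $\hat{K}=(K_1K_2^2\cdots K_\ell^\ell)^{2/(\ell+1)}$ also does so and $K_\ell$ is positive. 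Each remainder term vanishes: $\hat{K}N_{i,j}=(K_i\ldots K_\ell)(K_{j+1}\ldots K_\ell)$ fixes $a$, while $M_{i,j}^*$ lies in $\Uq{\ell}$ (since $j\leq\ell-1$) and is a nested $q$-commutator of $F$'s, hence $\epsilon(M_{i,j}^*)=0$ (reducing to $F_i$ in the extreme case $i=j$); so $\mL_{\hat{K}N_{i,j}M_{i,j}^*}(a) = 0$.

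What remains is the decomposition $\mL_{\hat{K}X_{i_r}}(a\,\omega_{\underline{i}\smallsetminus i_r}) = a\,\mL_{\hat{K}X_{i_r}}(\omega_{\underline{i}\smallsetminus i_r}) + \mL_{\hat{K}X_{i_r}}(a)\,\omega_{\underline{i}\smallsetminus i_r}$. Summing against $(-q)^{1-r}$, the first piece reassembles to $a\,(\deb\omega)_{\underline{i}}$; the second, using $(\deb a)_j = \mL_{\hat{K}X_j}(a)$ together with the explicit formula $(\eta\wprod\omega)_{\underline{i}} = \sum_{r=1}^{k+1}(-q)^{1-r}\eta_{i_r}\omega_{\underline{i}\smallsetminus i_r}$ for the wedge of a $1$-form $\eta$ by a $k$-form (recorded just after Prop.~\ref{prop:grass}), is exactly $(\deb a \wprod \omega)_{\underline{i}}$. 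The only genuine obstacle is bookkeeping: getting the flipped Sweedler indexing right and verifying that every Cartan string actually fixes $a$ — the case of $K_\ell$ being the only non-obvious one, handled via the extension of $\Uq{\ell+1}$ by $\hat{K}^{1/(\ell+1)}$ introduced in Sec.~\ref{sec:uno}.
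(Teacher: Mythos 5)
Your proof is correct and follows essentially the same route as the paper's: both insert the coproduct arising from \eqref{eq:Xcop} (you use $\Delta(\hat K X_i)$ with the twisted rule $\mL_h(xy)=\mL_{h_{(2)}}(x)\,\mL_{h_{(1)}}(y)$, whereas the paper first applies $S^{-1}$ to obtain $\Delta(S^{-1}(\hat K X_i))$ and then uses covariance of $\za$ — a purely notational reshuffling), and both then invoke the right $\Kq{\ell}$-invariance of $a$ to kill the $\hat K N_{i,j}M_{i,j}^*$-legs and trivialize the Cartan leg. Your remark on why $K_\ell$ fixes $a$ (via positivity and $\hat K$-invariance) is a point the paper leaves implicit, but it does not change the argument.
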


\begin{proof}
Since $S^{-1}$ is anticomultiplicative, by applying $S^{-1}$ to (\ref{eq:Xcop}) we get
\begin{align}
\Delta(S^{-1}(\hat{K}X_i)) &=S^{-1}(\hat{K}X_i)\otimes 1+
N_{i,i-1}^{-1}\hat{K}^{-1}\otimes S^{-1}(\hat{K}X_i) \notag \\
&+q^{-\frac{1}{2}}(q-q^{-1})\sum_{j=i}^{\ell-1}
\hat{K}^{-1}N_{i,j}^{-1}S^{-1}(M_{i,j}^*)\otimes S^{-1}(\hat{K}X_{j+1}) \;.\label{eq:XcopS}
\end{align}
With this, by using the covariance of the right action and the right $\Kq{\ell}$-invariance of $a$, we prove the identity in (\ref{eq:gLeibzeroN}):
\begin{align*}
\{\deb(a\omega)\}_{\underline{i}} &=
\sum\nolimits_{r=1}^{k+1}\,(-q)^{1-r}\,
(a\omega_{\underline{i}\smallsetminus i_r})\za S^{-1}(\hat{K}X_{i_r}) \\
&=\sum\nolimits_{r=1}^{k+1}\,(-q)^{1-r}\,
\Big\{(a\za S^{-1}(\hat{K}X_i))\omega_{\underline{i}\smallsetminus i_r}
+(a\za N_{i,i-1}^{-1}\hat{K}^{-1})(\omega_{\underline{i}\smallsetminus i_r}\za S^{-1}(\hat{K}X_i))\Big\} \\
&+q^{-\frac{1}{2}}(q-q^{-1})\sum\nolimits_{r=1}^{k+1}\,(-q)^{1-r}\,\sum\nolimits_{j=i}^{\ell-1}
(a\za \hat{K}^{-1}N_{i,j}^{-1}S^{-1}(M_{i,j}^*))(\omega_{\underline{i}\smallsetminus i_r}\za S^{-1}(\hat{K}X_{j+1})) \\
&=\sum\nolimits_{r=1}^{k+1}\,(-q)^{1-r}\,
\Big\{(a\za S^{-1}(\hat{K}X_i))\omega_{\underline{i}\smallsetminus i_r}
+a(\omega_{\underline{i}\smallsetminus i_r}\za S^{-1}(\hat{K}X_i))\Big\} \\
&=\{(\deb a)\wprod\omega+a(\deb\omega)\}_{\underline{i}} \;. &\!\!\qedhere
\end{align*}
\end{proof}

Hence, the commutator of $\deb$ with the operator of left multiplication by a
`function' $a$ gives the left multiplication by the differential of $a$, and
this will be useful to construct spectral triples. The second Leibniz property
-- that is more difficult to prove -- tells us that $\deb$ is a graded derivation
on $\Omega^\bullet_0$.

\begin{prop}\label{prop:5.4}
The datum $(\Omega^\bullet_0,\deb)$ is a left $\Uq{\ell+1}$-covariant differential calculus over $\Aq{\ell}$.
That is, for all $\omega,\omega'\in\Omega^\bullet_0$:
\begin{equation}\label{eq:gLeib}
\deb(\omega\wprod\omega')=(\deb\omega)\wprod\omega'+(-1)^k\omega\wprod(\deb\omega') \;,
\end{equation}
where $k$ is the degree of $\omega$.
\end{prop}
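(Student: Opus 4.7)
The plan is to reduce \eqref{eq:gLeib} by induction to the case where $\omega$ lies in a set of algebra generators of $\Omega^\bullet_0$, and then to exploit $\deb^2=0$ together with the degree-zero Leibniz rule \eqref{eq:gLeibzeroN} already established. The key observation is that if \eqref{eq:gLeib} holds for $\omega_1$ of degree $k_1$ and for $\omega_2$ of degree $k_2$ against any right factor, then associativity of $\wprod$ forces it for $\omega_1\wprod\omega_2$ of degree $k_1+k_2$; hence, because $\Omega^\bullet_0$ is generated as an algebra by $\Omega^0_0=\Aq{\ell}$ and $\Omega^1_0$, and the case $\omega\in\Omega^0_0$ is exactly \eqref{eq:gLeibzeroN}, it remains only to verify \eqref{eq:gLeib} when $\omega\in\Omega^1_0$.

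For $\omega$ of the particular form $\deb a$ with $a\in\Aq{\ell}$, I would apply $\deb$ to both sides of \eqref{eq:gLeibzeroN}. The left-hand side vanishes by $\deb^2=0$, yielding
\[
0=\deb(a\,\deb\omega')+\deb(\deb a\wprod\omega').
\]
A second application of \eqref{eq:gLeibzeroN} with $\omega'$ replaced by $\deb\omega'$ gives $\deb(a\,\deb\omega')=a\,\deb^2\omega'+\deb a\wprod\deb\omega'=\deb a\wprod\deb\omega'$, and hence $\deb(\deb a\wprod\omega')=-\deb a\wprod\deb\omega'$. Since $\deb(\deb a)=0$, this is precisely \eqref{eq:gLeib} for $\omega=\deb a$.

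To finish, one needs to pass from $\omega=\deb a$ to a general $\eta\in\Omega^1_0$. For this I would first establish that $\Omega^1_0$ is generated as a left $\Aq{\ell}$-module by $\deb(\Aq{\ell})$, so every $\eta$ admits an expansion $\eta=\sum_j a_j\,\deb b_j$ with $a_j,b_j\in\Aq{\ell}$. Writing $\deb(\eta\wprod\omega')=\sum_j\deb\bigl(a_j\wprod(\deb b_j\wprod\omega')\bigr)$ and combining \eqref{eq:gLeibzeroN} with the $\deb a$-case, together with $\deb\eta=\sum_j\deb a_j\wprod\deb b_j$ (itself from \eqref{eq:gLeibzeroN} and $\deb^2 b_j=0$), yields \eqref{eq:gLeib} for $\eta$. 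The main obstacle is the generation property $\Omega^1_0=\Aq{\ell}\cdot\deb(\Aq{\ell})$; I would verify it by comparing the isotypic content of $\Aq{\ell}\cdot\deb(\Aq{\ell})$ against the harmonic decomposition of $\Omega^1_0$ from Proposition \ref{prop:ha}, exhibiting explicit differentials $\deb b$ that meet every irreducible summand. A technically heavier alternative bypasses this by a direct calculation: expand $(\omega\wprod\omega')\za S^{-1}(\hat{K}X_i)$ via the coproduct \eqref{eq:XcopS}, identify the first two summands as $(\deb\omega)\wprod\omega'$ and $(-1)^k\omega\wprod\deb\omega'$ (using $\omega\za\hat{K}=q^k\omega$ on $\Omega^k_0$ and the explicit Cartan action in $N_{i,i-1}^{-1}$ via $\sigma_k^0$), and show that the remaining summands involving $S^{-1}(M_{i,j}^*)$ cancel after reindexing against the Grassmann relation \eqref{eq:debsquare}.
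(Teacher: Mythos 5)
Your inductive reduction to the degree-one case via associativity of $\wprod$ matches the paper's strategy exactly, and the algebra in that step checks out. Where you diverge is in how the degree-one case itself is handled. The paper proves \eqref{eq:gLeibkone} for an arbitrary $\eta\in\Omega^1_0$ by a direct component computation: it reduces to the single component labelled by $(1,\ldots,k'+2)$ using Lemma~\ref{lemma:trick}, expands via the coproduct formula \eqref{eq:XcopS}, kills most of the extra terms using \eqref{eq:etaM} (not \eqref{eq:debsquare}, which plays no role there), computes the residual Cartan factors, and cancels what remains after a reindexing. Your approach instead treats the special case $\eta=\deb a$ cleanly by applying $\deb$ to both sides of \eqref{eq:gLeibzeroN} and invoking $\deb^2=0$; that piece of the argument is correct and is nicer than a computation.

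The gap is the passage from $\eta=\deb a$ to a general $1$-form. Your argument is completed only modulo the generation claim $\Omega^1_0=\Aq{\ell}\cdot\deb(\Aq{\ell})$, which you flag yourself as the main obstacle but do not prove. This is a genuinely non-trivial statement. It is \emph{not} a consequence of the surjectivity of $\wprod$ (Proposition~\ref{prop:grass}\eqref{enu:one}) or of the fact the paper uses implicitly that $\Omega^k_0$ is generated by wedges of $1$-forms: those are statements about associated bundles under the Hopf--Galois picture, whereas $\Omega^1_0=\Aq{\ell}\cdot\deb(\Aq{\ell})$ says the calculus is generated by exact forms, which is a different kind of claim. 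Note in particular that $\deb(\Omega^0_0)$ alone is a proper $\Uq{\ell+1}$-submodule of $\Omega^1_0$: by Proposition~\ref{prop:ha}, $\Omega^0_0\simeq\bigoplus_m V_{(m,0,\ldots,0,m)}$ maps under $\deb$ only into the first family $\bigoplus_m V_{(m,0,\ldots,0,m+1)+\underline{e}_1}$ of $\Omega^1_0$, and the second family $\bigoplus_m V_{(m,0,\ldots,0,m+2)+\underline{e}_2}$ is not reached; so the isotypic check you sketch requires genuinely exploiting the $\Aq{\ell}$-multiplication and producing explicit vectors, which is nontrivial work not carried out. Your fallback "technically heavier alternative" is in spirit what the paper does, but your description of the cancellation mechanism (reindexing against \eqref{eq:debsquare}) does not match the paper's actual mechanism, which relies on \eqref{eq:etaM} and the explicit Cartan weights; you would need to redo that computation from scratch. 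As written, the proposal is incomplete.
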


\begin{proof}
If $k=0$, (\ref{eq:gLeib}) is a particular case of (\ref{eq:gLeibzeroN}).

Now we consider the case $k=1$. Let $\eta\in\Omega^1_0$ and $\omega'\in\Omega^{k'}_0$.
By Lemma \ref{lemma:trick}, in order to prove the equation
\begin{equation}\label{eq:gLeibkone}
\deb(\eta\wprod\omega')=(\deb\eta)\wprod\omega'-\eta\wprod(\deb\omega')
\end{equation}
it is enough to show that
$$
\{\deb(\eta\wprod\omega')\}_{1,2,\ldots,k'+2}=
\{(\deb\eta)\wprod\omega'\}_{1,2,\ldots,k'+2}
-\{\eta\wprod(\deb\omega')\}_{1,2,\ldots,k'+2} \;.
$$
We have
\begin{align*}
\{\deb(\eta\wprod\omega')\}_{1,2,\ldots,k'+2}
&=\sum_{1\leq s<r\leq k'+2}\,(-q)^{2-r-s}\,
(\eta_s\,\omega'_{(1,\ldots,k'+2)\smallsetminus\{r,s\}})\za S^{-1}(\hat{K}X_r) \\
&+\sum_{1\leq r<s\leq k'+2}\,(-q)^{3-r-s}\,
(\eta_s\,\omega'_{(1,\ldots,k'+2)\smallsetminus\{r,s\}})\za S^{-1}(\hat{K}X_r) \;,
\end{align*}
and by (\ref{eq:XcopS}):
\begin{align*}
\{\deb(\eta\wprod\omega')-(\deb\eta)\wprod\omega'\}_{1,\ldots,k'+2}
=\!\!\sum_{1\leq s<r\leq k'+2}\!\!(-q)^{2-r-s}\,(\eta_s\za N_{r,r-1}^{-1}\hat{K}^{-1})
(\omega'_{(1,\ldots,k'+2)\smallsetminus\{r,s\}}\za S^{-1}(\hat{K}X_r))
\\
+q^{-\frac{1}{2}}(q-q^{-1})\!\!\sum_{1\leq s<r\leq k'+2}\sum_{j=r}^{\ell-1}\,
(-q)^{2-r-s}\,(\eta_s\za \hat{K}^{-1}N_{r,j}^{-1}S^{-1}(M_{r,j}^*))
(\omega'_{(1,\ldots,k'+2)\smallsetminus\{r,s\}}\za S^{-1}(\hat{K}X_{j+1}))
\\
+\!\!\sum_{1\leq r<s\leq k'+2}\!\!(-q)^{3-r-s}\,(\eta_s\za N_{r,r-1}^{-1}\hat{K}^{-1})
(\omega'_{(1,\ldots,k'+2)\smallsetminus\{r,s\}}\za S^{-1}(\hat{K}X_r))
\\
+q^{-\frac{1}{2}}(q-q^{-1})\!\!\sum_{1\leq r<s\leq k'+2}\sum_{j=r}^{\ell-1}\,
(-q)^{3-r-s}\,(\eta_s\za \hat{K}^{-1}N_{r,j}^{-1}S^{-1}(M_{r,j}^*))
(\omega'_{(1,\ldots,k'+2)\smallsetminus\{r,s\}}\za S^{-1}(\hat{K}X_{j+1})) \;.
\end{align*}
By (\ref{eq:etaM}), all the terms with $j\neq s-1$ are zero. Thus
\begin{align*}
\{\deb(\eta\wprod\omega')-(\deb\eta)\wprod\omega'\}_{1,\ldots,k'+2}
=\!\!\sum_{1\leq s<r\leq k'+2}\!\!(-q)^{2-r-s}\,(\eta_s\za N_{r,r-1}^{-1}\hat{K}^{-1})
(\omega'_{(1,\ldots,k'+2)\smallsetminus\{r,s\}}\za S^{-1}(\hat{K}X_r))
\\
\qquad +\!\!\sum_{1\leq r<s\leq k'+2}\!\!(-q)^{3-r-s}\,(\eta_s\za N_{r,r-1}^{-1}\hat{K}^{-1})
(\omega'_{(1,\ldots,k'+2)\smallsetminus\{r,s\}}\za S^{-1}(\hat{K}X_r))
\\
\qquad +q^{-\frac{1}{2}}(q-q^{-1})\!\!\sum_{1\leq r<s\leq k'+2}\!\!
(-q)^{3-r-s}\,(\eta_s\za \hat{K}^{-1}N_{r,s-1}^{-1}S^{-1}(M_{r,s-1}^*))
(\omega'_{(1,\ldots,k'+2)\smallsetminus\{r,s\}}\za S^{-1}(\hat{K}X_s)) \;.
\end{align*}
We have
\begin{align*}
\eta_s\za \hat{K}^{-1}N_{r,r-1}^{-1} &=q^{-1}\eta_s
&\mathrm{if}\; r>s \;,\\
\eta_s\za \hat{K}^{-1}N_{r,r-1}^{-1} &=q^{-1}\eta_s
&\mathrm{if}\; r<s \;,\\
\eta_s\za \hat{K}^{-1}N_{r,s-1}^{-1} &=q^{-\frac{3}{2}}\eta_s
&\mathrm{if}\; r<s \;.
\end{align*}
We have also $\eta_s\za S^{-1}(M_{r,s-1}^*)=-q\eta_r$ by (\ref{eq:etaM}).
Thus,
\begin{align*}
\{\deb(\eta\wprod\omega')-(\deb\eta)\wprod\omega'\}_{1,\ldots,k'+2}
=-q^{-2}\sum_{1\leq s<r\leq k'+2}\,(-q)^{3-r-s}\,\eta_s
(\omega'_{(1,\ldots,k'+2)\smallsetminus\{r,s\}}\za S^{-1}(\hat{K}X_r))
\\
\qquad -\sum_{1\leq r<s\leq k'+2}\,(-q)^{2-r-s}\,\eta_s
(\omega'_{(1,\ldots,k'+2)\smallsetminus\{r,s\}}\za S^{-1}(\hat{K}X_r))
\\
\qquad +(q^{-2}-1)\sum_{1\leq r<s\leq k'+2}\,
(-q)^{3-r-s}\,\eta_r
(\omega'_{(1,\ldots,k'+2)\smallsetminus\{r,s\}}\za S^{-1}(\hat{K}X_s)) \;.
\end{align*}
If we exchange $r$ with $s$ in last summation, we get
\begin{align*}
\{\deb(\eta\wprod\omega')-(\deb\eta)\wprod\omega'\}_{1,\ldots,k'+2}
&=-\sum_{1\leq s<r\leq k'+2}\,(-q)^{3-r-s}\,\eta_s
(\omega'_{(1,\ldots,k'+2)\smallsetminus\{r,s\}}\za S^{-1}(\hat{K}X_r))
\\
&\quad -\sum_{1\leq r<s\leq k'+2}\,(-q)^{2-r-s}\,\eta_s
(\omega'_{(1,\ldots,k'+2)\smallsetminus\{r,s\}}\za S^{-1}(\hat{K}X_r)) \\
&=-\{\eta\wprod(\deb\omega')\}_{1,\ldots,k'+2} \;.
\end{align*}
This proves (\ref{eq:gLeibkone}).

Now let $k\geq 1$. Since $\Omega^k_0$ is generated by $1$-forms, we can write
any $k$-form as a sum of elements $\omega=\eta^1\wprod\eta^2\wprod\ldots\wprod\eta^k$,
with $\eta^j\in\Omega^1_0$. By iterated use of (\ref{eq:gLeibkone}) we get
\begin{align*}
\deb(\omega\wprod\omega') &=
\deb(\eta^1\wprod\eta^2\wprod\ldots\wprod\eta^k\wprod\omega') \\
&=(\deb\eta^1)\wprod(\eta^2\wprod\ldots\wprod\eta^k\wprod\omega')
-\eta^1\wprod\deb(\eta^2\wprod\ldots\wprod\eta^k\wprod\omega') \\
&=\sum_{j=1}^k(-1)^{j-1}\eta^1\wprod\ldots\wprod(\de\eta^j)\wprod\ldots\wprod\eta^k\wprod\omega'
+(-1)^k\eta^1\wprod\ldots\wprod\eta^k\wprod(\deb\omega') \\
&=(\deb\omega)\wprod\omega'+(-1)^k\omega\wprod(\deb\omega') \;.
\end{align*}
This concludes the proof.
\end{proof}

\subsection{Reality and the first order condition}
If we tensor the operator $J$ in (\ref{eq:JonW}) with the involution `$*$' on $\Oq$ we
get an antilinear map $*\otimes J:\Omega^\bullet_N\to \Oq\otimes\Gr$.
One can check the the image of
$\Omega^k_N=\Oq\boxtimes_{\sigma^N_k}W_k$ through $*\otimes J$ is in the
space $W_{\ell-k}\boxtimes_{\sigma^{\ell+1-N}_{\ell-k}}\Oq$, that is defined
like $\Omega_{\ell+1-N}^{\ell-k}$ but for the order of the factors in the
Hopf tensor product.
Thus, due to anticocommutativity of $\Uq{\ell}$, the image of $*\otimes J$
is in general not an element of $\Omega^\bullet_{N'}$, for any value of $N'$.
To cure this problem one could compose $*\otimes J$ with the $R$-matrix of $\Uq{\ell}$,
that is trivial if $q=1$ or if $\ell=1$ (so, in the commutative case and also on the standard
Podle\'s sphere we don't have this problem). We prefer to proceed in the following simpler
way.

We introduce
\begin{equation}\label{eq:Rstructure}
\Ja:=(*\otimes J)(\mL_{\maut^{-1}}\otimes id) \;,
\end{equation}
where $\maut$ is defined in (\ref{eq:Ssquare}).

\smallskip

\begin{prop}
The operator $\Ja$ maps $\Omega^k_N$ into $\Omega^{\ell-k}_{\ell+1-N}$.
\end{prop}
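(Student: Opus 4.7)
The plan is to verify the defining right-equivariance condition directly, using the characterization of Lemma \ref{lemma:equiv}(iii): for any $\omega\in\Omega^k_N$, I would show $(\Ja\omega)\za h=\sigma^{\ell+1-N}_{\ell-k}(h)(\Ja\omega)$ for every $h\in\Kq{\ell}$. Since $\maut$ is group-like (a product of powers of the $K_i$), $\mL_{\maut^{-1}}$ acts on $\Oq$ by $a\mapsto a\za\maut$, so componentwise $(\Ja\omega)_{\underline{i}}=c_{\underline{i}}(\omega_{\underline{i}^c}\za\maut)^*$ with $c_{\underline{i}}=(-q^{-1})^{|\underline{i}|}q^{\frac{1}{4}\ell(\ell+1)}$.

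The computation then unfolds in three moves. First, the fact that $\Oq$ is a right $\Uq{\ell+1}$-module $*$-algebra, i.e.\ $(a^*)\za h=(a\za S(h)^*)^*$, lets me rewrite $((\Ja\omega)\za h)_{\underline{i}}=c_{\underline{i}}(\omega_{\underline{i}^c}\za\maut S(h)^*)^*$. Second, I commute $\maut$ past $S(h)^*$: the defining property $\maut X=S^2(X)\maut$ from (\ref{eq:Sdue}), combined with the Hopf $*$-algebra identity $S\circ *=*\circ S^{-1}$, gives $\maut S(h)^*=S(h^*)\maut$. Third, since $S(h^*)\in\Kq{\ell}$, the hypothesis $\omega\in\Omega^k_N$ supplies $\omega\za S(h^*)=\sigma^N_k(S(h^*))\omega$, and assembling the pieces yields $(\Ja\omega)\za h=(*\otimes J)\bigl(\sigma^N_k(S(h^*))(\omega\za\maut)\bigr)$, where I have used that $\za\maut$ commutes with $\sigma^N_k(S(h^*))$ because the two act on different tensor factors.

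The final step is to push $\sigma^N_k(S(h^*))$ through $(*\otimes J)$. For $h\in\Uq{\ell}$ this is exactly (\ref{eq:Jx}) applied with $x=h^*$: $J\sigma_k(S(h^*))=\sigma_{\ell-k}(h)J$. The extension to all of $\Kq{\ell}$ reduces to checking agreement on the grouplike generator $\hat{K}$, which amounts to a direct scalar match using $\sigma^N_k(\hat K)=q^{k-\ell N/(\ell+1)}$, $S(\hat K)=\hat K^{-1}$, and $\hat K^*=\hat K$; one finds $\sigma^N_k(S(\hat K^*))=\sigma^{\ell+1-N}_{\ell-k}(\hat K)$ as scalars. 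Combining gives $(\Ja\omega)\za h=\sigma^{\ell+1-N}_{\ell-k}(h)(\Ja\omega)$, so $\Ja\omega\in\Omega^{\ell-k}_{\ell+1-N}$. The key point, and the whole motivation for inserting $\maut^{-1}$ in the definition (\ref{eq:Rstructure}), is precisely the identity $\maut S(h)^*=S(h^*)\maut$ in the second move: without this modular twist, the $*$-operation on $\Oq$ would leave an $S(h)^*$ acting on $\omega$, which does not match the form $S(h^*)$ that the intertwining property of $J$ demands, and the calculation would not close.
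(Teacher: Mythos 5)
Your proof is correct and follows essentially the same route as the paper's: both rest on the module $*$-algebra relation, the modular identity \eqref{eq:Sdue} for $\maut$, the $J$-intertwining \eqref{eq:Jx}, and the equivalences of Lemma~\ref{lemma:equiv}. The only difference is bookkeeping — you verify condition~(iii) of Lemma~\ref{lemma:equiv} via the right action $\za$, while the paper establishes condition~(i) via the left action $\mL$ and then reads off $N'$ from the $\hat{K}$-eigenvalue; your closing remark about \emph{why} the $\maut^{-1}$ twist is indispensable is a nice addition.
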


\begin{proof}
Let $h\in\Uq{\ell}$.
By (\ref{eq:Jx}) we have
$\sigma_{\ell-k}(h)J=J\sigma_k(S(h^*))$, and $\mL_h\,*=*\,\mL_{S(h^*)}$
being $\Oq$ a right $\Uq{\ell+1}$-module $*$-algebra. Thus, for all $h\in\Uq{\ell}$,
\begin{align*}
\bigl\{\mL_{h_{(1)}}\otimes\sigma_{\ell-k}(h_{(2)})\bigr\}\Ja &=
(*\otimes J)\bigl\{\mL_{S(h_{(1)}^*)}\otimes\sigma_k(S(h_{(2)}^*))\bigr\}
(\mL_{\maut^{-1}}\otimes id) \\
&=\Ja\bigl\{\mL_{\maut t_{(2)}\maut^{-1}}\otimes\sigma_k(t_{(1)})\bigr\}
 =\Ja\bigl\{\mL_{S^2(t_{(2)})}\otimes\sigma_k(t_{(1)})\bigr\} \;,
\end{align*}
where we used \eqref{eq:Sdue} and called $t=S(h^*)$. But
$$
\bigl\{\mL_{S^2(t_{(2)})}\otimes\sigma_k(t_{(1)})\bigr\}\omega=
\sigma_k(t_{(1)})\omega\za S(t_{(2)})=
\sigma_k(t_{(1)}S(t_{(2)}))\omega=
\epsilon(t)\omega
$$
for all $\omega\in\Omega^k_N$ and $t\in\Uq{\ell}$, where in the second equality we used
Lemma~\ref{lemma:equiv}.
Thus, $\Ja$ maps $\Omega^k_N$
into $\bigoplus_{N'\in\Z}\Omega^{\ell-k}_{N'}$, the elements of $\Oq\otimes W_{\ell-k}$
that are invariant under $\mL_{h_{(1)}}\otimes\sigma_{\ell-k}(h_{(2)})$,
$h\in\Uq{\ell}$. Moreover, since $a^*\za\hat{K}=(a\za\hat{K}^{-1})^*$, we have:
$$
q^{\ell-k-\frac{\ell}{\ell+1}N'}\Ja\omega=
(\Ja\omega)\za\hat{K}=\Ja(\omega\za\hat{K}^{-1})=
q^{-\left(k-\frac{\ell}{\ell+1}N\right)}\Ja\omega \;.
$$
It follows that $\Ja(\Omega^k_N)\subset\Omega^{\ell-k}_{N'}$ with $N'=\ell+1-N$.
\end{proof}

The map $\Ja$ is equivariant:
\begin{equation}\label{eq:Jaequiv}
h\az(\Ja\omega)=\Ja\bigl(S(h)^*\az\omega\bigr) \;,\qquad
\forall\;h\in\Uq{\ell+1}\;.
\end{equation}
This follows immediately from the fact that $h\az$ commutes with any endomorphism
of $W_k$ (it acts diagonally on $\Omega^k_N$), while $h\az a^*=\{S(h)^*\az a\}^*$
since $\Oq$ is a left $\Uq{\ell+1}$-module $*$-algebra. The antiunitary part of $\Ja$ is a
natural candidate for the real structure. We are going to compute it explicitly.

\begin{lemma}\label{lemma:Jaomega}
We have
$$
(\Ja\omega)_{\underline{i}}=
(-1)^{|\underline{i}|}q^{\frac{\ell}{2}(N-\frac{\ell+1}{2})}
(\omega_{\,\underline{i}^c}\za \maut^{\frac{1}{2}}\hat{K}^{\ell+1})^* \;,
$$
for all $\omega\in\Omega^k_N$.
\end{lemma}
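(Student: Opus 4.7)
The plan is a direct computation from the definitions, with the key step being a judicious factorization of $\maut$ so that every piece acts diagonally on $\omega_{\underline{i}^c}$.

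First I would unpack the definition componentwise. Applying the explicit formula \eqref{eq:JonW} to each component of $\omega\in\Oq\otimes W_k$ gives, for $\underline{i}\in\Lambda_{\ell-k}$,
$((*\otimes J)\omega)_{\underline{i}}=(-q^{-1})^{|\underline{i}|}q^{\ell(\ell+1)/4}\,\omega_{\underline{i}^c}^{\,*}$.
Since $\maut$ is a product of the group-like generators $K_i$, one has $S^{-1}(\maut^{-1})=\maut$, so precomposing with $\mL_{\maut^{-1}}\otimes id$ introduces an extra factor $\za\maut$ on the algebra side and yields
$(\Ja\omega)_{\underline{i}}=(-q^{-1})^{|\underline{i}|}q^{\ell(\ell+1)/4}(\omega_{\underline{i}^c}\za\maut)^{*}$.

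Next I would factor $\maut$ through $\hat{K}$ and a $\Uq{\ell}$-piece. The element $\xi:=\maut^{1/2}\hat{K}^{-(\ell+1)/2}=\prod_{j=1}^{\ell-1}K_j^{j(\ell-j)}$ lies in $\Uq{\ell}$, so $\maut=\xi^{2}\hat{K}^{\ell+1}$ and $\maut^{1/2}\hat{K}^{\ell+1}=\xi\,\hat{K}^{3(\ell+1)/2}$, with commuting factors throughout. By Lemma \ref{lemma:equiv}(iii), the right action of any element of $\Kq{\ell}$ on a component of $\omega$ is implemented by $\sigma^N_k$ at the labelling multi-index, and on these group-likes $\sigma^N_k$ is diagonal: the eigenvalue of $\hat{K}$ is the scalar $q^{k-\ell N/(\ell+1)}$, while \eqref{eq:zaK} combined with the identity $\xi^{2}=\maut\hat{K}^{-\ell-1}$ yields $\sigma_k(\xi)_{\underline{j},\underline{j}}=q^{(k(\ell+1)-2|\underline{j}|)/2}$ for $\underline{j}\in\Lambda_k$ (positive square root).

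Specialising to $\underline{j}=\underline{i}^c\in\Lambda_k$ and using the complementarity relation $|\underline{i}|+|\underline{i}^c|=\ell(\ell+1)/2$ (from $\underline{i}\cup\underline{i}^c=\{1,\dots,\ell\}$ and equation \eqref{eq:iic}), both $\omega_{\underline{i}^c}\za\maut$ and $\omega_{\underline{i}^c}\za\maut^{1/2}\hat{K}^{\ell+1}$ reduce to explicit $q$-scalar multiples of $\omega_{\underline{i}^c}$. Taking their ratio gives $\omega_{\underline{i}^c}\za\maut=q^{|\underline{i}|+\ell N/2-\ell(\ell+1)/2}\,(\omega_{\underline{i}^c}\za\maut^{1/2}\hat{K}^{\ell+1})$; substituting this into the formula of the first step and using $(-q^{-1})^{|\underline{i}|}q^{|\underline{i}|}=(-1)^{|\underline{i}|}$ collapses the overall scalar precisely to $(-1)^{|\underline{i}|}q^{\frac{\ell}{2}(N-\frac{\ell+1}{2})}$. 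There is no deep obstacle here beyond careful bookkeeping of $q$-exponents; the essential content of the lemma is the factorisation $\maut=\xi^{2}\hat{K}^{\ell+1}$, which isolates a $\Uq{\ell}$-piece where \eqref{eq:zaK} applies directly and a power of $\hat{K}$ on which $\omega\in\Omega^k_N$ is a scalar eigenvector.
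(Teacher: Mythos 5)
Your proof is correct and takes essentially the same route as the paper's: write $(\Ja\omega)_{\underline{i}}=(-q^{-1})^{|\underline{i}|}q^{\ell(\ell+1)/4}(\omega_{\underline{i}^c}\za\maut)^*$, then use that $\omega_{\underline{i}^c}$ is a joint eigenvector of the $\Uq{\ell}$-piece $\maut\hat{K}^{-\ell-1}$ (eigenvalue from \eqref{eq:zaK}) and of $\hat{K}$ (eigenvalue from $\sigma^N_k$) to trade $\za\maut$ for $\za\maut^{\frac{1}{2}}\hat{K}^{\ell+1}$ at the cost of a computable $q$-power. The only slip is cosmetic: the identity $|\underline{i}|+|\underline{i}^c|=\ell(\ell+1)/2$ follows immediately from $\underline{i}\sqcup\underline{i}^c=\{1,\dots,\ell\}$, not from \eqref{eq:iic}, which is the unrelated assertion $j\#\underline{i}+j\#\underline{i}^c=0$.
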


\begin{proof}
By (\ref{eq:zaK}) we have
$$
\omega_{\underline{i}}\za\maut\hat{K}^{-\ell-1}=q^{k(\ell+1)-2|\underline{i}|}\omega_{\underline{i}} \;.
$$
Thus
\begin{equation}\label{eq:zamaut}
\omega_{\underline{i}^c}\za\maut^{-\frac{1}{2}}\hat{K}^{\ell+1}=
\omega_{\underline{i}^c}\za(\maut\hat{K}^{-\ell-1})^{-\frac{1}{2}}\hat{K}^{\frac{\ell+1}{2}}=
q^{|\underline{i}^c|-\frac{\ell}{2}N}\omega_{\underline{i}^c}=
q^{-|\underline{i}|+\frac{\ell}{2}(\ell+1-N)}\omega_{\underline{i}^c}
\;,
\end{equation}
for all $\omega\in\Omega^k_N$.
With this, we can compute $\Ja\omega$. By (\ref{eq:Rstructure}) and (\ref{eq:JonW}), we have
\begin{align*}
(\Ja\omega)_{\underline{i}} &=
(-q^{-1})^{|\underline{i}|}
q^{\frac{1}{4}\ell(\ell+1)}
(\omega_{\underline{i}^c}\za\maut)^* \\
\intertext{and by (\ref{eq:zamaut})}
&=(-q^{-1})^{|\underline{i}|}q^{\frac{1}{4}\ell(\ell+1)}q^{|\underline{i}|-\frac{\ell}{2}(\ell+1-N)}
(\omega_{\underline{i}^c}\za\maut \maut^{-\frac{1}{2}}\hat{K}^{\ell+1})^* \\
&=(-1)^{|\underline{i}|}q^{\frac{\ell}{2}(N-\frac{\ell+1}{2})}
(\omega_{\underline{i}^c}\za\maut^{\frac{1}{2}}\hat{K}^{\ell+1})^* \;.
\end{align*}
This concludes the proof.
\end{proof}

\begin{lemma}\label{lemma:iso}
For all $\omega,\omega'\in\Omega^k_N$,
$$
\big<\Ja(\maut^{-\frac{1}{2}}\az\omega\za\hat{K}^{-2(\ell+1)}),\Ja(\maut^{-\frac{1}{2}}\az\omega'\za\hat{K}^{-2(\ell+1)})\big>=
q^{\ell(N-\frac{\ell+1}{2})}\inner{\omega',\omega} \;.
$$
\end{lemma}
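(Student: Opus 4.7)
The idea is to expand the left-hand side using the component formula for $\Ja$ from Lemma~\ref{lemma:Jaomega}, apply the modular property \eqref{eq:modprop} of the Haar state to absorb the spurious $\maut$-factors, and finally recognise the result via the component identity of Lemma~\ref{lemma:trick} together with the quantum-dimension formula of Lemma~\ref{lemma:dimq}. The reason the strange twist $\maut^{-1/2}\az(\,\cdot\,)\za\hat{K}^{-2(\ell+1)}$ appears in the statement is precisely so that, after using \eqref{eq:modprop}, all the weight factors coming from $\sigma^N_k(\maut)$ collapse exactly against the twist and leave a clean overall constant.

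First I would note that the twist does not change the degree: since $\hat{K}$ and $\maut^{1/2}$ are group-like and central enough that $\omega\za\hat{K}^{-2(\ell+1)}$ and $\maut^{-1/2}\az\omega$ are scalar multiples of $\omega$ on each component, the twisted element $\tilde\omega:=\maut^{-1/2}\az\omega\za\hat{K}^{-2(\ell+1)}$ still belongs to $\Omega^k_N$. I would then apply Lemma~\ref{lemma:Jaomega} to compute
\[
(\Ja\tilde\omega)_{\underline i}=(-1)^{|\underline i|}q^{\frac{\ell}{2}(N-\frac{\ell+1}{2})}\,(\tilde\omega_{\underline i^c}\za\maut^{1/2}\hat{K}^{\ell+1})^*,
\]
write out the inner product as a sum over $\underline{i}\in\Lambda_{\ell-k}$, reindex by $\underline j=\underline i^c\in\Lambda_k$, and factor out the scalar $q^{\ell(N-\frac{\ell+1}{2})}$ (the signs $(-1)^{|\underline i|}$ cancel in $(\cdot)^*(\cdot)$). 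Evaluating the group-like factors on the component $\omega_{\underline j}$ using \eqref{eq:zaK} shows that $\tilde\omega_{\underline j}\za\maut^{1/2}\hat K^{\ell+1}=q^{\ell N/2-|\underline j|}\,\maut^{-1/2}\az\omega_{\underline j}$.

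The middle step is to compute, for fixed $\underline j\in\Lambda_k$, the quantity $\varphi\bigl((\maut^{-1/2}\az\omega_{\underline j})(\maut^{-1/2}\az\omega'_{\underline j})^*\bigr)$. I would first use the $*$-module property $(h\az a)^*=S(h)^*\az a^*$ together with $S(\maut^{\pm 1/2})=\maut^{\mp 1/2}$ and $\maut^*=\maut$ to rewrite $(\maut^{-1/2}\az\omega'_{\underline j})^*=\maut^{1/2}\az(\omega'_{\underline j})^*$; then apply the modular identity \eqref{eq:modprop} to push $\maut\az(\,\cdot\,)\za\maut$ across the product. Group-likeness of $\maut^{1/2}$ lets me factor $\maut^{1/2}\az$ out of the resulting product, after which left invariance $\varphi(h\az a)=\epsilon(h)\varphi(a)$ and right invariance $\varphi(a\za\maut)=\varphi(a)$ dispose of the remaining left and right actions. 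What is left is $\varphi((\omega'_{\underline j})^*\,(\omega_{\underline j}\za\maut))$, and $\omega_{\underline j}\za\maut=\sigma^N_k(\maut)_{\underline j,\underline j}\,\omega_{\underline j}=q^{2(\ell+1)k-2|\underline j|-\ell N}\omega_{\underline j}$ by the computation following \eqref{eq:zaK}.

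Combining the two previous paragraphs with the prefactor from Lemma~\ref{lemma:Jaomega} yields
\[
\inner{\Ja\tilde\omega,\Ja\tilde\omega'}
=q^{\ell(N-\frac{\ell+1}{2})}\sum_{\underline j\in\Lambda_k}q^{2(\ell+1)k-4|\underline j|}\,\varphi\bigl((\omega'_{\underline j})^*\omega_{\underline j}\bigr).
\]
Finally I would apply Lemma~\ref{lemma:trick} to substitute $\varphi((\omega'_{\underline j})^*\omega_{\underline j})=\frac{q^{2|\underline j|-k(\ell+1)}}{\dim_qW_k}\inner{\omega',\omega}$; the $q$-exponents collapse to $q^{k(\ell+1)-2|\underline j|}$, and summing over $\underline j\in\Lambda_k$ using Lemma~\ref{lemma:dimq} produces exactly $\dim_qW_k$, cancelling the denominator and leaving the claimed $q^{\ell(N-\frac{\ell+1}{2})}\inner{\omega',\omega}$. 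The only real obstacle is the bookkeeping of the weight factors in step two: one must verify that $\maut^{1/2}$ is truly group-like in the extension $\Kq{\ell}$ used here, and keep track of the fact that $S(\maut^{1/2})=\maut^{-1/2}$ together with $\maut^*=\maut$ is what allows the $*$-module identity to fold the two $\maut^{-1/2}\az$ factors into a single $\maut^{1/2}\az$ that can be peeled off by left invariance.
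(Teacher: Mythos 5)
Your argument is correct and follows the same route as the paper: Lemma~\ref{lemma:Jaomega} for the component formula of $\Ja$, the modular property \eqref{eq:modprop} together with group-likeness of $\maut^{1/2}$ and left-invariance of $\varphi$, then \eqref{eq:zaK} and Lemma~\ref{lemma:trick} with the quantum dimension. The only difference is cosmetic bookkeeping — you absorb the $\za\hat{K}^{-2(\ell+1)}$ twist from the outset whereas the paper computes $\langle\Ja(\maut^{-1/2}\az\omega),\Ja(\maut^{-1/2}\az\omega')\rangle$ first and reinserts the $\hat{K}$-factor at the end — and your passing mention of right invariance is unused (the $\za\maut$ is evaluated by \eqref{eq:zaK}, not killed by invariance), but neither affects correctness.
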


\begin{proof}
From Lemma \ref{lemma:Jaomega} and equation (\ref{eq:modprop}) we get
\begin{align*}
& \big<\Ja(\maut^{-\frac{1}{2}}\az\omega),\Ja(\maut^{-\frac{1}{2}}\az\omega')\big>
=\sum\nolimits_{\underline{i}\in\Lambda_{\ell-k}}
\varphi\bigl( (\Ja \maut^{\smash[t]{-\frac{1}{2}}}\az\omega)_{\underline{i}}^*
(\Ja \maut^{\smash[t]{-\frac{1}{2}}}\az\omega')_{\underline{i}} \bigr) \\ & \qquad\qquad
=\sum\nolimits_{\underline{i}\in\Lambda_{\ell-k}}
q^{\ell(N-\frac{\ell+1}{2})}
\varphi\bigl(
\{\maut^{\smash[t]{-\frac{1}{2}}}\az\omega_{\,\underline{i}^c}\za \maut^{\frac{1}{2}}\hat{K}^{\ell+1}\}
\{\maut^{\smash[t]{\frac{1}{2}}}\az(\omega'_{\,\underline{i}^c})^*\za \maut^{-\frac{1}{2}}\hat{K}^{-\ell-1}\}
\bigr) \\
\intertext{(by the modular property of the Haar state, see \eqref{eq:modprop})}
 & \qquad\qquad
=\sum\nolimits_{\underline{i}\in\Lambda_{\ell-k}}
q^{\ell(N-\frac{\ell+1}{2})}
\varphi\bigl(
\{\maut^{\smash[t]{\frac{1}{2}}}\az(\omega'_{\,\underline{i}^c})^*\za \maut^{-\frac{1}{2}}\hat{K}^{-\ell-1}\}
\{\maut^{\smash[t]{\frac{1}{2}}}\az\omega_{\,\underline{i}^c}\za \maut^{\frac{3}{2}}\hat{K}^{\ell+1}\}
\bigr) \\
\intertext{(by invariance of the Haar state)}
 & \qquad\qquad
=\sum\nolimits_{\underline{i}\in\Lambda_{\ell-k}}
q^{\ell(N-\frac{\ell+1}{2})}
\varphi\bigl(
(\omega'_{\,\underline{i}^c})^*
(\omega_{\,\underline{i}^c}\za \maut^2\hat{K}^{2(\ell+1)})
\bigr) \\
\intertext{(using (\ref{eq:zaK}))}
 & \qquad\qquad
=\sum\nolimits_{\underline{i}\in\Lambda_{\ell-k}}
q^{\ell(N-\frac{\ell+1}{2})+6k(\ell+1)-4\ell N-4|\underline{i}^c|}
\varphi\bigl(
(\omega'_{\,\underline{i}^c})^*\omega_{\,\underline{i}^c}
\bigr) \\
 & \qquad\qquad
=\sum\nolimits_{\underline{i}\in\Lambda_k}
q^{\ell(N-\frac{\ell+1}{2})+6k(\ell+1)-4\ell N-4|\underline{i}|}
\varphi\bigl(
(\omega'_{\underline{i}})^*\omega_{\underline{i}}
\bigr) \\
\intertext{(using (\ref{eq:trick}))}
 & \qquad\qquad
=q^{\ell(N-\frac{\ell+1}{2})+4k(\ell+1)-4\ell N}\tfrac{1}
{\dim_q W_k}\inner{\omega',\omega}\sum\nolimits_{\underline{i}\in\Lambda_k}
q^{k(\ell+1)-2|\underline{i}|} \\
 & \qquad\qquad
=q^{\ell(N-\frac{\ell+1}{2})+4k(\ell+1)-4\ell N}\inner{\omega',\omega} \\
 & \qquad\qquad
=q^{\ell(N-\frac{\ell+1}{2})}\inner{\omega'\za\smash[t]{\hat{K}^{2(\ell+1)},\omega\za\hat{K}^{2(\ell+1)}}} \;.
\end{align*}
This concludes the proof.
\end{proof}

As a consequence of Lemma \ref{lemma:iso}, the antiunitary part of $\Ja$ is the map $\Jb:\Omega^k_N\to\Omega^{\ell-k}_{\ell+1-N}$
given by
\begin{equation}\label{eq:Jb}
\Jb\omega=q^{\frac{\ell}{2}(\frac{\ell+1}{2}-N)}\maut^{\frac{1}{2}}\az(\Ja\omega)\za\hat{K}^{2(\ell+1)} \;,
\end{equation}
for all $\omega\in\Omega^k_N$.

\begin{prop}\label{prop:rstru}
The operator $\Jb$ satisfies
\begin{itemize}
\item[(i)] $\Jb^2=(-1)^{\lfloor\frac{\ell+1}{2}\rfloor}$;
\item[(ii)] $\Jb a\Jb^{-1}\omega=\omega\cdot(\maut^{\frac{1}{2}}\az a^*)$ for
   all $\omega\in\Omega^k_N$ and $a\in\Aq{\ell}$;
\item[(iii)] $\Jb\deb\Jb^{-1}|_{\Omega^k_N}=q^{k-N}\deb^\dag$.
\end{itemize}
In particular, we find the following alternative expression for the operator $\deb^\dag$
in \eqref{eq:debdag}:
\begin{equation}\label{eq:samedag}
\deb^\dag\big|_{\Omega^k_N}=-q^{N-k+1}\textstyle{\sum_{i=1}^\ell}\,q^{-2i}
\mL_{S(X_i^*\hat{K})}\otimes \mathfrak{i}^R_{e^i} \;.
\end{equation}
\end{prop}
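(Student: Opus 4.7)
All three items stem from the decomposition $\Jb=c_{k,N}\,(\maut^{1/2}\az)\,\Ja$ on $\Omega^k_N$, where the scalar $c_{k,N}$ is determined by the definition \eqref{eq:Jb}. The key inputs are the explicit formula of Lemma~\ref{lemma:Jaomega} for $\Ja$, the identities $J^2=(-1)^{\lfloor(\ell+1)/2\rfloor}$ and $\sigma_{\ell-k}(x^*)J=J\sigma_k(S(x))$ of Proposition~\ref{prop:Jsquare}, the relation $J\mathfrak e^L_x J^{-1}=-q\,\mathfrak i^R_x$ of Proposition~\ref{prop:3.12}, the identity $S^2(h)=\maut h\maut^{-1}$, and the observation that $\maut\hat K^{-(\ell+1)}\in\Uq{\ell}$ (together with $\hat K\in\Kq{\ell}$ acting trivially on $\Aq{\ell}$), which implies $a\za\maut=a$ for every $a\in\Aq{\ell}$.

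For (i) I would iterate Lemma~\ref{lemma:Jaomega}: the double sign $(-1)^{|\underline i|+|\underline{i}^c|}=(-1)^{\ell(\ell+1)/2}$ has the same parity as $\lfloor(\ell+1)/2\rfloor$, exactly as in the proof of Proposition~\ref{prop:Jsquare}; the $q$-prefactors proportional to $N$ and to $\ell+1-N$ arising from the two applications cancel; and the iterated right twist accumulates into a power of $\maut\hat K^{-(\ell+1)}\in\Uq{\ell}$, which acts trivially on $\Omega^k_N$ by Lemma~\ref{lemma:equiv}. For (ii), grouplikeness of $\maut^{-1}$ makes $\mL_{\maut^{-1}}$ an algebra map on $\Oq$, so $\Ja(a\omega)=(\Ja\omega)\cdot(\mL_{\maut^{-1}}a)^*=(\Ja\omega)\cdot(a^*\za\maut^{-1})$ using $(b\za h)^*=b^*\za S(h)^*$; since $a^*\in\Aq{\ell}$, we have $a^*\za\maut^{-1}=a^*$, and multiplying by the left-invariant factor $\maut^{1/2}\az$ -- which distributes over products -- yields the claim.

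For (iii) together with the alternative formula \eqref{eq:samedag}, I conjugate each tensor factor of $\deb=\sum_i\mL_{\hat K X_i}\otimes\mathfrak e^L_{e^i}$ by $\Jb$. The compatibility $\ast\mL_h\ast^{-1}=\mL_{S(h^*)}$ on $\Oq$ combined with $\mL_{\maut^{-1}}\mL_h\mL_\maut=\mL_{S^{-2}(h)}$ yields $\Ja\mL_h\Ja^{-1}=\mL_{S(S^{-2}(h)^*)}$. A direct computation of $S^{-2}(X_i)=q^{2(\ell-i+1)}X_i$, obtained from $S^{-2}(F_j)=q^{2}F_j$ applied to the $(\ell-i+1)$-fold nested $q$-commutator defining $M_{i,\ell}^*$, then recasts this as $\Ja\mL_{\hat K X_i}\Ja^{-1}=q^{2(\ell-i+1)}\mL_{S(X_i^*\hat K)}$. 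Combining this with $J\mathfrak e^L_{e^i}J^{-1}=-q\,\mathfrak i^R_{e^i}$, noting that $\maut^{1/2}\az$ commutes with every $\mL_h$ because left and right canonical actions commute, and tracking the ratio $c_{\ell-k+1,\ell+1-N}/c_{\ell-k,\ell+1-N}=q^{-2(\ell+1)}$ of the grading-dependent prefactors contributed by $\Jb$ at both ends, one obtains on $\Omega^k_N$
\[
\Jb\deb\Jb^{-1}=-q\sum\nolimits_i q^{-2i}\,\mL_{S(X_i^*\hat K)}\otimes\mathfrak i^R_{e^i}.
\]
This equals $q^{k-N}$ times the right-hand side of \eqref{eq:samedag}, so it simultaneously establishes (iii) and verifies that \eqref{eq:samedag} agrees with $\deb^\dag=\sum_i\mL_{X_i^*\hat K}\otimes\mathfrak i^L_{e^i}$ on $\Omega^k_N$. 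The main obstacle is the $q$-power bookkeeping: the grading scalars $c_{j,M}$, the factor $q^{2(\ell-i+1)}$ coming from $S^{-2}$ acting on $M_{i,\ell}^*$, and the trivial conjugation of $\mL_h$ by $\maut^{1/2}\az$ must combine precisely to reproduce the $q^{-2i}$ and $-q^{N-k+1}$ weighting of \eqref{eq:samedag}; the appearance of $q^{-2i}$ reflects the nontrivial $\mathrm{Ad}_\maut$ action on the different Chevalley components of $X_i$.
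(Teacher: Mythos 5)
Your overall framework — decomposing $\Jb$ as a twist of $\Ja$, feeding in Lemma~\ref{lemma:Jaomega}, Propositions~\ref{prop:Jsquare} and~\ref{prop:3.12}, and the identity $S^2=\mathrm{Ad}_{\maut}$ — matches the paper's, and your calculations for (ii) and for the intermediate formula
\[
\Jb\deb\Jb^{-1}=-q\sum\nolimits_i q^{-2i}\,\mL_{S(X_i^*\hat K)}\otimes\mathfrak i^R_{e^i}
\]
are on the right track (including the $q^{2(\ell-i+1)}$ from $S^{\pm 2}$ and the prefactor bookkeeping). However, there is a genuine gap in (iii), which is precisely where the bulk of the paper's work lies. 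You assert that the displayed identity ``simultaneously establishes (iii) and verifies that \eqref{eq:samedag} agrees with $\deb^\dag=\sum_i\mL_{X_i^*\hat K}\otimes\mathfrak i^L_{e^i}$.'' This is circular: what you have shown is that $\Jb\deb\Jb^{-1}$ equals $q^{k-N}$ times the right-hand side of \eqref{eq:samedag}, so your computation makes (iii) and \eqref{eq:samedag} \emph{equivalent} to one another, but it proves neither. The nontrivial content is precisely the identity
\[
-q\sum\nolimits_i q^{-2i}\,\mL_{S(X_i^*\hat K)}\otimes\mathfrak i^R_{e^i}
= q^{k-N}\sum\nolimits_i\mL_{X_i^*\hat K}\otimes\mathfrak i^L_{e^i}
\quad\text{on }\Omega^{k+1}_N,
\]
which is far from a tautology, since the left side involves the right contraction $\mathfrak i^R$ and the antipode-twisted generator $S(X_i^*\hat K)$, while the right side involves the left contraction $\mathfrak i^L$ and $X_i^*\hat K$ itself. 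The paper proves it by passing to the $(1,\ldots,k)$-component via Lemma~\ref{lemma:trick}, writing both sides through the contraction formula \eqref{eq:internal}, using $h_{(2)}S^{-1}(h_{(1)})=\epsilon(h)$ with the explicit coproduct \eqref{eq:Xcop} of $X_r^*\hat K$, and invoking \eqref{eq:using}, \eqref{eq:Nrs}, and a geometric sum to collapse the double summation. None of this appears in your argument.

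A secondary, smaller issue is in your treatment of (i): you claim the iterated right twist ``accumulates into a power of $\maut\hat K^{-(\ell+1)}\in\Uq{\ell}$, which acts trivially on $\Omega^k_N$ by Lemma~\ref{lemma:equiv}.'' In fact Lemma~\ref{lemma:equiv} shows the opposite of triviality: for $\omega\in\Omega^k_N$ one has $\omega\za h=\sigma^N_k(h)\omega$, and $\maut\hat K^{-(\ell+1)}$ acts via $\sigma_k$ by the diagonal factor $q^{k(\ell+1)-2|\underline i|}$ of \eqref{eq:zaK}, which is not the identity. What actually happens is that the twist elements coming from the two applications of $\Jb$ multiply, together with the $*$-conjugation, to the identity element of the Hopf algebra (equivalently, as the paper observes, $\Jb^2=\Ja^2=(*\otimes J)^2$); your conclusion is correct but the stated mechanism is not.
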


\smallskip

\begin{proof}
Point (i) follows from Proposition \ref{prop:Jsquare} and the observation that
$\Jb^2=\Ja^2=(*\otimes J)^2$. Concerning point (ii), we have
$$
\Jb a\Jb^{-1}\omega
=\maut^{\frac{1}{2}}\az\bigl(a\,\maut^{\frac{1}{2}}\az\omega^*\za\hat{K}^{2(\ell+1)}\bigr)^*\za\hat{K}^{2(\ell+1)}
=\bigl\{(\maut^{-\frac{1}{2}}\az a)\omega^*\bigr\}^*
=\omega (\maut^{\frac{1}{2}}\az a^*)\;.
$$
It remains point (iii).
By definition $\deb=\sum_{i=1}^\ell\,\mL_{\hat{K}X_i}\otimes\mathfrak{e}^L_{e^i}$,
and
$$
\Jb\deb\Jb^{-1}=
\textstyle{\sum_{i=1}^\ell}\,
\mL_{\maut\hat{K}^{-2(\ell+1)}}
\mL_{S(X_i^*\hat{K})}
\mL_{\maut^{-1}\hat{K}^{2(\ell+1)}}
\otimes J\mathfrak{e}^L_{e^i}J^{-1} \;.
$$
Using Proposition \ref{prop:3.12}, and the identities $\maut h\maut^{-1}=S^2(h)$ and
$\hat{K}X_i^*\hat{K}^{-1}=qX_i^*$, we get
$$
\Jb\deb\Jb^{-1}=
-q^{-2(\ell+1)+1}\textstyle{\sum_{i=1}^\ell}\,
\mL_{S^3(X_i^*\hat{K})}\otimes \mathfrak{i}^R_{e^i} \;.
$$
Notice that $S^2(\hat{K})=\hat{K}$. Moreover, $S^2$ is an
algebra morphism, $S^2(E_j)=q^2E_j$, and $X_i^*$ is the
product of $\ell+1-i$ operators $E_j$'s. Thus
$S^2(X_i^*)=q^{2(\ell+1-i)}X_i^*$ and
$$
\Jb\deb\Jb^{-1}=
-q\textstyle{\sum_{i=1}^\ell}\,q^{-2i}
\mL_{S(X_i^*\hat{K})}\otimes \mathfrak{i}^R_{e^i} \;.
$$
We want to prove that this operator is just $q^{k-N}\deb^\dag$; this will give
as a corollary (\ref{eq:samedag}).
By Lemma \ref{lemma:trick}, it is enough to prove that for all $\omega\in\Omega^{k+1}_N$
we have
$$
(\deb^\dag\omega)_{1,2,\ldots,k}=(q^{N-k}\Jb\deb\Jb^{-1}\omega)_{1,2,\ldots,k} \;.
$$
Using (\ref{eq:internal}) we find the explicit formul{\ae}:
\begin{align*}
(\deb^\dag\omega)_{1,\ldots,k} &=
\sum_{i=k+1}^\ell(-q)^{-k}\omega_{1,2,\ldots,k,i}\za S^{-1}(X_i^*\hat{K}) \;,\\
(\Jb\deb\Jb^{-1}\omega)_{1,\ldots,k} &=
\sum_{i=k+1}^\ell(-q)^{k+1-2i}\omega_{1,2,\ldots,k,i}\za X_i^*\hat{K} \;.
\end{align*}
From the equation $h_{(2)}S^{-1}(h_{(1)})=\epsilon(h)$, applied to $h=X_r^*\hat{K}$,
and using (\ref{eq:Xcop}) for the coproduct, we get:
$$
-X_r^*\hat{K}=\hat{K}N_{r,r-1}\,S^{-1}(X_r^*\hat{K})
+q^{-\frac{1}{2}}(q-q^{-1})\sum_{s=r+1}^\ell
\hat{K}N_{r,s-1}M_{r,s-1}\,
S^{-1}(X_s^*\hat{K}) \;.
$$
Plugging this into previous formula we get
\begin{multline*}
(-\Jb\deb\Jb^{-1}\omega)_{1,2,\ldots,k}=
\sum_{r=k+1}^\ell(-q)^{k+1-2r}\omega_{1,2,\ldots,k,r}\za 
\hat{K}N_{r,r-1}\,S^{-1}(X_r^*\hat{K})
\\
+q^{-\frac{1}{2}}(q-q^{-1})
\sum_{r=k+1}^\ell(-q)^{k+1-2r}
\sum_{s=r+1}^\ell\omega_{1,2,\ldots,k,r}\za 
\hat{K}N_{r,s-1}M_{r,s-1}\,S^{-1}(X_s^*\hat{K}) \;.
\end{multline*}
From the definition of $\Omega^{k+1}_N$ and $\sigma_{k+1}^N$ we deduce
that $\omega\za\hat{K}=q^{k+1-\frac{\ell}{\ell+1}N}\omega$, and
since
\begin{equation}\label{eq:Nrs}
N_{r,s-1}^\ell=(K_r\ldots K_{\ell-1})^\ell(K_s\ldots K_{\ell-1})^\ell
(K_1K_2^2\ldots K_{\ell-1}^{\ell-1})^{-2}\hat{K} \;,
\end{equation}
we have also
$$
\omega_{1,\ldots,k,r}\za\hat{K}N_{r,s-1}
=q^{\frac{1}{2}(1+\delta_{r,s})+k-N}\omega_{1,\ldots,k,r} \;.
$$
Thus,
\begin{multline*}
(-q^{N-k-1}\Jb\deb\Jb^{-1}\omega)_{1,2,\ldots,k}=
\sum_{r=k+1}^\ell(-q)^{k+1-2r}\omega_{1,2,\ldots,k,r}\za S^{-1}(X_r^*\hat{K})
\\
+(1-q^{-2})\sum_{r=k+1}^\ell(-q)^{k+1-2r}
\sum_{s=r+1}^\ell\omega_{1,2,\ldots,k,r}\za 
M_{r,s-1}\,S^{-1}(X_s^*\hat{K}) \;.
\end{multline*}
Now we use (\ref{eq:using}) and get:
\begin{multline*}
(-q^{N-k-1}\Jb\deb\Jb^{-1}\omega)_{1,2,\ldots,k}=
\sum_{r=k+1}^\ell(-q)^{k+1-2r}\omega_{1,2,\ldots,k,r}\za S^{-1}(X_r^*\hat{K})
\\
+(1-q^{-2})\sum_{r=k+1}^\ell(-q)^{k+1-2r}
\sum_{s=r+1}^\ell\omega_{1,2,\ldots,k,s}\za S^{-1}(X_s^*\hat{K}) \;.
\end{multline*}
We use
$\sum_{r=k+1}^\ell\sum_{s=r+1}^\ell=\sum_{s=k+2}^\ell\sum_{r=k+1}^{s-1}$
to change the order of the summations, and use
$$
\sum_{r=k+1}^{s-1}q^{-2r}=\frac{q^{-2(k+1)}-q^{-2s}}{1-q^{-2}}
$$
to get
\begin{align*}
(-q^{N-k-1}\Jb\deb\Jb^{-1}\omega)_{1,2,\ldots,k} &=
\sum_{r=k+1}^\ell(-q)^{k+1-2r}\omega_{1,2,\ldots,k,r}\za S^{-1}(X_r^*\hat{K})
\\ &
+\sum_{s=k+2}^\ell (-q)^{k+1}(q^{-2(k+1)}-q^{-2s})
\omega_{1,2,\ldots,k,s}\za S^{-1}(X_s^*\hat{K}) \\
&=\sum_{s=k+1}^\ell (-q)^{-k-1}
\omega_{1,2,\ldots,k,s}\za S^{-1}(X_s^*\hat{K}) \\
&=-q^{-1}(\deb^\dag\omega)_{1,\ldots,k} \;.
\end{align*}
This concludes the proof.
\end{proof}

\section{A family of spectral triples for $\CP^\ell_q$}\label{sec:quattro}
In this section we present spectral triples for $\CP^\ell_q$.
Let $\HH_N$ be the Hilbert space completion of $\Omega^\bullet_N$,
and let $\gamma_N$ be $+1$ on even forms and $-1$ on odd forms.
A bounded $*$-representation of $\Aq{\ell}$ is given by (the completion
of) left multiplication on $\Omega^\bullet_N$, and a densely defined
$*$-representation of $\Uq{\ell}$ is given by the left action on $\Omega^\bullet_N$.
To construct Dirac operators, there are basically two possibilities. The first is
to use the operator $\deb|_{\Omega^\bullet_N}$ and its Hermitian conjugate.
Since $\Omega^k_N\simeq\Omega^k\otimes_{\Aq{\ell}}\Gamma_N$, another possibility
is to use the natural connection $\nabla_{\!N}\!:\Omega^k_N\to\Omega^{k+1}_N$ defined
by twisting the flat connection on $\Omega^k$ with the Grassmannian connection of
$\Gamma_N$. That is, we define $\nabla_{\!N}$ as a composition:

\smallskip

\begin{center}\begin{tabular}{c}
\begindc{\commdiag}[30]
 \obj(1,3)[A]{$\Omega^k_N$}
 \obj(5,3)[B]{$\qquad\! (\Omega^k_0)^{\otimes r}$}
 \obj(5,1)[C]{$\qquad (\Omega^{k+1}_0)^{\otimes r}$}
 \obj(1,1)[D]{$\Omega^{k+1}_N$}
 \mor{A}{B}[15,15]{{\footnotesize $\,\cdot\,\Psi^\dag_N$}}
 \mor{B}{C}[15,15]{{\footnotesize $\;\deb\otimes 1_r$}}
 \mor{C}{D}[20,20]{{\footnotesize $\,\cdot\,\Psi_N$}}[\atright,\solidarrow]
 \mor{A}{D}[15,15]{{\footnotesize $\,\nabla_{\!N}$}}[\atright,\dasharrow]
\enddc
\end{tabular}\end{center}

\smallskip

\noindent
where $\Psi_N$ is the vector given in (\ref{eq:Psi}), $r:=\binom{|N|+\ell}{\ell}$ is its size,
and $1_r$ is the identity matrix of size $r$. Explicitly
$$
\nabla_{\!N}\omega:=\big\{\deb(\omega\Psi^\dag_N)\big\}\cdot\Psi_N \;,
$$
for all $\omega\in\Omega^k_N$. The two choices coincide, that is $\nabla_{\!N}=\deb|_{\Omega^\bullet_N}$
as shown in the next lemma.

\begin{lemma}
We have
$$
\nabla_{\!N}\omega=\deb\omega \;,
$$
for all $\omega\in\Omega^k_N$.
\end{lemma}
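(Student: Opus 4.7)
The plan is to verify the identity componentwise. For each $\underline{i}\in\Lambda_{k+1}$ I will expand
$$(\nabla_{\!N}\omega)_{\underline{i}}=\sum_j(\deb(\omega(\psi^N_j)^*))_{\underline{i}}\,\psi^N_j$$
using the explicit formula for $\deb$ together with the coproduct \eqref{eq:XcopS} of $S^{-1}(\hat K X_{i_r})$, and show that after all cancellations what survives is exactly $(\deb\omega)_{\underline{i}}$. First I would check that for each fixed $j$ the ``twisted'' object $\omega(\psi^N_j)^*$ lies in $\Omega^k_0$: $(\psi^N_j)^*$ is $\Uq{\ell}$-invariant under $\mL$ because $\psi^N_j\in\A(S^{2\ell+1}_q)$, and the $\hat K$-weights combine as $q^{k-\frac{\ell}{\ell+1}N}\cdot q^{\frac{\ell}{\ell+1}N}=q^k=\sigma^0_k(\hat K)$. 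So the formula for $\deb$ gives
$$(\deb(\omega(\psi^N_j)^*))_{\underline{i}}=\sum_{r=1}^{k+1}(-q)^{1-r}(\omega_{\underline{i}\smallsetminus i_r}(\psi^N_j)^*)\za S^{-1}(\hat K X_{i_r}).$$

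Next I apply \eqref{eq:XcopS} to split each $(\omega_{\underline{i}\smallsetminus i_r}(\psi^N_j)^*)\za S^{-1}(\hat K X_{i_r})$ into three pieces. The first piece, coming from the grouplike summand $S^{-1}(\hat K X_{i_r})\otimes 1$, equals $(\omega_{\underline{i}\smallsetminus i_r}\za S^{-1}(\hat K X_{i_r}))(\psi^N_j)^*$; after multiplying by $\psi^N_j$ and summing over $j$, the identity $\Psi_N^\dag\Psi_N=1$ collapses it to $\omega_{\underline{i}\smallsetminus i_r}\za S^{-1}(\hat K X_{i_r})$, and the sum $\sum_r(-q)^{1-r}(\cdots)$ is precisely $(\deb\omega)_{\underline{i}}$. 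The remaining two pieces each factor, after summing on $j$, as an expression in $\omega$ and the Cartan generators times
$$B_s:=\sum_j\bigl((\psi^N_j)^*\za S^{-1}(\hat K X_s)\bigr)\psi^N_j,\qquad s\in\{1,\ldots,\ell\}.$$
It therefore suffices to prove the key vanishing $B_s=0$ for every $s$.

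Using $a^*\za h=(a\za S(h)^*)^*$ I rewrite $B_s=\sum_j(\psi^N_j\za X_s^*\hat K)^*\psi^N_j$, so it is enough to show $\psi^N_j\za X_s^*=0$. Since $X_s^*=M_{s\ell}N_{s\ell}$ and $\za$ is antihomomorphic, this reduces to $\psi^N_j\za M_{s\ell}=0$, which I prove by induction descending from $s=\ell$ via $M_{s\ell}=[E_s,M_{s+1,\ell}]_q$; the whole induction collapses to the single claim $\psi^N_j\za E_a=0$ for every $a=1,\ldots,\ell$. For $a\le\ell-1$, this follows from $\psi^N_j\in\A(S^{2\ell+1}_q)$ together with $S^{-1}(E_a)=-q^{-1}E_a$. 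For $a=\ell$ it is a direct check at the level of generators: $z_i=u^{\ell+1}_i$ satisfies $z_i\za F_\ell=\sum_k\pi^{\ell+1}_k(F_\ell)u^k_i=0$ because $\pi^{\ell+1}_k(F_\ell)=\delta^{\ell+1}_\ell\delta^{\ell+1}_k=0$, hence $z_i^*\za E_\ell=-q(z_i\za F_\ell)^*=0$; then the coproduct $\Delta(E_\ell)=E_\ell\otimes K_\ell+K_\ell^{-1}\otimes E_\ell$ extends this by induction to any monomial in the $z_i^*$'s, and in particular to $\psi^N_j$.

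The main technical obstacle is the careful bookkeeping in the three-term coproduct expansion of $S^{-1}(\hat K X_{i_r})$; the conceptual core, hidden behind the computation, is that $\Psi_N$ is a ``highest weight'' vector for the right action of every raising operator $E_a$, which is what forces all the correction terms $B_s$ to vanish and makes the Dolbeault operator defined in \eqref{eq:Dolbeault} agree with the Grassmannian connection on the line bundle $\Gamma_N$.
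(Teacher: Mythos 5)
Your proof is correct and takes the same essential route as the paper: write $\nabla_{\!N}\omega=\{\deb(\omega\Psi_N^\dag)\}\cdot\Psi_N$, expand $\deb(\omega\Psi_N^\dag)$ via the coproduct formula \eqref{eq:XcopS}, and observe that only the group-like term survives because $\Psi_N^\dag\za S^{-1}(\hat K X_s)=0$, after which $\Psi_N^\dag\Psi_N=1$ finishes the job. Two small points of comparison: the paper reaches that key vanishing in one stroke and uniformly in $a$ (since $\pi^{\ell+1}_k(F_a)=0$ for \emph{every} $a=1,\ldots,\ell$, one has $z_i\za F_a=0$ directly, hence $\Psi_N^\dag\za F_a=0$ and thus $\Psi_N^\dag\za S^{-1}(X_s)=0$), avoiding both your conjugation detour $(\psi^N_j)^*\za S^{-1}(\hat K X_s)=(\psi^N_j\za X_s^*\hat K)^*$ and your case split $a<\ell$ versus $a=\ell$; and both your argument and the paper's implicitly use the form of $\Psi_N$ valid for $N\ge 0$ (the paper flags this explicitly), with the case $N<0$ handled by the mirrored construction in which the roles of $z_i$ and $z_i^*$ are exchanged.
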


\begin{proof}
We give the proof for $N\geq 0$.
By definition, the components of $\Psi^\dag_N$ are monomials in $z_i$.
As $\pi^{\ell+1}_i(F_j)=0$, $z_i\za F_j=0$ and so $\Psi^\dag_N\za F_j=0$.
This means that $\Psi^\dag_N\za S^{-1}(X_i)=0$. Then by (\ref{eq:Xcop})
$$
\{\deb(\omega\Psi^\dag_N)\}_{\underline{i}}=\sum\nolimits_{r=1}^{k+1} (-q)^{1-r}
\bigr(\omega_{\underline{i}\smallsetminus i_r}\za S^{-1}(\hat{K}X_{i_r})\bigr)
\Psi^\dag_N=(\deb\omega)_{\underline{i}}\Psi^\dag_N
$$
and $\nabla_{\!N}\omega=\deb\omega$.
\end{proof}

Thus, for $c_{\ell,N,k}\in\C$ we define the following Dirac-type operator
$$
D(c)|_{\Omega^k_N}:=c_{\ell,N,k}\deb+\bar{c}_{\ell,N,k-1}\deb^\dag \;.
$$
The antilinear map $\Jb$ given by \eqref{eq:Jb} extends to an isometry $\HH_N\to\HH_{\ell+1-N}$. In particular,
for odd $\ell$ it is an automorphism of $\HH_{\frac{1}{2}(\ell+1)}$. A preferred Dirac operator $D_N$ on $\HH_N$
-- belonging to the class $D(c)$ -- is given by
\begin{equation}\label{eq:DN}
D_N:=q^{\frac{1}{2}(k-N)}\deb+q^{\frac{1}{2}(k-N-1)}\deb^\dag \;.
\end{equation}
By Proposition \ref{prop:rstru}, we have $\Jb D_N=D_{\ell+1-N}\Jb$ on $\Omega^\bullet_N$, that for odd $\ell$
and $N=\frac{1}{2}(\ell+1)$ means that $D_N$ commutes with $\Jb$.

Remark: the operator $D_0$ is a $q$-analogue of the Dolbeault-Dirac operator of $\CP^\ell$,
while $D_N$ is the twist of $D_0$ with the Grassmannian connection of the line
bundle $\Gamma_N$. If $\ell$ is odd and $N=\frac{1}{2}(\ell+1)$, $D_N$ is a
$q$-analogue of the Dirac operator of the Fubini-Study metric.

The main result of this section is the following theorem.

\begin{thm}\label{thm}
The datum $(\Aq{\ell},\HH_N,D_N,\gamma_N)$ is a $0^+$-dimensional equivariant
even spectral triple. If $\ell$ is odd and $N=\frac{1}{2}(\ell+1)$, the spectral
triple $(\Aq{\ell},\HH_N,D_N,\gamma_N,\Jb)$ is real with KO-dimension
$\,2\ell\!\!\mod 8$.
\end{thm}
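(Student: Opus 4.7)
The plan is to verify, in turn, the axioms of an even equivariant spectral triple, and, when $\ell$ is odd with $N=(\ell+1)/2$, those of a real structure. The analytic conditions follow almost immediately from results already established. The Leibniz rule \eqref{eq:gLeibzeroN} gives $[\deb,a]\omega=(\deb a)\wprod\omega$ for $a\in\Aq{\ell}$ and $\omega\in\Omega^k_N$, which is left multiplication by the one-form $\deb a$; its coefficients lie in the bounded $*$-algebra $\Oq$, so it extends to a bounded operator on $\HH_N$. Taking Hilbert adjoints gives boundedness of $[\deb^\dag,a]$, and hence of $[D_N,a]$. The grading $\gamma_N$ is diagonal in the form degree, so it commutes with the action of $\Aq{\ell}$ and anticommutes with $D_N$. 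The left canonical action of $\Uq{\ell+1}$ on $\Oq$ commutes with the right canonical action appearing in $\mL_h$ and with all endomorphisms of $W_k$, giving equivariance of the representation together with commutation of $D_N$ with the $\Uq{\ell+1}$-action. Symmetry of $D_N$ is built into the $q^{1/2}$-scaling in \eqref{eq:DN}, chosen so that the matrix element of $\deb:\Omega^{k-1}_N\to\Omega^k_N$ is the Hilbert adjoint of the matrix element of $\deb^\dag$ in the opposite direction.

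The core of the proof is the spectral analysis. A short computation gives $D_N^2=q^{k-N}\deb^\dag\deb+q^{k-N-1}\deb\deb^\dag$ on $\Omega^k_N$, and by the equivariance just established this operator commutes with the left $\Uq{\ell+1}$-action. Proposition~\ref{prop:ha} decomposes $\HH_N$ as a countable direct sum of isotypic components $V_n$ with highest weights of the form $(n_1,0,\ldots,0,n_\ell)+\underline{e}_{\,k}$ and multiplicities \eqref{eq:mult}, so Schur's lemma implies that $D_N^2$ acts as a scalar on each $V_n$. I would identify that scalar with the Casimir eigenvalue $\lambda_{n_1,n_\ell,N}$ of Lemma~\ref{lemma:multeig} by using the definition $X_i=N_{i,\ell}M^*_{i,\ell}$ (Lemma~\ref{lemma:Xi}) together with the commutations \eqref{eq:NEN} to rewrite $q\deb^\dag\deb+\deb\deb^\dag$ in terms of the combination $\sum_i q^{\ell+1-2i}M^*_{i,\ell}N^2_{i,\ell}M_{i,\ell}$ that appears in the Casimir decomposition \eqref{eq:CCprime} (up to explicit powers of $\hat{K}$ and a $\mathcal{C}_q'$-piece that is constant on each isotypic component). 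This algebraic rearrangement is the main technical obstacle. Once it is in place, \eqref{eq:eigA} shows $\lambda_{n_1,n_\ell,N}\sim q^{-2\max(n_1,n_\ell)}$ for large $n_1+n_\ell$, whereas the multiplicities grow only polynomially; hence the eigenvalue counting function of $|D_N|$ is at most poly-logarithmic, which makes $e^{-tD_N^2}$ trace class for every $t>0$. This yields both the compact resolvent property and $0^+$-summability, and since $D_N$ is diagonalized by an orthonormal basis of eigenvectors, it is essentially self-adjoint on the algebraic core $\Omega^\bullet_N$.

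For the real structure, when $\ell$ is odd and $N=(\ell+1)/2$, everything needed is already contained in Propositions~\ref{prop:Jsquare} and \ref{prop:rstru}. Proposition~\ref{prop:Jsquare} gives $\Jb^2=(-1)^{(\ell+1)/2}$; Proposition~\ref{prop:rstru}(iii), combined with straightforward power counting of the $q^{1/2}$-factors in \eqref{eq:DN}, gives $\Jb D_N=D_{\ell+1-N}\Jb$, which collapses to $\Jb D_N=D_N\Jb$ because $\ell+1-N=N$. Proposition~\ref{prop:rstru}(ii) identifies $\Jb b\Jb^{-1}$ with right multiplication by $\maut^{1/2}\az b^*\in\Aq{\ell}$, so the commutant axiom $[a,\Jb b\Jb^{-1}]=0$ reduces to the commutativity of left and right $\Aq{\ell}$-multiplications on the bimodule $\Omega^k_N$. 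The first-order condition $[[D_N,a],\Jb b\Jb^{-1}]=0$ follows because $[\deb,a]$ is left multiplication by $\deb a$ (hence commutes with any right multiplication), while $[\deb^\dag,a]$ is, up to a sign, the Hilbert adjoint of left multiplication by $\deb a^*$; a short check using the modular property \eqref{eq:modprop} shows that the Hilbert adjoint of right multiplication by $c\in\Aq{\ell}$ is again right multiplication by an element of $\Aq{\ell}$ (namely $\maut\az c^*$, which lies in $\Aq{\ell}$ because the left canonical action preserves right $\Kq{\ell}$-invariance), so the bimodule argument survives taking adjoints. Finally, $\Jb$ exchanges $\Omega^k_N$ with $\Omega^{\ell-k}_N$, and for odd $\ell$ this reverses the form-parity, giving $\Jb\gamma_N=-\gamma_N\Jb$. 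The triple of signs $\bigl((-1)^{(\ell+1)/2},+1,-1\bigr)$ matches exactly the signature of KO-dimension $2\ell\!\!\mod 8$.
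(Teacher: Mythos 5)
Your overall architecture matches the paper's: the algebraic axioms (bounded commutators from the Leibniz rule, the grading, the equivariance, and, for odd $\ell$ with $N=\frac{1}{2}(\ell+1)$, the real-structure conditions via Propositions~\ref{prop:Jsquare} and~\ref{prop:rstru}) are handled the same way, and the analytic part is reduced to showing $D_N^2$ is related to a Casimir acting with exponentially growing eigenvalues on the isotypic components of Proposition~\ref{prop:ha}.

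The one substantive gap is in the central algebraic step. You write down $D_N^2|_{\Omega^k_N}=q^{k-N-1}(\deb\deb^\dag+q\deb^\dag\deb)$ correctly, but then propose to identify its scalar on each $V_n$ with the single Casimir eigenvalue $\lambda_{n_1,n_\ell,N}$ of Lemma~\ref{lemma:multeig}, after rewriting in terms of $\sum_i q^{\ell+1-2i}M_{i,\ell}^*N^2_{i,\ell}M_{i,\ell}$ from~\eqref{eq:CCprime}. This would work if $\deb\deb^\dag+q\deb^\dag\deb$ related cleanly to $\mathcal{C}_q$ alone, but for $q\neq 1$ it does not. The paper computes, in Lemma~\ref{lemma:dddag}, the operator $\deb\deb^\dag+\deb^\dag\deb$ (which \emph{does} involve only $\sum q^{-2i}X_iX_i^*$, hence $\mathcal{C}_q$ via~\eqref{eq:CCprime}), and separately, via conjugation by $\Jb$, the operator $\deb\deb^\dag+q^2\deb^\dag\deb$, which involves $S^{-1}(X_iX_i^*)$ and hence $S^{-1}(\mathcal{C}_q)$. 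The quantity you need, $\deb\deb^\dag+q\deb^\dag\deb$, is then expressed as the convex combination $\frac{1}{1+q}\bigl(q(\deb\deb^\dag+\deb^\dag\deb)+(\deb\deb^\dag+q^2\deb^\dag\deb)\bigr)$, giving~\eqref{eq:DNsquare}: a \emph{positive} combination of $\mathcal{C}_q$ and $S^{-1}(\mathcal{C}_q)$ plus constants. Since $S^{-1}(\mathcal{C}_q)$ acts on $V_{(n_1,\ldots,n_\ell)}$ with the $\mathcal{C}_q$-eigenvalue of $V_{(n_\ell,\ldots,n_1)}$, its eigenvalues are generally different from $\lambda_{n_1,n_\ell,N}$; your identification with a single Casimir eigenvalue therefore fails as stated. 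The positivity of both coefficients is then what guarantees the dominant terms cannot cancel, so the exponential divergence of each of $\mathcal{C}_q|_{\Omega^k_N}$ and $S^{-1}(\mathcal{C}_q)|_{\Omega^k_N}$ (read off from~\eqref{eq:eigA} with the substitution from Proposition~\ref{prop:ha}) transfers to $D_N^2$. With this correction, the rest of your summability and compact-resolvent conclusions go through as you describe, and your KO-dimension sign bookkeeping is correct.
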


The rest of this section is devoted to the proof of this theorem.
Let us check the conditions of a spectral triple, as recalled in Appendix \ref{app:A}. 
Equivariance holds by construction: $\Omega^\bullet_N$ is
dense in $\HH_N$, it is a left $\Aq{\ell}\rtimes\Uq{\ell+1}$-module, the Dirac-type operators
(symmetric by construction since $\deb^\dag$ is the Hermitian conjugated of $\deb$) are defined
on the dense domain $\Omega^\bullet_N$ and commute with the action of $\Uq{\ell+1}$ (since $\deb$ does),
and $\Jb$ is the antiunitary part of $\Ja$ -- given by \eqref{eq:Rstructure} -- that is equivariant
due to \eqref{eq:Jaequiv}.

Next, for any $a\in\Aq{\ell}$,
$$
[D(c),a]=c_{\ell,N,k}[\deb,a]-\bar{c}_{\ell,N,k-1}\,[\deb,a^*]^\dag \;.
$$
But by \eqref{eq:gLeibzeroN}, $[\deb,a]$ is the operator of left multiplication by $\deb a$,
and this shows the bounded commutator condition.

As far as the grading is concerned, the action of $\Aq{\ell}\rtimes\Uq{\ell+1}$ does not change the parity
of forms, while the Dirac operator does.
The operator $\Jb$ sends $\Omega^k_N$ to $\Omega^{\ell-k}_{\ell+1-N}$, that is it exchanges the
parity of forms exactly when $\ell$ is odd, that shows the last condition
in (\ref{eq:J}). The remaining two conditions in (\ref{eq:J}), $D_N\Jb=\Jb D_N$ and $\Jb^2=(-1)^{\frac{1}{2}(\ell+1)}$,
follow from Proposition \ref{prop:rstru} for any odd $\ell$ and $N=\frac{1}{2}(\ell+1)$. 

We now pass to the commutant and first order condition, cf.~\eqref{eq:real}.
Proposition \ref{prop:rstru} tells us that for odd $\ell$ and $N=\frac{1}{2}(\ell+1)$, $\Jb a\Jb^{-1}$ is the
operator of right multiplication by $\,\maut^{\frac{1}{2}}\az a^*$. Since left and right $\Aq{\ell}$-module
structure of $\Omega^\bullet_N$ commute, this proves the commutant condition.
Moreover, due to the modular property of the Haar state (\eqref{eq:modprop}),
\begin{align*}
\big<\omega,\omega'(\maut^{\frac{1}{2}}\az a^*)\big>
&=\varphi\bigl( \omega^\dag\omega'(\maut^{\frac{1}{2}}\az a^*) \bigr)
=\varphi\bigl( (\maut^{-\frac{1}{2}}\az a^*)\omega^\dag\omega' \bigr) \\
&=\varphi\bigl( \{\omega(\maut^{\frac{1}{2}}\az a)\}^\dag\omega' \bigr)
=\big<\omega(\maut^{\frac{1}{2}}\az a),\omega'\big>
=\inner{\Jb a^*\Jb^{-1}\omega,\omega'} \;.
\end{align*}
Hence $(\Jb a\Jb^{-1})^\dag=\Jb a^*\Jb^{-1}$. Since $[\deb,a]=(\deb a)\wprod$, from the associativity of
$\wprod$ we deduce the first order condition for $\deb$. This condition for $D_N$ follows
from the identity
$$
[[D_N,a],\Jb b\Jb^{-1}]=c_{\ell,N,k}[[\deb,a],\Jb b\Jb^{-1}]
+\bar{c}_{\ell,N,k-1}\,[[\deb,a^*],\Jb b^*\Jb^{-1}]^\dag \;,
$$
that is valid for all $a,b\in\Aq{\ell}$.

Next, we show that $D_N$ is diagonalizable by relating it to the Casimir of $\Uq{\ell+1}$ (then,
being a symmetric operator, $D_N$ has a canonical selfadjoint extension). We shall also prove that
the eigenvalues of $D_N$ diverge exponentially (while their multiplicities are only polynomially
divergent). This implies that $(D_N^2+1)^{-1}$ is compact and that $D_N$ is $0^+$-summable, which
will complete the proof of Theorem \ref{thm}. We need first two technical lemmas.

\begin{lemma}\label{lemma:dddag}
We have
\begin{equation}\label{eq:dddag}
(\deb\deb^\dag+\deb^\dag\deb)\omega=\omega\za\left(
q^{\frac{2N}{\ell+1}}\textstyle{\sum_{i=1}^\ell}q^{-2i}X_iX^*_i
+q^{N-\ell-k}[k][\ell+1-N]\right)
\end{equation}
for all $\omega\in\Omega^k_N$.
\end{lemma}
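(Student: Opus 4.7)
By Lemma \ref{lemma:trick}, two elements of $\Omega^k_N$ coincide as soon as their $(1,2,\ldots,k)$-components agree, so my plan is to compute the two halves $(\deb\deb^\dag\omega)_{1,\ldots,k}$ and $(\deb^\dag\deb\omega)_{1,\ldots,k}$ separately and then verify that their sum reduces to $\omega_{1,\ldots,k}\za\Lambda$, where $\Lambda$ denotes the bracketed element of $\Uq{\ell+1}$ appearing on the right-hand side of \eqref{eq:dddag}. The computation is greatly simplified by using the \emph{two} available expressions for $\deb^\dag$: the original \eqref{eq:debdag} when $\deb^\dag$ stands on the outside (as in $\deb\deb^\dag$), and the alternative \eqref{eq:samedag} when it stands on the inside (as in $\deb^\dag\deb$). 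The reason is that on the $(1,\ldots,k)$-component the operator $\mathfrak{i}^R_{e^s}$ from \eqref{eq:samedag} is non-zero only for $s\in\{k+1,\ldots,\ell\}$, yielding $(-q)^k$ times the $(1,\ldots,k,s)$-entry; and dually $\mathfrak{e}^L_{e^r}$ from \eqref{eq:Dolbeault} is non-zero only for $r\in\{1,\ldots,k\}$. Thus most of the combinatorial clutter disappears at the outset.

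Expanding $(\deb\omega)_{1,\ldots,k,s}$ via \eqref{eq:Dolbeault} and substituting, $(\deb^\dag\deb\omega)_{1,\ldots,k}$ becomes a double sum over pairs $(r,s)$ with $r\in\{1,\ldots,k\}\cup\{s\}$ and $s>k$, involving expressions of the form $\omega_{\underline{j}}\za S^{-1}(\hat K X_r)X_s^*\hat K$. An analogous expansion of $(\deb\deb^\dag\omega)_{1,\ldots,k}$ produces a double sum involving $\omega_{\underline{j}}\za X_s^*\hat K \cdot S^{-1}(\hat K X_r)$. The ``mixed'' terms ($r\ne s$) thus pair up as commutators $[X_s^*,\,S^{-1}(X_r)]$ (modulo $\hat K$-factors pushed through using $X_i\hat K=q\hat K X_i$ and $\omega\za\hat K=q^{k-\ell N/(\ell+1)}\omega$). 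Identity \eqref{eq:SXXA} rewrites each such commutator as a Cartan factor times $S^{-1}(M_{j,i-1}^*)$, which when applied to the off-diagonal entries of $\omega$ collapses to $\omega_{1,\ldots,k}$ by Lemma \ref{lemma:Dsquare}. After this telescoping cancellation the surviving piece is purely ``diagonal'' and gives precisely $\omega_{1,\ldots,k}\za\bigl(q^{2N/(\ell+1)}\sum_{i}q^{-2i}X_iX_i^*\bigr)$, once the accumulated $\hat K$-factors are resolved.

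The scalar contribution $q^{N-\ell-k}[k][\ell+1-N]$ comes entirely from the diagonal case $r=s$, where instead \eqref{eq:SXXB} applies. The Cartan element $(K_i\cdots K_\ell)^2-(K_i\cdots K_\ell)^{-2}$ appearing there acts on $\omega_{1,\ldots,k}$ as $q^{\mathbb{1}[i\leq k]}-q^{-\mathbb{1}[i\leq k]}=(q-q^{-1})\mathbb{1}[i\leq k]$, since $i\#(1,\ldots,k)=\delta_{i,k}$ and the weight of $\hat K$ on $\omega_{1,\ldots,k}$ is fixed. Weighting by the $q^{-2i}$ factors present in the expansion and summing $i$ from $1$ to $k$ yields a geometric sum proportional to $[k]$; assembling the remaining $q$-powers produced by $\hat K$ and $N_{i,i-1}$ supplies the companion factor $[\ell+1-N]$ together with the overall prefactor $q^{N-\ell-k}$. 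The main obstacle is the meticulous bookkeeping: tracking the many $q$-powers, signs, and Cartan actions accurately through the cascaded commutators, and recognising that the various partial sums really do telescope into the clean quantum integers $[k]$ and $[\ell+1-N]$. Once this is done, Lemma \ref{lemma:trick} upgrades the component-wise identity to the full equality \eqref{eq:dddag}.
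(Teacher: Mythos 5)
Your overall strategy — reduce to the $(1,\ldots,k)$-component via Lemma \ref{lemma:trick}, and use \eqref{eq:SXXA}, \eqref{eq:SXXB} and Lemma \ref{lemma:Dsquare} to handle mixed and diagonal terms — is the right one and close to the paper's (the paper actually uses \eqref{eq:samedag} for \emph{both} occurrences of $\deb^\dag$, so that all contractions are $\mathfrak{i}^R$ and \eqref{eq:internal} applies directly; your hybrid choice would force you to first derive a component formula for $\mathfrak{i}^L$, which the paper never writes down). The genuine gap is in the claimed origin of the scalar $q^{N-\ell-k}[k][\ell+1-N]$. You assert it comes ``entirely from the diagonal case $r=s$'' and that the off-diagonal terms merely ``telescope,'' but neither is true. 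After \eqref{eq:SXXA} and Lemma \ref{lemma:Dsquare}, the commutators with $r\neq s$ produce a Cartan factor $(\hat{K}N_{r,s-1})^{-1}$ acting on $\omega_{1,\ldots,k}$ by a scalar, and summing over pairs gives a \emph{nonzero} contribution proportional to $[k][\ell-k]$. On the diagonal side, the eigenvalue you wrote is also wrong: for $i\leq k$, the element $\tfrac{(K_i\cdots K_\ell)^2-(K_i\cdots K_\ell)^{-2}}{q-q^{-1}}$ acts on $\omega_{1,\ldots,k}$ as $[k-N+1]$, not $1$ — you dropped the $\hat{K}$-weight, which carries all the $N$-dependence. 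The factor $[\ell+1-N]$ is not ``assembled from $q$-powers''; it arises from adding the two scalar pieces via $q^{-k}[k-N+1]+q^{N-\ell-k-1}[\ell-k]=q^{-\ell}[\ell+1-N]$, so misallocating the whole scalar to one side would make the bookkeeping collapse.

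A secondary gap: after peeling off the scalar, the surviving diagonal piece is $\sum_r q^{-2r}\,\omega_{1,\ldots,k}\za S^{-1}(X_r)X_r^*$, not yet $\omega_{1,\ldots,k}\za X_rX_r^*$ as required by the right-hand side of \eqref{eq:dddag}. Passing from one to the other requires applying $S^{-1}(h_{(2)})h_{(1)}=\epsilon(h)$ to $h=X_r$ together with the coproduct formula \eqref{eq:Xcop} and the vanishing $\omega_{1,\ldots,k}\za F_n=0$ for $1\leq n\leq\ell-1$; this is a genuine step, not just ``resolving accumulated $\hat{K}$-factors.''
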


\begin{proof}
Using (\ref{eq:Dolbeault}) for $\deb$ and (\ref{eq:samedag}) for $\deb^\dag$, we compute
$$
(\deb\deb^\dag+\deb^\dag\deb)\big|_{\Omega^k_N}=-q^{N-k}\sum_{i,j=1}^\ell q^{-2j}\Big(
q\,\mL_{\hat{K}X_i}\mL_{S(X_j^*\hat{K})}\otimes \mathfrak{e}^L_{e^i}\mathfrak{i}^R_{e^j}
+\mL_{S(X_j^*\hat{K})}\mL_{\hat{K}X_i}\otimes\mathfrak{i}^R_{e^j}\mathfrak{e}^L_{e^i}\Big) \;.
$$
Since $X_i\hat{K}=q\hat{K}X_i$, we have $X_i^*\hat{K}=q^{-1}\hat{K}X_i^*$ and
\begin{align*}
\mL_{\hat{K}X_i}\mL_{S(X_j^*\hat{K})} &=\mL_{\hat{K}X_i S(X_j^*\hat{K})}
     =\mL_{\hat{K}X_i\hat{K}^{-1} S(X_j^*)}=q^{-1}\mL_{X_i S(X_j^*)} \;,\\
\mL_{S(X_j^*\hat{K})}\mL_{\hat{K}X_i} &=\mL_{S(X_j^*\hat{K})\hat{K}X_i}
     =q^{-1}\mL_{S(\hat{K}X_j^*)\hat{K}X_i}=q^{-1}\mL_{S(X_j^*)X_i} \;.
\end{align*}
Thus
$$
(\deb\deb^\dag+\deb^\dag\deb)\big|_{\Omega^k_N}=-q^{N-k}\sum_{i,j=1}^\ell q^{-2j}
\Big(
\mL_{X_i S(X_j^*)}\otimes\mathfrak{e}^L_{e^i}\mathfrak{i}^R_{e^j}+
q^{-1}
\mL_{S(X_j^*)X_i}\otimes\mathfrak{i}^R_{e^j}\mathfrak{e}^L_{e^i}
\Big) \;.
$$
Using (\ref{eq:internal}) we get
\begin{align*}
\sum_{i,j=1}^\ell q^{-2j}(\mL_{X_i S(X_j^*)}\otimes\mathfrak{e}^L_{e^i}\mathfrak{i}^R_{e^j}\omega)_{\underline{i}}
=\!\!\sum_{r\in\underline{i} \, ,\, s\notin(\underline{i}\smallsetminus r)}
\!\!(-q)^{-L(r,\underline{i})+L(s,(\underline{i}\smallsetminus r)\cup s)-2s}
\,\omega_{(\underline{i}\smallsetminus r)\cup s}\za q^{-2s}X^*_s S^{-1}(X_r) \\
=\sum_{r\in\underline{i}}\,\omega_{\underline{i}}\za q^{-2r}X^*_r S^{-1}(X_r)
+\!\!\sum_{r\in\underline{i} \, ,\, s\notin\underline{i} \, ,\, r<s}
\!\!(-q)^{-L(r,\underline{i})+L(s,\underline{i}\cup s)-1}
\,\omega_{\underline{i}\smallsetminus r\cup s}\za q^{-2s}X^*_s S^{-1}(X_r) \\
+\!\!\sum_{r\in\underline{i} \, ,\, s\notin\underline{i} \, ,\, r>s}
\!\!(-q)^{-L(r,\underline{i})+L(s,\underline{i}\cup s)}
\,\omega_{\underline{i}\smallsetminus r\cup s}\za q^{-2s}X^*_s S^{-1}(X_r) \\
\intertext{and}
\sum_{i,j=1}^\ell q^{-2j}(\mL_{S(X_j^*)X_i}\otimes\mathfrak{i}^R_{e^j}\mathfrak{e}^L_{e^i}\omega)_{\underline{i}}
=\!\!\sum_{s\notin\underline{i} \, ,\, r\in(\underline{i}\cup s)}
\!\!(-q)^{L(s,\underline{i}\cup s)-L(r,\underline{i}\cup s)}
\,\omega_{(\underline{i}\cup s)\smallsetminus r}\za q^{-2s}S^{-1}(X_r)X^*_s \qquad\quad\! \\
=\sum_{s\notin\underline{i}}\,\omega_{\underline{i}}\za q^{-2s}S^{-1}(X_s)X^*_s
+\!\!\sum_{s\notin\underline{i} \, ,\, r\in\underline{i} \, , \, r<s}
\!\!(-q)^{-L(r,\underline{i})+L(s,\underline{i}\cup s)}
\,\omega_{\underline{i}\cup s\smallsetminus r}\za q^{-2s}S^{-1}(X_r)X^*_s \\
+\!\!\sum_{s\notin\underline{i} \, ,\, r\in\underline{i} \, , \, r>s}
\!\!(-q)^{-L(r,\underline{i})+L(s,\underline{i}\cup s)-1}
\,\omega_{\underline{i}\cup s\smallsetminus r}\za q^{-2s}S^{-1}(X_r)X^*_s \;.
\end{align*}
In particular, we read off the component $\underline{i}=(1,2,\ldots,k)$,
\begin{align*}
-q^{k-N}\{(\deb\deb^\dag+\deb^\dag\deb)\omega\}_{1,\ldots,k}
& =\sum_{r=1}^k\omega_{1,\ldots,k}\za q^{-2r}X^*_rS^{-1}(X_r)
+q^{-1}\!\!\!\sum_{s=k+1}^\ell\!\!\omega_{1,\ldots,k}\za q^{-2s}S^{-1}(X_s)X^*_s
 \\
& \quad +\sum_{r=1}^k\sum_{s=k+1}^\ell(-q)^{k-r-2s}
\,\omega_{1,\ldots,\hat{r},\ldots,k-1,k,s}\za [X^*_s,S^{-1}(X_r)] \;.
\end{align*}
We use (\ref{eq:SXXA}) and Lemma \ref{lemma:Dsquare} to rewrite last term:
\begin{align*}
\sum_{r=1}^k\sum_{s=k+1}^\ell(-q)^{k-r-2s}
\,\omega_{1,\ldots,\hat{r},\ldots,k-1,k,s}\za [X^*_s,S^{-1}(X_r)]
=-q^{\frac{3}{2}}\sum_{r=1}^k\sum_{s=k+1}^\ell
\,q^{2(k-r-s)}\omega_{1,\ldots,k}\za (\hat{K}N_{r,s-1})^{-1}
\\
=-q^{N+1-k}\omega_{1,\ldots,k}\sum_{r=1}^k\sum_{s=k+1}^\ell\,q^{2(k-r-s)}
=-q^{N-\ell-k-1}[k][\ell-k]\omega_{1,\ldots,k} \;.
\end{align*}
Thus,
\begin{align*}
-q^{k-N}\{(\deb\deb^\dag+\deb^\dag\deb)\omega\}_{1,2,\ldots,k}
&=\sum_{r=1}^k\omega_{1,2,\ldots,k}\za q^{-2r}X^*_rS^{-1}(X_r)
+q^{-1}\!\!\!\sum_{s=k+1}^\ell\!\!\omega_{1,2,\ldots,k}\za q^{-2s}S^{-1}(X_s)X^*_s \\
&\quad -q^{N-\ell-k-1}[k][\ell-k]\omega_{1,2,\ldots,k} \;.
\end{align*}
Now, we rewrite the first summation using (\ref{eq:SXXB}) and get:
\begin{align*}
-q^{k-N}\{(\deb\deb^\dag+\deb^\dag\deb)\omega\}_{1,2,\ldots,k}
=\sum_{r=1}^k\omega_{1,2,\ldots,k}\za q^{-2r}S^{-1}(X_r)X^*_r
+q^{-1}\!\!\!\sum_{s=k+1}^\ell\!\!\omega_{1,2,\ldots,k}\za q^{-2s}S^{-1}(X_s)X^*_s
\\
-q\sum_{r=1}^kq^{-2r}\omega_{1,2,\ldots,k}\za \tfrac{(K_r\ldots K_\ell)^2-(K_r\ldots K_\ell)^{-2}}{q-q^{-1}}
-q^{N-\ell-k-1}[k][\ell-k]\omega_{1,\ldots,k} \;.
\end{align*}
The last sum is
$$
\sum_{r=1}^kq^{-2r}\omega_{1,\ldots,k}\za \tfrac{(K_r\ldots K_\ell)^2-(K_r\ldots K_\ell)^{-2}}{q-q^{-1}}
=[k-N+1]\omega_{1,\ldots,k}\sum_{r=1}^kq^{-2r}=
q^{-k-1}[k][k-N+1]\omega_{1,\ldots,k} \;.
$$
Thus
\begin{align*}
-q^{k-N}\{(\deb\deb^\dag+\deb^\dag\deb)\omega\}_{1,\ldots,k}
&=\sum_{r=1}^k\omega_{1,\ldots,k}\za q^{-2r}S^{-1}(X_r)X^*_r
+q^{-1}\!\!\sum_{r=k+1}^\ell\!\omega_{1,\ldots,k}\za q^{-2r}S^{-1}(X_r)X^*_r \\
&\quad -q^{-\ell}[k][\ell+1-N]\omega_{1,\ldots,k} \;.
\end{align*}
Applying the identity $S^{-1}(h_{(2)})h_{(1)}=\epsilon(h)$ to $h=X_r$,
and using (\ref{eq:Xcop}) for $\Delta(X_r)$, we get
$$
-S^{-1}(X_r)=qN_{r,r-1}^{-1}\hat{K}X_r
+q^{\frac{1}{2}}(q-q^{-1})\sum_{s=r+1}^\ell N_{r,s-1}^{-1}\hat{K}S^{-1}(M_{r,s-1}^*)X_s  \;.
$$
Since $\omega_{1,2,\ldots,k}\za F_n=0$ for all $1\leq n\leq\ell-1$, 
we have $\omega_{1,2,\ldots,k}\za S^{-1}(M_{r,s-1}^*)$ for all $s\leq\ell$. Thus
$$
\omega_{1,\ldots,k}\za S^{-1}(X_r)=
-q\omega_{1,\ldots,k}\za N_{r,r-1}^{-1}\hat{K}X_r=
\begin{cases}
-q^{k-N+\frac{2N}{\ell+1}}\omega_{1,\ldots,k}\za X_r &\mathrm{if}\;1\leq r\leq k \;,\\
-q^{k-N+\frac{2N}{\ell+1}+1}\omega_{1,\ldots,k}\za X_r &\mathrm{if}\;k+1\leq r\leq\ell \;.
\end{cases}
$$
With this, we obtain
$$
\{(\deb\deb^\dag+\deb^\dag\deb)\omega\}_{1,2,\ldots,k}
=q^{\frac{2N}{\ell+1}}\sum_{r=1}^\ell\omega_{1,2,\ldots,k}\za q^{-2r}X_rX^*_r
+q^{N-\ell-k}[k][\ell+1-N]\omega_{1,\ldots,k} \;.
$$
By Lemma \ref{lemma:trick}, the last equality implies
$$
\{(\deb\deb^\dag+\deb^\dag\deb)\omega\}_{\underline{i}}
=q^{\frac{2N}{\ell+1}}\sum_{r=1}^\ell\omega_{\underline{i}}\za q^{-2r}X_rX^*_r
+q^{N-\ell-k}[k][\ell+1-N]\omega_{\underline{i}} \;,
$$
and this concludes the proof.
\end{proof}

\begin{lemma}
We have
\begin{equation}\label{eq:dddagB}
q^{2(k-N)}(\deb\deb^\dag+q^2\deb^\dag\deb)\omega=\omega\za\left(
q^{2\frac{\ell+1-N}{\ell+1}}\textstyle{\sum_{i=1}^\ell}q^{-2i}S^{-1}(X_iX^*_i)
+q^{-N-\ell+k+1}[\ell-k][N]\right)
\end{equation}
for all $\omega\in\Omega^k_N$.
\end{lemma}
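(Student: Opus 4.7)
The plan is to deduce (\ref{eq:dddagB}) from (\ref{eq:dddag}) by conjugating with the real structure $\Jb$. For $\omega\in\Omega^k_N$ I would set $\omega':=\Jb^{-1}\omega\in\Omega^{\ell-k}_{\ell+1-N}$, apply Lemma \ref{lemma:dddag} with $(k,N)$ replaced by $(\ell-k,\ell+1-N)$ to $\omega'$, and then apply $\Jb$ to both sides of the resulting identity. The desired formula should fall out after simplifying the two sides separately.

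For the left hand side, Proposition~\ref{prop:rstru}(iii) gives $\Jb\deb\Jb^{-1}|_{\Omega^m_{N'}}=q^{m-N'}\deb^\dag$, and inverting this (the scalar $\Jb^2=\pm 1$ cancels in any conjugation since it appears twice) yields the companion identity $\Jb\deb^\dag\Jb^{-1}|_{\Omega^m_{N'}}=q^{m+1-N'}\deb$. Composing these in the correct degrees on $\Omega^k_N$ will give $\Jb(\deb\deb^\dag)\Jb^{-1}=q^{2(k+1-N)}\deb^\dag\deb$ and $\Jb(\deb^\dag\deb)\Jb^{-1}=q^{2(k-N)}\deb\deb^\dag$, whose sum is precisely $q^{2(k-N)}(\deb\deb^\dag+q^2\deb^\dag\deb)\omega$, matching the LHS of (\ref{eq:dddagB}).

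For the right hand side I will need to understand how $\Jb$ intertwines with the right action. Starting from $\Jb = q^{\frac{\ell}{2}(\frac{\ell+1}{2}-N)}\,\maut^{\frac{1}{2}}\!\az\Ja(\,\cdot\,)\za\hat{K}^{2(\ell+1)}$ with $\Ja=(*\otimes J)(\mL_{\maut^{-1}}\otimes id)$, the identities $(a\za g)^*=a^*\za S(g)^*$ and $h\maut=\maut S^{-2}(h)$ give $\Ja(\omega\za h)=(\Ja\omega)\za S^{-1}(h)^*$; the outer $\maut^{\frac{1}{2}}\!\az$ and $\za\hat{K}^{2(\ell+1)}$ commute past the extra right action, producing the general intertwining formula
$$\Jb(\omega\za h)=(\Jb\omega)\za\,\hat{K}^{-2(\ell+1)}S^{-1}(h)^*\hat{K}^{2(\ell+1)}.$$

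The crux is then the simplification for $h=X_iX_i^*$. This element is of weight zero, so it commutes with every $K_j$ and in particular with $\maut$ and $\hat{K}$. Consequently $S^2(X_iX_i^*)=\maut X_iX_i^*\maut^{-1}=X_iX_i^*$, which forces $S^{-1}(h)^*=S(h^*)=S(X_iX_i^*)=S^{-1}(X_iX_i^*)$, and $[S^{-1}(X_iX_i^*),\hat{K}]=0$ kills the outer conjugation by $\hat{K}^{\pm 2(\ell+1)}$. Hence $\Jb((\Jb^{-1}\omega)\za X_iX_i^*)=\omega\za S^{-1}(X_iX_i^*)$; the remaining real constant $q^{1-N-\ell+k}[\ell-k][N]$ passes through the antilinear $\Jb$ unchanged, and combining with the LHS yields exactly (\ref{eq:dddagB}). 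The main technical step is establishing the intertwining formula---where one has to keep careful track of how $\maut$, $S^{-2}$ and $\hat{K}$ interact with the antilinear $*$---after which the weight-zero property of $X_iX_i^*$ makes the right hand side collapse immediately.
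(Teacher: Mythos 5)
Your argument is correct and is essentially the paper's own proof: both substitute $(k,N)\to(\ell-k,\ell+1-N)$ in Lemma~\ref{lemma:dddag}, conjugate by $\Jb$, and simplify the two sides using Proposition~\ref{prop:rstru}(iii) together with the fact that $X_iX_i^*$ is self-adjoint, $S^2$-invariant and commutes with $\hat K$. The only difference is that you spell out the derivation of the intertwining identity $\Jb(\omega\za h)=(\Jb\omega)\za\hat{K}^{-2(\ell+1)}S^{-1}(h)^*\hat{K}^{2(\ell+1)}$ and of the companion formula for $\Jb\deb^\dag\Jb^{-1}$, which the paper quotes without proof.
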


\begin{proof}
We substitute $k\to\ell-k$ and $N\to\ell+1-N$ in \eqref{eq:dddag} and apply conjugation by $\Jb$.
By Proposition \ref{prop:rstru} the left hand side becomes the left hand side of \eqref{eq:dddagB}.
Since for any $h\in\Uq{\ell+1}$,
$(\Jb^{-1}\omega)\za h=\Jb^{-1}(\omega\za \hat{K}^{-2(\ell+1)}S^{-1}(h^*)\hat{K}^{2(\ell+1)})^*$,
and since $X_iX_i^*$ commutes with $\hat{K}$, the right hand side of \eqref{eq:dddag}
becomes the right hand side of \eqref{eq:dddagB}.
\end{proof}

Using the identity
$$
D_N^2\big|_{\Omega^k_N} =q^{k-N-1}(\de\deb^\dag+q\deb^\dag\de)
=q^{k-N-1}(q\tfrac{\de\deb^\dag+\deb^\dag\de}{1+q}+\tfrac{\de\deb^\dag+q^2\deb^\dag\de}{1+q}) \;,
$$
from \eqref{eq:dddag} and \eqref{eq:dddagB} we get the following expression for the square of
the Dirac operator
\begin{align}
(1+q)D_N^2\big|_{\Omega^k_N} &=
q^{k-\frac{\ell-1}{\ell+1}N}\textstyle{\sum_{i=1}^\ell}q^{-2i}(\,\cdot\,)\za X_iX^*_i
+q^{\frac{\ell-1}{\ell+1}N-k+1}\textstyle{\sum_{i=1}^\ell}q^{-2i}(\,\cdot\,)\za S^{-1}(X_iX^*_i) \notag\\ &\quad
+q^{-\ell}[k][\ell+1-N]+q^{-\ell}[\ell-k][N] \;. \label{eq:DNsquare}
\end{align}
Since $D_N^2$ leaves the degree of forms invariant, it is the direct sum of
finitely many $D_N^2|_{\Omega^k_N}$. Therefore, it suffices to prove that
the eigenvalues of $D_N^2|_{\Omega^k_N}$ diverge exponentially and the multiplicities
diverge polynomially. For that, we now study the first term in \eqref{eq:DNsquare}.

\begin{lemma}\label{lemma:6.5}
We have
\begin{align*}
\za\big(\mathcal{C}_q-\textstyle{\sum\nolimits_{i=1}^\ell}q^{\ell+1-2i} X_iX_i^*\big)\big|_{\Omega^k_N}
&=q^{1+\frac{2k}{\ell}-\frac{2N}{\ell+1}}\left(
\tfrac{1}{2}[k][\ell+1-\tfrac{\ell+2}{\ell}k]+\tfrac{1}{2}[\ell][\tfrac{k}{\ell}]^2
+\tfrac{[\ell]}{(q-q^{-1})^2}\right) \\
&\quad +\tfrac{1}{(q-q^{-1})^2}\,q^{-2k+\ell(\frac{2N}{\ell+1}-1)}
-\tfrac{[\ell+1]}{(q-q^{-1})^2} \;.
\end{align*}
\end{lemma}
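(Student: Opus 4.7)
The plan is to rewrite the combination $\mathcal{C}_q-\sum_{i=1}^\ell q^{\ell+1-2i}X_iX_i^*$ using the identity \eqref{eq:CCprime} relating the Casimir of $\Uq{\ell+1}$ to that of $\Uq{\ell}$, and then to evaluate the resulting expression on $\Omega^k_N$ using that $\za$ acts by $\sigma^N_k$ on invariant elements.

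The crucial algebraic observation is that $\sum_{i=1}^\ell q^{\ell+1-2i}X_iX_i^*$ coincides with the last sum appearing in \eqref{eq:CCprime}. Since $N_{i\ell}$ is a product of self-adjoint generators, it is self-adjoint, so $X_i^*=M_{i\ell}N_{i\ell}$. Moreover the relation $N_{j,\ell}X_iN_{j,\ell}^{-1}=q^{(1-\delta_{i,j})/2}X_i$ established in the derivation of \eqref{eq:SXXA} shows, specialised to $j=i$, that $N_{i\ell}$ commutes with $X_i$, hence with $M_{i\ell}^*$ and with its adjoint $M_{i\ell}$. Therefore $X_iX_i^*=N_{i\ell}M_{i\ell}^*M_{i\ell}N_{i\ell}=M_{i\ell}^*N_{i\ell}^2M_{i\ell}$, which is exactly the summand of $\sum_{1\le j\le k\le\ell}q^{\ell+1-2j}M_{jk}^*N_{jk}^2M_{jk}$ indexed by $j=i$, $k=\ell$. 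Substituting into \eqref{eq:CCprime} yields
\[
\mathcal{C}_q-\sum_{i=1}^\ell q^{\ell+1-2i}X_iX_i^*=q\hat{K}^{2/\ell}\Big(\mathcal{C}'_q+\tfrac{[\ell]}{(q-q^{-1})^2}\Big)+\tfrac{q^{-\ell}}{(q-q^{-1})^2}\hat{K}^{-2}-\tfrac{[\ell+1]}{(q-q^{-1})^2}\,.
\]

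Next I would evaluate the right action of this element on $\omega\in\Omega^k_N$. By Lemma \ref{lemma:equiv}, $\omega\za h=\sigma^N_k(h)\omega$ for all $h\in\Kq{\ell}$, so $\za\hat{K}^{2/\ell}$ acts by the scalar $q^{\frac{2k}{\ell}-\frac{2N}{\ell+1}}$ and $\za\hat{K}^{-2}$ by $q^{-2k+\frac{2\ell N}{\ell+1}}$; combined with the prefactor $q^{-\ell}/(q-q^{-1})^2$ this produces the term $q^{-2k+\ell(\frac{2N}{\ell+1}-1)}/(q-q^{-1})^2$ of the claim. The central element $\mathcal{C}'_q$ acts on the irreducible $\Uq{\ell}$-representation $\sigma_k$ (highest weight $\delta^k$) by a scalar, which I would read off from Lemma \ref{lemma:multeig} after the substitution $\ell\to\ell-1$: choosing $n_1=n_{\ell-1}=0$ with distinguished index equal to $k$ (so $N=k$ in the lemma's own notation), formula \eqref{eq:eigA} collapses to $\tfrac{1}{2}[k][\ell+1-\tfrac{\ell+2}{\ell}k]+\tfrac{1}{2}[\ell][\tfrac{k}{\ell}]^2$. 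The extremal cases $k=0$ and $k=\ell$ (where $\sigma_k$ is the trivial representation) are consistent, as both summands vanish there. Multiplying by $q\cdot q^{\frac{2k}{\ell}-\frac{2N}{\ell+1}}$ and including the $[\ell]/(q-q^{-1})^2$ contribution inside the parentheses of the first term reconstructs the dominant piece of the right-hand side, and the claim follows.

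The only non-routine step is the commutation $[N_{i\ell},M_{i\ell}^*]=0$, which fails for $N_{j\ell}$ with $j\ne i$: this very coincidence is what makes $\sum_i q^{\ell+1-2i}X_iX_i^*$ match exactly the $k=\ell$ column of the double sum in \eqref{eq:Cq}. Once this is noted, the remainder is bookkeeping of $q$-exponents and an invocation of the already-computed spectrum of $\mathcal{C}'_q$.
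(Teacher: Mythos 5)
Your proof is correct and follows exactly the paper's route: rewrite $\mathcal{C}_q-\sum_iq^{\ell+1-2i}X_iX_i^*$ via \eqref{eq:CCprime}, then evaluate $\hat{K}^{2/\ell}$, $\hat{K}^{-2}$ and $\mathcal{C}'_q$ on $\Omega^k_N$ using $\za\hat{K}|_{\Omega^k_N}=q^{k-\frac{\ell}{\ell+1}N}$ and Lemma~\ref{lemma:multeig} with $\ell\to\ell-1$. Your added verification that $[N_{i\ell},M_{i\ell}^*]=0$, so that $X_iX_i^*=M_{i\ell}^*N_{i\ell}^2M_{i\ell}$ matches the $k=\ell$ column of the double sum in \eqref{eq:Cq}, is the one step the paper leaves tacit, and it is argued correctly.
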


\begin{proof}
From (\ref{eq:CCprime}) we get
$$
\mathcal{C}_q-\textstyle{\sum\nolimits_{i=1}^\ell}q^{\ell+1-2i}X_iX_i^*=
q\hat{K}^{\frac{2}{\ell}}\Big(\mathcal{C}'_q+\tfrac{[\ell]}{(q-q^{-1})^2}\Big)
+\tfrac{q^{-\ell}}{(q-q^{-1})^2}\,\hat{K}^{-2}
-\tfrac{[\ell+1]}{(q-q^{-1})^2} \;,
$$
where $\mathcal{C}_q$ is the Casimir of $\Uq{\ell+1}$ as in (\ref{eq:Cq}) and
$\mathcal{C}'_q$ is the Casimir of $\Uq{\ell}$ as in (\ref{eq:CqK}).
The representation $\sigma^N_k$ in the definition of $\Omega^k_N$
has highest weight $n_i=\delta_{i,k}$.
By Lemma \ref{lemma:multeig} (with the replacement $\ell\to\ell-1$ and $n_1=n_\ell=0$)
the eigenvalue of $\mathcal{C}'_q$ in this representation is
\begin{equation}\label{eq:zaCprimeq}
\za 2\mathcal{C}'_q\big|_{\Omega^k_N}=
[k][\ell+1-\tfrac{\ell+2}{\ell}k]+[\ell][\tfrac{k}{\ell}]^2 \;.
\end{equation}
Moreover $\za\,\hat{K}|_{\Omega^k_N}=q^{k-\frac{\ell}{\ell+1}N}$. Combining
these equations concludes the proof.
\end{proof}

We are interested in the asymptotic behaviour of the eigenvalues of $D_N$.
From \eqref{eq:DNsquare} and the last lemma we deduce that
$$
(1+q)D_N^2\big|_{\Omega^k_N}\simeq q^{k-\frac{\ell-1}{\ell+1}N-\ell-1}\mathcal{C}_q
+q^{\frac{\ell-1}{\ell+1}N-k-\ell}S^{-1}(\mathcal{C}_q) \;,
$$
where `$\simeq$' means equality modulo constant multiples of the identity operator
on $\Omega^k_N$.
Since the coefficients in front of $\mathcal{C}_q$ and $S^{-1}(\mathcal{C}_q)$ in
$(1+q)D_N^2|_{\Omega^k_N}$ are positive (so that it is not possible that
the dominating contributions cancel), it is enough to show that
$\mathcal{C}_q|_{\Omega^k_N}$ and $S^{-1}(\mathcal{C}_q)|_{\Omega^k_N}$ separately
have eigenvalues that diverge exponentially.

Recall that for central elements, like $\mathcal{C}_q$ and $S^{-1}(\mathcal{C}_q)$, the
left and right canonical actions coincide. The decomposition of $\Omega^k_N$ with respect of the left
action of $\Uq{\ell}$ was given in Proposition \ref{prop:ha}. We set $n_\ell=n$ and $n_1=n+N-k$, where $n\gg 1$.
We use \eqref{eq:eigA} and, for $1\leq k\leq\ell$, we find two series of eigenvalues
for $\mathcal{C}_q|_{\Omega^k_N}$ given by
$$
q^{-2n}\frac{q^{-\frac{2\ell}{\ell+1}N-\ell+2k-2}+q^{\frac{2}{\ell+1}N-\ell}}{2(q-q^{-1})^2}+O(1) \;,\qquad
q^{-2n}\frac{q^{-\frac{2\ell}{\ell+1}N-\ell+2k}+q^{\frac{2}{\ell+1}N-\ell}}{2(q-q^{-1})^2}+O(1) \;,
$$
where by $O(1)$ we mean terms that remain bounded when $n\to\infty$. These eigenvalues diverge like $q^{-2n}$
and have multiplicity that, according to \eqref{eq:mult}, is polynomial in $n$. If $k=0$ or $k=\ell$,
the same holds true except one of the two series is absent.

A similar reasoning applies to $S^{-1}(\mathcal{C}_q)|_{\Omega^k_N}$. The eigenvalue of $S^{-1}(\mathcal{C}_q)$
in the representation with highest weight $(n_1,n_2,\ldots,n_\ell)$ is the eigenvalue of $\mathcal{C}_q$ in
the representation with highest weight $(n_\ell,\ldots,n_2,n_1)$. Thus the two series of eigenvalues for
$S^{-1}(\mathcal{C}_q)|_{\Omega^k_N}$ are given asymptotically by
$$
q^{-2n}\frac{q^{\frac{2\ell}{\ell+1}N-\ell-2k}+q^{-\frac{2}{\ell+1}N-\ell+2}}{2(q-q^{-1})^2}+O(1) \;,\qquad
q^{-2n}\frac{q^{\frac{2\ell}{\ell+1}N-\ell-2k+2}+q^{-\frac{2}{\ell+1}N-\ell+2}}{2(q-q^{-1})^2}+O(1) \;,
$$
and one of the two is absent if $k=0$ or $k=\ell$. With this, the proof of Theorem \ref{thm}
is completed.

Our asymptotic analysis can be extended to the explicit computation of the spectrum of $D_N$,
but we are not giving it since the formul{\ae} are complicated and not particularly illuminating.

We conclude with few remarks on the case $\ell=2$.
The combination of Lemma \ref{lemma:dddag} and Lemma \ref{lemma:6.5} gives
(after a tedious check in all the three cases $k=0,1,2$) the simple formula
$$
(\deb+\deb^\dag)^2|_{\Omega^\bullet_N}=q^{\frac{2}{3}N-3}\mathcal{C}_q
+\tfrac{1}{1-q^2}\big(q^N[N]-q^{\frac{N}{3}+2}[3][\tfrac{N}{3}]\big) \;.
$$
From this, the explicit expression of the spectrum of the Dirac operator $\deb+\deb^\dag$
follows, which extends the case $N=0$ studied in \cite{DDL08b}. For $N=0$ we get
$$
(\deb+\deb^\dag)^2|_{\Omega^\bullet_0}=q^{-3}\mathcal{C}_q \;,
$$
which is the square of the Dirac operator considered in \cite{DDL08b}, although in
different conventions which explains the $q^{-3}$ factor.

\smallskip

{\small%
\subsection*{Acknowledgments}
L.D.~was partially supported by PRIN-2006 (I), MKTD-CT-2004-509794 (EU) and N201177033 (PL). }

\appendix\section{Some general definitions}\label{app:A}
We recall the notion of equivariant (unital) spectral triple, see e.g.~\cite{Con94,Sit03}.
Let $\A$ be a (complex, associative) involutive algebra with unity,
$(\U,\epsilon,\Delta,S)$ a Hopf $*$-algebra and suppose $\A$ is a
left $\U$-module $*$-algebra, which means that the left action
`$\az$' of $\U$ on $\A$ satisfies
$$
h\az ab=(h_{(1)}\az a)(h_{(2)}\az b)\;,\qquad
h\az 1=\varepsilon(h)1\;,\qquad
h\az a^*=\{S(h)^*\az a\}^*\;,
$$
for all $h\in\U$ and $a,b\in\A$. As usual we use Sweedler notation for the
coproduct, $\Delta(h)=h_{(1)}\otimes h_{(2)}$ with summation understood.
The left crossed product $\A\rtimes\U$ is the $*$-algebra generated
by $\A$ and $\U$ with crossed commutation relations
$$
ha=(h_{(1)}\az a)h_{(2)}\;,\quad\forall\;h\in\U,\;a\in\A\,.
$$
The data $(\A,\HH,D)$ is called an $\U$-equivariant spectral triple
if (i) there is a dense subspace $\M$ of $\HH$ carrying a $*$-representation
$\pi$ of $\A\rtimes\U$, (ii) $D$ is a (unbounded) selfadjoint operator
with compact resolvent and with domain containing $\M$, (iii) $\pi(a)$
and $[D,\pi(a)]$ extend to bounded operators on $\HH$ for all $a\in\A$,
(iv) $[D,\pi(h)]=0$ on $\M$ for any $h\in\U$.
As usual we refer to $D$ as the `Dirac operator', in analogy with the
commutative situation where spectral triples are canonically associated to
spin structures. The representation symbol $\pi$ will be omitted.

An equivariant spectral triple is called \emph{even} if there exists a grading
$\gamma$ on $\HH$ (i.e.~a bounded operator satisfying $\gamma=\gamma^*$ and $\gamma^2=1$)
such that the Dirac operator is odd and the crossed product algebra is even:
$$
\gamma D+D\gamma=0\;,\quad\qquad t\gamma=\gamma t \quad \forall\;t\in\A\rtimes\U\;.
$$
An even spectral triple is called \emph{real} if there exists an antilinear
isometry $J$ on $\HH$ such that
$$
J^2=\pm 1\;,\qquad
JD=\pm DJ\;,\qquad
J\gamma=\pm\gamma J\;,
$$
and such that for all $a,b\in\A$
\begin{equation}\label{eq:real}
[a,JbJ^{-1}]=0\;,\qquad
[[D,a],JbJ^{-1}]=0\;.
\end{equation}
The signs `$\pm$' in (\ref{eq:J}) are determined by the dimension of
the geometry~\cite{Con96}. For a $2\ell$-dimensional space, with $\ell$ odd,
we have
\begin{equation}\label{eq:J}
J^2=(-1)^{\frac{1}{2}(\ell+1)} \;,\qquad
JD=DJ \;,\qquad
J\gamma=-\gamma J \;.
\end{equation}

Finally, we call $(\A,\HH,D,\gamma,J)$ a `real equivariant even spectral triple'
if $(\A,\HH,D,\gamma)$ is an equivariant even spectral triple, $J$ is a real
structure and there is an (unbounded) antilinear operator $T$ on $\HH$ whose
antiunitary part is $J$ and satisfying
$$
Th=S(h)^*T\;,
$$
on the joint domain of $h$ and $T$, and for all $h\in\U$.

\section{Twisted Dirac operators on $\CP^\ell$}\label{app:B}

In this section we consider the limit $q=1$.
If we work with formal power series in $\hbar:=\log q$, the Cartan generators $H_i$ are obtained from $K_i$ through the rescaling $K_i=q^{H_i/2}$. In the $q\to 1$ limit, the elements $\{H_i,E_i,F_i\}_{i=1,\ldots,\ell}$ satisfy the well known Chevalley-Serre relations
(see~\cite{Ser01}, Sec.~VI.4), and the elements $\{H_i,M_{jk},M_{jk}^*\}_{i,j,k=1,\ldots,\ell}$
form a linear basis of the Lie algebra $\mathfrak{su}(\ell+1)$ called Cartan-Weyl basis (see~\cite{KS97}, Sec.~6.1.1).

The $q\to 1$ limit of the Casimir (\ref{eq:Cq}) is
$$
\mathcal{C}_{q=1}=\tfrac{1}{2}A+\sum_{1\leq i\leq j\leq\ell}M_{ij}^*M_{ij}
$$
where
\begin{align*}
A &=\sum\nolimits_{i=1}^{\ell+1}
\left(\tfrac{\sum_{j=1}^{i-1}jH_j-\sum_{j=i}^\ell
(\ell+1-j)H_j}{\ell+1}+i-\tfrac{\ell+2}{2}\right)^2
-\sum\nolimits_{i=1}^{\ell+1}
\left(i-\tfrac{\ell+2}{2}\right)^2 \\
&=\sum\nolimits_{i=1}^{\ell+1}
\left(\tfrac{\sum_{j=1}^{i-1}jH_j-\sum_{j=i}^\ell
(\ell+1-j)H_j}{\ell+1}\right)^2
+\sum\nolimits_{i=1}^{\ell+1}
2(i-\tfrac{\ell+2}{2})\tfrac{\sum_{j=1}^{i-1}jH_j-\sum_{j=i}^\ell
(\ell+1-j)H_j}{\ell+1} \;.
\end{align*}
The term $-2\tfrac{\ell+2}{2}\sum\nolimits_{i=1}^{\ell+1}
\tfrac{\sum_{j=1}^{i-1}jH_j-\sum_{j=i}^\ell
(\ell+1-j)H_j}{\ell+1}$ is zero, as one can see inverting
the order in the summation. Furthermore
\begin{align*}
\frac{1}{\ell+1}\sum_{i=1}^{\ell+1}2i\left(\sum_{j=1}^{i-1}jH_j
-\sum_{j=i}^\ell(\ell+1-j)H_j\right) &=
\frac{2}{\ell+1}\sum\nolimits_{j=1}^\ell\left(
\sum_{i=j+1}^{\ell+1}ij-\sum_{i=1}^ji(\ell+1-j)\right)H_j \\
&=\sum\nolimits_{j=1}^\ell j(\ell+1-j)H_j\;.
\end{align*}
Now we develop the square in $A$ using the formula
$$
\left(\sum\nolimits_{i=1}^n a_i\right)^2=
\sum\nolimits_{i=1}^n a_i^2+
2\sum\nolimits_{1\leq i<j\leq n}a_ia_j
$$
and get
$$
A=\sum_{i=1}^\ell\tfrac{i(\ell+1-i)}{\ell+1}H_i(\ell+1-H_i)
+2\sum_{1\leq i\leq j\leq\ell}\tfrac{i(\ell+1-j)}{\ell+1}\,H_iH_j \;.
$$
Hence
\begin{equation}\label{eq:B1}
\mathcal{C}_{q=1}=\tfrac{1}{2}\sum_{i=1}^\ell\tfrac{i(\ell+1-i)}{\ell+1}H_i(\ell+1-H_i)
 +\sum_{1\leq i\leq j\leq\ell}(\tfrac{i(\ell+1-j)}{\ell+1}H_iH_j+M_{ij}^*M_{ij}) \;.
\end{equation}
The relation between the Casimir of $U(\mathfrak{su}(\ell+1))$ and the
Casimir of $U(\mathfrak{su}(\ell))$ is:
$$
\textstyle{\sum_{i=1}^\ell}X_iX_i^*=\mathcal{C}_{q=1}-\mathcal{C}'_{q=1}-\tfrac{\ell+1}{8\ell}\hat{H}(\hat{H}+\ell)
$$
where $\hat{H}=\frac{2}{\ell+1}\sum_{i=1}^\ell iH_i$. This can be obtained both as $q\to 1$
limit of \eqref{eq:CCprime} or using \eqref{eq:B1}.

The limit of Dirac operator in \eqref{eq:DN} is $D_N=\deb+\deb^\dag$, and the formul{\ae}
\eqref{eq:dddag} gives
$$
D_N^2\big|_{\Omega^k_N}=
\mathcal{C}_{q=1}-\mathcal{C}'_{q=1}-\tfrac{\ell+1}{8\ell}\hat{H}(\hat{H}+\ell)
+k(\ell+1-N) \;,
$$
where the right action is understood.
The operator $\za\,\hat{H}|_{\Omega^k_N}$ is given by $2k-\frac{2\ell}{\ell+1}N$ times the identity
operator. The operator $\za\,\mathcal{C}'_{q=1}$ is constant on $\Omega^k_N$ and given by (cf.~\eqref{eq:zaCprimeq})
$$
\za\,\mathcal{C}'_{q=1}=\tfrac{\ell+1}{2\ell}k(\ell-k) \;.
$$
Thus
$$
D_N^2\big|_{\Omega^k_N}=
\mathcal{C}_{q=1}+\tfrac{\ell}{2(\ell+1)}N(\ell+1-N) \;.
$$
Notice that the constant on the right hand side does not depend on $k$. If we forget
Cumulatively, the results of Proposition \ref{prop:ha} can be can be simplified as follows.
If $N\leq 0$,
$$
\Omega^\bullet_N\simeq
V_{(0,0,\ldots,0,-N)}\,\oplus\,
2\,\bigoplus\nolimits_{k=1}^\ell
\bigoplus\nolimits_{m\in\N}
V_{(m,0,\ldots,0,m-N+k)\,+\,\underline{e}_k} \;,
$$
if $N>\ell$,
$$
\Omega^\bullet_N\simeq
V_{(N-\ell-1,0,\ldots,0,0)}\,\oplus\,
2\,\bigoplus\nolimits_{k=1}^\ell
\bigoplus\nolimits_{m\in\N}
V_{(m+N-k,0,\ldots,0,m)\,+\,\underline{e}_k} \;,
$$
and if $1\leq N\leq\ell$,
\begin{align*}
\Omega^\bullet_N &\simeq
2\bigoplus\nolimits_{k=1}^{N-1}
\bigoplus\nolimits_{m\in\N}
V_{(m+N-k,0,\ldots,0,m)\,+\,\underline{e}_k} \\
&\oplus 2\bigoplus\nolimits_{m\in\N}
V_{(m,0,\ldots,0,m)\,+\,\underline{e}_N} \\
&\oplus
2\bigoplus\nolimits_{k=N+1}^{\ell}
\bigoplus\nolimits_{m\in\N}
V_{(m,0,\ldots,0,m-N+k)\,+\,\underline{e}_k} \;.
\end{align*}
Eigenvalues of $D_N$ are computed using the $q\to 1$ limit of 
Lemma \ref{lemma:multeig}. Let
\begin{subequations}
\begin{align}
\lambda_{m,k}^N &:=\sqrt{(m+N)(m+k)} \;, \label{eq:Dolan} \\
\mu_{m,k}^N &:=\frac{k(2m+k+N)}{(m+N)(m+k)}
\binom{m+\ell}{\ell}\binom{m+k+N-1}{\ell}\binom{\ell}{k} \;.
\end{align}
\end{subequations}
If $N\leq 0$, $D_N$ has a kernel of dimension $\binom{-N+\ell}{\ell}$ while
non-zero eigenvalues $\lambda$'s and their multiplicities $\mu$'s are given by
$$
\mathrm{Sp}(D_N)\smallsetminus\ker(D_N)
=\bigl\{(\pm\lambda_{m,k}^{\ell+1-N},\mu_{m,k}^{\ell+1-N})\,;\;1\leq k\leq\ell\,,\;m\in\N\bigr\} \;.
$$
If $N>\ell$, $D_N$ has a kernel of dimension $\binom{N-1}{\ell}$ and
$$
\mathrm{Sp}(D_N)\smallsetminus\ker(D_N)
=\bigl\{(\pm\lambda_{m,k}^{N},\mu_{m,k}^{N})\,;\;1\leq k\leq\ell\,,\;m\in\N\bigr\} \;.
$$
Notice that in the cases above $\mathrm{Sp}(D_N)=\mathrm{Sp}(D_{\ell+1-N})$.
If $1\leq N\leq\ell$, $D_N$ is invertible with spectrum
\begin{align*}
\mathrm{Sp}(D_N)
&=\bigl\{(\pm\lambda_{m,\ell+1-k}^N,\mu_{m,\ell+1-k}^N)\,;\;1\leq k<N\,,\;m\in\N\bigr\} \cup \\
&\quad \cup
\bigl\{(\pm\lambda_{m,k}^{\ell+1-N},\mu_{m,k}^{\ell+1-N})\,;\;N\leq k\leq\ell\,,\;m\in\N\bigr\} \;.
\end{align*}
In particular, if $\ell$ is odd and $N=\frac{1}{2}(\ell+1)$, the Dirac operator of the Fubini-Study
metric $\D=D_{\frac{1}{2}(\ell+1)}$ has eigenvalues:
$$
\pm\sqrt{(m+\tfrac{\ell+1}{2})(m+k)} \;,\qquad
\tfrac{\ell+1}{2}\leq k\leq \ell\,,\;m\in\N\,.
$$

\smallskip

Let us compare these results with the literature.

For odd $\ell$, the spectrum of the Dirac operator of the Fubini-Study metric on $\CP^\ell$
(i.e.~$D_N$ when $N=\frac{1}{2}(\ell+1)$) has been computed in \cite{CFG89} (cf.~also
\cite{SS93}). The same spectrum has been computed with a different method in \cite{AB98},
and by Theorem 4.6 in \cite{AB98} it coincides with the one in \cite{CFG89,SS93} after the right
parameter substitutions. Finally, in \cite{DHMOC08} the spectrum of $D_N$ for arbitrary $N$ is
computed (for both odd and even $\ell$); when $\ell$ is odd and $N=\frac{1}{2}(\ell+1)$,
coincides with the spectrum in \cite{AB98}.

We obtain the same result.
The comparison with the notations of \cite[App.~B]{DHMOC08} is as follows: $n:=\ell$ is the dimension (over $\C$) of the space,
$q:=\ell+1-N$ is the `charge' of the bundle used to construct the Dirac operator, $l:=m+k+N-\ell-1$
and $k':=k-1$ is the integers that label the eigenvalues. Modulo a misprint in last line of \cite[App.~B]{DHMOC08}
(the constraint is $l+q-k'\geq 1$, with a $-$ sign, cf.~(105) of their paper), their spectrum coincide
with our.


\providecommand{\bysame}{\leavevmode\hbox to3em{\hrulefill}\thinspace}

\end{document}